\theoremstyle{plain}
\newtheorem{thm}{Theorem}[section]
\newtheorem{cor}[thm]{Corollary}
\newtheorem{lem}[thm]{Lemma}
\newtheorem{prop}[thm]{Proposition}
\theoremstyle{definition}
\newtheorem{rem}[thm]{Remark}
\newtheorem*{condition*}{Condition}
\newtheorem{defin}[thm]{Definition}
\newtheorem{example}[thm]{Example}
\newtheorem*{example*}{Example}
\numberwithin{equation}{section}
\DeclareMathOperator{\bfe}{\mathbf{e}}
\DeclareMathOperator{\bfk}{\mathbf{k}}
\DeclareMathOperator{\bfa}{\mathbf{a}}
\DeclareMathOperator{\bfb}{\mathbf{b}}
\DeclareMathOperator{\bfu}{\mathbf{u}}
\DeclareMathOperator{\bfX}{\mathbf{X}}
\DeclareMathOperator{\bfx}{\mathbf{x}}
\DeclareMathOperator{\bft}{\mathbf{t}}
\DeclareMathOperator{\ZZ}{\mathbb{Z}}
\DeclareMathOperator{\sing}{sing}
\DeclareMathOperator{\codim}{codim}
\DeclareMathOperator{\GL}{GL}
\DeclareMathOperator{\Tr}{Tr}
\newcommand{\R}{\mathbb{R}}
\newcommand{\Q}{\mathbb{Q}}
\newcommand{\Z}{\mathbb{Z}}
\newcommand{\F}{\mathbb{F}}
\newcommand{\C}{\mathbb{C}}
\newcommand{\A}{\mathbb{A}}
\renewcommand{\P}{\mathbb{P}}
\renewcommand{\GL}{\text{GL}}
\newcommand{\calE}{\mathcal{E}}
\newcommand{\bSpec}{\textbf{Spec}}
\newcommand{\calM}{\mathcal{M}}
\newcommand{\calP}{\mathcal{P}}
\newcommand{\bx}{\mathbf{x}}
\newcommand{\by}{\mathbf{y}}
\newcommand{\bz}{\mathbf{z}}
\newcommand{\bt}{\mathbf{t}}
\newcommand{\ba}{\mathbf{a}}
\newcommand{\bu}{\mathbf{u}}
\renewcommand{\bx}{\mathbf{x}}
\newcommand{\bX}{\mathbf{X}}
\newcommand{\bZ}{\mathbf{Z}}
\newcommand{\ep}{\varepsilon}
\newcommand{\al}{\alpha}
\newcommand{\ga}{\gamma}
\newcommand{\om}{\omega}
\newcommand{\sig}{\sigma}
\newcommand{\beq}{\begin{equation}}
\newcommand{\eeq}{\end{equation}}
\newcommand{\ndiv}{\nmid}
\newcommand{\con}{\equiv}
\newcommand{\modd}[1]{\; ( \text{mod} \; #1)}
\newcommand{\Sig}{\Sigma}
\newcommand{\glo}{\mathrm{glo}}
\newcommand{\loc}{\mathrm{loc}}
\definecolor{pink}{rgb}{1,.2,.6}
\definecolor{orange}{rgb}{0.7,0.3,0}
\definecolor{blue}{rgb}{.2,.6,.75}
\definecolor{green}{rgb}{.4,.7,.4}
\definecolor{purple}{RGB}{127,0,255}
 \definecolor{gray}{RGB}{211,211,211}
\begin{document}
\title{Counting integral points in thin sets of type II: singularities,  sieves, and stratification}

\author[Bonolis]{Dante Bonolis}
\address{Duke University, 120 Science Drive, Durham NC 27708}
\email{dante.bonolis@duke.edu}

\author[Pierce]{Lillian B. Pierce}
\address{Duke University, 120 Science Drive, Durham NC 27708}
\email{lillian.pierce@duke.edu}

\author[Woo]{Katharine Woo}
\address{Princeton University, Fine Hall, Washington Road, Princeton NJ 08544}
\email{khwoo@princeton.edu }

\begin{abstract}
  Consider an absolutely irreducible polynomial $F(Y,X_1,\ldots,X_n) \in \Z[Y,X_1,\ldots,X_n]$ that is monic in $Y$  and is a polynomial in $Y^m$ for an integer $m \geq 1$. Let $N(F,B)$ count the number of $\bx \in [-B,B]^n \cap \Z^n$ such that $F(y,\bx)=0$ is solvable for $y \in\Z$. In nomenclature of Serre, bounding $N(F,B)$ corresponds to counting integral points in an affine thin set of type II. Previously, in this generality Serre proved $N(F,B) \ll_F B^{n-1/2}(\log B)^{\ga}$ for some $\ga<1$.  When $m \geq 2$, this new work proves  $N(F,B) \ll_{n,F,\ep} B^{n-1+1/(n+1) + \ep}$    under  a nondegeneracy condition that encapsulates that $F(Y,\bX)$  is truly a polynomial in $n+1$ variables, even after performing any $\GL_n(\Q)$ change of variables on $X_1,\ldots,X_n$. Under GRH, this result also holds when $m=1$.  We show that generic polynomials  satisfy the relevant nondegeneracy condition.  Moreover, for a certain  class of polynomials, we prove the stronger bound $N(F,B) \ll_{F} B^{n-1}(\log B)^{e(n)}$, comparable to a conjecture of Serre. A key strength of these results is that they require no nonsingularity property of $F(Y,\bX)$.  The Katz-Laumon stratification for character sums, in a   new uniform formulation appearing in a companion paper of Bonolis, Kowalski and Woo, is a key ingredient in the sieve method we develop to prove upper bounds that explicitly control any dependence on the size of the coefficients of $F$. 
\end{abstract}

\maketitle

\section{Introduction}\label{sec_introduction}

Let $F(Y,X_1,...,X_n)\in \Z[Y,X_1,...,X_n]$ be a polynomial, and define the counting function $$N(F,B) = \#\{\bx \in \Z^n \cap [-B,B]^n: \exists y\in \Z, F(y,\bx)=0\}.$$
 Trivially, $N(F,B) \ll B^n$, and this is sharp in the case that $F(Y,\bX)$ has degree at most 1 in $Y$. 
 If $\deg_Y F \geq 2$ and $F$ is absolutely irreducible, the best known bound (in such generality) is $N(F,B)\ll_{F} B^{n-1/2}(\log B)^{\ga}$ for some $\ga<1$ due to Serre \cite{Ser97}.
 A conjecture of Serre  on thin sets of type II suggests that for suitable polynomials $F(Y,\bX),$ the trivial bound for $N(F,B)$ may be reduced by a full factor of $B$, and this is the motivation for the present work.
As an illustration, here is a special consequence of our main work:

\begin{thm}\label{thm_cyclic_uniform}
 Fix $n \geq 2$. Let $H(\bX) \in \Z[X_1,\ldots,X_n]$ be an  irreducible polynomial and define  $F(Y,\bX) = Y^d - H(\bX)$ for an integer $d \geq 2$.
Suppose there is no $L\in \GL_n(\Q)$ such that $H(L(\bX)) \in \Z[(X_i)_{i \in I}]$ for a subset $I \subsetneq\{1,\ldots,n\}$. 
     Then  there is an integer $e(n)\geq 1$ such that for all $B \geq 1$,
     \[ N(F,B) \ll_{n,\deg F,\ep} \log(\|F\|+2)^{e(n)} B^{n-1+\frac{1}{n+1}+\ep},  \] 
     for all $\ep>0$.
 \end{thm}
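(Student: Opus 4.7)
The theorem is a special case of the main sieve-theoretic result of the paper applied to the polynomial $F(Y,\bX)=Y^d-H(\bX)$, so the plan is (i) to verify that $F$ satisfies the hypotheses of that general theorem, and (ii) to sketch the sieve argument behind it. The key new input is the uniform Katz--Laumon character-sum bound from the companion paper of Bonolis, Kowalski, and Woo, which allows the implied constants to depend on $\|H\|$ only polylogarithmically.

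For the verification: $F$ is absolutely irreducible (since $H$ is irreducible and $d\ge 2$), monic in $Y$ of degree $d$, and tautologically a polynomial in $Y^m$ with $m=d\ge 2$. Moreover $F(Y,L\bX)=Y^d-H(L\bX)$ for any $L\in\GL_n(\Q)$, and the $Y^d$ summand does not involve $\bX$, so the nondegeneracy condition on $F$ required by the main theorem---that $F$ cannot be made, after a $\GL_n(\Q)$ substitution on $\bX$, into a polynomial in fewer than $n+1$ effective variables---is equivalent to the stated hypothesis that no such $L$ confines $H(L\bX)$ to a proper subset of the $X_i$.

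For the sieve: for $\bx\in\Z^n$ with $H(\bx)=y^d$ and any prime $p\equiv 1\pmod d$ with $p\nmid y$ (and avoiding a bounded bad set built from $\disc(F)$ and $d$), every multiplicative character $\chi_p$ of exact order $d$ on $(\Z/p\Z)^\times$ satisfies $\chi_p(H(\bx))=1$. Averaging this identity over primes $p\in[Q,2Q]$ in the required congruence class and applying Cauchy--Schwarz or a higher moment yields a bound of the shape
\[ N(F,B)\ll \frac{B^n(\log B)^{C}}{Q}+E_{\text{char}}+E_{p\mid y}, \]
where $E_{\text{char}}$ is controlled by the uniform Katz--Laumon bound
\[ \Bigl|\sum_{\bx\in(\Z/(p_1\cdots p_k)\Z)^n}\chi_{p_1}(H(\bx))\cdots\chi_{p_k}(H(\bx))\Bigr| \ll (p_1\cdots p_k)^{n-1/2} \]
with implied constant polynomial in $\deg H$ and $k$ but only polylogarithmic in $\|H\|$, and $E_{p\mid y}$ is benign because $|y|\ll B^{\deg H/d}$ forces $\omega(y)\ll\log B$. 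To reach the exponent $n-1+1/(n+1)$, the sieve is combined with a fiber-by-fiber lattice-point estimate: the nondegeneracy of $H$ guarantees that, for each $y$, the fiber $\{H(\bx)=y^d\}$ is a geometrically irreducible affine hypersurface in $\A^n$, on which Heath--Brown's determinant method gives $\ll_{\deg H,\ep} B^{n-1+\ep}$ lattice points in $[-B,B]^n$; this contributes $\ll Y\cdot B^{n-1+\ep}$ from $|y|\le Y$, and is balanced against the sieve bound on the complementary range by optimally choosing $Y$ and $Q$ subject to the Fourier-completion constraint on the character sums. Tracking the polylog-in-$\|H\|$ dependence through this optimization produces the displayed $\log(\|F\|+2)^{e(n)}$ factor.

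The main obstacle is precisely this uniformity: standard Deligne--Katz estimates bound the above character sums only with implicit constants depending on the singularities and behaviour at infinity of the $H$-twisted Kummer sheaves, and hence in principle polynomially on $\|H\|$. Upgrading those constants to polylogarithmic $\|H\|$-dependence, simultaneously across all strata of the Katz--Laumon decomposition, is the achievement of the companion paper; weaving this input into the moment sieve while also keeping track of the crude but essential fiber bound constitutes the technical core of the argument.
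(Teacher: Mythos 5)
Your high-level plan is right in outline: the theorem is deduced by specializing the general sieve theorem (Theorem \ref{thm_explicit}) to $F=Y^d-H$, after checking the nondegeneracy hypothesis, and the uniform Katz--Laumon input from the companion paper is indeed what makes the $\|F\|$-dependence polylogarithmic. The paper's actual deduction is exactly this two-step: Example \ref{ex: generic cyclic} shows $Y^d-H(\bX)$ is strongly $(1,n)$-allowable under the stated hypothesis on $H$ (this is \emph{not} a tautological equivalence as you assert --- it needs Lemma \ref{lem : spectoglobal} plus Hilbert irreducibility), and then Theorem \ref{thm_explicit} applies with $m=d\ge 2$.

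However, your sketch of the sieve mechanism has genuine gaps. First, the complete-sum bound you quote, $|\sum_{\bx}\chi_{p_1}(H(\bx))\cdots\chi_{p_k}(H(\bx))|\ll(p_1\cdots p_k)^{n-1/2}$, is not what Katz--Laumon provides and is far too weak: a saving of $p^{1/2}$ per modulus only recovers Serre's exponent $n-1/2$. The exponent $n-1+\frac{1}{n+1}$ comes from square-root cancellation $|S(\bu,p)|\ll p^{n/2}$ in the \emph{additively twisted} sums $S(\bu,p)=\sum_{\bx}(v_p(\bx)-1)e_p(\bu\cdot\bx)$ for frequencies $\bu$ outside the first stratum, after Poisson summation; the balance $B^nP^{-1}+P^n$ with $P=B^{n/(n+1)}$ then gives the exponent with no determinant-method input on fibers of $H$ at all (Pila's bound enters only to count lattice points on the strata $V_j$ themselves). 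Second, you never confront the issue that forces the hypothesis on $H$: some irreducible components of the strata may be \emph{degenerate} (contained in a hyperplane $W$), and for those one cannot beat $|S(\bu,p)|\le p^{(n+j)/2}$ pointwise; the paper instead proves the second moment bound $\sum_{\bu\in W(\F_p)}|S(\bu,p)|^2\ll p^{2n-1}$ (Proposition \ref{prop_cyclic_second_moment} in the cyclic case), and it is precisely here --- via the nonvanishing of a discriminant in $\bX'$ after the change of variables $L$ adapted to $W$ --- that the ``no $L$ confines $H(L\bX)$ to fewer variables'' hypothesis is used. Your proposal uses that hypothesis only to claim irreducibility of fibers $\{H(\bx)=y^d\}$, which is not where it is needed, and your proposed splitting by the size of $y$ with a fiber-by-fiber determinant-method count $\ll Y\cdot B^{n-1+\ep}$ cannot be balanced as claimed, since the natural range of $y$ is $|y|\ll B^{\deg H/d}$ and the character sieve does not improve on the complementary range.
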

 (Here and throughout,  $\|F\|$  denotes the maximum absolute value of any coefficient of $F$. We follow the convention that $A \ll_\kappa B$ implies that $|A| \leq C_\kappa B$ with the implied constant $C_\kappa$ depending on $\kappa$.)
 A strength of Theorem \ref{thm_cyclic_uniform} is that it requires \emph{no assumption of nonsingularity} for $F$. Many previous works on bounding $N(F,B)$ in various contexts, such as \cite{Pie06,HB08,Mun09,HBPie12,Bon21,BCLP23,BonPie24} have   relied on $F$ satisfying an appropriate nonsingularity condition. A central aim of the present work is to bound $N(F,B)$ for singular polynomials $F$.  
 
Our main work goes beyond the cyclic structure of the polynomial in Theorem \ref{thm_cyclic_uniform}. 
Consider any absolutely irreducible polynomial of the form
 \beq\label{F_dfn_intro}
F(Y,X_1,...,X_n) = Y^{md} + Y^{m(d-1)} f_1(\bX) +\cdots + Y^mf_{d-1}(\bX)+ f_d(\bX),
\eeq
in which each $f_j(\bX) \in \Z[X_1,\ldots,X_n]$ is a polynomial (not necessarily homogeneous) of degree $k_j \geq 0$, and $f_d \not\con 0$.
If $m=1$, (\ref{F_dfn_intro}) simply specifies that $F$ is monic   in $Y$; if $m \geq 2$, this structure imposes that $F$ is a polynomial in $Y^m$. 
 It is natural to require that $f_d \not\con 0$ (since otherwise $y=0$ provides  a solution to $F(y,\bx)=0$ for each $\bx \in [-B,B]^n$ and so $B^n \ll N(F,B) \ll B^n$); this certainly holds if $F(Y,\bX)$ is absolutely irreducible. 
 
Our most general theorem (Theorem \ref{thm_main}) bounds $N(F,B)$ with an exponent as strong as Theorem \ref{thm_cyclic_uniform}   under two conditions: (i) 
a nondegeneracy condition that encapsulates that $F(Y,\bX)$  is truly a polynomial in $n+1$ variables, even after performing any $\GL_n(\Q)$ change of variables on $X_1,\ldots,X_n$;  (ii) the condition $m \geq 2$ in (\ref{F_dfn_intro}). We furthermore can allow $m=1$, under the assumption of GRH. Under a stronger nondegeneracy condition, we show the dependence on $\|F\|$ is at most polylogarithmic (Theorem \ref{thm_explicit}).
In particular, our main theorems  require  no assumption of nonsingularity.
Moreover, we show that in the moduli space of polynomials of the form (\ref{F_dfn_intro}) for fixed $m, d, k_1,\ldots,k_j$ (and even among the subspace of singular polynomials), polynomials that satisfy the relevant nondegeneracy condition (i)   are generic (Theorem \ref{thm_genericity}). 
We furthermore prove  for a certain   class of polynomials the stronger bound $N(F,B) \ll_{F} B^{n-1}(\log B)^{e(n)}$, which is comparable to a conjecture of Serre (Theorems \ref{thm_allowable_not_strongly} and \ref{thm_genuine_not_strongly}).  We next state these general results precisely.

\subsection{General setting of new work}

The first nondegeneracy condition we require  takes the following form.
Let $M$ be a finite (nontrivial) extension of $\Q(X_1,...,X_n)=\Q(\bX)$. We say   $M$ is an \textbf{n-genuine extension} of $\Q(\bX)$ if for every $G(Y,\bX)\in \Z[Y,\bX]$ such that $$M = \Q(\bX)[Y]/(G(Y,\bX)),$$ 
$G(Y,\bX)$ has nonzero degree in each of $Y,X_1,\ldots,X_n$. 
We say an absolutely irreducible polynomial $F(Y,\bX)\in \Z[Y,\bX]$ that is monic in $Y$ is \textbf{$(1,n)$-allowable} if for every linear change of variables $\sigma\in \GL_n(\Q)$, the polynomial  $F_\sigma(Y,X_1,...,X_n) := F(Y,\sigma(\bX))$ has the property that $\Q(\bX)[Y]/(F_\sig(Y,\bX))$ is an $n$-genuine extension. (In particular, $\deg_Y F \geq 2$.)
Our first main result is:
\begin{thm}\label{thm_main}
Fix $n \geq 2$. Let $F \in \Z[Y,X_1,\ldots,X_n]$ with $\deg_Y F \geq 2$ be a $(1,n)$-allowable polynomial of the  form (\ref{F_dfn_intro})  for an integer $m \geq 2$. Then there exists an integer $h=h(n,\deg F) \geq 1$ such that for all $B \geq 1$,
\[ N(F,B) \ll_{n,\deg F,\ep} \|F\|^{h} B^{n-1+\frac{1}{n+1} + \ep},\]
for every $\ep>0$. If $\deg_Y F \geq 2$ and the same hypotheses hold with  $m =1$, the same result holds, conditional on GRH.
\end{thm}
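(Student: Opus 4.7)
My plan is to prove Theorem \ref{thm_main} via a sieve that exploits the factorization $F(Y,\bX) = G(Y^m, \bX)$ with $G$ monic of degree $d$ in $Z$, together with the uniform Katz-Laumon character-sum bounds from the companion Bonolis-Kowalski-Woo paper. The key structural observation is that $F(y,\bx) = 0$ is equivalent to $G(z,\bx) = 0$ with $z = y^m$, so $\bx \in N(F,B)$ forces $G(\cdot,\bx)$ to have an integer root that is a perfect $m$-th power. For every prime $p$, reducing mod $p$ produces the sieve constraint that $\bx \bmod p$ lies in
\[T_p = \bigl\{\bx \bmod p : G(z,\bx) \equiv 0 \bmod p \text{ for some $m$-th power residue } z\bigr\}.\]
When $m \geq 2$ and $p \equiv 1 \bmod m$, the $m$-th power residues form a proper subgroup of $\F_p^*$ of index $m$, so its indicator can be expanded in multiplicative characters $\chi$ of order dividing $m$. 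This is the source of the per-prime saving that drives the sieve.

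Next, I would reduce bounding $N(F,B)$ to estimating mixed character sums $\sum_{\bx \bmod p} \chi(R(\bx))\, e_p(\bt \cdot \bx)$, where the additive character $e_p$ enters through Fourier/Poisson summation on $[-B,B]^n$ and $R$ is a polynomial built from $G$. For each prime $p$ in a dyadic range $[P, 2P]$, invoke the uniform Katz-Laumon stratification of Bonolis-Kowalski-Woo to obtain square-root cancellation $\ll_{n,\deg F} \|F\|^h p^{n/2}$ outside an exceptional locus of controlled codimension, with polynomial dependence on $\|F\|$. The $(1,n)$-allowable hypothesis enters exactly at this step: without it, some $\GL_n(\Q)$ change of coordinates would reduce $F$ to fewer variables, causing the associated $\ell$-adic sheaves to become induced from a lower-dimensional situation and the exceptional locus to drop to codimension one, which would be fatal for the sieve. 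Under allowability, the exceptional locus has large enough codimension that its contribution is absorbed by the main error.

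To assemble the argument, sum the per-prime bounds, apply Poisson summation on $[-B,B]^n$, and optimize in $P$. This produces an inequality of the form $N(F,B) \ll \|F\|^h(B^n/P^\al + P^\be B^\ga)$ for suitable exponents, and balancing the two terms yields the target $n - 1 + 1/(n+1) + \ep$. The case $m = 1$ lacks the multiplicative-character mechanism above, so the sieve must appeal directly to an effective Chebotarev density theorem in $\G(F/\Q(\bX))$; GRH supplies the $p$-uniformity that multiplicative characters of order $m \geq 2$ would otherwise provide unconditionally, allowing the same optimization to go through.

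The main obstacle is controlling the exceptional locus of the Katz-Laumon stratification uniformly in $\|F\|$ while retaining the sharp exponent $1/(n+1)$. The $(1,n)$-allowable hypothesis is the precise lever that keeps the stratification in force under every linear change of variables, but converting the abstract uniform stratification from Bonolis-Kowalski-Woo into a quantitative per-prime bound that yields both the sharp exponent and polynomial $\|F\|^h$-dependence (rather than, say, an exponential dependence, or a weaker saving such as $n - 1/2$) is where the bulk of the technical work will reside.
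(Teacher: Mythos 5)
There is a genuine gap, in fact two. First, your reduction to mixed character sums $\sum_{\bx}\chi(R(\bx))e_p(\bt\cdot\bx)$ does not exist for a general $F$ of the form (\ref{F_dfn_intro}): the condition that $G(z,\bx)\equiv 0$ have a root $z$ which is an $m$-th power residue is not the vanishing or non-vanishing of a single multiplicative character evaluated at a polynomial in $\bx$, except in the cyclic case $F=Y^d-H(\bX)$ (which is exactly why the paper singles that case out as an illustration in Proposition \ref{prop_cyclic_second_moment}). The paper instead runs the \emph{polynomial sieve} of Lemma \ref{lemma_poly_sieve}, whose detector is $(v_p(\bx)-1)(v_q(\bx)-1)$ summed over pairs of distinct primes $p\neq q\equiv 1\ (\mathrm{mod}\ m)$, with $v_p(\bx)=\#\{y:F(y,\bx)\equiv 0\}$; the role of $m\geq 2$ is only to guarantee $v_p(\bx)>0\Rightarrow v_p(\bx)\geq 2$, so that the detector is nonnegative on solvable $\bx$. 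After Poisson summation one lands on the additive sums $S(\bu,p)=\sum_{\ba}(v_p(\ba)-1)e_p(\bu\cdot\ba)$, to which the uniform Katz--Laumon stratification applies directly; no multiplicative characters appear.

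Second, and more seriously, you have misplaced where the $(1,n)$-allowable hypothesis acts, and as a result your proposal has no mechanism for half of the theorem. The Katz--Laumon strata $V_j$ have codimension $\geq j$ regardless of any allowability hypothesis; allowability does not prevent the exceptional locus from "dropping to codimension one." The actual obstruction is that a stratum $V_j$ may have a \emph{degenerate} irreducible component lying in a hyperplane $W$, and the lattice-point count over $W$ is too large for the sieve unless one proves the second moment bound $\sum_{\bu\in W(\F_p)}|S(\bu,p)|^2\ll p^{2n-1}$. That bound requires $F_L$ to be \emph{strongly} $n$-genuine for the change of variables $L$ adapted to $W$ — strictly more than $(1,n)$-allowability. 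Theorem \ref{thm_main} is therefore proved by a dichotomy: if every relevant $F_L$ is strongly $n$-genuine, the sieve (Theorem \ref{thm_main_options}) closes; if some $F_{L_*}$ is $n$-genuine but not strongly so, the sieve is abandoned entirely and one instead exploits the intermediate subfield $\Q(\bX)\subsetneq\Q(\bX)[Y]/(G)\subsetneq\Q(\bX)[Y]/(F)$ with $G$ depending on fewer variables: solvability of $F(Y,\bx)=0$ forces solvability of $G(Z,\bx_I)=0$, and combining Lemma \ref{lemma_Cohen_genuine_linear_factor} with the effective Hilbert irreducibility bound of Theorem \ref{thm_polylog} in $n-1$ variables saves a full power of $B$ (Theorem \ref{thm_genuine_not_strongly_NFB}), with the $\|F\|^h$ loss coming from the size of $L_*$. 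Without this second branch your argument cannot reach all $(1,n)$-allowable $F$.
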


The proof of this theorem relies on a dichotomy, so that it is a consequence of two main theorems. On one side of the dichotomy leading to Theorem \ref{thm_main}, we prove a slightly better bound, with a polylogarithmic dependency on $\|F\|$, under a stronger nondegeneracy condition.  The main strategy to prove this result is a polynomial sieve.

  Precisely, we say that a finite (nontrivial) extension $M$ of $\Q(X_1,...,X_n)=\Q(\bX)$ is a \textbf{strongly $n$-genuine extension} of $\Q(\bX)$ if for all subextensions $M'$ satisfying  $$\Q(\bX) \subsetneq M' \subset M,$$
    $M'$ is an $n$-genuine extension. 
We say that an absolutely irreducible polynomial $F(Y,\bX) \in \Z[Y,\bX]$ that is monic in $Y$ is  \textbf{strongly $(1,n)$-allowable} if  for every linear change of variables $\sigma\in \GL_n(\Q)$, the polynomial   $F_\sigma(Y,X_1,...,X_n) := F(Y,\sigma(\bX))$ has the property that $\Q(\bX)[Y]/(F_\sig(Y,\bX))$ is a strongly $n$-genuine extension. (In particular, $\deg_Y F \geq 2$.)

\begin{thm}\label{thm_explicit}
Fix $n \geq 2$. Let $F \in \Z[Y,X_1,\ldots,X_n]$ with $\deg_Y F \geq 2$ be a strongly $(1,n)$-allowable polynomial of the form (\ref{F_dfn_intro}) for an integer $m \geq 2$. Then there exists an integer $e(n) \geq 1$ such that for all $B \geq 1$,
\[ N(F,B) \ll_{n,\deg F,\ep} \log(\|F\|+2)^{e(n)} B^{n-1+\frac{1}{n+1} + \ep},\]
for every $\ep>0$.
If $\deg_Y F \geq 2$ and the same hypotheses hold with $m =1$, the same result holds, conditional on GRH.
\end{thm}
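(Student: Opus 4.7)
The plan is to develop a polynomial sieve keyed to the ``polynomial in $Y^m$'' structure of $F$, and to pull in the uniform Katz--Laumon stratification from the companion paper of Bonolis, Kowalski, and Woo to extract the necessary cancellation in the resulting bilinear character sums. First, any $\bx \in [-B,B]^n \cap \Z^n$ counted by $N(F,B)$ admits an integer $y$ with $F(y,\bx) = 0$, so the local root count $\rho(p,\bx) := \#\{y \bmod p : F(y,\bx) \equiv 0 \bmod p\}$ satisfies $\rho(p,\bx)\geq 1$ for every prime $p$. Fix a parameter $P \asymp B^{1/(n+1)}$ and let $\mathcal{P}$ be the primes in $(P,2P]$. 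One builds a centered counting function $\rho^*(p,\bx) = \rho(p,\bx) - \theta(p,\bx)$ whose average over $\bx$ vanishes: when $m \geq 2$, the hypothesis that $F$ is a polynomial in $Y^m$ allows $\theta$ to be assembled from the $m$-th power residue symbol, which is available unconditionally; when $m=1$ the only available construction passes through Chebotarev for the splitting field of $F(Y,\bX)$, and an effective form down to small primes requires GRH. Cauchy--Schwarz against $\mathbf{1}_{N(F,B)}(\bx)$ yields a bilinear inequality of the shape
\[
|\mathcal{P}|^2 \cdot N(F,B) \ll \sum_{p,q \in \mathcal{P}} \; \sum_{\bx \in [-B,B]^n \cap \Z^n} \rho^*(p,\bx) \rho^*(q,\bx) + (\text{main terms}).
\]

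Next, expand $\rho^*$ via additive (resp.\ multiplicative, when $m\geq 2$) characters, complete the $\bx$-sum modulo $pq$ (a routine Poisson/truncation step controlled by $B/pq$), and arrive at a weighted sum in $\bx\bmod pq$ of fiberwise character sums in $y$. Chinese Remainder factors these into independent sums modulo $p$ and modulo $q$; the diagonal $p=q$ is absorbed using the trivial bound $\rho \leq \deg_Y F$. The critical ingredient is a square-root cancellation bound for the completed $\bx$-sum in the off-diagonal regime, uniform in $p,q$ and the frequencies, and \emph{polylogarithmic} in $\|F\|$. Here the uniform Katz--Laumon stratification from the companion paper delivers: the base $\A^n_{\F_p}$ is stratified into locally closed subvarieties on whose open stratum the fiberwise $y$-sum enjoys Weil-type cancellation $O(p^{1/2})$, while the bad loci have codimension bounded below in terms of $n$ and $\deg F$ only, and the Betti numbers and stratum count are controlled polylogarithmically in $\|F\|$.

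The role of the \emph{strongly $(1,n)$-allowable} hypothesis is exactly to guarantee a codimension lower bound stable under every linear change of coordinates. If some $\GL_n(\Q)$-twist $F_\sigma$ admitted a proper intermediate subextension $\Q(\bX) \subsetneq M' \subset \Q(\bX)[Y]/(F_\sigma)$ that failed to be $n$-genuine, the associated monodromy locus could collapse onto a hypersurface in fewer than $n$ variables, creating a positive-dimensional bad stratum on which the $y$-sum need not cancel. The hypothesis excludes this uniformly in $\sigma$, so that the stratification cancellation survives after any $\GL_n(\Q)$-rearrangement of the box $[-B,B]^n$. Inserting the stratified estimate, summing over $p,q\in\mathcal{P}$, and balancing the resulting off-diagonal error against the diagonal contribution $\ll B^n/|\mathcal{P}|$ then selects $P = B^{1/(n+1)+o(1)}$ and produces the stated exponent $n-1+1/(n+1)+\ep$, with the polylogarithmic $\|F\|$-dependence inherited directly from the stratification.

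The principal obstacle is the character-sum step: producing a square-root cancellation bound for the completed $\bx$-sum that is simultaneously uniform in the prime modulus and polylogarithmic in $\|F\|$, and whose bad locus is controlled by an invariant (codimension) insensitive to $\GL_n(\Q)$-changes of variable. This is exactly where the companion paper's uniform Katz--Laumon stratification must be brought to bear; its interaction with the strongly $(1,n)$-allowable hypothesis---through the Galois/monodromy description of the bad locus---is the crux of the argument and the delicate point of the proof.
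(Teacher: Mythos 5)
Your high-level architecture matches the paper's: a polynomial sieve exploiting the $Y^m$ structure (with GRH needed only to run the sieve when $m=1$), Poisson summation and CRT factorization of the complete sums $S(\bu,pq)=S(\overline{q}\bu,p)S(\overline{p}\bu,q)$, the uniform Katz--Laumon stratification from the companion paper, and an optimization in $P$. One concrete error first: the diagonal term in the sieve is $B^n/|\mathcal{P}| \asymp B^n\log P/P$, so to reach $B^{n-1+1/(n+1)}$ you must take $P\asymp B^{n/(n+1)}$, not $B^{1/(n+1)}$; with your choice the diagonal alone is $B^{n-1/(n+1)}$, which exceeds the target for every $n\ge 2$.

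The genuine gap is at what you yourself call the crux. The strongly $(1,n)$-allowable hypothesis is \emph{not} used to secure codimension lower bounds for the bad strata, nor to prevent ``positive-dimensional bad strata'': the bounds $\codim V_j\ge j$ are unconditional consequences of Katz--Laumon and positive-dimensional strata genuinely occur and must be handled. The actual difficulty is that an irreducible component $X_{j,s}$ of a stratum $V_j$ may be \emph{degenerate}, i.e.\ contained in a hyperplane $W$ (e.g.\ a line through the origin); for such components neither the pointwise stratification bound nor the determinant method (which, via $\deg X_{j,s}>\codim X_{j,s}$ for nondegenerate components, handles the rest) saves enough when counting frequencies $\bu$ in the box of size $P^2/B$. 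The paper's resolution is a \emph{second moment bound} $\sum_{\bu\in W(\F_p)}|S(\bu,p)|^2\ll p^{2n-1}$ over each such hyperplane, proved by opening the square to reduce to counting $\ba\in\F_p^{n-1}$ with $F_L(Y,X_1,\ba)$ reducible over $\overline{\F_p}$ ($L$ a change of variables adapted to $W$), showing via Noether's lemma, Hilbert irreducibility, and an integral-closure/constructibility argument that this count is $O(p^{n-2})$ precisely because $F_L$ is strongly $n$-genuine, and then applying Lang--Weil to the irreducible fibers. Your sketch contains no second moment, no degenerate/nondegenerate dichotomy, no global-versus-local splitting of the frequencies $\bu$ (i.e.\ whether $\bu\in V_j(\Z)$ or only $\bu\in V_j(\F_p)$, which is how the local terms gain the factor $|\mathcal{P}|^{-1}$), and no use of Pila's bound for the nondegenerate components; deferring all of this to ``the companion paper's stratification'' does not close the argument, since the stratification alone is insufficient for degenerate components and the allowability hypothesis enters through an entirely different (Galois-theoretic, but reducibility-of-specializations rather than monodromy-codimension) mechanism.
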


On the other side of the dichotomy leading to Theorem \ref{thm_main}, we must consider the case in which  $F$ is $(1,n)$-allowable but not strongly $(1,n)$-allowable. In this case, a completely different argument  achieves a stronger exponent,  at the expense of the dependency on the coefficients of $F$:
 \begin{thm}\label{thm_allowable_not_strongly}
Fix $n \geq 2$. Let $F(Y,\bX) \in \Z[Y,X_1,\ldots,X_n]$ with $\deg_Y F\geq 2$ be $(1,n)$-allowable but not strongly $(1,n)$-allowable. Then there exists an integer $e(n) \geq 1$ such that for all $B \geq 1$,  
\[ N(F,B) \ll_{n,F} B^{n-1} (\log (B+2))^{e(n)}.\] 
\end{thm}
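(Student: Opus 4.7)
The plan is to exploit the "allowable but not strongly allowable" hypothesis to extract a nontrivial intermediate subfield $M'$ inside $M_\sigma$ that admits a presentation missing one variable, factor $F$ over the corresponding algebraic extension, and then invoke Serre's bound $N(\cdot,B)\ll B^{k-1/2}(\log B)^{\gamma}$ \emph{twice} in sequence --- once in $n-1$ of the $\bX$-variables (for $G$) and once in a single $\bX$-variable (for the factors) --- so that the product $B^{n-3/2}\cdot B^{1/2}=B^{n-1}$ recovers exactly the target exponent.

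By hypothesis there exist $\sigma\in\GL_n(\Q)$ and $\Q(\bX)\subsetneq M'\subsetneq M_\sigma$ with $M'$ not $n$-genuine (the strict inclusion on the right using that $F_\sigma$ is $(1,n)$-allowable, forcing $M'\neq M_\sigma$). First I would clear denominators of $\sigma$ to reduce to the case $\sigma=\mathrm{Id}$, paying a bounded rescaling of $B$ and a decomposition of $N(F,B)$ along finitely many cosets of a sublattice of $\Z^n$. After permuting coordinates so that the "missing" variable is $X_n$, we obtain $M'=\Q(\bX)[\alpha]$ with monic minimal polynomial $G(T,X_1,\ldots,X_{n-1})\in\Z[T,X_1,\ldots,X_{n-1}]$ (monic in $T$ by choosing $\alpha$ integral over $\Z[X_1,\ldots,X_{n-1}]$). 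Writing $\alpha=h(Y,\bX)/r(\bX)$ with $h\in\Z[Y,\bX]$ and $r\in\Z[\bX]$, the relation $G(\alpha,X_1,\ldots,X_{n-1})\equiv 0\pmod F$ yields the divisibility $F\mid\prod_{k=1}^{d'}(h-\alpha_k r)$ over $\overline{\Q(X_1,\ldots,X_{n-1})}$, where $\{\alpha_k\}$ are the Galois conjugates of $\alpha$ and $d'=\deg_T G\geq 2$. Consequently, for every $(y,\bx)\in\Z^{n+1}$ with $F(y,\bx)=0$ and $r(\bx)\neq 0$, the rational number $\alpha_0:=h(y,\bx)/r(\bx)$ must be a root of $G(T,x_1,\ldots,x_{n-1})$. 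Discarding $\{r=0\}$, which contributes trivially $\ll_F B^{n-1}$, and organizing the main count by fibers of $(y,\bx)\mapsto(\alpha_0,x_1,\ldots,x_n)$ (each of size at most $d/d'$), I would obtain
\[
N(F,B)\;\ll_F\; B^{n-1}\;+\;\sum_{(x_1,\ldots,x_{n-1})\in\Z^{n-1}\cap[-B,B]^{n-1}}\;\;\sum_{\alpha_0\in\Q:\,G(\alpha_0,x_1,\ldots,x_{n-1})=0}\;N(F_{\alpha_0,x_1,\ldots,x_{n-1}},B),
\]
where $F_{\alpha_0,x_1,\ldots,x_{n-1}}(Y,X_n):=h(Y,x_1,\ldots,x_{n-1},X_n)-\alpha_0\,r(x_1,\ldots,x_{n-1},X_n)\in\Z[Y,X_n]$, after rescaling to clear the denominator of $\alpha_0$.

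Now I would apply Serre twice. The outer sum has at most $d'\cdot N(G,B)\ll_G B^{(n-1)-1/2}(\log B)^{\gamma_1}$ terms by Serre's bound applied to $G$ (absolutely irreducible, monic, of $T$-degree $d'\geq 2$). For the inner count, by choosing $\alpha$ of minimal $Y$-degree $d''=d/d'$ one obtains the clean factorization $F=c\cdot\prod_k(h-\alpha_k r)$, so each factor $h-\alpha_k r$ is absolutely irreducible of $Y$-degree $d''\geq 2$; for generic $(\alpha_0,x_1,\ldots,x_{n-1})$ on the variety $V_0=\{G=0\}$, $F_{\alpha_0,x_1,\ldots,x_{n-1}}$ is then absolutely irreducible in $\Q[Y,X_n]$ by a Bertini/Hilbert irreducibility argument, and Serre's bound in one $\bX$-variable gives $N(F_{\alpha_0,x_1,\ldots,x_{n-1}},B)\ll B^{1/2}(\log B)^{\gamma_2}$. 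The exceptional $(\alpha_0,x_1,\ldots,x_{n-1})$ where this specialization degenerates lie on a proper Zariski-closed subset of $V_0$ and contribute $\ll B^{n-2}$ points, each with the trivial $\ll B$ inner count, totaling $\ll B^{n-1}$. Combining,
\[
N(F,B)\;\ll_F\; B^{n-1}+B^{n-3/2}\cdot B^{1/2}(\log B)^{\gamma_1+\gamma_2}\;\ll_F\; B^{n-1}(\log B)^{e(n)},
\]
with $e(n):=\gamma_1+\gamma_2$. The main technical obstacle is securing the requisite \emph{uniformity} of Serre's bound across the inner sum, since the coefficients of $F_{\alpha_0,x_1,\ldots,x_{n-1}}$ grow polynomially with $B$ (because $|\alpha_0|,|x_i|\ll B^{O(1)}$); we need the logarithmic dependence on the polynomial's height in Serre's estimate to merge cleanly with $(\log B)^{\gamma_2}$ rather than introduce a polynomial loss. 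A secondary subtlety is verifying the Bertini genericity step and quantifying the exceptional locus cleanly enough to keep its contribution at $\ll B^{n-1}$ in the integer lattice.
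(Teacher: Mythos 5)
Your architecture matches the paper's: reduce via the non-strongly-genuine structure to a solvability condition for a polynomial $G$ in $n-1$ of the $\bX$-variables, then apply a Serre-type bound twice (once to $G$ in $n-1$ variables, once in the single remaining variable), with the degenerate specializations handled separately. However, the two points you defer are precisely the entire content of the proof, and as written both are gaps. First, the uniformity issue you flag is not a technicality one can wave at: Serre's bound as stated has unspecified dependence on the polynomial, and your inner polynomials have height $B^{O(1)}$, so you need an estimate of the form $\ll (\log(\|F\|+2))^{e}B^{k-1/2}(\log B)^{e}$. The paper proves exactly such a statement (Theorem \ref{thm_polylog}) by induction on the number of variables using the effective Hilbert irreducibility theorem of Cluckers et al.\ (Theorem \ref{thm_CDHNV}); without some such input your two applications of ``Serre'' do not close.

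Second, and more seriously, your claim that the exceptional specializations (those for which the one-variable polynomial degenerates) lie on a \emph{proper} Zariski-closed subset ``by a Bertini/Hilbert irreducibility argument'' is not justified and cannot follow from HIT alone. The dangerous case is a specialization acquiring a \emph{linear factor in $Y$ over $\overline{\Q}$}, for which the inner count is the trivial $\ll B$; if such specializations were Zariski-dense the whole bound collapses (compare $Y^2-X_1$, where every specialization splits and $N\gg B^{n-1/2}$). Showing that for an $n$-genuine polynomial only $\ll B^{n-2}$ specializations produce a linear factor is the paper's Lemma \ref{lemma_Cohen_genuine_linear_factor}, whose proof needs the refined Noether lemma for the divisibility condition $\mathcal{D}(1,k_1)$, the geometric-integrality-of-fibers constructibility result (Lemma \ref{lemma_F_open_set}), and the complete-splitting Lemma \ref{lemma_F_linear_factor_splits_completely} --- this is where the $(1,n)$-allowability hypothesis actually does work, and it is not a ``secondary subtlety.'' Relatedly, your ``clean factorization'' $F=c\prod_k(h-\alpha_k r)$ with every factor of $Y$-degree $d''\ge 2$ is unsubstantiated: one only gets $F\mid\prod_k(h-\alpha_k r)$, and nothing forces a representative $h$ of $Y$-degree exactly $d''$ or prevents the specialized factors from being linear or reducible. (A smaller issue: $G$, being a minimal polynomial, is irreducible over $\Q$ but need not be absolutely irreducible, so the outer application must use a version of the bound requiring only $\Q$-irreducibility, as Theorem \ref{thm_polylog} does.) The paper sidesteps your fibration over $(\alpha_0,\bx')$ entirely by specializing $n-1$ variables in the original $F$ and factoring over $\Q$, which makes the linear-factor dichotomy explicit.
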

The exponent in this case concurs with a conjecture of Serre, which we review momentarily. In this theorem, the dependence on $F$ in the implicit constant is inexplicit. But we show that within the dichotomy leading to Theorem \ref{thm_main}, either we can apply the polynomial sieve (leading to the inequality stated for $N(F,B)$ in Theorem \ref{thm_explicit}) or we can refine the strategy for Theorem \ref{thm_allowable_not_strongly} to attain $N(F,B) \ll_{n,\deg F} \|F\|^{h}B^{n-1}(\log (B+2))^e$ for some $e=e(n)$ and $h=h(n,\deg F)$. Taken together, this ultimately verifies the remaining cases of Theorem \ref{thm_main}. Note that this result does not require the polynomial to have the form (\ref{F_dfn_intro}) for an integer $m \geq 2$, and hence sidesteps the need to assume GRH  when $m=1$.
 
\subsection{Relation to previous  results}
 The special case of Theorem \ref{thm_cyclic_uniform} in which $H(\bX)$ is a nonsingular form includes the main theorem of Munshi's work \cite{Mun09} as formulated and proved by Bonolis \cite[Remark 1]{Bon21}.  The special case of Theorem \ref{thm_main} in which $F(Y, \bX) = g(Y) - f(\bX)$ for a fixed polynomial $g$ and a nonsingular form $f$ recovers many cases of the main theorem of \cite{Bon21}  (see Example \ref{ex: sepvars} and details in Remark \ref{remark_sepvars}).
 Theorem \ref{thm_explicit} immediately implies Theorem \ref{thm_cyclic_uniform}, since under the hypotheses of Theorem \ref{thm_cyclic_uniform}, $F(Y,\bX)=Y^d  - H(\bX)$ is a strongly $(1,n)$-allowable polynomial 
 (as proved in Example \ref{ex: generic cyclic}). 
 Finally, the special case of Theorem \ref{thm_explicit} in which $F(Y,\bX)=0$ defines a nonsingular hypersurface in weighted projective space $\mathbb{P}(e,1,\ldots,1)$ (for an integer $e \geq 1$) recovers the main theorem of Bonolis and Pierce's work \cite{BonPie24}   (see Example \ref{ex: generic nonsingular weighted} and Remark \ref{remark_correction}). 

\subsection{Relation to Serre's conjecture on thin sets}
Recall Serre's formulation of  thin sets  \cite[\S 9.1]{Ser97}: a subset $M \subset \mathbb{P}^{n-1}(\Q)$  is a   thin set of type I if  there is a Zariski-closed subvariety $F \subsetneq \mathbb{P}^{n-1}$ such that $M \subset F(\Q)$.  On the other hand, a subset $M \subset \mathbb{P}^{n-1}(\Q)$  is a   thin set of type II in $\mathbb{P}^{n-1}$ if there is an irreducible projective algebraic variety $Z$ of dimension $n-1$ and a generically surjective morphism $\pi: Z \rightarrow \P^{n-1}$ of degree $D \geq 2$ with $M \subset \pi (Z(\Q))$.   Any thin set is contained in a finite union of thin sets of type I and type II. Affine thin sets of each type are defined analogously, with $\mathbb{P}^{n-1}$ replaced by $\mathbb{A}^n$.  One motivation to show   a thin set contains few points comes from specialization of Galois groups: as Serre observed, for $\bx$ outside of a thin set, $F(Y,\bx)$ has the same Galois group as $F(Y,\bX)$ \cite[p. 122]{Ser97}.

\subsubsection{Projective setting}
 Let $M \subset \mathbb{P}^{n-1}(\Q)$ be a thin set. Define the height function $H(\bx)=\max_{1 \leq i \leq n}|x_i|$ for   $\bx = [x_1: \cdots : x_n] \in \mathbb{P}^{n-1}(\Q)$ with $(x_1,\ldots,x_n) \in \Z^n$ and $\gcd(x_1,\ldots, x_n)=1$, and the associated counting function for the thin set:
 \[M_H(B) =\{ \bx \in M: H(\bx) \leq B\}.\]
 Trivially, $M_H(B) \ll B^n$. Serre conjectured in \cite[p. 178]{Ser97} that 
 \beq\label{Serre_M_conjecture} 
 M_H(B) \ll_M B^{n-1} (\log B)^c
 \eeq
 for some $c$. If $M$ is of type I this can be interpreted as a statement of the Dimension Growth Conjecture, which predicts that if $Z$ is an irreducible projective variety in $\mathbb{P}^{n-1}(\Q)$ of degree $D \geq 2$ then $Z_H(B) \ll_{Z,\ep} B^{\dim Z+\ep}$ for all $B \geq 1$ and $\ep>0$. This has been resolved in many cases by \cite{Bro03a,BroHB06a,BroHB06b,HB94,HB02,Sal07}, and in all remaining cases by Salberger \cite{Sal23}. A uniform version, in which the implied constant depends only on $n, \deg Z, \ep$ is also known, as long as $\deg Z \neq 3$; see \cite{Sal23}.
 
For thin sets of type II in $\mathbb{P}^{n-1}(\Q)$, the conjecture (\ref{Serre_M_conjecture}) has been proved (with $B^\ep$ instead of a logarithmic factor)  when $n=2$, and when $n=3$ if the cover has degree at least 3,  by Broberg \cite{Bro03N}. (For $n=3$ and degree 2, Broberg proves $M_H(B)\ll B^{9/4+\ep}$.) It has also been proved for the special case of nonsingular cyclic covers of $\P^{n-1}$ when $n \geq 10$, by Heath-Brown and Pierce in \cite{HBPie12} (which also obtains somewhat weaker nontrivial bounds for $3 \leq n \leq 9$).

\subsubsection{Affine setting}
As Serre describes in \cite[p. 122]{Ser97}, an affine thin set $M$ of type II in $\mathbb{A}^n(\Q)$ has a corresponding  polynomial $F(Y,X_1,\ldots,X_n) \in \Q(X_1,\ldots,X_n)[Y]$ that is absolutely irreducible (that is, irreducible in $\overline{\Q}(X_1,\ldots,X_n)[Y]$) and of degree $D \geq 2$ in $Y$, for which
\[ M = \{\bx \in \Q^n: \exists y \in \Q, F(y,\bx) =0, \text{$y$ not a pole of any coefficient of $F$}\}.\] 
  If $F$ satisfies appropriate homogeneity conditions, a thin set of type II in $\mathbb{P}^{n-1}(\Q)$ can be described in an analogous way. 

Our study of $N(F,B)$ is thus the study of counting integral points on affine thin sets of type II. 
 Serre made initial progress: if $F \in \Z[Y,X_1,\ldots,X_n]$  is absolutely irreducible   and $\deg_Y F \geq 2$, then by \cite[Thm. 1, p. 177]{Ser97},
 \beq\label{Serre_bound_intro}
 N(F,B) \ll_{F} B^{n-1/2}(\log B)^\ga
 \eeq
 for some $\ga<1$, and unspecified dependence of the implicit constant on $F$. An examination of Serre's original argument (which is done in \cite{BPW25x}) shows that this bound (with $\gamma=1$) can be made to have at most polylog dependence on $\|F\|$  if $F(Y,\bX)$ is of the form (\ref{F_dfn_intro}) for some $m \geq 2$; when $m=1$, this polylog dependence can be established under GRH for reasons similar to why GRH is assumed in Theorem \ref{thm_main} for $m=1$. To prove our main theorems, we require an unconditional  version of (\ref{Serre_bound_intro}) with at most polylog dependence on $\|F\|$, which applies to any irreducible polynomial $F(Y,\bX)$ with constant-leading-coefficient in $Y$. We prove this in Theorem \ref{thm_polylog}.

Serre also attributes  to  earlier work of Cohen \cite[Thm. 2.1]{Coh81} a bound of the strength 
 \[N(F,B) \ll_{n,D} \|F\|^{c} B^{n-1/2}\log B 
 \]
 for all $B \geq \|F\|^{3c}$ for some $c=c(n,\deg F)$.
  There is a gap in the method described in \cite[Thm. 2.1]{Coh81}; we prove in the companion paper \cite{BPW25x_per} that this gap can be repaired by using the theory of strongly $n$-genuine polynomials that we develop in the present paper.

 If no further condition is placed on $F$, the upper bound (\ref{Serre_bound_intro}) is (essentially) sharp. For example, Serre points to the trivial example $F(Y,X_1,\ldots,X_n)=Y^2-X_1$, for which 
 \beq\label{sharp_example}
 N(F,B) = \# \{\bx \in [-B,B]^n \cap \Z^n: \text{$x_1$ is a square}\} \gg B^{n-1/2}.
 \eeq
 But Serre's conjecture (\ref{Serre_M_conjecture}) for counting rational points on projective thin  sets also suggests a question for counting integral points on certain affine thin sets of type II: for
 \emph{suitable} polynomials $F(Y,X_1,...,X_n)$ with $\deg_Y F\geq 2$, is there is some $c=c(F)$ such that 
\beq\label{intro_Serre_conjecture}
N(F,B) \ll_{F} B^{n-1}( \log B)^c, \qquad \text{for all $B \geq 1$?}
\eeq
 And, can the dependence of the implicit constant on $\|F\|$ be controlled? These questions motivate our present work.

Previous works such as \cite{Mun09,Bon21,BonPie24} (or in the function field setting \cite{BCLP23})  improved on Serre's bound (\ref{Serre_bound_intro}), and approached 
 (\ref{intro_Serre_conjecture}), but only when $F(Y,\bX)$ satisfies a nonsingularity condition, and has a special shape. Additionally, Pierce and Heath-Brown \cite{HBPie12} have proved Serre's conjecture (\ref{intro_Serre_conjecture}) for $F(Y,\bX) = Y^d - f(\bX)$ when $n \geq 10$ (also obtaining stronger results than (\ref{Serre_bound_intro}) for $n \geq 6$ and than Theorem \ref{thm_cyclic_uniform} for $n \geq 8$), but only under the assumption that $f$ has nonsingular leading form.

   In the present work, we make  \emph{no assumption of nonsingularity}. Theorems \ref{thm_cyclic_uniform}, \ref{thm_main}, and \ref{thm_explicit} provide bounds for $N(F,B)$ that approach (\ref{intro_Serre_conjecture}) as $n \rightarrow \infty$. Moreover, Theorem \ref{thm_allowable_not_strongly} and Theorem \ref{thm_genuine_not_strongly} (stated below) admit the exponent predicted by (\ref{intro_Serre_conjecture}).

\subsection{Singularities, sieves, and stratification:} Our   strategy is to count solutions contributing to $N(F,B)$ via a
 polynomial sieve,  as formulated in \cite{BonPie24}. The unconditional sieve lemma we employ (Lemma \ref{lemma_poly_sieve}) imposes that $m \geq 2$ in the depiction (\ref{F_dfn_intro}) of $F(Y,\bX)$; under GRH, an analogous sieve lemma also holds true for $m = 1$ (Lemma \ref{lemma_poly_sieve_cor}). Aside from this application of the sieve lemma, the remainder of the paper does not distinguish between $m \geq 2$ and $m=1$. 

 After an application of a polynomial sieve lemma, our central concern is to understand for many primes $p$ the local counting function
\[v_p(\bx) = \#\{y\modd{p}: F(y,\bx) = 0 \modd{p}\}.\]
In particular, it is crucial to estimate for each $\bu \in \Z^n$   the associated complete sum
\[
S(\bfu,p) = \sum_{\bfa \modd{p}} (v_p(\bfa)-1)e_{p}(\bfu\cdot \bfa).
\]

A celebrated stratification result of Katz and Laumon \cite{KatLau85} (see also Fouvry \cite{Fou00}) provides a set of nested varieties that control how large $|S(\bu,p)|$ can be as $\bu$ varies. The version of the stratification we require is as follows: 
there exists a constant $C_F\geq 1$, an integer $N_F\geq 1$, and homogeneous varieties $\A^n_{\Z[1/N_F]} \supset V_1\supset V_2 \supset...\supset V_n$, where $V_j$ has codimension at least $j$, such that for any prime $p \ndiv N_F$ and for $\bfu\not\in V_j(\F_p)$, 
$$|S(\bu,p)|\leq C_F p^{\frac{n+j-1}{2}}.$$ Thus in particular if $\bu \in V_j(\F_p) \setminus V_{j+1}(\F_p)$, then 
$|S(\bu,p)|\leq C_F p^{\frac{n+j}{2}}$. 
Previous works that employed the polynomial sieve (such as \cite{Mun09, Bon21, BCLP23, BonBro23, BonPie24}) only encountered two possibilities:  $|S(\bu,p)|\leq C_F p^{\frac{n}{2}}$ ($\bu$ is good) or $|S(\bu,p)|\leq C_F p^{\frac{n+1}{2}}$ ($\bu$ is bad), since in all those works $F(Y,\bX)$ was assumed to be nonsingular. 
Here we make no such assumption, and we require the full strength of the Katz-Laumon stratification.

To obtain the explicit dependence on $\|F\|$ in Theorems \ref{thm_cyclic_uniform}, \ref{thm_main} and \ref{thm_explicit}, we require an even stronger statement of stratification, which makes explicit whether (and how) the following parameters depend on the coefficients of $F$: $C_F, N_F$, and the degree, number of irreducible components, and defining polynomials of each stratum $V_j$. This uniform stratification is new: it  is proved by two of the authors with Emmanuel Kowalski, in the companion paper \cite{BKW25x}, and recorded here in Theorem \ref{thm_strat_details}.  

Even with such a uniform stratification in hand, an essential difficulty   arises when applying the stratification to bound $N(F,B)$. This occurs if an irreducible component, say $X_{j}$, of some stratum $V_j$ is degenerate, that is, lies in a proper linear subspace of $\mathbb{A}^n(\Z)$ (and in particular is hence contained in a hyperplane, say $W_{j}$). 
We show that to handle the contribution of   a possible degenerate component, it would suffice to prove that aside from a set $\mathcal{E}$ of exceptional primes with  $|\mathcal{E}| \ll_{n,\deg F}\log \|F\|$, for each prime $p\not\in \mathcal{E}$,  
\beq\label{second_moment_W} \sum_{\bu \in W_j(\F_p)} |S(\bu,p)|^2 \ll_{n,\deg F} p^{2n-1}.
\eeq
Since there are $O(p^{n-1})$ terms in the sum, this is a statement that $|S(\bu,p)|\ll p^{n/2}$ on average over $\bu \in W_j(\F_p)$.
 The proof of the second moment bound (\ref{second_moment_W}) is delicate: it is precisely where the assumption that $F(Y,\bX)$ is strongly $(1,n)$-allowable  becomes relevant (in the proof of Theorem \ref{thm_explicit}).

Precisely, we are able to prove that (\ref{second_moment_W}) holds as long as for a certain change of variables $L \in \mathrm{GL}_n(\Q)$ with integer entries (determined by the hyperplane $W_j$), the polynomial $F_L(Y,\bX):=F(Y,L(\bX))$ has the property that aside from an exceptional set $\mathcal{E}_L$ of primes with  $|\mathcal{E}_L|\ll_{n,\deg F}\log \|F_L\|$, for each prime $p \not\in \mathcal{E}_L$,
\beq\label{B_F_intro_explanation}
\#\{(a_2,\ldots,a_n)\in \F_p^{n-1}: F_L(Y,X_1,a_2,...,a_n) \text{ is reducible over $\overline{\F_p}$}\} \ll_{n, \deg F} p^{n-2}.
\eeq
In a key step, we show that (\ref{B_F_intro_explanation}) holds   as long as $F_L(Y,\bX)$ is ``strongly $n$-genuine.'' 
Here, and throughout,  
we say an absolutely irreducible polynomial $G(Y,\bX) \in \Z[Y,X_1,\ldots,X_n]$ is an  \textbf{$n$-genuine polynomial} if  $\Q(X_1,...,X_n)[Y]/(G(Y,\bX))$ is an  $n$-genuine extension. Similarly, we say an absolutely irreducible polynomial $G(Y,\bX) \in \Z[Y,X_1,\ldots,X_n]$ is a  \textbf{strongly $n$-genuine polynomial} if  $\Q(X_1,...,X_n)[Y]/(G(Y,\bX))$ is a strongly $n$-genuine extension.  That is to say,  $F(Y,\bX)$ is $(1,n)$-allowable (resp. strongly $(1,n)$-allowable)  precisely when $F_L(Y,\bX)$ is  $n$-genuine (resp. strongly $n$-genuine) for \emph{all} $L \in \GL_n(\Q)$.

This series of ideas leads ultimately to   Theorem \ref{thm_explicit}  with at most polylog dependency on $\|F\|$: if $F(Y,\bX)$ is strongly $(1,n)$-allowable then $F_L(Y,\bX)$ is strongly $n$-genuine for all $L$, by definition. Thus we can show (\ref{B_F_intro_explanation}) holds, and hence the second moment bound (\ref{second_moment_W}) can be obtained for all hyperplanes $W$, and in particular for any hyperplane that contains at least one degenerate component of a stratum $V_j$. Here  the full strength of the new uniform stratification provided by the companion paper  \cite{BKW25x}  by Bonolis, Kowalski and Woo is also required to control the size of the  set $\mathcal{E}_L$ of exceptional primes for which (\ref{B_F_intro_explanation}) may not hold.

On the other hand, if $F_L(Y,\bX)$ is $n$-genuine but not strongly $n$-genuine, we do not prove (\ref{B_F_intro_explanation}), and so we do not prove (\ref{second_moment_W}). Instead, it is possible to exploit the property of not being strongly $n$-genuine and save a variable, without applying the polynomial sieve:
\begin{thm}\label{thm_genuine_not_strongly}
Let $F(Y,\bX)  \in \Z[Y,X_1,\ldots,X_n]$ with $\deg_Y F \geq 2$ be $n$-genuine but not strongly $n$-genuine. Then there exists an integer $e(n) \geq 1$ such that for all $B \geq 1$, 
\[ N(F,B) \ll_{n, \deg F} \log(\|F\|+2)^{e(n)} B^{n-1}(\log (B+2))^{2e(n)}.\]
\end{thm}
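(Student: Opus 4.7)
The plan is to exploit the non-strongly-$n$-genuine hypothesis to reduce the count to a two-variable Bezout estimate. By hypothesis, fix a polynomial $G(Z, X_2, \ldots, X_n) \in \Z[Z, X_2, \ldots, X_n]$ generating a proper subextension $\Q(\bX) \subsetneq M' \subsetneq M := \Q(\bX)[Y]/(F(Y,\bX))$ (after permuting indices, $X_1$ is the variable absent from $G$), with $e := \deg_Z G = [M':\Q(\bX)] \geq 2$. Let $z \in M'$ be the class of $Z$; representing $z = R(Y,\bX)/s(\bX)$ with $R \in \Z[Y,\bX]$ of $Y$-degree at most $\deg_Y F - 1$ and $s \in \Z[\bX] \setminus \{0\}$, one has $\deg_Y R \geq 1$, since otherwise $z \in \Q(\bX)$. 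The heights of $G$, $R$, and $s$ are polynomial in $\|F\|$, so their logarithms will absorb into the allowed $\log(\|F\|+2)^{e(n)}$ factor. Abbreviate $\bX' := (X_2, \ldots, X_n)$.

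The central structural identity
\[ s(\bX)^e \, G\bigl(R(Y,\bX)/s(\bX), \, \bX'\bigr) \equiv 0 \pmod{F(Y,\bX)}, \]
which follows from $G(z, \bX') = 0$ in $M$ after clearing denominators, specializes to show that every integer solution $(y, \bx)$ of $F(y,\bx) = 0$ with $s(\bx) \neq 0$ produces a rational number $z_0 := R(y,\bx)/s(\bx)$ satisfying $G(z_0, \bx') = 0$. For each $\bx' \in \Z^{n-1} \cap [-B,B]^{n-1}$, the univariate $G(Z, \bx')$ has at most $e$ rational roots, so the pairs $(z_0, \bx')$ that can arise number at most $e(2B+1)^{n-1}$. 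With such a pair fixed, I would bound via Bezout the number of integer $x_1 \in [-B,B]$ admitting some $y \in \Z$ with $F(y,x_1,\bx') = 0$ and $R(y,x_1,\bx') = z_0\, s(x_1,\bx')$. Set $P_1 := F(Y, X_1, \bx')$ and $P_2 := R(Y, X_1, \bx') - z_0\, s(X_1, \bx')$ in $\overline{\Q}[Y, X_1]$. Whenever $P_1$ is absolutely irreducible, the strict inequality $\deg_Y P_2 \leq \deg_Y R < \deg_Y F = \deg_Y P_1$ together with $P_2 \not\equiv 0$ forces $P_1 \nmid P_2$, so $\gcd(P_1, P_2) = 1$; Bezout then produces at most $(\deg P_1)(\deg P_2) \ll_{\deg F} 1$ common zeros in $\overline{\Q}^2$, hence $\ll_{\deg F} 1$ integer values of $x_1$ per pair. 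This yields a main contribution of $\ll_{\deg F} B^{n-1}$ to $N(F,B)$.

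To finish, I need to control the bad $\bx$: those with $s(\bx) = 0$; those with $\bx'$ in the reducibility locus $W := \{\bx' \in \A^{n-1} : F(Y, X_1, \bx') \text{ is reducible in } \overline{\Q}[Y, X_1]\}$; and those where $P_2 \equiv 0$ or where the leading $Y$-coefficient of $R$ vanishes at $\bx$. Since $F$ is absolutely irreducible in $\overline{\Q}[Y, \bX]$ with $\deg_Y F \geq 2$, standard generic-irreducibility arguments force $W \subsetneq \A^{n-1}$ and hence $\dim W \leq n - 2$; the remaining auxiliary loci have dimension at most $n-1$ in $\A^n$. The main obstacle is to establish
\[ \#\bigl(W \cap \Z^{n-1} \cap [-B,B]^{n-1}\bigr) \ll_{n, \deg F} \log(\|F\|+2)^{O(1)} B^{n-2}(\log B)^{O(1)}, \]
with polylogarithmic (rather than polynomial) dependence on $\|F\|$. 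I plan to achieve this by applying a uniform affine dimension-growth estimate to each irreducible component of $W$, after first bounding the complexity of $W$'s defining equations (degrees by $\deg F$, coefficient heights polynomially by $\|F\|$) so that only logarithms of $\|F\|$ enter the final constant. Lifting to $\A^n$ via the $(2B+1)$ choices of $x_1$ gives a bad contribution of $\ll \log(\|F\|+2)^{O(1)} B^{n-1}(\log B)^{O(1)}$, which combined with the main contribution proves the theorem.
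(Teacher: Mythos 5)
There is a fatal gap at the heart of your main term: the claim that the reducibility locus $W=\{\bx'\in\A^{n-1}: F(Y,X_1,\bx')\text{ is reducible over }\overline{\Q}\}$ is a proper subvariety. In the setting of this theorem the opposite is true. Since $F$ is not strongly $n$-genuine, the very subextension $\Q(\bX)\subsetneq M'=\Q(\bX)[Y]/(G(Z,\bX'))\subsetneq M$ you fix at the outset forces $F$ to be reducible over $M'$ (because $[M:M']<\deg_Y F$), and specializing $\bX'=\bx'$ shows that $F(Y,X_1,\bx')$ is reducible over $\overline{\Q}$ for \emph{every} $\bx'$ — this is exactly Lemma \ref{lem : spectoglobal}. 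So $W=\A^{n-1}$, your dimension-growth step has nothing to save, and your Bezout step never applies: $P_1=F(Y,X_1,\bx')$ is never absolutely irreducible. Worse, coprimality of $P_1$ and $P_2=R-z_0 s$ genuinely fails: the factorization $F=\prod_i H_i$ over $M'$ specializes to $F(Y,X_1,\bx')=\prod_i H_{i,z_0}(Y,X_1)$, and $P_2$ vanishes identically on the components $H_{i,z_0}=0$ (those are precisely the points where the "value of $z$" equals $z_0$), so $\gcd(P_1,P_2)$ is a curve. Fixing $z_0$ does not pin down $(y,x_1)$ to a finite set, because $z_0$ is already determined by $\bx'$ alone. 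At best one is reduced to counting $x_1$ with $H_{i,z_0}(Y,x_1)=0$ solvable, which gives $B^{1/2}$ per $\bx'$ (or $B$ when some $H_{i,z_0}$ is linear in $Y$), and since you sum over all $\approx B^{n-1}$ values of $\bx'$, the total is at best $B^{n-1/2}$ — no better than Serre's bound.

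The repair requires extracting a saving in the $\bx'$-sum as well, and this is what the paper's proof does: by Lemma \ref{lem_shared_zeros_not_strongly_genuine}, solvability of $F(Y,\bx)=0$ over $\Z$ forces solvability of $G(Z,\bx')=0$ over $\Z$ (not merely over $\Q$), and the uniform Serre-type bound of Theorem \ref{thm_polylog} applied to $G$ shows that only $\ll (\log\|F\|)^{e}B^{n-3/2}(\log B)^{e}$ values of $\bx'$ survive. Multiplying by the $B^{1/2}$ saving per line (Theorem \ref{thm_polylog} again, applied to the factors of $F(Y,X_1,\bx')$ of $Y$-degree $\geq 2$) recovers $B^{n-1}$; the factors linear in $Y$ are handled separately by showing, via the $n$-genuine hypothesis and an integral-closure argument (Lemma \ref{lemma_Cohen_genuine_linear_factor}), that they occur for only $\ll B^{n-2}$ values of $\bx'$. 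Your proposal uses neither of these two inputs, and without them the argument cannot reach the exponent $n-1$. (Separately, your assertion that $G$, $R$, $s$ can be chosen with heights polynomial in $\|F\|$ is true but is itself a nontrivial linear-algebra argument, the content of Lemma \ref{lem_shared_zeros_not_strongly_genuine}.)
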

 This result concurs with the exponent predicted in (\ref{intro_Serre_conjecture}), in relation to Serre's conjecture. We deduce Theorem \ref{thm_allowable_not_strongly} from this result, but the polylog dependency on $\|F\|$ is degraded, as we explain in \S \ref{sec_deduce_thm_allowable_not_strongly}. We combine Theorem \ref{thm_genuine_not_strongly} with the polynomial sieve (in the form of Theorem \ref{thm_main_options}) to complete the proof of Theorem \ref{thm_main}. Here too the  polylog dependency on $\|F\|$ is degraded, as we explain in   \S \ref{sec_deduce_thm_main}.
See Example \ref{example_gen_not_strongly} for infinite classes of polynomials satisfying the hypothesis of Theorem \ref{thm_genuine_not_strongly}.

\subsection{Organization}
In \S \ref{sec_preliminary} we assemble a wide array of  counting results and algebraic facts we will call upon, with careful attention to  explicit dependence on $\|F\|$. In \S \ref{sec_genericity} we state a  theorem  that $(1,n)$-allowable and strongly $(1,n)$-allowable polynomials are generic (Theorem \ref{thm_genericity}, Corollary \ref{cor_genericity_M_Y}), also among singular polynomials (Corollary \ref{cor_genericity_sing}), and prove several useful facts about these classes of polynomials.   In \S \ref{sec_not_strongly_allowable} we focus on polynomials that are $n$-genuine but not strongly $n$-genuine, and prove Theorem \ref{thm_genuine_not_strongly_NFB}, from which we deduce Theorems \ref{thm_allowable_not_strongly} and \ref{thm_genuine_not_strongly}. In \S \ref{sec_strongly}, under the hypothesis that $F$ is strongly $n$-genuine, we prove  second moment bound related to (\ref{second_moment_W}).  In \S \ref{sec_apply_strat_to_sieve} we initiate the polynomial sieve, and the main claim is Theorem \ref{thm_main_options}. Theorem \ref{thm_main_options} immediately implies Theorem \ref{thm_explicit}, which in turn immediately implies Theorem \ref{thm_cyclic_uniform} (by Example \ref{ex: generic cyclic}). We show that Theorem \ref{thm_main_options} and Theorem \ref{thm_genuine_not_strongly_NFB} together imply Theorem \ref{thm_main}.  To prove Theorem \ref{thm_main_options}, we introduce the polynomial sieve lemma to bound $N(F,B)$, dissect the sieve terms using the Katz-Laumon stratification   in the new uniform version of Bonolis, Kowalski and Woo \cite{BKW25x}, and bound all of the resulting terms.    In \S \ref{sec_combining_estimates} we combine all the estimates for terms in the sieve lemma, and conclude the proof of Theorem \ref{thm_main_options}.  
In \S \ref{sec_examples} we construct examples necessary to verify genericity in \S \ref{sec_genericity}; examples of classes of polynomials to which each main theorem applies; examples that verify   this paper recovers nearly all the previous literature on these problems.

\section{Preliminaries}\label{sec_preliminary}
We will apply several existing bounds for counting points within our argument, and because the uniformity   of these estimates with respect to the underlying polynomial coefficients will play a role, we assemble   precise statements here.  
\begin{lem}[Trivial bound]\label{lemma_Schwartz_Zippel}
Let $F \in \Z[X_1,\ldots,X_n]$ be a   polynomial of degree $e \geq 1$, and $S \subset \Z$ a finite subset. Then 
\[ \#\{(x_1,\ldots,x_n) \in S^n : F(x_1,\ldots,x_n)=0\} \leq e |S|^{n-1}.\]
\end{lem}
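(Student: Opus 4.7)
The plan is a standard induction on $n$, essentially the Schwartz--Zippel argument adapted to the setting where the ambient set is the finite subset $S \subset \Z$ rather than a field.

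The base case $n=1$ is immediate: a univariate polynomial $F \in \Z[X_1]$ of degree $e \geq 1$ has at most $e$ roots in $\Z$, and hence at most $e$ roots in $S$. Since $|S|^{n-1} = 1$ when $n=1$, the claimed bound $e|S|^{n-1}$ matches.

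For the inductive step, assuming the result for polynomials in $n-1$ variables, I would write $F$ as a polynomial in the last variable,
\[
F(X_1,\ldots,X_n) = \sum_{i=0}^{k} f_i(X_1,\ldots,X_{n-1}) X_n^{i},
\]
where $k = \deg_{X_n} F$ and $f_k \not\equiv 0$. The degree of $f_k$ as a polynomial in $n-1$ variables is at most $e-k$. If $k=0$, then $F$ depends only on $X_1,\ldots,X_{n-1}$, and the zero set in $S^n$ is simply the product of the zero set in $S^{n-1}$ with $S$; the inductive hypothesis gives $e |S|^{n-2} \cdot |S| = e|S|^{n-1}$.

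If $k \geq 1$, I would partition $S^{n-1}$ into
\[
A = \{(x_1,\ldots,x_{n-1}) \in S^{n-1} : f_k(x_1,\ldots,x_{n-1}) = 0\}
\]
and its complement. For a tuple in the complement, $F(x_1,\ldots,x_{n-1},X_n)$ is a univariate polynomial in $X_n$ of degree exactly $k$, so it has at most $k$ roots in $S$. For a tuple in $A$, the number of roots in $S$ is bounded trivially by $|S|$. Applying the inductive hypothesis to $f_k$ (which has degree $\leq e-k$ in $n-1$ variables), one gets $|A| \leq (e-k)|S|^{n-2}$, and hence the total count is at most
\[
k \cdot |S|^{n-1} + |A|(|S|-k) \leq k|S|^{n-1} + (e-k)|S|^{n-2}|S| = e|S|^{n-1}.
\]

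The only step requiring any care is handling the degenerate cases where $|S|<k$ or where $f_k$ has degree zero (in which case induction is vacuous but the conclusion $|A| \leq (e-k)|S|^{n-2}$ still holds with the convention that a nonzero constant is never zero, forcing $|A|=0$). These are the potential pitfalls, but a direct case check confirms that the inequality $k|S|^{n-1} + |A|(|S|-k) \leq e|S|^{n-1}$ holds in every case, even when $|S|-k<0$ (since then the second summand is nonpositive and $k|S|^{n-1} \leq e|S|^{n-1}$ suffices).
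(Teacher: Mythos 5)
Your proof is correct and is exactly the standard induction-on-dimension (Schwartz--Zippel) argument that the paper invokes by reference to \cite{HB02} and \cite{BCLP23} rather than writing out; the degenerate cases ($k=0$, $\deg f_k = 0$, $|S|<k$) are all handled properly. No issues.
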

The proof is by induction on dimension, and may be found in many places, such as \cite[Thm. 1]{HB02} or \cite[Lemma 10.1]{BCLP23}; it is sometimes called the Schwartz-Zippel bound. While the proof given
in \cite[Lemma 10.1]{BCLP23} is stated in the case where $F$ is homogeneous, it also applies in the nonhomogeneous case via trivial modifications.  

We will call upon a result to count the points in a ``box''   in $\F_p^n$ lying on a variety with a given codimension; we quote a special case of a lemma of Xu \cite[Lemma 1.7]{Xu20}:
 \begin{prop}\label{prop_Xu_count}
Let $X \subset \mathbb{A}_{\F_p}^n$ be a subscheme of codimension $j$, and let $d$ be the sum of the dimensions of its irreducible components. If $\{B_i\}_{i=1}^n$ are subsets of $\F_p$, consider the ``box'' $\mathcal{B} = \prod_i B_i \subset \mathbb{A}_{\F_p}^n$. Assuming $1 \leq \# B_1 \leq \#B_2 \leq \cdots \leq \# B_n < \infty$, 
\[ \# (X(\F_p) \cap \mathcal{B}) \leq d \prod_{i=j+1}^n \# B_i = d (\# \mathcal{B}) (\prod_{i=1}^j \# B_j)^{-1}.\]
 \end{prop}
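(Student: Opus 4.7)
The plan is to prove the bound by induction on the ambient dimension $n$, slicing $X$ by a coordinate hyperplane parallel to the largest-side direction and summing the resulting subvariety counts.

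I would first decompose $X$ into its irreducible components $X_\alpha$, each of codimension $j_\alpha \geq j$ and dimension $d_\alpha = n - j_\alpha$. Since $\#B_1 \leq \cdots \leq \#B_n$, we have $\prod_{i=j_\alpha+1}^n \#B_i \leq \prod_{i=j+1}^n \#B_i$, so it suffices to establish a per-component inequality of the form $\#(X_\alpha(\F_p) \cap \mathcal{B}) \leq d_\alpha \prod_{i=j_\alpha+1}^n \#B_i$ and sum over $\alpha$, invoking additivity of the dimension invariant, $d = \sum_\alpha d_\alpha$.

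For the inductive step, I would slice each irreducible $X_\alpha$ by the hyperplane $\{X_n = a\}$ as $a$ ranges over $B_n$ (the largest side). If $X_\alpha \subseteq \{X_n = c\}$ for a single value $c$, then $X_\alpha$ already lives in $\mathbb{A}^{n-1}_{\F_p}$ and the induction applies directly to the box $B_1 \times \cdots \times B_{n-1}$, leaving one factor of $\#B_n$ unused. Otherwise, the slice $X_\alpha \cap \{X_n = a\}$ has strictly greater codimension than $X_\alpha$ in $\mathbb{A}^n$ for every $a \in B_n$, hence codimension $j_\alpha$ in the affine hyperplane $\{X_n = a\} \cong \mathbb{A}^{n-1}$, so the inductive hypothesis bounds its points in $B_1 \times \cdots \times B_{n-1}$ by an invariant $d_\alpha'(a)$ times $\prod_{i=j_\alpha+1}^{n-1} \#B_i$. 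Summing over $a \in B_n$ then produces the missing factor $\#B_n$, recombining to $\prod_{i=j_\alpha+1}^n \#B_i$.

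The main obstacle is the bookkeeping of the dimension-sum invariant through slicing: one must verify that $\sum_{a \in B_n} d_\alpha'(a) \leq d_\alpha \cdot \#B_n$. This is a Bezout/Hilbert-polynomial fact — a generic hyperplane section of an irreducible variety preserves degree while dropping dimension by one, but the finitely many exceptional hyperplanes (where components split, become non-reduced, or acquire embedded points) must be accounted for so that the total invariant remains controlled. This is precisely the intersection-theoretic content carried out in Xu's original argument \cite[Lemma 1.7]{Xu20} via Hilbert-polynomial tracking, and I would follow that treatment to close the induction.
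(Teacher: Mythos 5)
The paper does not actually prove this proposition: it is quoted, with a pointer to Xu's Lemma 1.7, as an external input, so there is no internal argument to compare yours against. Your sketch is in effect an attempt to reconstruct Xu's proof, and its skeleton is the right one for Schwartz--Zippel-type bounds of this shape: reduce to irreducible components, induct on $n$ by slicing along the direction of the largest side $B_n$, and split into the case $X_\alpha \subset \{X_n = c\}$ (drop to $\mathbb{A}^{n-1}$, using the ordering of the $\#B_i$ to trade the lost factor for $\#B_n$) and the case where every slice has strictly larger codimension.

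The genuine gap is the inductive invariant. You track $d_\alpha = \dim X_\alpha$ and reduce to $\sum_{a \in B_n} d'_\alpha(a) \leq d_\alpha \, \#B_n$, calling this ``a Bezout/Hilbert-polynomial fact.'' But Bezout controls the \emph{degree} of a hyperplane section, not the sum of the dimensions of its components, and for the dimension-sum the inequality is false: the irreducible surface $\{x_3 = \prod_{i=1}^{D}(x_1 - i x_2)\} \subset \mathbb{A}^3$ has $d_\alpha = 2$, yet its slice at $x_3 = 0$ is a union of $D$ lines, so $d'_\alpha(0) = D$; since $B_n$ may consist of exactly that one exceptional value, the ``finitely many exceptional hyperplanes'' cannot be absorbed into the sum over $a \in B_n$, and the recursion does not close. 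The base case breaks the same way: a zero-dimensional component contributes $0$ to the dimension-sum but contributes points to the count (indeed, read literally with ``sum of dimensions,'' the statement itself fails for a single reduced point, which is strong evidence that the invariant in Xu's lemma is the sum of the \emph{degrees} of the components). With degree as the invariant everything you wrote goes through cleanly --- $\deg(X_\alpha \cap \{X_n = a\}) \leq \deg X_\alpha$ holds for \emph{every} $a$ with $X_\alpha \not\subset \{X_n = a\}$, with no exceptional set --- so the fix is to rerun your induction tracking degree, not dimension. Finally, note that your closing step defers ``the intersection-theoretic content'' back to Xu's original argument, which is exactly the result being cited; as written the proposal therefore does not supply an independent proof, whereas carrying out the degree bookkeeping yourself would.
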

The Lang-Weil bound provides both a main term and an error term with good uniformity:
\begin{lem}[Lang-Weil {\cite[Thm. 1]{LanWei54}}]\label{lemma_Lang_Weil}
Let $V \subset \mathbb{P}^n$ be a variety of dimension $m$ and degree $d$ defined over a finite field $\F_q$ and irreducible over $\overline{\F_q}$. Then
\[ \# V(\F_q) = q^m + E\]
where $E = (d-1)(d-2)q^{m-1/2} + O_{n,m,d}(q^{m-1})$, in which the implied constant only depends on $n,m,d$.  
\end{lem}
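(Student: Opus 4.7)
The plan is to prove the lemma by induction on the dimension $m$ of $V$, using hyperplane sections (or generic projections) to reduce to the case of a smooth projective curve, where Weil's Riemann hypothesis for curves supplies the bound. Uniformity of the implicit constants in $n,m,d$ will follow because every step depends only on degree-theoretic invariants of $V$ and its successive hyperplane sections.

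For the base case $m=1$, the curve $V$ is geometrically irreducible of degree $d$ in $\P^n$. Its arithmetic genus satisfies $p_a(V) \leq (d-1)(d-2)/2$, and the geometric genus $g$ of the normalization $\tilde{V} \to V$ satisfies $g \leq p_a(V)$. Weil's bound applied to the smooth curve $\tilde{V}$ yields $|\#\tilde{V}(\F_q)-(q+1)| \leq 2g\,q^{1/2} \leq (d-1)(d-2)\,q^{1/2}$. Since $|\#V(\F_q)-\#\tilde{V}(\F_q)|$ is bounded by the number of singular points of $V$, which is $O_d(1)$, this proves the lemma when $m=1$.

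For the inductive step, assume the bound in dimension $m-1$ and let $V$ have dimension $m \geq 2$. A generic hyperplane section $V \cap H$ is again geometrically irreducible of dimension $m-1$ and degree $d$ by a Bertini-type theorem. Choose a generic pencil of hyperplanes $\{H_t\}_{t \in \P^1}$ whose base locus meets $V$ properly; then $\#V(\F_q)$ can be written as the sum of $\#(V \cap H_t)(\F_q)$ over $t \in \P^1(\F_q)$, corrected for the base-locus contribution which is $O_{n,d}(q^{m-2})$. Applying the inductive hypothesis to the \emph{good} $t$ produces the main term $q^m$ together with an error of order $(d-1)(d-2)\,q^{m-1/2}$ plus lower order, while the \emph{bad} $t$, for which $V \cap H_t$ fails to be geometrically irreducible, form a proper subset of $\P^1(\F_q)$ of cardinality $O_{n,m,d}(1)$; their contribution, estimated by the Schwartz--Zippel-type bound $\#(V \cap H_t)(\F_q) \ll_{n,d} q^{m-1}$, is absorbed into the $O_{n,m,d}(q^{m-1})$ error.

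The main obstacle I expect is establishing a uniform Bertini statement over $\F_q$: the locus in the dual projective space $\check{\P}^n$ parametrizing hyperplanes $H$ for which $V \cap H$ is geometrically reducible must itself be a proper subvariety whose degree is bounded purely in terms of $n,m,d$. This is the principal technical ingredient of Lang--Weil, who handle it through an explicit Veronese re-embedding that linearizes the problem, allowing both the leading coefficient $(d-1)(d-2)$ and the uniformity of the error constant to propagate cleanly through the induction.
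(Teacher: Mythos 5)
The paper does not prove this lemma at all: it is quoted verbatim as an external result from Lang--Weil \cite[Thm.~1]{LanWei54}, with a pointer to \cite[p.~184]{Ser97} for the formal statement, so there is no in-paper argument to compare against. Your sketch is in fact a faithful outline of the classical Lang--Weil induction: reduce the base case to Weil's bound for curves via normalization and the genus bound $g\le (d-1)(d-2)/2$ (obtained by generic projection to a plane curve), then fibre $V$ by a pencil of hyperplane sections, apply the inductive hypothesis to the geometrically irreducible sections, and absorb the base locus and the bad sections into the $O_{n,m,d}(q^{m-1})$ term. Two small imprecisions are harmless: the discrepancy $|\#V(\F_q)-\#\tilde V(\F_q)|$ is bounded by the number of points of $\tilde V$ lying over singular points plus the number of singular points (not just the latter), still $O_d(1)$; and for good $t$ one only gets $\deg(V\cap H_t)\le d$, which suffices because the displayed equality for $E$ is really an upper bound on $|E|$. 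The one place where your proposal is a plan rather than a proof is exactly where you say it is: the uniform statement that the hyperplanes $H$ with $V\cap H$ geometrically reducible form a proper subvariety of $\check{\P}^n$ of degree bounded only in terms of $n,m,d$. That is the substantive content of Lang--Weil's paper (their irreducibility lemma for linear sections, proved via degree bounds on associated/Chow forms), and without it the count of bad $t\in\P^1(\F_q)$ is not controlled; as written, your argument defers the entire difficulty to that black box.
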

See e.g. \cite[p. 184]{Ser97} for a  formal statement. We apply its consequence in an affine setting:
\begin{lem}\label{lemma_Lang_Weil_cor}
Let $F\in \Z[X_1,\ldots,X_k]$ have degree $d$ (not necessarily homogeneous) and let $p$ be a prime such that the affine variety $V(F):=\{F(\bX)=0\}\subset \mathbb{A}_{\F_p}^k$ of dimension $k-1$ is absolutely irreducible (that is, irreducible over $\overline{\F_q}$). Then 
\[ \# \{(x_1,\ldots,x_k) \in \F_p^k: F(\bx)=0\} = p^{k-1} + O_{k,d}(p^{k-1-1/2}).\]
\end{lem}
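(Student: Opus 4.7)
The plan is to homogenize and reduce to Lemma~\ref{lemma_Lang_Weil}. First, I would introduce a new variable $X_0$ and form the homogenization $\tilde{F}(X_0,X_1,\ldots,X_k)\in\Z[X_0,\ldots,X_k]$, which is homogeneous of degree $d$. Writing $F = F_d + F_{d-1} + \cdots + F_0$ for the homogeneous decomposition of $F$, the leading form $F_d$ is nonzero (since $F$ has degree exactly $d$), so $\tilde{F}$ is not divisible by $X_0$; combined with the absolute irreducibility of $F$ (which is recovered by setting $X_0=1$), this forces $\tilde{F}$ to be absolutely irreducible over $\overline{\F_p}$. Hence $\tilde{V}:=\{\tilde{F}=0\}\subset \P^k_{\F_p}$ is an absolutely irreducible projective variety of dimension $k-1$ and degree $d$.

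Applying Lemma~\ref{lemma_Lang_Weil} to $\tilde{V}$ yields
\[ \#\tilde{V}(\F_p) = p^{k-1} + O_{k,d}(p^{k-3/2}), \]
with implied constant depending only on $k$ and $d$. Since affine $\F_p$-points of $V(F)$ correspond bijectively to projective points of $\tilde{V}$ with $X_0\neq 0$,
\[ \#\{\bx \in \F_p^k : F(\bx)=0\} = \#\tilde{V}(\F_p) - \#\bigl(\tilde{V} \cap \{X_0=0\}\bigr)(\F_p). \]

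It remains to bound the boundary contribution. The hyperplane $\{X_0=0\}\cong \P^{k-1}$ (with homogeneous coordinates $[X_1:\cdots:X_k]$) intersects $\tilde{V}$ in the vanishing locus of the restriction $\tilde{F}|_{X_0=0} = F_d$, which is a projective hypersurface in $\P^{k-1}$ of degree $d$ (proper, since $F_d\not\equiv 0$). Lifting to the affine cone in $\F_p^k$, Lemma~\ref{lemma_Schwartz_Zippel} bounds the cone by at most $dp^{k-1}$ points, and dividing by the free $\F_p^\times$-action on the nonzero points gives $\#\bigl(\tilde{V}\cap\{X_0=0\}\bigr)(\F_p) \ll_d p^{k-2}$. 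Substituting this and the Lang--Weil estimate completes the proof.

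There is no serious obstacle: the only subtle point is verifying that absolute irreducibility is preserved under homogenization, which follows because any nontrivial factorization of $\tilde{F}$ would, upon setting $X_0=1$, restrict to a nontrivial factorization of $F$, contradicting its absolute irreducibility.
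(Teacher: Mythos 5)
Your proof is correct and follows essentially the same route as the paper's: homogenize, apply Lang--Weil to the absolutely irreducible projective closure, and subtract the points at infinity. The only cosmetic differences are that the paper cites a textbook fact for the preservation of irreducibility under projective closure where you argue it directly via factorizations, and your Schwartz--Zippel treatment of the hyperplane section $\{F_d=0\}$ is in fact slightly more careful than the paper's bare assertion that $\tilde{V}\cap\{X_0=0\}$ contributes $O(p^{k-2})$ points.
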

\begin{proof} 
By homogenizing, we consider the polynomial $\tilde{F}(X_0,\ldots,X_k) = X_0^d F(X_1/X_0,\ldots,X_k/X_0)$. Then the projective closure of the affine variety defined as the vanishing set of $F$ in $\mathbb{A}^k$ is $V(\tilde{F}) \subset \mathbb{P}^k$. The projective closure is irreducible if and only if the affine variety is (see \cite[\S 4.5 Ex. 1]{Sha13}  or \cite[\S5.5 Prop.]{Rei88}), so by the hypothesis, $V(\tilde{F})$ is   irreducible over $\overline{\F_p}$. Thus by Lemma \ref{lemma_Lang_Weil}, $\# V(\tilde{F}) = p^{k-1}+O_{k,d}(p^{k-1-1/2})$. Let $U_0 = \mathbb{P}^k \setminus V(X_0)$, where $V(X_0)=\{X_0=0\}$. Then $V(\tilde{F})$ is the union of $V(\tilde{F}) \cap U_0$ (the points we wish to count) and $V(\tilde{F}) \cap V(X_0)$, which can be identified with a subset of $\mathbb{P}^{k-1}$ and hence can contribute at most $O(p^{k-2})$ points over $\F_p$, and the proposition is verified.
\end{proof}

\subsection{Variants of Noether's Lemma}
We will frequently apply Noether's Lemma, (for example in Theorem \ref{thm_genericity} and Lemma \ref{lemma_Cohen_strongly_genuine_reducible}), and for  Lemma \ref{lemma_Cohen_genuine_linear_factor} we need a refined version, which we now specify.

\begin{defin}   Let $K$ be a field, and $\overline{K}$ a given algebraic closure. Let $F(Y,\bX) \in K[Y,X_1,\ldots,X_n]$. Let $e$ denote a multi-degree $e = (e_0,e_1,\ldots,e_n)$ with non-negative integral entries, and set $|e| = e_0+e_1+ \cdots +e_n $. For a given multi-degree $e$ with  $1 \leq |e| < \deg_F$,  we say that $F(Y,\bX)$ satisfies  divisibility condition $\mathcal{D}(e)$ over $\overline{K}$ if there exists a factorization $$F(Y,\bX) = G(Y,\bX)H(Y,\bX)$$
    where $G$ and $H$ lie in $\overline{K}[Y,X_1,\ldots,X_n]$, $\deg H <\deg F$, and $\deg_Y G \leq e_0,$ $\deg_{X_j} G \leq e_j$ for $j = 1,\ldots, n$. 
\end{defin}
We will later apply this in two settings:   $F(Y,\bX) \in K[Y,X_1,\ldots,X_n]$ with total degree $\deg F >1$ is absolutely irreducible (that is, irreducible over $\overline{K}$) precisely when $F$ does not satisfy condition $\mathcal{D}(e)$ for any multi-degree $e$ with $1 \leq |e| < \deg_F$; this relates to  the classical formulation of Noether's lemma. Second, if $F(Y,\bX)$ with $\deg_Y F \geq 2$ has a linear factor in $Y$ over $\overline{\mathbb{Q}}$ (see  (\ref{dfn_Y_linear_factor})) then $F$ satisfies condition $\mathcal{D}(e)$ for some multi-degree $e=(1,e_1,\ldots,e_n)$ with $e_0=1$.  

To detect when a polynomial $F(Y,\bX)$ satisfies a divisibility condition $\mathcal{D}(e)$, we record a variant of Noether's lemma, which closely follows the presentation and proof of \cite[Ch. V Thm. 2A]{Sch76}.
\begin{lem}[Variant of Noether's Lemma]\label{lemma_Noether}
    Let $K$ be a field. Fix $D \in \Z$ with $D\geq 2$.
\begin{enumerate}[(i)]
    \item Let $\mathcal{D}(e)$ be a divisibility condition for a fixed multi-degree $e =(e_0,e_1,\ldots,e_n)$ with $1 \leq |e|<D$. Then there exist   forms $G_1,...,G_s$ in variables $(A_{i_0,...,i_n})_{i_0+\cdots+i_n\leq D}$ such that a polynomial $$F(Y,\bX) = \sum_{i_0+\cdots+i_n\leq D} a_{i_0,...,i_n} Y^{i_0}X_1^{i_1}\cdots X_n^{i_n}$$
    \beq\label{Noether_condition}
    \text{satisfies $\mathcal{D}(e)$ over $\overline{K}$ or is of degree $\deg F<D$,}
    \eeq
    if and only if $$G_j((a_{i_0,...,i_n})) = 0, \forall j=1,...,s.$$
The forms $G_1,...,G_s$ depend only on $n, D$ and $e$, and are independent of the field $K$ in the sense that if $\mathrm{char}(K)=0$ they have rational integer coefficients and if $\mathrm{char}(K) = p\neq 0$, the polynomials are obtained by reducing the integral coefficients modulo $p$. 
Moreover, $s = O_{n,D,e}(1)$, and $\deg G_j \ll_{n,D,e} 1$ for all $j=1,\ldots,s$. If $\mathrm{char}(K)=0$, 
\[ \|G_j\| \ll_{n,D,e} 1\] 
  for all $j=1,\ldots,s$. 

\item The same result as (i) holds if (\ref{Noether_condition}) is replaced by: is reducible over $\overline{K}$ or is of degree $\deg_F<D$.
\end{enumerate}
\end{lem}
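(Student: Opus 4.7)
The plan is to follow the classical projection-and-elimination argument that underlies Noether's lemma, adapted to the multi-degree constraint. I introduce auxiliary indeterminates $(B_{j_0,\ldots,j_n})_{0 \le j_i \le e_i}$ for the coefficients of a hypothetical factor $G(Y,X_1,\ldots,X_n)$ with $\deg_Y G \le e_0$ and $\deg_{X_i} G \le e_i$, together with indeterminates $(C_{k_0,\ldots,k_n})_{k_0+\cdots+k_n \le D-1}$ for the coefficients of a hypothetical cofactor $H$. Equating coefficients in the formal identity $F = GH$ yields a finite system of polynomial equations $P_\ell = 0$ with integer coefficients, each bilinear in the $B$'s and $C$'s and linear in the $A_{i_0,\ldots,i_n}$. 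Let $Z$ denote the closed subscheme of $\mathbb{A}^{N_A}_{\Z} \times \mathbb{A}^{N_B}_{\Z} \times \mathbb{A}^{N_C}_{\Z}$ cut out by these equations, where $N_A, N_B, N_C$ count the three collections of coefficient indices. By construction, the coefficient tuple $(a_{i_0,\ldots,i_n})$ lies in $\pi(Z(\overline{K}))$, where $\pi$ is the projection onto $\mathbb{A}^{N_A}$, precisely when $F$ either satisfies $\mathcal{D}(e)$ over $\overline{K}$ or has total degree strictly less than $D$.

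To convert this set-theoretic image into a closed system of polynomial conditions, I would exploit the scaling symmetry $G \mapsto \lambda G$, $H \mapsto \lambda^{-1}H$: only the projective classes of $(B)$ and $(C)$ affect the product $GH$. Passing to $\mathbb{P}^{N_B-1} \times \mathbb{P}^{N_C-1} \times \mathbb{A}^{N_A}$ and forming the Zariski closure $\bar Z$ of the preimage of $Z$, the projection $\bar\pi : \bar Z \to \mathbb{A}^{N_A}$ is proper by the main theorem of elimination theory, hence its image is closed and defined over $\Z$ by a finite list of polynomials $G_1,\ldots, G_s$. Conversely, whenever all $G_j$ vanish at a point $(a_{i_0,\ldots,i_n}) \in \overline{K}^{N_A}$, there exist projective representatives $[B], [C]$ over $\overline{K}$ solving $P_\ell = 0$ for all $\ell$, and these lift back to genuine factors $G, H \in \overline{K}[Y,X_1,\ldots,X_n]$ with $F = GH$, the only exception being degenerate lifts in which the leading coefficients cancel and $\deg F < D$. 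Field independence is automatic because the whole construction is performed over $\Z$: in positive characteristic one simply reduces the $G_j$ modulo $p$, mirroring the situation for the classical absolute-irreducibility case in Schmidt.

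The uniformity bounds $s \ll_{n,D,e} 1$, $\deg G_j \ll_{n,D,e} 1$, and $\|G_j\| \ll_{n,D,e} 1$ follow because $N_A, N_B, N_C$ depend only on $n, D, e$, while each $P_\ell$ has degree $2$ with $\{0,1\}$-valued coefficients; any of the standard effective elimination tools (classical multivariate resultants, the Macaulay multipolynomial resultant, or an effective Hilbert Nullstellensatz) outputs integer defining polynomials whose count, degree, and height are bounded in terms of the ambient dimensions and input degrees alone. Part (ii) is then immediate from part (i): a polynomial $F$ of degree $D$ is reducible over $\overline{K}$ if and only if it satisfies $\mathcal{D}(e)$ for some multi-degree $e$ with $1 \le |e| < D$, and there are only $O_{n,D}(1)$ such $e$, so the union of the corresponding closed conditions (for instance, via the products of the $G_j$'s across the different $e$) preserves the uniformity.

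I expect the main obstacle to be the quantitative height bound on the $G_j$: the set-theoretic projection argument is standard and essentially forced, but pinning down $\|G_j\| \ll_{n,D,e} 1$ requires selecting a specific effective elimination procedure and tracking heights through it. Schmidt's Ch.~V \S 2 treats the absolute-irreducibility case via the classical univariate resultant, and the same machinery applied to our bounded-coefficient space yields the required control with essentially no modification.
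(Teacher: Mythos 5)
Your overall strategy (set up the bilinear coefficient-matching system and eliminate) is in the same family as the paper's, but the paper follows Schmidt's explicit route: for fixed coefficients $b$ of $G$ it views $GH=cF$ as a \emph{linear} system in the coefficients of $H$ and an auxiliary scalar $c$, characterizes nontrivial solvability by the vanishing of all $K\times K$ minors $\Delta_i$ (forms in the $b$'s, linear in the $a$'s), and then eliminates the $b$'s using Schmidt's resultant systems (Ch.~V Thm.~1A), which hands over the count, degree, and height bounds and the universal mod-$p$ compatibility for free. Your geometric version has a concrete problem at the projectivization step: the equations $\sum B_jC_k=A_i$ are invariant only under the antidiagonal scaling $(B,C)\mapsto(\lambda B,\lambda^{-1}C)$, not under independent rescaling of $B$ and $C$, so they do not descend to $\mathbb{P}^{N_B-1}\times\mathbb{P}^{N_C-1}\times\mathbb{A}^{N_A}$ and the ``preimage of $Z$'' there is not defined. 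Even after repairing this, taking a Zariski closure and applying properness only shows the image of $Z$ is \emph{contained} in a closed set; the converse direction of the lemma requires identifying exactly which extra points the closure introduces, and your assertion that these are precisely the degree-drop locus is stated but not proved. Relatedly, ``field independence is automatic because the construction is performed over $\Z$'' is not automatic: scheme-theoretic images and closures need not commute with reduction modulo $p$ for \emph{every} $p$, which is what the lemma asserts; one needs a universal integral construction such as the resultant system.

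The reduction of (ii) to (i) is false as stated. A reducible $F$ of degree $D$ need not satisfy $\mathcal{D}(e)$ for any admissible $e$ with $|e|<D$: take $F=(X_1X_2+Y)(Y+X_1+X_2)$ with $D=3$; both irreducible factors have multidegree $(1,1,1)$, so the smallest $e$ dominating either factor has $|e|=3=D$, which is excluded by the definition of a divisibility condition. The paper instead proves (ii) by the same linear-algebra argument with the componentwise constraint $i_j\le e_j$ on $G$ replaced by the total-degree constraint $i_0+\cdots+i_n\le D-1$ (equivalently, by citing Schmidt's classical theorem directly); your union over admissible $e$ does not cover the reducible locus.
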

\begin{proof}
The result (ii) is the classical statement proved in \cite[Ch. V Thm. 2A]{Sch76}, so we only prove (i).
Let us suppose that $F(Y,\bX)$ satisfies $\mathcal{D}(e)$ over $\overline{K}$. Let us assume that $G(Y,\bX)\mid F(Y,\bX)$ over $\overline{K}$ and denote the coefficients as $$G(Y,\bX) = \sum_{\substack{i_j \leq e_j\\ j=0,\ldots,n}}b_{i_0,...,i_n} Y^{i_0}X_1^{i_1}...X_n^{i_n}.$$
(Incidentally, to prove (ii) the only change in this argument is to specify not $i_j \leq e_j$ but $i_0+\cdots+i_n \leq D-1$ at this step.)
We also denote the quotient $F(Y,\bX)/G(Y,\bX)$ as $H(Y,\bX)$ with coefficients: 
$$H(Y,\bX) = \sum_{i_0+...+i_n \leq D-1} c_{i_0,...,i_n} Y^{i_0}X_1^{i_1}...X_n^{i_n}.$$
We note that there exists such a $G(Y,\bX)$ and $H(Y,\bX)$ in $\overline{K}[Y,\bX]$, where $G(Y,\bX)$ satisfies $\deg_Y G \leq e_0$ and $\deg_{X_j}G \leq e_j$ for each $j=1,...,n$, if and only if for some fixed choice of coefficients $\{b_{i_0,...,i_n}\}$, there is a nontrivial solution to the system of equations in the variables $c$ and $ \{c_{i_0,...,i_n}\}$ 
given by: $$\sum_{j_0+k_0=i_0}...\sum_{j_n+k_n=i_n} b_{j_0,...,j_n}c_{k_0,...,k_n} - c \cdot a_{i_0,...,i_n}=0,$$
with one equation for each multi-degree $(i_0,i_1,\ldots,i_n)$ that can appear in $F$ (of total degree $\leq D$).
The number of equations in this system is given by $\binom{D+n}{n}$, as that is the number of $(n+1)$-tuples of non-negative indices  with $i_0+\cdots+i_n\leq D$.  
Let us denote by $K$ the number variables $c$ and $\{c_{i_0,\ldots,i_n}\}$, so that
  $$K \leq  1+ \binom{D-1+n}{n},$$
which depends only on $D$ and $n$.   In particular, $K< \binom{D+n}{n}$.  
 Note that the restriction to $G(Y,\bX)$ satisfying $\mathcal{D}(e)$ does not change the number of equations, nor the number of variables, but how many of the coefficients $b_{j_0,...,j_n}$ are forced to be zero. 

Given values for the $\{b_{i_0,\ldots,i_n}\}$ and $\{a_{i_0,\ldots,i_n}\}$, for the above linear system to have a nontrivial solution in  the variables $c$ and $\{c_{i_0,\ldots,i_n}\}$, it is equivalent to require the determinant of each $K\times K$ submatrix (of the corresponding matrix) to be zero. This is equivalent to a system of (determinant) forms of degree $K$ vanishing, say $$\Delta_i(\{b_{j_0,...,j_n}\},\{a_{i_0,...,i_n}\})=0, \qquad i =1,...,r,$$
where we denote the number of such $K \times K$ determinants by  $r$, which only depends on $D$, $n$, and $e$.   These determinants can be viewed as polynomials with coefficients in terms of $\{a_{i_0,...,i_n}\}$, evaluated at variables $\{b_{i_0,\ldots,i_n}\}$. 

By \cite[Ch. V Thm. 1A]{Sch76}, there exist forms $h_1,...,h_s$ in the coefficients of $\Delta_1,...,\Delta_r$ (i.e. in polynomials of $\{a_{i_0,...,i_n}\}$) such that $\Delta_1=\Delta_2=...=\Delta_r=0$ is solvable in the $\{b_{j_0,...,j_n}\}$ if and only if $h_1=...=h_s=0$ when evaluated at $\{a_{i_0,...,i_n}\}$. These forms $h_i$ are moreover of degree $O_{D,n,e}(1)$; if $\mathrm{char}{K}=0$, $\|h_i\| = O_{D,n,e}(1)$ for $i=1,\ldots,s$ (see \cite[Ch. V Thm. 1D]{Sch76}).

Now for each $i=1,\ldots,r$ we define a form $G_i$ by substituting into $h_i$ the coefficients of the forms $\Delta_1,\ldots,\Delta_r$,  as polynomials in the $\{a_{i_0,\ldots,i_n}\}$. We recall that the coefficients of $\Delta_i$, as a function of $\{b_{i_0,\ldots,i_n}\}$, can be at most linear in $a_{i_0,...,i_n}$. Additionally, since $\Delta_1,...,\Delta_r$ range over all $K\times K$ minors, there exists a nonempty subset of the polynomials $\{\Delta_i\}$ where the coefficients have degree 1 in $\{a_{i_0,...,i_n}\}.$ So, the composition of $h_j$ with these linear polynomials in $a_{i_0,...,i_n}$ give us forms $G_j(\{a_{i_0,...,i_n}\})$ such that $F(Y,\bX) = G(Y,\bX) H(Y,\bX)$ if and only if $$G_j(\{a_{i_0,...,i_n}\})=0, j=1,...,s.$$
We observe that these forms $G_j$ cannot be uniformly zero, as that would imply that any polynomial $F(Y,\bX)$ is reducible.
Moreover, $G_j$ has degree and coefficients that are $O_{D,n,e}(1).$
Indeed, Schmidt  notes that $\deg G_j \leq (K-1)^{2^{K-1}}$ and 
$\|G_j\| \leq 4^{{K-1}^{2^{K-1}}}$ \cite[Ch. V Thm. 2A]{Sch76}.

We note that  the statement of \cite[Ch. V Thm. 1A]{Sch76} does not explicitly quantify the size of $s$ in terms of $n,D,e$. But the functions $h_1,\ldots,h_s$ are a resultant system of $r$ forms in the variables $\{a_{i_0,\ldots,i_n}\}$. A resultant system for $R$ forms in $S$ variables, when $R \geq S$, is the set of all $S \times S$ subdeterminants of an associated $R \times S$ matrix, so that the number of forms in the resultant system is $\ll_{R,S} 1$; see \cite[p. 179]{Sch76}. Thus in the statement of the lemma at hand, $s \ll_{n,D,e} 1$. 
\end{proof}
\begin{rem}
    Observe that the divisibility condition $\mathcal{D}(e)$ forces certain entries of the $K\times K$ submatrices to be zero. Nevertheless, we can still view the determinants of the $K\times K$ submatrices as polynomials in $\{b_{i_0,...,i_n}\}$ with coefficients in terms of $\{a_{i_0,...,i_n}\}$, as we have done above.
\end{rem}

One immediate corollary is the following standard fact.
\begin{cor}\label{cor_Noether_irred_generic}
For $D\geq 2$, let $\calM_n(D,k_1,...,k_n)$ denote the moduli space of polynomials in $\Q[Y,X_1,\ldots,X_n]$ that are monic in $Y$ and satisfy $\deg_Y(F) = D$ and $\deg_{X_i}(F) \leq k_i.$ In this moduli space, absolutely irreducible polynomials are generic.
\end{cor}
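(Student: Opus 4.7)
The plan is to apply part (ii) of Lemma \ref{lemma_Noether} to realize the reducible locus inside $\calM_n(D,k_1,\ldots,k_n)$ as a Zariski-closed subset, and then to exhibit a single explicit absolutely irreducible polynomial in the moduli space to force this closed locus to be proper. Its complement --- the locus of absolutely irreducible polynomials --- is then Zariski-open and dense, which is the meaning of generic.

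In more detail, I would first identify $\calM_n(D,k_1,\ldots,k_n)$ with the affine subspace of the coefficient space parametrizing polynomials of total degree at most $D+k_1+\cdots+k_n$, cut out by the linear equations $a_{D,0,\ldots,0}=1$ (monicity) together with $a_{i_0,\ldots,i_n}=0$ whenever $i_0>D$ or some $i_j>k_j$ for $j\geq 1$. Applying Lemma \ref{lemma_Noether}(ii) over $K=\Q$ yields forms $G_1,\ldots,G_s$ in the ambient coefficient variables whose common vanishing is equivalent to reducibility over $\overline{\Q}$ or to $\deg_Y F<D$. The monicity constraint eliminates the degree possibility, so restricting $G_1,\ldots,G_s$ to our affine subspace cuts out exactly the reducible locus as a Zariski-closed subset.

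The remaining step is to check that this closed locus is proper, which reduces to producing one absolutely irreducible witness in the moduli space. Assuming at least one $k_i\geq 1$ (otherwise, for $D\geq 2$, the moduli space contains only polynomials in $Y$ alone, which factor completely over $\overline{\Q}$, and the statement is understood to exclude this degenerate case), the polynomial $F(Y,\bX)=Y^D-X_i$ lies in $\calM_n(D,k_1,\ldots,k_n)$ and is absolutely irreducible by Eisenstein's criterion applied in $\Q[X_1,\ldots,X_n][Y]$ at the height-one prime $(X_i)$ of the UFD $\Q[X_1,\ldots,X_n]$. No step presents a real obstacle; the corollary is an immediate consequence of Noether's lemma together with this explicit witness, with the only bookkeeping being the passage from the total-degree formulation of Lemma \ref{lemma_Noether} to the multi-degree constraints defining the moduli space.
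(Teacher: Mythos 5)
Your overall route is the same as the paper's: apply Lemma \ref{lemma_Noether}(ii) to realize a closed locus containing all reducible polynomials, and use irreducibility of the affine space $\calM_n(D,k_1,\ldots,k_n)$ to conclude that the complement, once nonempty, is dense. Your instinct to exhibit an explicit witness is correct and in fact goes beyond the paper, whose one-line proof does not address why the forms $G_j$ are not identically zero on the moduli space. However, there is a concrete error in how you dispose of the degree-drop clause.

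The forms $G_1,\ldots,G_s$ of Lemma \ref{lemma_Noether}(ii) must be taken with the \emph{total-degree} parameter equal to the largest total degree occurring in $\calM_n(D,k_1,\ldots,k_n)$, namely $D'=\max(D,\,D-1+k_1+\cdots+k_n)$, and their common vanishing characterizes ``reducible over $\overline{\Q}$ \emph{or} of total degree $<D'$.'' Monicity in $Y$ with $\deg_Y F=D$ only forces $\deg F\geq D$; it does not force $\deg F=D'$. As soon as $k_1+\cdots+k_n\geq 2$ we have $D'=D-1+\sum_j k_j>D$, so the restricted vanishing locus is not ``exactly the reducible locus'' — it also contains every polynomial of submaximal total degree, and in particular it contains your witness $Y^D-X_i$, which has total degree $D<D'$. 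That witness therefore does not show that $\calM_n\setminus V(G_1,\ldots,G_s)$ is nonempty, and the genericity conclusion is left unestablished. The gap is easily repaired: choose instead a witness that is absolutely irreducible \emph{and} of total degree exactly $D'$, for instance $F=Y^D+Y^{D-1}X_1^{k_1}\cdots X_n^{k_n}-X_1$ (when some $k_i \geq 1$), which lies in $\calM_n(D,k_1,\ldots,k_n)$, has total degree $D-1+\sum_j k_j$, and is irreducible over $\overline{\Q}$ by Eisenstein at the prime $(X_1)$ of $\overline{\Q}[X_1,\ldots,X_n]$ together with Gauss's lemma. With that substitution your argument is complete.
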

Indeed, by Noether's Lemma \ref{lemma_Noether} (ii), for a degree $D \geq 2$  there exist forms $G_1,\ldots, G_{s}$ such that if $F(Y,\bX)$ is reducible over $\overline{\Q}$ (or $\deg F<D$) then $G_i$ vanish when evaluated at $F$, verifying the corollary.

The following fact is a consequence of Lemma \ref{lemma_Noether}, just as  
\cite[Ch. V Cor. 2B]{Sch76} follows from \cite[Ch. V Thm. 2A]{Sch76} (as initially observed by Ostrowski).
\begin{lem}\label{lemma_Bertini_Noether}
Let $F(\bX) \in \Z[X_1,\ldots,X_n]$ have degree  $D \geq 1$. (i) If $F$ does not satisfy divisibility condition $\mathcal{D}(e)$ over $\overline{\Q}$, then there exists a finite set $\mathcal{E}_e$ of exceptional primes with $|\mathcal{E}_e| \ll_{n,D,e} \log \|F\|/ \log \log \|F\|$   such that for $p \not\in \mathcal{E}_e$, the reduction of $F$ modulo $p$ is again of degree $D$ and does not satisfy divisibility condition $\mathcal{D}(e)$ over $\overline{\F_p}$. 
(ii)  If $F$ is absolutely irreducible over $\Q$, then there exists a finite set $\mathcal{E}$ of exceptional primes with $|\mathcal{E}| \ll_{n,D} \log \|F\|/ \log \log \|F\|$  such that for $p \not\in \mathcal{E}$, the reduction of $F$ modulo $p$ is again of degree $D$ and is irreducible over $\overline{\F_p}$. 
\end{lem}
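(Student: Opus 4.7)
The plan is to reduce both parts of Lemma \ref{lemma_Bertini_Noether} to a direct application of Lemma \ref{lemma_Noether}, treating (i) via part (i) of Noether's variant and (ii) via the classical part (ii). The central observation is that Noether's Lemma produces forms $G_1, \ldots, G_s$ in the \emph{coefficients} of $F$ whose common vanishing is equivalent to the relevant bad condition (either ``satisfies $\mathcal{D}(e)$ over $\overline{K}$ or $\deg F < D$'' for (i); ``reducible over $\overline{K}$ or $\deg F < D$'' for (ii)), and crucially these forms depend only on $n, D, e$ (resp.\ $n, D$) and not on the field. Thus the same finite system of forms detects the bad condition both over $\overline{\Q}$ (applied to $F$) and over $\overline{\F_p}$ (applied to the reduction $\bar F$).

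The key step: under the hypothesis of (i), $F$ does not satisfy $\mathcal{D}(e)$ over $\overline{\Q}$ and has degree exactly $D$, so the integers $G_1(F), \ldots, G_s(F)$ are not all zero. Fix a $j_0$ with $G_{j_0}(F) \neq 0$. For a prime $p$, if the reduction $\bar F$ were either to satisfy $\mathcal{D}(e)$ over $\overline{\F_p}$ or to have lower degree, then Lemma \ref{lemma_Noether} applied over $\F_p$ would force $G_i(\bar F) = 0$ for all $i$, and in particular $p \mid G_{j_0}(F)$. Hence the exceptional set of primes is contained in the set of prime divisors of the nonzero integer $G_{j_0}(F)$.

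It remains to estimate $|G_{j_0}(F)|$ and its number of prime divisors. By Lemma \ref{lemma_Noether}, $G_{j_0}$ has degree $O_{n,D,e}(1)$, coefficients of size $O_{n,D,e}(1)$, and $O_{n,D,e}(1)$ monomials, so
\[
|G_{j_0}(F)| \ll_{n,D,e} \|F\|^{O_{n,D,e}(1)}.
\]
Applying the standard elementary bound that a nonzero integer $N$ has at most $O(\log |N|/\log \log |N|)$ prime divisors yields $|\mathcal{E}_e| \ll_{n,D,e} \log \|F\|/\log \log \|F\|$, as required.

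For part (ii), the argument is essentially verbatim: absolute irreducibility of $F$ over $\Q$ together with $\deg F = D$ means the forms $G_1, \ldots, G_s$ (now arising from Lemma \ref{lemma_Noether}(ii)) are not all zero on $F$, and exactly the same counting of prime divisors of a chosen nonvanishing $G_{j_0}(F)$ gives the bound. No serious obstacle is anticipated — the only care needed is to ensure the ``or $\deg F < D$'' clause built into Noether's Lemma is exploited correctly, so that a single application simultaneously handles the two failure modes we wish to rule out for $\bar F$, namely a drop in degree (from primes dividing leading coefficients) and a sudden appearance of a factorization over $\overline{\F_p}$.
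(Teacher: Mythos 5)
Your proof is correct and follows essentially the same route as the paper: find a form $G_{j_0}$ from Noether's Lemma that is nonzero at $F$, observe that any bad prime must divide the integer $G_{j_0}(F)$, bound that integer by $\ll_{n,D,e}\|F\|^{O_{n,D,e}(1)}$, and apply $\omega(t)\ll \log t/\log\log t$. The paper handles (ii) either the same way you do or by taking the union of the exceptional sets $\mathcal{E}_e$ over all multi-degrees $e$ with $1\leq|e|<D$, but both are minor variants of the identical argument.
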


 \begin{proof}
For (i), let $F(\bX) \in \Z[X_1,\ldots,X_n]$ be given, with degree $D$, and let $\mathcal{D}(e)$ be a given divisibility condition with $|e| < D$. By hypothesis, $F$ does not satisfy $\mathcal{D}(e)$ over $\Q$ (and has degree $D$) so among the forms $G_1,\ldots,G_s$ provided by Lemma \ref{lemma_Noether} there is at least one, say $G_{j^*}$, such that when evaluated at the coefficients of $F$, $G_{j^*}(\{a_{i_0,\ldots,i_n}\}) \neq 0$. By Lemma \ref{lemma_Noether}, $F$ satisfies $\mathcal{D}(e)$ over $\overline{\F_p}$ or has degree $< D$ if and only if $p$ belongs the exceptional set $\mathcal{E}_e$ of primes such that $p |G_{j^*}(\{a_{i_0,\ldots,i_n}\})$.  
As a nonzero integer, 
\[ |G_{j^*}(\{a_{i_0,\ldots,i_n}\})| \leq \|G_{j^*}\| \|F\|^{\deg G_{j^*}}.\]
By Lemma \ref{lemma_Noether} (i), $\|G_{j^*}\| \ll_{n,D,e} 1$.
For any nonzero integer $t$, the number $\om(t)$ of distinct prime divisors of $t$ satisfies $\om(t) \ll \log t / \log \log t$. 
Consequently $|\mathcal{E}_e| \ll_{n,D,e} \log \|F\| / \log \log \|F\|$, as claimed. 

For (ii), suppose $F(\bX) \in \Z[X_1,\ldots,X_n]$ of degree $D$  is irreducible over $\overline{\Q}$. One can argue as above using the forms $G_1,\ldots, G_s$ provided by Lemma \ref{lemma_Noether} (ii), or observe that $F$ does not satisfy $\mathcal{D}(e)$ for any multi-degree $e$ with $1 \leq |e| < D$. Let $\mathcal{E} = \cup_e  \mathcal{E}_e$ denote the finite union over multi-degrees $e$ with $1 \leq |e| < D$. Then $|\mathcal{E}| \ll_{n,D} \log \|F\| / \log \log \|F\|$ and for $p \not\in \mathcal{E}$, $F$ does not satisfy condition $\mathcal{D}(e)$ over $\overline{\F_p}$ for any multi-index $e$, and is hence irreducible over $\overline{\F_p}$. 

 \end{proof}

\subsection{Hilbert's irreducibility theorem}
 We record the following version of the Hilbert Irreducibility Theorem, which we   use extensively in the next sections; a proof  can be found  in \cite[Ch. 13 and Ch. 14 \S 3]{FriJar23} or in \cite[Ch. 9, pp. 233-235]{Lan83}.

\begin{thm}[Hilbert]\label{thm_HIT}
Let $F_{1},...,F_{s}\in\mathbb{Q}(X_{1},...,X_{n},T_{1},...,T_{r})$ be irreducible polynomials in $n+r$ variables. There exists a dense subset $U\subset\mathbb{A}^{r}(\Q)$ such that for any $(t_{1},...,t_{r})\in U$, $F_{1}(X_{1},...,X_{n},t_{1},...,t_{r}),...,F_{s}(X_{1},...,X_{n},t_{1},...,t_{r})$ are irreducible over $\Q$.
\end{thm}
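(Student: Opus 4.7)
The plan is to follow the classical three-step reduction---from $s$ polynomials to one, from $n$ variables in $\bX$ to one, and from $r$ parameters to one---and then prove the base case $n=r=s=1$ by a counting estimate. For the first reduction, observe that if $U_i$ is a dense ``good-specialization'' set for $F_i$, then $\bigcap_i U_i$ is dense provided Hilbert sets are closed under finite intersection; this follows because one may build the joint good-specialization set from the union of the individual defining polynomials. For the reduction from $n$ variables in $\bX$ to $n=1$, I would apply a generic linear change of variables $X_i \mapsto X + c_i X'$ with one new formal variable $X'$ treated as an additional parameter: absolute irreducibility is preserved under generic linear substitution (Bertini/Gauss), so iterating peels off the $X_i$ one at a time. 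For the reduction from $r$ to $r=1$, I would argue inductively: by the $(r-1)$-case, there is a dense set of $(t_1,\ldots,t_{r-1})$ such that $F(X,t_1,\ldots,t_{r-1},T_r)$ remains irreducible in $\Q[X,T_r]$, and then the one-parameter case produces a dense set of $t_r$.

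The heart of the matter is the base case: for irreducible $F(X,T)\in\Z[X,T]$ with $\deg_X F \geq 2$, the set
\[ \mathcal{B}(N) := \{ t_0 \in \Z \cap [-N,N] : F(X,t_0) \text{ is reducible over } \Q\} \]
has size $o(N)$, so its complement is dense in $\Z$ (and, rescaling $T \mapsto T/M$, dense in $\Q$). To bound $|\mathcal{B}(N)|$, I would introduce Puiseux expansions $\alpha_1(T),\ldots,\alpha_d(T)$ of the roots of $F(X,T)$ at $T=\infty$, viewed as algebraic functions of $T$. Reducibility of $F(X,t_0)$ forces a proper subset $\{\alpha_{j_1}(t_0),\ldots,\alpha_{j_k}(t_0)\}$ to be stable under $\Gal(\overline{\Q}/\Q)$; equivalently, some symmetric function $g(\alpha_{j_1}(T),\ldots,\alpha_{j_k}(T))$---which by absolute irreducibility of $F/\Q(T)$ is \emph{not} a rational function of $T$---takes a rational value at $T=t_0$. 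Bookkeeping of the finitely many partitions of $\{\alpha_1,\ldots,\alpha_d\}$ that can arise (bounded in terms of $|\Gal(F/\Q(T))|$) reduces the problem to counting integer specializations where a fixed non-rational algebraic function lands in $\Q$.

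The main obstacle is this final Diophantine step: controlling how many integer $t_0 \in [-N,N]$ can make a genuinely algebraic function $g(\alpha(T))$ hit a rational number. I would handle it in one of two ways: (i) an elementary route, expanding $g(\alpha(T))$ as a truncated Puiseux series at infinity and exploiting that a rational value of $g(\alpha(t_0))$ forces an unusually sharp rational approximation to the tail---this can be converted via a sieve over primes into a bound of shape $|\mathcal{B}(N)|\ll_F N^{1-\delta}$; or (ii) the Thue--Siegel theorem on integral points on the auxiliary curves $\{(x,t): g(\alpha(t))=x\}$, which have positive genus once $g(\alpha(T))$ is non-rational of sufficient complexity. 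Either input delivers $|\mathcal{B}(N)|=o(N)$; combined with the reductions above, this yields the asserted dense subset $U\subset\A^r(\Q)$.
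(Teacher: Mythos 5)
The paper does not prove this statement: Theorem \ref{thm_HIT} is quoted as a classical result with references to Fried--Jarden and to Lang, so there is no in-paper argument to compare against. Your outline is essentially the classical D\"orge--Lang counting proof of Hilbert irreducibility (reduce to one polynomial, one $X$-variable, one parameter; then bound the number of $t_0\in[-N,N]$ with $F(X,t_0)$ reducible by $O_F(N^{1-\delta})$ via Puiseux expansions and the observation that reducibility forces a non-rational algebraic function of $T$ to take rational values), and the architecture is sound.

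Two points in the sketch are looser than they should be. First, the reduction from $n$ variables $X_1,\ldots,X_n$ to one is not usually done by a ``generic linear change of variables'' with a floating new variable $X'$; the standard devices are either a Kronecker-type substitution $X_i\mapsto X^{d^{\,i-1}}$ or viewing $X_2,\ldots,X_n$ as additional parameters, and in either case one must address the discrepancy between irreducibility in $\Q[X_1,\ldots,X_n]$ and irreducibility over $\Q(X_2,\ldots,X_n)$ (a content/primitivity issue via Gauss's lemma). As written, ``absolute irreducibility is preserved under generic linear substitution'' does not by itself transport the Hilbert-set property through the substitution. Second, your route (ii) via Siegel's theorem cannot deliver the needed bound on its own: the auxiliary curve can be rational with two points at infinity (e.g.\ $x^2=t$ for $F=X^2-T$), in which case it carries infinitely many integral points and Siegel gives nothing, yet the count is still only $O(N^{1/2})$. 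So in those residual genus-zero cases you are thrown back on the quantitative Puiseux/approximation estimate of route (i) anyway; route (i) alone, carried out carefully, suffices and is the standard argument. With those repairs your proof would be a correct (and more self-contained) substitute for the citation.
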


\subsection{Counting via the determinant method}
We will apply a result developed from the Bombieri-Pila determinant method, which has suitable uniformity, and a savings that depends on the degree of the variety:
 \begin{thm}[Pila {\cite[Thm. A]{Pil95}}]\label{thm_Pila_multidim}
Let $V$ be an irreducible affine variety defined over $\R$ of dimension $m$ and degree $d$ in $\mathbb{A}^n$. The number $N(H)$ of integral points $(x_1,\ldots,x_n) \in \Z^n$ lying on $V$ with $\max_i|x_i| \leq H$ satisfies 
\[  N(H) \ll_{n,m,d} H^{m-1+1/d}\exp(12\sqrt{d\log H\log \log H}) \]
for all $H \geq 1$. Note that given $\ep>0$, for all $H \gg_{d,\ep} 1$, $\exp(12\sqrt{d\log H\log \log H})\leq H^\ep$. 
\end{thm}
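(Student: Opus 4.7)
The plan is to apply the Bombieri--Pila determinant method in its higher-dimensional form, proceeding by induction on $m = \dim V$. The base case $m = 0$ is trivial since an irreducible zero-dimensional variety of degree $d$ consists of at most $d$ points. For the inductive step, the central idea is to cover the set of integer points of $V$ in $[-H,H]^n$ by small pieces on each of which every integer point is forced to lie on an auxiliary hypersurface of degree $\leq D$ that does not contain $V$. Intersecting this auxiliary hypersurface with $V$ drops the dimension by one, so that the inductive hypothesis applies to each irreducible component. Finally, one optimizes the auxiliary degree $D$ (eventually of the order $\sqrt{\log H/\log\log H}$) to minimize the total bound.

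The technical core is the determinant step on each small piece. After partitioning $[-H,H]^n$ into boxes of side length approximately $H^{\theta}$ for some $\theta \in (0,1)$, restrict attention to a single box $B$, list the integer points $P_1,\dots,P_M \in V \cap B$, and consider the matrix whose columns are the vectors $((P_j)^{\boldsymbol{\alpha}})_{|\boldsymbol{\alpha}| \leq D}$. A crucial ingredient is that the monomials of degree $\leq D$ restrict to a vector space of dimension only $\ll_{d} D^{m}$ on $V$ (by a Hilbert-function estimate depending on $\deg V = d$), which limits the rank of this matrix. Combining this with Hadamard's inequality applied to a maximal minor, and using a Taylor expansion along $V$ to replace the global scale $H$ by the local scale $H^{\theta}$ within the box, every such minor is bounded in absolute value by a quantity strictly less than $1$ once $\theta$ is chosen appropriately. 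Since these minors are integers, they vanish, producing a nonzero polynomial $G_B$ of degree $\leq D$ vanishing at every $P_j$ but not identically zero on $V$. By Bezout, $V \cap \{G_B = 0\}$ then has dimension $m-1$ and degree $\ll dD$, so the inductive hypothesis applies to each of its irreducible components.

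The main obstacle will be calibrating the interplay between $D$, the box scale $\theta$, and the degree $d$ so that the exponent lands at $m-1 + 1/d$ rather than some worse combination like $m-1+1/(dD)$. This requires packaging $D$'s contribution entirely into the sub-polynomial exponential factor, which forces the optimal choice $D \sim \sqrt{\log H/\log\log H}$ and demands that the inductive constant depend on $D$ only polynomially, so that compounding the estimate through $m$ levels of induction survives. A secondary challenge is uniform control on the number and degrees of irreducible components of $V \cap \{G_B=0\}$ across all boxes $B$: one invokes standard Bezout bounds to argue that these are $O_{n,m,d,D}(1)$, so that summing the inductive estimate over the $\ll H^{m\theta}$ relevant boxes and $O_{n,m,d,D}(1)$ components per box yields the uniform bound with the precise exponential error factor $\exp(12\sqrt{d\log H\log\log H})$.
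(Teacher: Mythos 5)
First, a point of context: the paper does not prove this statement at all — it is quoted verbatim from Pila \cite[Thm.\ A]{Pil95} as an external input, so there is no ``paper's own proof'' to compare against. Judged as a standalone argument, your sketch captures the general flavor of determinant-method proofs but is not how Pila proves Theorem A, and it contains gaps that I do not believe can be repaired in the form you describe. Pila's actual route is to establish the case $m=1$ by projecting an irreducible space curve to an irreducible plane curve and invoking the Bombieri--Pila bound $N(H)\ll H^{1/d}\exp(12\sqrt{d\log H\log\log H})$ (which is where the determinant method, the exponent $1/d$, and the constant $12$ all live), and then to induct on dimension by slicing $V$ with hyperplanes in a well-chosen direction, controlling the finitely many slices that degenerate (drop dimension, drop degree, or become reducible) by effective Bertini/Noether-type arguments. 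The factor $H^{m-1}$ comes from the $O(H^{m-1})$ slices and the factor $H^{1/d}\exp(\cdots)$ from the terminal curve case.

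The two concrete gaps in your version are these. First, the step ``using a Taylor expansion along $V$ to replace the global scale $H$ by the local scale $H^{\theta}$'' presupposes that $V\cap[-H,H]^n$ can be decomposed into boundedly many pieces on each of which $V$ is a graph over an $m$-dimensional coordinate plane with all partial derivatives up to order $D$ under control. Already for curves this is a delicate real-analytic argument in Bombieri--Pila; for $m\ge 2$ no such parametrization was available in 1995, which is precisely why Pila reduces to curves rather than running the determinant method directly in dimension $m$ (the later higher-dimensional versions either do this hard parametrization work or switch to a $p$-adic setting \`a la Heath-Brown). Second, the exponent bookkeeping fails. With $k\asymp dD^m/m!$ monomials on $V$ and the standard Hadamard/vanishing-order computation, the admissible box side is $H^{\theta}$ with $1-\theta\asymp kD/E\asymp c_m d^{-1/m}$, not $d^{-1}$; since the final exponent is $m-1+(1-\theta)$, the direct method lands at $m-1+O(d^{-1/m})$, which is strictly weaker than $m-1+1/d$ for $m\ge 2$ and $d$ large. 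Moreover your induction on the components of $V\cap\{G_B=0\}$ is broken by low-degree components: a linear component of dimension $m-1$ contributes $(H^{\theta})^{m-1}$ points per box by the inductive bound $m-2+1/d''$ with $d''=1$, and summing over boxes returns you to $H^{m-1+(1-\theta)}$ with the same inadequate $1-\theta$. You cannot rule such components out, and handling them is exactly the hard part that the slicing-to-curves argument is designed to avoid.
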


\subsection{Counting with a smooth weight}
A smoothed count  preserves the upper bound from a sharp cut-off count for ``points in a box,'' up to a negligible factor.
\begin{lem}\label{lemma_weight_count}
Let $W(\bu) = w(\bu/B)$ where $B \geq 1$ and $w$ is a non-negative infinitely differentiable function that is compactly supported in $[-2,2]^n$. 
\begin{enumerate}[(i)]
\item For any $M \geq 1$, 
$$\hat{W}(\bfu) \ll_n B^n \prod_{i=1}^n \left(1+|u_i|B\right)^{-M}.$$
\item  Suppose an affine variety $V \subset \mathbb{A}_\Z^n$ has the property that for each $H\geq 1$, 
\beq\label{hypothesis_in_box}
\#\{\bu \in \Z^n \cap V \cap [-H,H]^n\} \ll H^\kappa,
\eeq
for a given $\kappa = \kappa(V) \in [1,n]$. Let $A\geq 0$ be fixed. Then for any $Y >0$ and any $\ep>0$,   by taking $M \gg_{n,\kappa,\ep} 1$ sufficiently large,
\[  \sum_{\bu \in \Z^n \cap V} \prod_{i=1}^n \left(1+\frac{|u_i|}{Y}\right)^{-M}(\log(1+ |\bu|))^A\ll_{n,A,\ep} \max\{ 1,Y^{\kappa + \ep}\} .\]
\item  Suppose an affine variety $V \subset \mathbb{A}_{\F_p}^n$ has the property that for any subset $I \subset \F_p$ and corresponding ``box'' $I^n \subset \F_p^n$, 
\beq\label{hypothesis_in_box_p}
\#(V(\F_p)\cap I^n) \ll |I|^\kappa,
\eeq
for a given $\kappa = \kappa(V) \in [1,n]$. Let $A \geq 0$ be fixed. Then for $Y \geq 1$, for any sufficiently small $\ep>0$, by taking $M \gg_{n,\kappa,\ep
} 1$ sufficiently large,
\[  \sum_{\substack{\bu \in \Z^n \\ \bu_p \in V(\F_p)}}\prod_{i=1}^n \left(1+\frac{|u_i|}{Y}\right)^{-M}(\log(1+ |\bu|))^A\ll_{n,A,\ep}  \begin{cases}
    Y^{\kappa + \ep}, & \text{$Y \ll p^\sig,$ $\sig < 1$}\\
    (Y/p)^n p^\kappa Y^{\ep}& \text{$Y \gg p$}
\end{cases}
 ,\]
in which $\bu_p \in V(\F_p)$ indicates that the reduction of $\bu \modd{p}$ is identified with a point in $V(\F_p)$.
\end{enumerate}
\end{lem}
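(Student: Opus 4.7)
Part (i) is standard Fourier analysis. A change of variables gives $\hat W(\bu) = B^n \hat w(B\bu)$, and integration by parts $M$ times in each coordinate (using that $w \in C_c^\infty([-2,2]^n)$) yields $\hat w(\bv) \ll_{n,M} \prod_i(1+|v_i|)^{-M}$; substituting $\bv = B\bu$ concludes the bound.

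For part (ii), the plan is a dyadic decomposition of the support. For $j \geq 0$ let $A_j = \{\bu \in V \cap \Z^n : 2^{j-1} < |\bu|_\infty \leq 2^j\}$, with $A_0$ absorbing the unit ball; hypothesis (\ref{hypothesis_in_box}) yields $|A_j| \ll 2^{j\kappa}$. Since the largest coordinate on $A_j$ has $|u_i| \sim 2^j$, the product weight is dominated by $(1+2^j/Y)^{-M}$, and the log factor by $(j+1)^A$, so the sum is bounded by
\[
\sum_{j \geq 0} 2^{j\kappa}\bigl(1+2^j/Y\bigr)^{-M}(j+1)^A.
\]
Splitting at the threshold $2^j \sim Y$: for $2^j \leq Y$ the weight is $O(1)$, and the partial sum telescopes to $O(Y^\kappa(\log(Y+2))^A)$; for $2^j > Y$ the weight is $\sim (Y/2^j)^M$, and choosing $M > \kappa$ makes the tail a geometric series also summing to $O(Y^\kappa(\log(Y+2))^A)$. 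The polylog is absorbed into a $Y^\ep$ loss by slightly increasing $M$. When $Y < 1$, the analysis reduces to noting that only $\bu = 0$ can contribute nontrivially, giving the $\max\{1,Y^{\kappa+\ep}\}$ bound.

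For part (iii), the same dyadic argument applies once the box-count is adapted to the residue setting. When $2^j < p/2$, the residues $\{-2^j,\ldots,2^j\}$ inject modulo $p$ onto a subset $I \subset \F_p$ of size $O(2^j)$, and (\ref{hypothesis_in_box_p}) gives a count of $O(2^{j\kappa})$, so the analysis of part (ii) applies verbatim. When $2^j \geq p$, each residue is hit $O((2^j/p)^n)$ times, yielding a count $\ll (2^j/p)^n p^\kappa$ by taking $I = \F_p$. Inserting these into the dyadic sum and splitting at the thresholds $2^j \sim Y$ and $2^j \sim p$ handles both regimes: for $Y \ll p^\sigma$ with $\sigma < 1$, the main contribution $2^j \lesssim Y$ gives $Y^{\kappa+\ep}$ as in (ii), while the tail from $2^j \gtrsim p$ contributes $\ll p^\kappa (Y/p)^M$, which is dominated by $1$ once $M \geq \kappa/(1-\sigma)$; for $Y \gg p$, the main contribution from $p \lesssim 2^j \lesssim Y$ is $\ll (Y/p)^n p^\kappa (\log Y)^A$, the sub-$p$ part is smaller, and the tail $2^j \gtrsim Y$ is again geometrically summable with matching size.

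The main bookkeeping care is in part (iii): the strict inequality $\sigma < 1$ is precisely what allows a finite threshold $M = M(n,\kappa,\sigma,\ep)$ to kill the $p^\kappa(Y/p)^M$ tail uniformly in $p$ and $Y$, so that the implied constant depends only on $n,\kappa,A,\ep$ and not on $p$. Everything else is standard dyadic summation against a smooth weight.
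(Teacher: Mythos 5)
Your proof is correct and follows essentially the same strategy as the paper's: split the sum according to the size of $|\bu|_\infty$, apply the box-counting hypothesis where the weight is $O(1)$, and use the rapid decay of the weight (with $M$ large) to kill the tail; the only difference is that you use dyadic shells where the paper uses a single cutoff at $|\bu|_\infty \leq Y^{1+\ep}$, which is cosmetic. Your identification of the role of $\sigma<1$ in part (iii) matches the paper's reasoning, and the one imprecision — that for $Y<1$ "only $\bu=0$ contributes," when really one should just note $(1+|u_i|/Y)^{-M}\leq(1+|u_i|)^{-M}$ so the whole sum is $O(1)$ — does not affect the conclusion.
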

\begin{proof}
The bound for $\hat{W}(\bu)$ holds because $\hat{W}(\bu) = B^n \hat{w}(B\bu )$, and integrating $\hat{w}(\bx)$ by parts $M$ times in each coordinate proves the rapidly decaying upper bound $\hat{w}(\bx) \ll \prod_i (1+|x_i|)^{-M}$, for any $M \geq 1$. 

For the second claim, if $0<Y<1$  it suffices to observe
the left-hand side is bounded above, for sufficiently large $M$, by
\[  \sum_{\bu \in \Z^n } \prod_{i=1}^n (1+ |u_i|)^{-M}(\log(1+ |\bu|))^A \ll_{n,A}
\sum_{\bu \in \Z^n } \prod_{i=1}^n (1+|u_i|)^{-(M-1)} 
\ll 1.\]
Thus from now on we assume $Y \geq 1$.  We divide the sum over $\bu$ into $\mathrm{I}+\mathrm{II}$, which denote the contributions from $\bu \in C_\ep$ and $\bu \not\in C_\ep$, respectively, in which we define 
\[ C_\ep= [-Y^{1+\ep},Y^{1+\ep}]^n\]
for a fixed small $\ep>0$ of our choice. 
Then for any $M \geq 1$, by the hypothesis
(\ref{hypothesis_in_box}), 
\[ \mathrm{I} \ll_{\ep,M} (\log Y)^A \sum_{\bu \in C_\ep \cap V} 1  \ll (\log Y)^A (Y^{1+\ep})^\kappa \ll_{A,\ep} Y^{\kappa + (\kappa +1) \ep},\]
say, using $(\log Y)^A \ll_{A,\ep} Y^\ep$.
On the other hand,
\beq\label{C_ep_estimate}  \mathrm{II} \leq \sum_{\bu \in \Z^n \setminus C_\ep} \prod_{i=1}^n \left(1+\frac{|u_i|}{Y}\right)^{-M}
(\log |u_i|)^A \ll_{n,A,\ep} Y^n Y^{-\ep (M-2)}.
\eeq
(This used the fact that $(\log |u_i|)^A \ll_{A,\ep} \log Y$ if $|u_i| \leq Y^{1+\ep}$, and that $(\log |u_i|)^A(1+|u_i|/Y)^{-1} \leq 1$ if $|u_i| \geq Y^{1+\ep}$, for all $Y \gg_{A,\ep} 1$.)
By taking $M\gg_{n,\ep,A} 1$ sufficiently large, this is dominated by as small a power of $Y$ as we like, and in particular by the contribution of $\mathrm{I}$. We then obtain (ii) by taking $\ep>0$ as small as we like.

For the third claim, we again define the box $C_\ep$ as above (for a small $\ep>0$ of our choice) and divide the sum over $\bu$ into $\mathrm{I}+\mathrm{II}$, denoting the contributions from $\bu \in C_\ep$ and $\bu \not\in C_\ep$, respectively. Suppose first $Y \gg p$. (We ultimately do not require this case, but include it for completeness.) Then the integer points in the box $C_\ep$ may be identified as belonging to at most $\ll (Y^{1+\ep}/p + 1)^n$ copies of $\F_p^n$. Thus by the hypothesis (\ref{hypothesis_in_box_p}) and positivity, 
\[\mathrm{I} \ll_{\ep} (\log Y)^A (Y^{1+\ep}/p + 1)^n p^\kappa \ll_{n,A,\ep}  (Y/p)^n p^\kappa Y^{(n+1)\ep}  \qquad (Y \gg p) .\]
If on the other hand $Y \ll p^\sig$ for some $\sig<1$,  by  choosing $\ep$ sufficiently small with respect to $\sig$, we may ensure that  with $C_\ep$ defined as above, $C_\ep \cap \Z^n$  can be identified with a subset, say $I^n \subset \F_p^n$, to which the hypothesis (\ref{hypothesis_in_box_p}) applies directly. Consequently $\mathrm{I} \ll_A (\log Y)^A(Y^{1+\ep})^\kappa \ll_{A,\ep} Y^{\kappa + (\kappa+1)\ep}$. 
In either case, by positivity we may bound 
 $\mathrm{II}$ by summing over all integer tuples in the complement of $C_\ep$ as in (\ref{C_ep_estimate}).  By taking $M$ sufficiently large this is dominated by the contribution of $\mathrm{I}$. 
\end{proof}

\subsection{An estimate with polylog dependency}
 
We prove an unconditional    estimate for $N(F,B)$ with polylog dependence on $\|F\|$. This provides an explicit bound with essentially the same exponent as Serre's bound (\ref{Serre_bound_intro}), which we require later.
\begin{thm}\label{thm_polylog}
Fix an integer $n \geq 1$. Let $F(Y,X_{1},...,X_{n})\in\mathbb{Z}[Y,X_{1},...,X_{n}]$ be irreducible in $\mathbb{Q}[Y,X_{1},...,X_{n}]$ of total degree $D$, with   $\deg_{Y}(F)\geq 2$, and assume that  $F$ has constant-leading-coefficient in $Y$. Then  
\[
N(F,B)\ll_{n,D} (\log (\|F\|+2))^{e(n)} B^{n-\frac{1}{2}}(\log B)^{e(n)}\qquad \text{for all $B \geq 1$,}
\]
  for a constant $e(n)>0$ depending only on $n$.
\end{thm}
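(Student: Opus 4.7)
The plan is to apply the large sieve, along the lines of Serre's original argument \cite{Ser97} but with careful control over the dependence on $\|F\|$. First, let $S = \{\bx \in \Z^n \cap [-B, B]^n : \exists y \in \Z, F(y, \bx) = 0\}$ so that $N(F, B) = |S|$, and for each prime $p$ define
\[
\omega_p = \{\ba \in \F_p^n : F(Y, \ba) \bmod p \text{ has no root in } \F_p\}.
\]
Any $\bx \in S$ must satisfy $\bx \bmod p \notin \omega_p$ for every prime $p$. The large sieve then yields
\[
N(F, B) \leq \frac{(2B + Q^2)^n}{H(Q)}, \qquad H(Q) = \sum_{\substack{q \leq Q \\ q \text{ squarefree}}} \prod_{p \mid q} \frac{|\omega_p|/p^n}{1 - |\omega_p|/p^n}.
\]
The crux is to establish a uniform lower bound $|\omega_p|/p^n \geq \delta(n, D) > 0$ for all primes $p$ outside an exceptional set of size $\ll_{n, D} \log \|F\|$.

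For the density estimate, I would apply Chebotarev's theorem to the Galois cover of $\mathbb{A}^n_{\Q}$ associated with the splitting of $F(Y, \bX)$ over $\Q(\bX)$. Since $F$ is irreducible with $\deg_Y F \geq 2$, its Galois group $G$ acts transitively on the $\deg_Y F$ roots; by Jordan's theorem, the proportion of derangements satisfies $\delta(G) \geq 1/D!$. Combining Chebotarev with a uniform Lang--Weil bound (in the spirit of Lemma \ref{lemma_Lang_Weil_cor}, extended to the Galois closure with constants depending only on $n$ and $D$) gives
\[
|\omega_p|/p^n = \delta(G) + O_{n, D}(p^{-1/2})
\]
for all primes $p$ outside an exceptional set $\mathcal{E}$ consisting of: primes dividing the leading $Y$-coefficient of $F$ (here the constant-leading-coefficient hypothesis ensures $\deg_Y F$ is preserved modulo $p$), primes where $F \bmod p$ fails to be absolutely irreducible, and primes dividing the discriminant of the Galois cover. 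Applying the quantitative Bertini--Noether (Lemma \ref{lemma_Bertini_Noether}) to $F$ and to the auxiliary polynomial resolvents cutting out the Galois cover, whose coefficients are polynomially bounded in $\|F\|$, yields $|\mathcal{E}| \ll_{n, D} \log \|F\|$.

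With the density estimate in hand, for $p \geq p_0(n, D)$ and $p \notin \mathcal{E}$ one has $|\omega_p|/p^n \geq \delta(G)/2 \geq 1/(2 \cdot D!)$. Restricting $H(Q)$ to squarefree $q = p$ prime in this good set, the prime number theorem yields $H(Q) \gg_{n, D} Q/\log Q$ as soon as $Q \gg_{n, D} \log \|F\|$. Choosing $Q = \lfloor B^{1/2} \rfloor$ then gives $N(F, B) \ll_{n, D} B^{n-1/2} \log B$ whenever $B \gg_{n, D} (\log \|F\|)^2$; for smaller $B$ the trivial bound $N(F, B) \leq (2B+1)^n$ is absorbed into a factor of $(\log \|F\|)^{e(n)}$ with $e(n) = O(n)$. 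The main obstacle is the uniform density estimate: both the Lang--Weil error term and the threshold $p_0$ must depend only on $n$ and $D$ (requiring a Cafure--Matera-style uniform version of Lang--Weil applied to the Galois closure, whose own description has complexity bounded purely in terms of $n, D$), and the size of $\mathcal{E}$ must be controlled by $O_{n, D}(\log \|F\|)$ via quantitative Bertini--Noether applied to auxiliary polynomials whose coefficients remain polynomially bounded in $\|F\|$.
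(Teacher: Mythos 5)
Your approach is Serre's large-sieve argument, and it breaks down at exactly the step you flag as "the crux": the uniform lower bound on $|\omega_p|/p^n$. The density of $\ba\in\F_p^n$ with $F(Y,\ba)$ rootless mod $p$ is \emph{not} $\delta(G)+O_{n,D}(p^{-1/2})$ for a $p$-independent $\delta(G)$. It is governed by the proportion of fixed-point-free elements in the \emph{Frobenius coset} of the geometric monodromy group inside the arithmetic monodromy group, and that coset varies with $p$ according to how $p$ splits in the constant-field extension $k/\Q$ (the algebraic closure of $\Q$ in the splitting field of $F$ over $\Q(\bX)$). Jordan's theorem produces a derangement in the full transitive group, but says nothing about a prescribed coset. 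Concretely, take $F(Y,X_1)=Y^2-d\,h(X_1)^2$ with $d$ a non-square and $h$ non-constant: this is irreducible over $\Q$ with $\deg_Y F=2$ and constant leading coefficient, yet $|\omega_p|/p$ equals $1-O(1/p)$ when $d$ is a non-residue mod $p$ and $0$ when $d$ is a residue. So for a positive proportion of primes the sieve contributes nothing, and to certify that enough primes $p\leq Q\asymp B^{1/2}$ are good you need effective Chebotarev (here, equidistribution of primes in residue classes modulo $\asymp d$) for a field whose conductor is polynomial in $\|F\|$. Unconditionally this forces a threshold far larger than $(\log\|F\|)^{O(1)}$ — under GRH it is polylog, which is precisely why the paper states that Serre's argument yields polylog dependence on $\|F\|$ only under GRH when $m=1$. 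Since Theorem \ref{thm_polylog} is needed \emph{unconditionally} for arbitrary monic $F$, your route cannot close the argument.

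The paper's proof avoids local densities entirely. It bounds $N(F,B)$ by the number of $\bx$ for which $F(Y,\bx)$ is reducible in $\Q[Y]$ and then inducts on $n$: the effective Hilbert Irreducibility Theorem of Cluckers et al.\ (Theorem \ref{thm_CDHNV}) shows that only $\ll_{n,D}(\log(\|F\|+2))^{h(n)}B^{1/2}$ values of $x_n$ make $F(Y,X_1,\dots,X_{n-1},x_n)$ reducible as a polynomial in the remaining variables (those $x_n$ are counted trivially with $B^{n-1}$), while for the remaining $\ll B$ values of $x_n$ the inductive hypothesis in $n-1$ variables applies, giving $B\cdot B^{(n-1)-1/2}=B^{n-1/2}$. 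The constant-leading-coefficient hypothesis is used only to ensure $\deg_Y$ does not drop under specialization. If you want to salvage a sieve-based proof, you must either assume GRH or restrict to polynomials in $Y^m$ with $m\geq 2$ (as the paper does for its sieve lemmas); for the unconditional general statement you need the inductive/effective-HIT route.
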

The key tool to prove this is a recent result of Cluckers et al, on effective Hilbert's Irreducibility Theorem, via the determinant method.
\begin{thm}[{\cite[Theorem 1.10]{CDHNV23x}}]\label{thm_CDHNV}
Let $F(T,X_{1},...,X_{m})\in\mathbb{Z}[T,X_{1},...,X_{m}]$ be irreducible in $\mathbb{Q}[T,X_{1},...,X_{m}]$ with  $d=\deg_{\bX}(F)\geq 1$. 
 For each integer $B \geq 1$ define
 \[ R_T(F,B) = \{ t \in \Z \cap [-B,B]: \text{$F(t,X_1,\ldots,X_m)$ is reducible in $\Q[X_1,\ldots,X_m]$}\}.\]
 Then there is a constant $h(m)>0$ depending only on $m$ such that 
 for all $B \geq 1$, 
 \[ R_T(F,B) \ll_m 2^{h(m)d}d^{h(m)}(\log (\|F\|+2))^{h(m)}B^{1/2}.
 \]
\end{thm}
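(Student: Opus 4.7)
The plan is to prove the theorem by induction on $n$, using Theorem \ref{thm_CDHNV} at each step to discard specializations that produce reducible polynomials. The key structural observation linking CDHNV to our counting problem is the following: the constant-leading-coefficient-in-$Y$ hypothesis implies $\deg_Y F(Y,\bx) = \deg_Y F \geq 2$ for every $\bx$, so any integer $y$ with $F(y,\bx)=0$ renders $F(Y,\bx)$ reducible in $\Q[Y]$. The same principle applies after partial specialization of any proper subset of the $X_i$ variables.

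For the base case $n=1$, apply Theorem \ref{thm_CDHNV} with the specialization variable taken to be $T = X_1$ and $m=1$, viewing $F(Y,X_1)$ as $\tilde F(T, X'_1)$ via $X'_1 = Y$. The theorem bounds the number of $x_1 \in \Z \cap [-B,B]$ for which $F(x_1,Y)$ is reducible in $\Q[Y]$ by $\ll_D (\log(\|F\|+2))^{h(1)}B^{1/2}$. By the observation above, every $x_1$ contributing to $N(F,B)$ lies in this reducibility set, giving the bound with $e(1) := h(1)$.

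For the inductive step, with $n \geq 2$, take $T = X_n$ as the specialization variable in Theorem \ref{thm_CDHNV} and $(Y, X_1, \ldots, X_{n-1})$ as the $\bX$-variables (so $m = n$); the hypotheses $F$ irreducible in $\Q[T,\bX]$ and $\deg_{\bX} F \geq \deg_Y F \geq 2$ are both satisfied. Let $\calB \subset [-B,B] \cap \Z$ denote the set of $x_n$ for which $F_{x_n}(Y,X_1,\ldots,X_{n-1}) := F(Y,X_1,\ldots,X_{n-1},x_n)$ is reducible in $\Q[Y,X_1,\ldots,X_{n-1}]$. Theorem \ref{thm_CDHNV} gives $|\calB| \ll_{n,D} (\log(\|F\|+2))^{h(n)}B^{1/2}$. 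For $x_n \in \calB$, bound the count of contributing $(x_1,\ldots,x_{n-1}) \in [-B,B]^{n-1} \cap \Z^{n-1}$ trivially by $(2B+1)^{n-1}$, contributing $\ll_{n,D} (\log(\|F\|+2))^{h(n)} B^{n-1/2}$ in total. For $x_n \notin \calB$, the specialized polynomial $F_{x_n}$ is irreducible in $\Q[Y,X_1,\ldots,X_{n-1}]$, retains $\deg_Y \geq 2$ with the same (constant) leading coefficient, and has total degree at most $D$. The inductive hypothesis then yields $N(F_{x_n},B) \ll_{n-1,D} (\log(\|F_{x_n}\|+2))^{e(n-1)} B^{n-3/2}(\log B)^{e(n-1)}$. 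Since $\|F_{x_n}\| \ll_D \|F\|(B+1)^D$, one has $\log(\|F_{x_n}\|+2) \ll_D \log(\|F\|+2) + \log(B+2)$, so summing over the at most $2B+1$ good values of $x_n$ yields $\ll_{n,D} B^{n-1/2}(\log(\|F\|+2))^{e(n-1)}(\log B)^{2e(n-1)}$. Combining the two cases, one may set $e(n) := \max(h(n), 2e(n-1))$ to close the induction.

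The main obstacle is really just the bookkeeping: one must verify carefully that after specializing $X_n$, the polynomial $F_{x_n}$ continues to satisfy all hypotheses required by the inductive statement (absolute irreducibility when $x_n \notin \calB$, preservation of $\deg_Y \geq 2$ via the constant-leading-coefficient assumption, and the total-degree bound), and that the polylogarithmic dependence on $\|F\|$ inflates only by a bounded factor at each stage of the recursion. Mild edge cases, such as when $F$ happens not to depend on $X_n$, reduce immediately to the lower-dimensional statement applied to the genuine variables of $F$ together with the trivial factor of $(2B+1)$ for the redundant coordinate.
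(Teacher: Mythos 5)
Your proposal does not prove the stated theorem. The statement you were asked to establish is Theorem \ref{thm_CDHNV} itself — the effective Hilbert Irreducibility Theorem of Cluckers et al., bounding the number of integer specializations $t \in [-B,B]$ at which an irreducible $F(T,X_1,\ldots,X_m)$ becomes reducible by $\ll_m 2^{h(m)d} d^{h(m)} (\log(\|F\|+2))^{h(m)} B^{1/2}$. In the paper this is an imported result, quoted from \cite[Theorem 1.10]{CDHNV23x} with no proof given; its actual proof goes through the determinant method (covering the reducible specializations by integral points of bounded height on auxiliary varieties to extract the $B^{1/2}$ saving), none of which appears in your argument.

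What you have written instead is, essentially verbatim, the paper's deduction of Theorem \ref{thm_polylog} (the bound $N(F,B) \ll (\log(\|F\|+2))^{e(n)} B^{n-1/2}(\log B)^{e(n)}$) \emph{from} Theorem \ref{thm_CDHNV}: the induction on the number of variables, the specialization of $X_n$, the split into the $\ll B^{1/2}$ "bad" specializations counted trivially in the remaining variables versus the "good" specializations handled by the inductive hypothesis, and the control of $\log\|F_{x_n}\|$ by $\log\|F\| + \log B$. That deduction is correct and matches the paper's own argument for Theorem \ref{thm_polylog}, but as a proof of Theorem \ref{thm_CDHNV} it is circular: your very first step invokes Theorem \ref{thm_CDHNV} as a black box. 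To prove the target statement you would need an independent argument for the one-parameter specialization bound itself — e.g., the Noether-form/determinant-method machinery that produces the exponent $1/2$ and the explicit dependence $2^{h(m)d} d^{h(m)} (\log(\|F\|+2))^{h(m)}$ on the degree and height of $F$ — and nothing in the proposal addresses this.
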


\begin{rem}\label{remark_apply_CDHNV}
Note that the hypotheses of the theorem do not require that $\deg_T(F) \geq 1$. Indeed, if $\deg_T(F) =0$, then the hypothesis that $F(T,X_1,\ldots,X_m)$ is irreducible over $\Q$ requires that $F(X_1,\ldots,X_m)$ is irreducible over $\Q$, independent of specializing $T=t$ to any value $t$. Thus the set $R_T(F,B)$ is empty for all $B$, and the theorem is vacuously true. The theorem also does not require $F$ to be monic in $T$.
\end{rem}

To deduce Theorem \ref{thm_polylog}, we start by observing that, since $\deg_{Y}(F)\geq 2$,
\begin{multline}\label{F_solvable_reducible}
\#\{\bfx\in[-B,B]^n \cap \mathbb{Z}^{n}:\text{$F(Y,\bfx)=0$ is solvable over $\Z$}\}\\
\leq \#\{\bfx\in [-B,B]^n \cap \mathbb{Z}^{n}: \text{$F(Y,\bfx)=0$ is reducible in $\Q[Y]$}\}.
\end{multline}
Observe that Theorem \ref{thm_polylog} holds immediately if $\deg_Y(F) \geq 2$ but $\deg_{\{X_1,\ldots,X_n\}}(F)=0$, since then the set on the right-hand side of (\ref{F_solvable_reducible}) must be empty. For in such a case, $F(Y,\bX)=F(Y,\bx)$ for any fixed $\bx \in \Z^n$, so if $F(Y,\bx)$ is reducible then $F(Y,\bX)$ is reducible over $\Q$, which contradicts the hypothesis that $F(Y,\bX)$ is irreducible. 
Thus from now on we may assume $\deg_{\{X_1,\ldots,X_n\}}(F) \geq 1$. 

 If $n=1$, then by Theorem \ref{thm_CDHNV} applied to $F(Y,X_1)$ with $t$ playing the role of $x_1$,
 then 
 \[ \#\{x_1\in [-B,B] \cap \mathbb{Z}: \text{$F(Y,x_1)=0$ is reducible in $\Q[Y]$}\} \ll_{D}( \log (\|F\|+2))^{h(1)}B^{1/2}.\]
 From now on, let $n \geq 2$, and we proceed by induction on the number  $n$ of variables. Let us now assume that   for any polynomial $G\in\mathbb{Z}[Y,X_{1},...,X_{n-1}]$ that is irreducible in $\mathbb{Q}[Y,X_{1},...,X_{n-1}]$ of degree $\deg_Y(G) \geq 2$ and $\deg_{\{X_1,\ldots,X_{n-1}\}}(G) \geq 1$, the following inductive hypothesis holds:
  \begin{multline*}
       \#\{\bx' \in [-B,B]^{n-1} \cap \mathbb{Z}^{n-1}: \text{$G(Y,\bx')=0$ is reducible in $\Q[Y]$}\}\\
       \ll_{n,\deg G}( \log (\|G\|+2))^{e(n-1)}B^{(n-1)-1/2},
       \end{multline*}
  for some  $e(n-1)$. 
 
 Suppose $F(Y,X_1,\ldots,X_n)$ satisfies the hypotheses of Theorem \ref{thm_polylog}. We split up our count according to those $x_n$ such that $F(Y,X_1,...,X_{n-1},x_n)$ is reducible/irreducible in $\Q[Y,X_1,\ldots,X_{n-1}]$.
We now apply Theorem \ref{thm_CDHNV} with $t$ playing the role of $x_n$. (It may be that $\deg_{X_n}F =0$, but note that by Remark \ref{remark_apply_CDHNV}, Theorem \ref{thm_CDHNV} is also applicable in this case.) This leads to:
\begin{multline*}
\#\{x_{n}\in [-B,B]\cap \Z:\text{$F(Y,X_{1},...,X_{n-1},x_{n})$ is reducible in $\Q[Y,X_1,\ldots,X_{n-1}]$}\}
    \\
    \ll_{n,D}(\log (\|F\|+2))^{h(n)} B^{\frac{1}{2}}.
\end{multline*}
Now if $F(Y,X_{1},...,X_{n-1},x_{n})$ is reducible  in $\Q[Y,X_1,\ldots,X_{n-1}]$, then   $F(Y,x_{1},...,x_{n-1},x_{n})$ is   reducible in $\Q[Y]$ for any choice of $x_{1},...,x_{n-1}$, so we count these trivially. Hence the overall contribution to (\ref{F_solvable_reducible}) of the $x_n \in [-B,B] \cap \Z$ such that $F(Y,X_{1},...,X_{n-1},x_{n})$ is reducible is bounded by 
\[  \ll_{n,D}(\log (\|F\|+2))^{h(n)} B^{(n-1)+\frac{1}{2}}
\ll_{n,D}(\log (\|F\|+2))^{h(n)} B^{n-\frac{1}{2}},\]
which is acceptable.

On the other hand, suppose $x_n \in [-B,B] \cap \Z$ is fixed and $F(Y,X_{1},...,X_{n-1},x_{n})$ is irreducible in $\Q[Y,X_1,\ldots,X_{n-1}]$.  To apply the inductive step, we must ensure that 
\beq\label{degs_hold} \deg_YF(Y,X_{1},...,X_{n-1},x_{n}) \geq 2, \qquad \deg_{\{X_1,\ldots,X_{n-1}\}}F(Y,X_{1},...,X_{n-1},x_{n}) \geq 1.
\eeq
The first property holds since $F$ has constant-leading-coefficient in $Y$, and we assumed initially that $\deg_Y(F) \geq 2$. Second, suppose that $\deg_{\{X_1,\ldots,X_{n-1}\}}F(Y,X_{1},...,X_{n-1},x_{n}) =0,$ in which case $F(Y,X_{1},...,X_{n-1},x_{n})\in \Z[Y]$ while also being irreducible (in the case we currently consider).  Then for all choices of $\bX'=\bx'$, $F(Y,\bX',x_{n}) = F(Y,\bx',x_n)$ is irreducible in $\Q[Y]$, and so no contribution to (\ref{F_solvable_reducible})  occurs.  

Now we may suppose $x_n \in [-B,B] \cap \Z$ is fixed, and that $F(Y,X_{1},...,X_{n-1},x_{n})$ is irreducible in $\Q[Y,X_1,\ldots,X_{n-1}]$, and (\ref{degs_hold}) holds.
We can apply the inductive hypothesis, obtaining
\[
\#\{\bfx'\in [-B,B]^{n-1} \cap \Z^{n-1}: \text{$F(Y,\bfx',x_{n})$ is reducible in $\Q[Y]$}\}\ll_{n, D}(\log (\|F_{x_n}\|+2))^{e(n-1)} B^{n-1-\frac{1}{2}}.
\]
Here $\|F_{x_n}\|$ denotes the norm of the polynomial $F(Y,X_1,...,X_{n-1},x_n)$ and satisfies that $\log(\|F_{x_n}\|+2) \ll_{n,D} \log(\|F\|+2)+\log(B)$.
The number of $x_n$ which lead to this case is trivially at most $\ll B$, 
so the overall contribution from all $x_{n}$ for which $F(Y,X_{1},...,X_{n-1},x_{n})$ is irreducible in $\Q[Y,X_1,\ldots,X_{n-1}]$ is $\ll_{n, D}(\log (\|F\|+2)+\log(B))^{e(n-1)} B^{n-\frac{1}{2}}$.

In total,
this argument has proved that for all $B \geq 1$,
\[ 
\#\{\bfx\in [-B,B]^n\cap \mathbb{Z}^{n}:  \text{$F(Y,\bfx)=0$ is solvable over $\Z$}\}\ll_{n,D}(\log (\|F\|+2)+\log(B))^{e(n)}  B^{n-\frac{1}{2}} 
\]
with $e(n) = \max\{h(n),e(n-1)\}.$ This completes the proof of Theorem \ref{thm_polylog}.

\section{Genericity  and key properties}\label{sec_genericity}
 
 In this section we begin with a theorem that $(1,n)$-allowable and strongly $(1,n)$-allowable polynomials are generic. 
 We then prove several key properties of these classes of polynomials, which we will call upon later. 
 
 \emph{Notation:} To work with these classes of polynomials, we define the following notational conventions. For a given subset $I \subseteq \{1,\ldots,n\}$, we denote $\bX_I = (X_i)_{i \in I}$. For example, if $I=\{2,\ldots,n\}$ then $\bX_I = (X_2,\ldots,X_n)$, so that $\Q(\bX_I)=\Q(X_2,\ldots,X_n)$, and a polynomial $G(Y,\bX_I)$ can depend on $Y$ and $X_2,\ldots,X_n$ but not $X_1$. With a slight abuse of notation with respect to the ordering of variables, we indicate by $F(Y,\bX_{I^c},\bx_{I})$, for a given $\bx_I \in \Q^{|I|}$, that $X_i$ is evaluated at $x_i$ for each $i \in I$. We will also denote $(X_2,\ldots,X_n)$ by $\bX'$.   

In this section, we denote by $\calM_n(D,k_1,...,k_n)$ the closed irreducible subset of the moduli space of polynomials in  $\Q[Y,X_1,\ldots,X_n]$ that are monic in $Y$ and satisfy $\deg_Y(F) = D$ and $\deg_{X_i}(F) \leq k_i$. For a given integer $m|D$, we denote by $ \mathcal{Y}_n^m(D,k_1,...,k_n)\subset \calM_n(D,k_1,...,k_n)$ the subset of polynomials that take the form (\ref{F_dfn_intro}). 
 
\begin{thm}[Genericity]\label{thm_genericity}
 Fix integers $D \geq 2$ and $k_1,\ldots, k_n \geq 1$.   There exists a finite collection of polynomials  $\tilde{h}_1,\ldots,\tilde{h}_s$ in the coefficients of $F \in \calM_n(D,k_1,...,k_n)$, not all identically zero on $\calM_n(D,k_1,...,k_n)$, such that if $F\in \calM_n(D,k_1,...,k_n)$  is not strongly $(1,n)$-allowable, then
 $\tilde{h}_1(F) = \cdots = \tilde{h}_s(F)=0$.

 Let $\calM\subset\calM_n(D,k_1,...,k_n)$ be a closed irreducible subset. Moreover, assume that there exists $P\in\calM$ such that $P$ is strongly $(1,n)$-allowable. Then strongly $(1,n)$-allowable polynomials are generic  in $\calM$.

 Fix  an integer $m \geq 1$ with $m|D$. The polynomials  $\tilde{h}_1,\ldots,\tilde{h}_s$ are not all identically zero on $\mathcal{Y}^m_n(D,k_1,...,k_n)$. Let $\mathcal{Y}\subset\mathcal{Y}_n^m(D,k_1,...,k_n)$ be a closed irreducible subset, and assume that there exists $P\in \mathcal{Y}$ which is strongly $(1,n)$-allowable. Then   strongly $(1,n)$-allowable polynomials are generic in $\mathcal{Y}$.
\end{thm}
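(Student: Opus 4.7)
The plan is to prove Theorem \ref{thm_genericity} in three stages: (i) construct the polynomials $\tilde h_1,\ldots,\tilde h_s$ whose joint vanishing is a necessary condition for failure of the strongly $(1,n)$-allowable property; (ii) verify that these polynomials are not all identically zero on $\calM_n(D,k_1,\ldots,k_n)$ or on $\mathcal{Y}_n^m(D,k_1,\ldots,k_n)$ by exhibiting explicit strongly $(1,n)$-allowable examples; (iii) deduce genericity on any irreducible closed subset containing such an example by a standard Zariski argument.

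For stage (i), I would unpack the definition. An absolutely irreducible $F$ monic in $Y$ fails the strongly $(1,n)$-allowable property precisely when there exist $\sigma \in \GL_n(\Q)$, an index $i \in \{1,\ldots,n\}$, a divisor $d'$ of $D = \deg_Y F$ with $2 \leq d' \leq D$, and an intermediate field $\Q(\bX) \subsetneq M' \subseteq \Q(\bX)[Y]/(F_\sigma)$ of relative degree $d'$, such that $M'$ admits a defining polynomial $G(Y,\bX)$ over $\overline{\Q}$ with $\deg_Y G = d'$ and $\deg_{X_i}G = 0$. The existence of such an $M'$ of degree $d'$ translates, after fixing a generator for $M'$ of controlled degree and clearing denominators, into the existence of a factorization of $F_\sigma$ over $\overline{\Q(\bX)}$ with a divisibility pattern of the form treated in Lemma \ref{lemma_Noether}. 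Applying that lemma for each pair $(d',i)$ yields finitely many polynomial conditions $\phi_j(F,\sigma) = 0$ in the coefficients of $F$ and the entries of $\sigma$. To remove the dependence on $\sigma$, I would apply classical elimination theory: the projection onto the coefficient space of $F$ of the incidence set cut out by the $\phi_j$ inside the product of the coefficient space of $F$ with $\GL_n \subseteq \mathbb{A}^{n^2}$ is constructible, and its Zariski closure is carved out by finitely many polynomials $\tilde h_1,\ldots,\tilde h_s$ in the coefficients of $F$ alone.

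For stages (ii) and (iii), I would exhibit a single strongly $(1,n)$-allowable polynomial in each relevant moduli space; such examples will appear in Section \ref{sec_examples}, notably the cyclic family $F = Y^D - H(\bX)$ with generic irreducible $H$ as in Example \ref{ex: generic cyclic} and Theorem \ref{thm_cyclic_uniform}. This family lies in $\calM_n(D,k_1,\ldots,k_n)$ and, with $m = D$, also in $\mathcal{Y}_n^m(D,k_1,\ldots,k_n)$, so its existence forces at least one $\tilde h_j$ to be nonzero on each ambient moduli space. Given then a closed irreducible subset $\mathcal{M}$ (respectively $\mathcal{Y}$) containing a strongly $(1,n)$-allowable polynomial $P$, the vanishing locus $\{F \in \mathcal{M} : \tilde h_1(F) = \cdots = \tilde h_s(F) = 0\}$ is closed and misses $P$, hence is proper; irreducibility then implies its complement is dense open, and every polynomial in the complement is strongly $(1,n)$-allowable.

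The principal obstacle is stage (i): the translation of ``$M'$ is not $n$-genuine'' into a condition amenable to Lemma \ref{lemma_Noether} must proceed through a single defining polynomial of bounded degree, despite $n$-genuine-ness being formally a property of the entire class of defining polynomials of $M'$. The auxiliary step of bounding the complexity of such a generator (for instance, by working with primitive elements built from $Y$ and polynomial combinations in $\bX$) is where the most care is needed; the subsequent $\sigma$-elimination is routine provided one handles the open subvariety $\GL_n \subset \mathbb{A}^{n^2}$ with the usual precautions.
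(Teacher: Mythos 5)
Your stages (ii) and (iii) match the paper, and your $\sigma$-elimination (Chevalley constructibility plus Zariski closure of the projection) is a workable substitute for the paper's use of Schmidt's resultant systems. The genuine gap is in stage (i), precisely at the point you flag as the ``principal obstacle'': the translation of ``there exists an intermediate field $M'$ that is not $n$-genuine'' into a divisibility condition on $F_\sigma$ of the type handled by Lemma \ref{lemma_Noether} does not exist. Failure of strong allowability is compatible with absolute irreducibility of $F_\sigma$: for instance $(Y^2-X_2)^2-X_1$ is absolutely irreducible over $\overline{\Q}[Y,X_1,X_2]$ yet has the intermediate field $\Q(X_1,X_2)(\sqrt{X_1})$ defined by $Y^2-X_1$. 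So no factorization of $F_\sigma$ over $\overline{\Q}$ with a prescribed multi-degree pattern can detect the condition; and if instead you take $K=\Q(\bX)$ in Lemma \ref{lemma_Noether}, every monic $F_\sigma$ splits into linear factors over $\overline{K}$ and the degree conditions become vacuous. The intermediate-field condition is a constraint on the \emph{field of definition} of the coefficients of the factors, not on their degrees, and that is not what the divisibility conditions $\mathcal{D}(e)$ encode.

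The paper's resolution, which is the essential idea missing from your proposal, is a specialization argument. If $M'=\Q(\bX)[Y]/(G(Y,\bX_I))$ with $1\notin I$ is a proper intermediate field, then $F_\sigma$ is reducible over $N'=\Q(X_2,\ldots,X_n)[Y]/(G)$ by the degree count $[M:\Q(\bX)]=[M:N][N:\Q(\bX)]$; specializing $(X_2,\ldots,X_n)=\bx'\in\Q^{n-1}$ sends $G(Y,\bx')$ into $\Q[Y]$, whose roots lie in $\overline{\Q}$, so $F_\sigma(Y,X_1,\bx')$ is reducible over $\overline{\Q}$ for \emph{every} $\bx'$. Only now is Noether's lemma applied, to the two-variable polynomials $F_\sigma(Y,X_1,\bx')$: the resulting form $G_R$ must vanish identically as a polynomial in $X_2,\ldots,X_n$, and it is the coefficients of that identically vanishing polynomial that furnish the conditions $g_{\bfk,F}(\sigma)=0$ to which the elimination of $\sigma$ is then applied. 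Note also that this route never needs a bounded-degree generator for $M'$, so the auxiliary primitive-element step you propose is not only delicate but unnecessary. Finally, a caution common to both your closure argument and the paper's resultant systems: exhibiting a strongly $(1,n)$-allowable $P$ shows $P$ lies outside the projection of the incidence set, but to conclude that some $\tilde h_j(P)\neq 0$ one must show $P$ lies outside the \emph{closed} set the $\tilde h_j$ cut out; this requires running the necessary condition backwards at $P$ (e.g.\ showing the forms $g_{\bfk,P}$ have no nontrivial common zero), not merely quoting that $P$ is strongly allowable.
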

The key conclusions of the theorem are contingent on the existence of at least one explicit example in $\mathcal{M}_n(D,k_1,\ldots,k_n)$, $\mathcal{M}$, $\mathcal{Y}_n^m(D,k_1,\ldots,k_n)$, or $\mathcal{Y}$ that confirms the polynomials  $\tilde{h}_1,\ldots,\tilde{h}_s$ are not all identically zero on the relevant set. We reserve the construction of these examples to \S \ref{sec_examples}, and  summarize here the consequences, as two corollaries.
 
 \begin{cor}\label{cor_genericity_M_Y}
Fix integers $D \geq 2$ and $k_1,\ldots, k_n \geq 1$. Then strongly $(1,n)$-allowable polynomials are generic in $\calM_n(D,k_1,...,k_n)$. 
Furthermore, polynomials that are not strongly $(1,n)$-allowable have density zero in $\calM_n(D,k_1,...,k_n)$, in the sense that  
 \[ \frac{\#\{ F(Y,\bX)\in \calM_n(D,k_1,...,k_n), \|F\| \leq T: F\text{ is not strongly $(1,n)$-allowable})\}}{\#\{ F(Y,\bX) \in \calM_n(D,k_1,...,k_n), \|F\| \leq T\}}\ll \frac{1}{T}
 \]
  for all $T \geq 1$. 
For each $m|D$, the above results hold for $\mathcal{Y}_n^{m}(D,k_1,...,k_n)$ as well.
\end{cor}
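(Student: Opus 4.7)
The plan is to deduce Corollary \ref{cor_genericity_M_Y} directly from Theorem \ref{thm_genericity} by combining the polynomials $\tilde{h}_1,\ldots,\tilde{h}_s$ it provides with the Schwartz-Zippel bound (Lemma \ref{lemma_Schwartz_Zippel}) for the quantitative density assertion. Recall that Theorem \ref{thm_genericity} furnishes polynomials $\tilde{h}_1,\ldots,\tilde{h}_s$ in the coefficient variables of $F$ such that every non-strongly $(1,n)$-allowable $F$ satisfies $\tilde{h}_1(F)=\cdots=\tilde{h}_s(F)=0$; moreover these polynomials are not all identically zero on $\mathcal{M}_n(D,k_1,\ldots,k_n)$ (respectively $\mathcal{Y}_n^m(D,k_1,\ldots,k_n)$), contingent on at least one strongly $(1,n)$-allowable example existing in each set.

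First I would invoke the examples constructed in \S \ref{sec_examples}: these produce explicit strongly $(1,n)$-allowable polynomials $P \in \mathcal{M}_n(D,k_1,\ldots,k_n)$ and $P' \in \mathcal{Y}_n^m(D,k_1,\ldots,k_n)$. With these examples in hand, both $\mathcal{M}_n(D,k_1,\ldots,k_n)$ and $\mathcal{Y}_n^m(D,k_1,\ldots,k_n)$ satisfy the hypothesis of the second/third paragraph of Theorem \ref{thm_genericity} (taking $\mathcal{M}$ and $\mathcal{Y}$ to be the entire respective moduli spaces, which are closed and irreducible). The theorem then immediately yields that strongly $(1,n)$-allowable polynomials are Zariski-generic in both spaces. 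For the quantitative density statement, I would fix one $\tilde{h}_{j_0}$ among $\tilde{h}_1,\ldots,\tilde{h}_s$ that does not vanish identically on $\mathcal{M}_n(D,k_1,\ldots,k_n)$; every non-strongly $(1,n)$-allowable $F$ then has its coefficient tuple lying on the hypersurface $\tilde{h}_{j_0}=0$.

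Parameterize $\mathcal{M}_n(D,k_1,\ldots,k_n)$ by its free coefficients (those of $Y^j X_1^{i_1}\cdots X_n^{i_n}$ for $0 \leq j < D$ and $0\leq i_\ell \leq k_\ell$), giving an affine space of some dimension $N = N(n,D,k_1,\ldots,k_n)$. Restricting to $\|F\| \leq T$ confines the coefficients to $\{-\lfloor T\rfloor,\ldots,\lfloor T\rfloor\}^N$, which has cardinality $\asymp_{n,D,k_1,\ldots,k_n} T^N$. Apply Lemma \ref{lemma_Schwartz_Zippel} to the nonzero polynomial $\tilde{h}_{j_0}$ in $N$ variables over this set to bound the number of non-strongly $(1,n)$-allowable $F$ with $\|F\|\leq T$ by $\deg(\tilde{h}_{j_0})(2\lfloor T\rfloor+1)^{N-1} \ll_{n,D,k_1,\ldots,k_n} T^{N-1}$. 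The ratio in the corollary is therefore $\ll T^{-1}$. The identical argument applied to $\mathcal{Y}_n^m(D,k_1,\ldots,k_n)$ — which is a linear (hence irreducible) affine subspace of dimension $N' \leq N$ on which the same $\tilde{h}_{j_0}$ (or at least some member of the collection) is nonzero by Theorem \ref{thm_genericity} — yields the analogous bound. There is essentially no obstacle in this deduction: all the structural work is absorbed into Theorem \ref{thm_genericity} and into the explicit constructions of \S \ref{sec_examples}, and the only mild care is in confirming that the chosen $\tilde{h}_{j_0}$ remains nonzero after restriction to the subspace $\mathcal{Y}_n^m(D,k_1,\ldots,k_n)$, which is precisely what the last sentence of Theorem \ref{thm_genericity} provides.
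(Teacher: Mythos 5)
Your proposal is correct and follows essentially the same route as the paper's own proof: invoke an explicit strongly $(1,n)$-allowable example (the paper uses $P=Y^{D}-\sum_i X_i^{k_i}$ from Example \ref{ex: generic diag}, which also lies in $\mathcal{Y}_n^m(D,k_1,\ldots,k_n)$ when $D=md$) to activate Theorem \ref{thm_genericity}, then identify the moduli space with an affine space of coefficients and apply the Schwartz--Zippel bound (Lemma \ref{lemma_Schwartz_Zippel}) to a nonvanishing $\tilde{h}_{j_0}$ to save one variable and obtain the $\ll 1/T$ density bound. No substantive differences.
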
 
\begin{proof}[Proof of Corollary \ref{cor_genericity_M_Y}]
 
 For the first claim, it is enough to observe that  $\calM_n(D,k_1,...,k_n)$ is irreducible and that the polynomial
\[
P=Y^{D}-\sum_{i=1}^{n}X_{i}^{k_{i}},
\]
is strongly $(1,n)$-allowable, which is verified   in Example \ref{ex: generic diag} of \S \ref{sec_examples}. Then by Theorem \ref{thm_genericity}, it follows that strongly $(1,n)$-allowable polynomials are generic in $\calM_n(D,k_1,...,k_n)$.

For the next claim of the corollary, recall that at least one among the polynomials  $\tilde{h}_1,...,\tilde{h}_s$ (provided by Theorem \ref{thm_genericity}) is not identically zero  over $\calM_n(D,k_1,...,k_n)$. Let us suppose $\tilde{h}_1$ is not identically zero over $\calM_n(D,k_1,...,k_n)$. Then the left-hand side of the claim is bounded above by  
\[\frac{\#\{ F(Y,\bX)\in \calM_n(D,k_1,...,k_n), \|F\| \leq T: \tilde{h}_1(F) = 0\}}{\#\{ F(Y,\bX) \in \calM_n(D,k_1,...,k_n), \|F\| \leq T\}}\ll \frac{1}{T}.
\]
This is because $\mathcal{M}_n(D,k_1,\ldots,k_n)$ is isomorphic to affine space $\mathbb{A}^M$ (for an appropriate dimension $M$ depending on $n,D,k_1,\ldots,k_n$), so that a trivial estimate (Lemma \ref{lemma_Schwartz_Zippel}) saves at least one variable in the count recorded in the numerator.
 The argument for $\mathcal{Y}_n^{m}(D,k_1,...,k_n)$ is analogous, since if $D=md$ then the   example $P$ lies in $\mathcal{Y}_n^{m}(D,k_1,...,k_n)$, and again $\mathcal{Y}_n^m(D,k_1,\ldots,k_n)$ is isomorphic to affine space (of an appropriate dimension).
\end{proof}
\begin{cor}\label{cor_genericity_sing}
Fix integers $D \geq 2$ and $k_1,\ldots, k_n \geq 1$ and denote
\[
\mathcal{S}_n(D,k_1,...,k_n)=\{P\in\calM_n(D,k_1,...,k_n)\text{: }P\text{ is singular}  \}.
\]
Then strongly $(1,n)$-allowable polynomials are generic in $\mathcal{S}_n(D,k_1,...,k_n)$.
\end{cor}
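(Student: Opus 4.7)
The plan is to apply Theorem \ref{thm_genericity} directly to the subvariety $\mathcal{S}_n(D,k_1,\ldots,k_n) \subset \calM_n(D,k_1,\ldots,k_n)$. This reduces the problem to verifying two facts: (a) $\mathcal{S}_n(D,k_1,\ldots,k_n)$ is a closed irreducible subset of $\calM_n(D,k_1,\ldots,k_n)$; and (b) $\mathcal{S}_n(D,k_1,\ldots,k_n)$ contains at least one strongly $(1,n)$-allowable polynomial.

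For (a), closedness of $\mathcal{S}_n$ is classical: $F \in \calM_n$ is singular precisely when the discriminant $\Delta(F)$ vanishes, and $\Delta$ is a polynomial in the coefficients of $F$. For irreducibility, I would use a standard incidence variety argument. Define
\[ \mathcal{I} = \{(F, p) \in \calM_n(D,k_1,\ldots,k_n) \times \mathbb{A}^{n+1} : F(p) = 0, \; \partial_Y F(p) = 0, \; \partial_{X_i} F(p) = 0 \text{ for } i=1,\ldots,n\}. \]
For each fixed $p \in \mathbb{A}^{n+1}$, the conditions defining the fiber of the projection $\pi_2:\mathcal{I}\to \mathbb{A}^{n+1}$ over $p$ are $n+2$ affine-linear conditions on the coefficients of $F$. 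A direct check, first at $p=(0,\ldots,0)$ (where the conditions reduce to vanishing of distinct coordinate coefficients) and then by tracking how the conditions deform as $p$ varies, shows these conditions are linearly independent for every $p$. Hence the fibers of $\pi_2$ are affine linear subspaces of $\calM_n$ of constant codimension $n+2$, and in particular irreducible of common dimension. Since $\mathbb{A}^{n+1}$ is irreducible and $\pi_2$ has irreducible equidimensional fibers, $\mathcal{I}$ is itself irreducible. The image of the other projection $\pi_1:\mathcal{I}\to \calM_n$ is precisely $\mathcal{S}_n$ by the definition of singularity; the image of an irreducible variety under a morphism is irreducible, and together with closedness this shows $\mathcal{S}_n$ is a closed irreducible subvariety of $\calM_n$.

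For (b), the diagonal polynomial $P = Y^{D} - \sum_{i=1}^n X_i^{k_i}$ from Example \ref{ex: generic diag} is strongly $(1,n)$-allowable, and when all $k_i \geq 2$ it is also singular at the origin since $\partial_Y P = DY^{D-1}$ and $\partial_{X_i}P = -k_i X_i^{k_i-1}$ all vanish there. For parameter choices where some $k_i = 1$, a slight modification suffices—for instance, replacing the offending term $X_i^{k_i}$ with $X_i \cdot g(\bX)$ for a polynomial $g$ vanishing at the origin and chosen to preserve strong $(1,n)$-allowability—verified by arguments parallel to those of Example \ref{ex: generic diag}. With (a) and (b) in hand, Theorem \ref{thm_genericity} applied with $\mathcal{M} = \mathcal{S}_n(D,k_1,\ldots,k_n)$ immediately yields the corollary.

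The main obstacle is part (a), specifically the irreducibility of $\mathcal{S}_n$: although the incidence variety argument is standard, one must carefully confirm the linear independence of the $n+2$ defining conditions over every $p \in \mathbb{A}^{n+1}$ so that the fibers of $\pi_2$ genuinely have constant dimension. Constructing the explicit example in (b) when some $k_i = 1$ also requires a little care, since modifying the diagonal polynomial must not destroy its strong $(1,n)$-allowability.
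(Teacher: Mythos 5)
Your overall strategy is the same as the paper's: reduce to Theorem \ref{thm_genericity} by checking that $\mathcal{S}_n(D,k_1,\ldots,k_n)$ is closed and irreducible and that it contains one strongly $(1,n)$-allowable member. For irreducibility, the paper simply cites \cite[Prop.~7.1]{EisHar16}; your incidence-variety argument is exactly the content of that citation, and your linear-independence check does go through (the constant coefficient, the coefficient of $Y$, and the coefficients of $X_1,\ldots,X_n$ are all free because $D\geq 2$ and $k_i\geq 1$, and the resulting $(n+2)\times(n+2)$ block is unitriangular), so that part is fine, just longer than necessary.

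The genuine gap is in step (b), the construction of a strongly $(1,n)$-allowable singular polynomial when some $k_i=1$. The diagonal polynomial $Y^D-\sum X_i^{k_i}$ is not even admissible then (and has no singular point since $\partial_{X_i}=-1$), and your proposed repair --- replace $X_i^{k_i}$ by $X_i\,g(\bX)$ and verify strong allowability ``by arguments parallel to Example \ref{ex: generic diag}'' --- does not work as stated: the proof of Example \ref{ex: generic diag} hinges on $V(F)$ having an \emph{isolated} singularity at the origin (it derives a contradiction from $\dim \sing V(F_L)>0$), and the modification you propose will in general produce a positive-dimensional singular locus, destroying that argument. The paper avoids this by taking $P=Y^D-H(\bX)$ with $H$ homogeneous of degree $\geq 2$ (so $(0,\mathbf{0})$ is automatically a singular point, regardless of how degenerate the singular locus is) and verifying strong $(1,n)$-allowability through Example \ref{ex: generic cyclic}, which needs only that $H$ is irreducible and cannot be moved into fewer variables by any $L\in\GL_n(\Q)$ --- no nonsingularity or isolated-singularity input at all. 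That is Example \ref{ex: genericsin}, and it is the right tool here; to complete your proof you should route the verification through Example \ref{ex: generic cyclic} rather than Example \ref{ex: generic diag}.
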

\begin{proof}[Proof of Corollary \ref{cor_genericity_sing}]
  By Theorem $\ref{thm_genericity}$, it is enough to show that $\mathcal{S}_n(D,k_1,...,k_n)$ is irreducible and that there exists $P\in\mathcal{S}_n(D,k_1,...,k_n)$ that is strongly $(1,n)$-allowable. The fact that $\mathcal{S}_n(D,k_1,...,k_n)\subset\mathcal{M}_n(D,k_1,...,k_n)$ is a closed irreducible subset follows from \cite[Proposition $7.1$, pages $245-246$]{EisHar16}. By  Example \ref{ex: genericsin} of \S \ref{sec_examples} there is a polynomial  $P\in\mathcal{S}_n(D,k_1,...,k_n)$ which is strongly $(1,n)$-allowable.
\end{proof}

 \subsection{Proof of   Theorem \ref{thm_genericity}}
Consider $F(Y,\bX) \in \calM_n(D,k_1,...,k_n)$.
 Assume that $F(Y,\bX)$ is not strongly $(1,n)$-allowable; this occurs if and only if there exists a linear change of variables $\sigma\in \GL_n(\Q)$, some subset $I \subsetneq \{1,...,n\}$, and an irreducible polynomial $G(Y,\bX_I)$ with $\deg_Y G \geq 2$, such that 
\[\Q(\bX)[Y]/F_\sig(Y,\bX)  \supset \Q(\bX)[Y]/G(Y,\bX_I)\supsetneq\Q(\bfX),\]
in which $F_{\sigma}(Y,\bX) :=F(Y,\sigma(\bX))$.
 Without loss of generality, we may suppose that $1 \not\in I$, so that $G$ is independent of $X_1$. In what follows, we denote $M=\Q(\bX)[Y]/F_\sig(Y,\bX)$ and $N=\Q(\bX)[Y]/G(Y,\bX_I)$. Note  that $[N:\Q(\bX)] \geq 2$.   Furthermore,
\beq\label{F_reducible_over_intermediate_field}
\deg_{Y}F_{\sigma}=[M:\Q (\bfX)]=[M:N][N:\Q (\bfX)].
\eeq
Since $[N:\Q (\bfX)] \geq 2$ this implies that $[M:N]<\deg_{Y}F_{\sigma}$, so $F_{\sig}$ cannot be irreducible over $N$. (For later reference, observe that $N = N'(X_1)$ for $N' = \Q(X_2,...,X_n)[Y]/(G(Y,\bX_I))$, so $F_\sigma$ factorizes over $N'$ as well.) Now let us decompose $F_{\sigma}(Y,\bX)$ as 
 \beq\label{genericity_expansion}
F_{\sigma}(Y,\bX) = \sum_{j,k} f_{1,j,k}(X_2,...,X_n;\sigma) Y^j X_1^k.
\eeq

 By Noether's Lemma \ref{lemma_Noether} (ii), there exists a form $G_R$ in the coefficients of the expansion (\ref{genericity_expansion}) such that  $F_\sig(Y, X_{1},x_{2},...,x_{n})$ (for a fixed choice of $x_2,\ldots,x_n$) is reducible over $\overline{\Q}$ or has strictly lower degree than $F_\sig(Y,\bX)$, if and only if $$  G_R(\{(f_{1,j,k}(x_2,...,x_n;\sigma)\}_{j,k})= 0.$$
(While the lemma produces a system of forms, by taking a sum of appropriate powers of these forms, one can construct one such a form $G_R$.)
Since $F_{\sig}(Y,\bX)$ is reducible over $N$ (and hence $N'$ as observed above), it follows that for any $x_2,...,x_n$, $F_{\sig}(Y,X_1,x_2,...,x_n)$ is reducible over $\Q[Y]/(G(Y,x_2,...,x_n))\subset \bar{\Q}$.
 Thus it follows that
$$  G_R(\{(f_{1,j,k}(X_2,...,X_n;\sigma)\}_{j,k})\con 0.$$
Note that for each $j,k$ the coefficients $f_{1,j,k}(X_2,...X_n;\sigma)$   depend polynomially on the entries of $\sigma$, so that this function is itself a polynomial in $X_2,\ldots,X_n$, and the entries of $\sig$.
Thus we may write 
$$G_R(\{(f_{1,j,k}(X_2,...,X_n;\sigma)\}_{j,k}) = \sum_{\bfk \in K} g_{\bfk,F}(\sigma) \prod_{j\neq 1} X_j^{k_j},$$ defining a system of polynomials $g_{\bfk,F}(\sigma)$ indexed by a finite set $K$ of multi-indices $\bfk \in \Z_{\geq 0}^{n-1}$ of bounded order (depending only on $n,D,k_1,\ldots,k_n$). The argument above has shown that if $F(Y,\bX)$ is not  strongly $(1,n)$-allowable, then there must exist a linear change of variables $\sigma\in \GL_n(\Q)$ such that $g_{\bfk,F}(\sigma)=0$ for each $\bfk \in K$.  By \cite[Chapter V Thm. 1A]{Sch76}, there exists a resultant system of  forms $h_1,...,h_s$ in the coefficients of the polynomials $g_{\bfk,F}$ such that $\{g_{\bfk,F}(\sigma)=0 ,\forall \bfk \in K\}$ has a solution in $\sigma\in \GL_n(\Q)$ if and only if $h_1=...=h_s=0.$ Moreover, the coefficients of $g_{\bfk,F}$ depend polynomially on the coefficients of $F(Y,\bX)$. Thus, we find that   there exists a system of polynomials $\tilde{h}_1,\ldots,\tilde{h}_s$ (functions of the coefficients of $F$) such that  if $F(Y,\bX)$ is not strongly  $(1,n)$-allowable then $$\tilde{h}_1(F)=...=\tilde{h}_s(F)=0.$$
By Example \ref{ex: generic cyclic} of \S \ref{sec_examples} there is a polynomial in $\mathcal{M}_n(D,k_1,\ldots,k_n)$ that is strongly $(1,n)$-allowable, so we know that the polynomials $\tilde{h}_1,...,\tilde{h}_s$ are not all identically zero on $\calM_n(D,k_1,\ldots,k_n)$. 

For the next claim of the theorem, since we are assuming that there exists $P\in\calM$ which is strongly $(1,n)$-allowable, it follows that $\tilde{h}_1,...,\tilde{h}_s$ are not identically zero on $\calM$. Then the result that strongly $(1,n)$-allowable polynomials are generic in $\calM$ follows, since we are assuming $\calM$ to be closed and irreducible.
 
 Finally, again by Example \ref{ex: generic cyclic} (when $m | D$) there is a polynomial in $\mathcal{Y}^m_n(D,k_1,\ldots,k_n)$ that is strongly $(1,n)$-allowable, so we know that the polynomials $\tilde{h}_1,...,\tilde{h}_s$ are not all identically zero on $\mathcal{Y}^m_n(D,k_1,\ldots,k_n)$. In particular, if $\mathcal{Y} \subset \mathcal{Y}^m_n(D,k_1,\ldots,k_n)$ 
is a closed irreducible subset, the claim of the theorem for $\mathcal{Y}$ follows. This completes the proof of the theorem, aside from the explicit construction of Example \ref{ex: generic cyclic}, which we verify in \S \ref{sec_examples}.

\subsection{Properties: $n$-genuine and strongly $n$-genuine polynomials}

 There is an equivalent characterization for being strongly $n$-genuine; this in particular plays a role in \S \ref{sec_strongly} as well as \cite{BPW25x_per}.
\begin{lem}\label{lemma_strongly_n_gen_equiv_alg_closed}
       Let $F\in\mathbb{Z}[Y,X_1,\ldots,X_n]$ be an absolutely irreducible polynomial that is monic in $Y$ with $\deg_Y F \geq 2$ and denote $\mathcal{L}_F=  \mathbb{Q}(X_1,\ldots,X_n)[Y]/F$. Then $F$ is a strongly $n$-genuine polynomial if and only if for every $j\in\{1,\ldots,n\}$,
       \[
       \overline{\mathbb{Q}(X_{i})_{i\neq j}}\cap \mathcal{L}_F=\mathbb{Q}(X_{i})_{i\neq j}.
       \]
   \end{lem}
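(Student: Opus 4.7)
\medskip
\noindent\textbf{Proof plan for Lemma \ref{lemma_strongly_n_gen_equiv_alg_closed}.}
My plan is to prove both implications by contrapositive, translating between the intermediate-field formulation on the left and the algebraic-closure formulation on the right. The underlying fact I will use repeatedly is that $\mathbb{Q}(X_i)_{i\neq j}$ is algebraically closed in $\mathbb{Q}(\mathbf{X}) = \mathbb{Q}(X_i)_{i\neq j}(X_j)$, since $X_j$ is transcendental over $\mathbb{Q}(X_i)_{i\neq j}$.

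For the implication ($\Rightarrow$), fix $j$ and suppose there exists $\alpha \in \overline{\mathbb{Q}(X_i)_{i\neq j}} \cap \mathcal{L}_F$ with $\alpha \notin \mathbb{Q}(X_i)_{i\neq j}$. By the algebraic closedness noted above, $\alpha \notin \mathbb{Q}(\mathbf{X})$, so $M' := \mathbb{Q}(\mathbf{X})(\alpha) \subseteq \mathcal{L}_F$ is a proper intermediate field. Let $p(T) \in \mathbb{Q}(X_i)_{i\neq j}[T]$ be the minimal polynomial of $\alpha$ over $\mathbb{Q}(X_i)_{i\neq j}$. The key intermediate step is to show that $p$ remains irreducible over $\mathbb{Q}(\mathbf{X})$: factoring $p$ over $\overline{\mathbb{Q}(\mathbf{X})}$ produces roots lying in $\overline{\mathbb{Q}(X_i)_{i\neq j}}$, so the coefficients of any putative factor in $\mathbb{Q}(\mathbf{X})[T]$ lie in $\overline{\mathbb{Q}(X_i)_{i\neq j}} \cap \mathbb{Q}(\mathbf{X}) = \mathbb{Q}(X_i)_{i\neq j}$, forcing the factorization to occur already over $\mathbb{Q}(X_i)_{i\neq j}$. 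Hence $p$ is also the minimal polynomial of $\alpha$ over $\mathbb{Q}(\mathbf{X})$, and clearing denominators from $\mathbb{Q}(X_i)_{i\neq j}[T]$ (rather than from $\mathbb{Q}(\mathbf{X})[T]$) yields $G \in \mathbb{Z}[Y,X_i : i \neq j]$ with $M' = \mathbb{Q}(\mathbf{X})[Y]/(G)$ and $\deg_{X_j} G = 0$. Thus $M'$ fails to be $n$-genuine, so $F$ is not strongly $n$-genuine.

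For the implication ($\Leftarrow$), suppose $F$ is not strongly $n$-genuine. Then there exists a proper intermediate field $\mathbb{Q}(\mathbf{X}) \subsetneq M' \subseteq \mathcal{L}_F$ that is not $n$-genuine, hence a polynomial $G \in \mathbb{Z}[Y,\mathbf{X}]$ with $M' = \mathbb{Q}(\mathbf{X})[Y]/(G)$ and $\deg_{X_j} G = 0$ for some $j \in \{1,\ldots,n\}$ (the option $\deg_Y G = 0$ is automatically excluded, since $M' \supsetneq \mathbb{Q}(\mathbf{X})$ forces $\deg_Y G \geq 2$). Let $\alpha$ be the image of $Y$ in $M'$, viewed as an element of $\mathcal{L}_F$. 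Then $G(\alpha) = 0$ with $G \in \mathbb{Z}[Y, X_i : i \neq j]$, so $\alpha \in \overline{\mathbb{Q}(X_i)_{i\neq j}} \cap \mathcal{L}_F$. If this intersection equalled $\mathbb{Q}(X_i)_{i\neq j}$, then $\alpha \in \mathbb{Q}(X_i)_{i\neq j} \subset \mathbb{Q}(\mathbf{X})$, whence $M' = \mathbb{Q}(\mathbf{X})(\alpha) = \mathbb{Q}(\mathbf{X})$, contradicting $M' \supsetneq \mathbb{Q}(\mathbf{X})$. Hence the intersection is strictly larger for this $j$, completing the contrapositive.

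The principal obstacle is the irreducibility step in the first direction: one must confirm that the minimal polynomial of $\alpha$ over $\mathbb{Q}(X_i)_{i\neq j}$ survives as the minimal polynomial over the larger field $\mathbb{Q}(\mathbf{X})$, so that clearing denominators really produces a defining polynomial $G$ with $\deg_{X_j} G = 0$. Everything else amounts to carefully unwinding the definitions of $n$-genuine and strongly $n$-genuine, together with the transcendence of $X_j$ over $\mathbb{Q}(X_i)_{i\neq j}$.
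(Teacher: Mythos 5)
Your proof is correct and follows essentially the same route as the paper, which packages the argument via the intermediate field $N_j=\bigl(\overline{\mathbb{Q}(X_i)_{i\neq j}}\cap\mathcal{L}_F\bigr)(X_j)$ and observes that it is not $n$-genuine, while you work with a single element $\alpha$ and its minimal polynomial. The one point you elaborate that the paper leaves implicit is exactly your "principal obstacle": that the minimal polynomial of $\alpha$ over $\mathbb{Q}(X_i)_{i\neq j}$ stays irreducible over $\mathbb{Q}(\mathbf{X})$ (because $\mathbb{Q}(X_i)_{i\neq j}$ is algebraically closed in $\mathbb{Q}(\mathbf{X})$), which is needed to exhibit a defining polynomial with $\deg_{X_j}G=0$; your verification of this is correct.
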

    \begin{proof}
    Here, for example, the notation $\Q(X_i)_{i \neq 1}$ indicates $\Q(X_2,\ldots,X_n)$.
    Recall $F$ is a strongly $n$-genuine polynomial if and only if $\mathcal{L}_F$ is a strongly $n$-genuine extension of $\mathbb{Q}(X_1,\ldots,X_n)$. For each  $j\in\{1,...,n\}$ denote $N_{j}= (\overline{\mathbb{Q}(X_{i})_{i\neq j}}\cap \mathcal{L_F})(X_{j})$. By construction, $N_j$ is an extension of $\mathbb{Q}(X_1,\ldots,X_n)$ that is not  $n$-genuine.  Now assume $\mathcal{L}_F$ is strongly $n$-genuine: if for some  $j$,  $\mathbb{Q}(X_1,\ldots,X_n)\subsetneq N_{j}\subset \mathcal{L}_F$, this would be a contradiction, since $N_j$ is not an $n$-genuine extension. Thus for each $j \in \{1,\ldots,n\}$, $\mathbb{Q}(X_1,\ldots,X_n) = N_j$, and this implies $ \overline{\mathbb{Q}(X_{i})_{i\neq j}}\cap \mathcal{L}_F=\mathbb{Q}(X_{i})_{i\neq j}.$ In the other direction, suppose $\mathcal{L}_F$ is not strongly $n$-genuine: then there exists some intermediate extension $M'$ with $\mathbb{Q}(X_1,\ldots,X_n)\subsetneq M'\subset \mathcal{L}_F$, which is not $n$-genuine. Since $M'\subset N_{j}$ for some $j$, we have $N_{j}\neq \mathbb{Q}(X_1,\ldots,X_n)$, and this suffices to show $ \overline{\mathbb{Q}(X_{i})_{i\neq j}}\cap \mathcal{L}_F\neq \mathbb{Q}(X_{i})_{i\neq j}.$
    \end{proof}

The following lemma is useful to establish whether  a polynomial is  $n$-genuine or strongly $n$-genuine, and this will play a key role in the study of $(1,n)$-allowable and strongly $(1,n)$-allowable polynomials as well.
\begin{lem}\label{lem : spectoglobal}
Let $F(Y,\bX)\in\ZZ[Y,\bX]$ be an irreducible polynomial, and assume $F(Y,\bX)$ is not strongly $n$-genuine (resp. not $n$-genuine). Then we can find a non-empty subset $I\subsetneq\{1,...,n\}$ such that for every $\bfx_{I}=(x_{i})_{i\in I} \in \Q^{|I|}$,   the polynomial $F_{\bfx_{I}}(Y, (X_i)_{i \not\in I})=F(Y,\bX_{I^c},\bx_I)$ (obtained by the specialization $X_{i}=x_{i}$ for every $i\in I$) is reducible over $\overline{\Q}$ (resp. has a linear factor in $Y$ over $\overline{\Q}$).
\end{lem}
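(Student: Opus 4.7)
The plan is to translate the field-theoretic hypothesis into a polynomial divisibility in $\Z[Y,\bX]$ and then, via specialization and unique factorization in $\overline{\Q}[Y,\bX_{I^c}]$, produce an explicit factorization of the specialized polynomial. Since $F$ is not strongly $n$-genuine, there is an intermediate field $\Q(\bX)\subsetneq M'\subseteq M_F:=\Q(\bX)[Y]/(F)$ that is not $n$-genuine, so $M'=\Q(\bX)[T]/(G)$ for some irreducible $G\in\Z[T,\bX]$ with $\deg_{X_{j_0}}G=0$ for at least one index $j_0$. Let $I\subsetneq\{1,\ldots,n\}$ be the set of indices on which $G$ genuinely depends; after rescaling $\theta:=T\bmod G$ by the leading $T$-coefficient of $G$, we may take $G\in\Z[T,\bX_I]$ monic in $T$ of degree $e\geq 2$. (The trivial case $I=\emptyset$ is handled separately: here $M'$ is a constant algebraic extension of $\Q(\bX)$, and factoring $F$ over $M'$ via Gauss's lemma while specializing any nonempty proper $I$ yields the conclusion. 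The ``not $n$-genuine'' conclusion corresponds to $M'=M_F$.) Via the embedding $M'\hookrightarrow M_F$, write $\theta=\Psi(Y,\bX)/Q(\bX)$ for $\Psi\in\Z[Y,\bX]$, $Q\in\Z[\bX]\setminus\{0\}$, with $\deg_Y\Psi<d:=\deg_Y F$.

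Let $\tilde G(T_1,T_2,\bX_I):=T_2^{e}G(T_1/T_2,\bX_I)$ be the $T$-homogenization of $G$. Since $G(\theta,\bX_I)=0$ in $M_F$, we have $\tilde G(\Psi,Q,\bX_I)=Q^{e}G(\theta,\bX_I)\equiv 0\pmod F$, and because $F$ is monic in $Y$, division in $\Q[\bX][Y]$ yields $R\in\Q[\bX][Y]$ with $\tilde G(\Psi(Y,\bX),Q(\bX),\bX_I)=F(Y,\bX)\cdot R(Y,\bX)$. Specializing $\bX_I=\bfx_I$ and factoring $G(T,\bfx_I)=\prod_{j=1}^{e}(T-\theta_j)$ over $\overline{\Q}$ produces the identity
\[
\bar F\cdot\bar R\;=\;\prod_{j=1}^{e}\bigl(\bar\Psi-\theta_j\bar Q\bigr)\qquad\text{in }\overline{\Q}[Y,\bX_{I^c}],
\]
where overlines denote specialization at $\bX_I=\bfx_I$. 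When the right-hand side is nonzero, each factor $\bar\Psi-\theta_j\bar Q$ has $Y$-degree at most $\deg_Y\Psi<d$; unique factorization in $\overline{\Q}[Y,\bX_{I^c}]$ then forces every irreducible factor of $\bar F$ to have $Y$-degree strictly less than $d$, and since $\bar F$ is monic of $Y$-degree $d$, it must be reducible over $\overline{\Q}$. For the ``not $n$-genuine'' case, we strengthen this by using that $M'=M_F$ and writing the root $\alpha:=Y\bmod F\in M_F$ as $\alpha=\phi(\theta,\bX)$ for some $\phi\in\Q(\bX)[T]$; specialization produces a linear-in-$Y$ factor corresponding to a root $\bar\alpha=\phi(\theta_j,\bX_{I^c},\bfx_I)\in\overline{\Q}(\bX_{I^c})$, which by Gauss's lemma (using that $\bar F$ is monic in $Y$) clears denominators to give a genuine linear-in-$Y$ factor $aY-b\in\overline{\Q}[Y,\bX_{I^c}]$.

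The main obstacle lies in the degenerate regime, where the right-hand side of the key identity vanishes --- i.e., $\bar\Psi=\theta_j\bar Q$ for some $j$, or jointly $\bar Q=\bar\Psi=0$. Because $\deg_Y\Psi\geq 1$ (as $\theta\notin\Q(\bX)$), the degenerate locus is a proper Zariski-closed subvariety $D\subsetneq\mathbb{A}^{|I|}$, and the argument above applies on the Zariski-dense complement. By (variants of) Lemma~\ref{lemma_Noether}, each of the conclusions ``$\bar F$ is reducible over $\overline{\Q}$'' and ``$\bar F$ has a linear factor in $Y$ over $\overline{\Q}$'' corresponds to a Zariski-closed condition on the coefficients of $\bar F$, hence on the parameters $\bfx_I\in\mathbb{A}^{|I|}$. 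A closed condition holding on a Zariski-dense subset of $\mathbb{A}^{|I|}$ extends to all of $\mathbb{A}^{|I|}(\overline{\Q})$, and in particular to all $\bfx_I\in\Q^{|I|}$, completing the proof.
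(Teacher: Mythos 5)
Your route is genuinely different from the paper's. The paper's proof is very short: it takes the witnessing polynomial $G(Y,\bX_I)$, observes that $F$ is reducible over (resp.\ has a linear factor in $Y$ over) the intermediate field $\Q(\bX)[Y]/G$, and specializes that factorization directly at $\bX_I=\bfx_I$, using that $G(Y,\bfx_I)\in\Q[Y]$ has roots in $\overline{\Q}$. Your approach instead converts the field-theoretic datum into the integral polynomial identity $\tilde G(\Psi,Q,\bX_I)=F\cdot R$ in $\Q[\bX][Y]$ (valid since $F$ is monic in $Y$ --- an assumption not literally in the lemma's statement but consistent with the paper's definitions and with every application), specializes that identity, and argues by unique factorization. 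On the non-degenerate locus this is clean and arguably more careful than the paper's own argument, which silently passes over the possibility that denominators in the coefficients of the factors over $\Q(\bX)[Y]/G$ vanish at $\bfx_I$; your integrality remark in the linear-factor case (a root in $\overline{\Q}(\bX_{I^c})$ of a monic-in-$Y$ polynomial lies in $\overline{\Q}[\bX_{I^c}]$) is also correct and is something the paper needs implicitly.

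The gap is in your treatment of the degenerate locus $D$. You extend the conclusion from the dense complement of $D$ to all of $\mathbb{A}^{|I|}$ by asserting that ``reducible over $\overline{\Q}$'' (resp.\ ``has a linear factor in $Y$'') is a Zariski-closed condition on $\bfx_I$, citing Lemma~\ref{lemma_Noether}. But that lemma only produces forms whose vanishing is equivalent to ``satisfies $\mathcal{D}(e)$ over $\overline{K}$ \emph{or} has total degree $<D$,'' and the total degree of $F(Y,\bX_{I^c},\bfx_I)$ can genuinely drop at special $\bfx_I$ (e.g.\ when leading $\bX_{I^c}$-coefficients vanish); for such points the Noether forms vanish without forcing reducibility. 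Indeed, for non-monic families reducibility is \emph{not} a closed condition (consider $tY^2-(1+tX)Y+X=(tY-1)(Y-X)$, reducible for $t\neq0$ but irreducible at $t=0$), so some additional input is essential. The fix is to exploit monicity in $Y$: the $Y$-degree of $\bar F$ never drops, any factorization of a monic-in-$Y$ polynomial can be normalized to have both factors monic in $Y$ of positive $Y$-degree, and running the elimination of Lemma~\ref{lemma_Noether} for \emph{monic} factorizations (or observing that the scheme of monic factorizations is finite over the parameter space, since the factors' coefficients are integral over the base, hence has closed image) shows that reducibility, and likewise possession of a monic linear factor in $Y$, is honestly closed for such families. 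As written, that argument is missing, so the conclusion is not established for $\bfx_I\in D$, and the lemma's ``for every $\bfx_I$'' is used in earnest in the paper (e.g.\ in the proof of Theorem~\ref{thm_genericity}).
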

 Here we say that a polynomial $F(Y,\bX) \in \Q[Y,\bX]$ has  a linear factor in $Y$ over $\overline{\Q}$ if $F(Y,\bX) = (Y - Q(\bX)) \Tilde{H}(Y,\bX),$
 in which $Q(\bX)\in \overline{\Q}[\bX]$ and $\Tilde{H}(Y,\bX)\in \overline{\Q}[Y,\bX].$ (Suppose a non-empty subset $I \subsetneq \{1,\ldots,n\}$ has been found such that the conclusion of the lemma holds. Then for every superset $S$ with $I \subseteq S \subseteq \{1,\ldots,n\}$, for every $\bx_S \in \Q^{|S|}$, the polynomial $F_{\bx_S}(Y,(X_j)_{j \notin S})$ has the respective property in the conclusion.)
\begin{proof}
We first prove the lemma in the case that the polynomial is not strongly $n$-genuine. Let $F(Y,\bX)\in\ZZ[Y,\bX]$ be an irreducible polynomial that is not strongly $n$-genuine. This means that we can find $I\subsetneq \{1,...,n\}$ and a polynomial $G\in\ZZ[Y,\bX_i]$ such that
\beq\label{G_sequence}
\Q(\bX) \subsetneq \Q(\bX)[Y]/G\subset \Q(\bX)[Y]/F.
\eeq
In particular, $F(Y,\bX)$ is reducible over $\Q(\bX)[Y]/G$  (by the  reasoning applied earlier in (\ref{F_reducible_over_intermediate_field})). For each $\bx_I \in \Z^{|I|}$, let $G_{\bx_I}(Y)$ denote $G(Y,\bx_I)$.  The result now follows since for every $\bfx_{I}\in\Q^{|I|}$   one has that $\Q(\bX_{I^c})[Y]/G_{\bfx_{I}}\subset\overline{\Q}(\bX_{I^c})[Y]$ and $F_{\bfx_{I}}$ is reducible over $\Q(\bX_{I^c})[Y]/G_{\bfx_{I}}$.

 Let us assume now  that $F(Y,\bX)\in\ZZ[Y,\bX]$ is an irreducible polynomial that is not $n$-genuine. This means that we can find $I\subsetneq \{1,...,n\}$ and a polynomial $G\in\ZZ[Y,\bX_I]$ such that
\beq\label{F_sequence}
\Q(\bX)[Y]/G= \Q(\bX)[Y]/F.
\eeq
In particular, $F(Y,\bX)$ has a linear  factor in $Y$ over $\Q(\bX)[Y]/G$. Let $Z$ be a root of $G$, such that
\[
F(Y,\bfX)=(Y-H(\bfX,Z))R(Y,\bfX,Z).
\]
 For any $\bfx_{I}\in\Q^{|I|}$, $G_{\bfx_{I}}(Y):=G(Y,\bx_I)\in\Q [Y]$, thus a solution $z$ of $G_{\bfx_{I}}(Y)=0$ is in $\overline{\Q}$. 
Hence, we obtain that for this $z \in \overline{\Q}$,
\[
F(Y,\bX_{I^c},\bx_I)=(Y-H(\bX_{I^c},\bx_I,z)R(\bX_{I^c},\bx_I,z).
\]
Now the result follows since $H(\bX_{I^c},\bx_I,z)$ and $R(\bX_{I^c},\bx_I,z)\in\overline{\Q}[\bX_{I^c}]$. 

\end{proof}

\begin{rem}\label{remark_G_sequence}
Note in particular that if $F(Y,\bX)$ is not strongly $n$-genuine (respectively, not $n$-genuine), then we may take the non-empty subset $I \subsetneq \{1,\ldots,n\}$ to be the set of variables that $G$ depends on, where $G$ is any polynomial with the property (\ref{G_sequence}) (respectively, with the property (\ref{F_sequence})).
\end{rem}

The previous lemma indicates that the property of having a linear factor in $Y$ over $\bar{\Q}$ will later play a role; we will require the following lemma.
\begin{lem}\label{lemma_F_linear_factor_splits_completely}
    If $F(Y,X_1) \in \Z[Y,X_1]$  has a linear factor in $Y$ over $\overline{\Q}$ and is irreducible over $\Q(X_1)$, then $$F(Y,X_1) = \prod_{j} (Y-Q_j(X_1))$$
    for $Q_j(X_1)\in \overline{\Q}[X_1]$ for all $j$. In other words, $F(Y,X_1)$ splits completely. 
\end{lem}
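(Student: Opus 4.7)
The plan is to exploit Galois theory for the extension $\overline{\Q}(X_1)/\Q(X_1)$. Let $E \subset \overline{\Q}$ be a number field, Galois over $\Q$, containing all coefficients of $Q(X_1)$ (this is possible since $Q$ has finitely many coefficients). Then $E(X_1)/\Q(X_1)$ is a Galois extension, and the natural restriction-to-$E$ map gives an isomorphism $\Gal(E(X_1)/\Q(X_1))\cong \Gal(E/\Q)$, where an element $\tau \in \Gal(E/\Q)$ acts on $E(X_1)$ (and in particular on $E[X_1]$) by applying $\tau$ coefficient-wise while fixing $X_1$. A key consequence is that the coefficient-wise action preserves polynomiality: if $R(X_1) = \sum_k a_k X_1^k \in E[X_1]$, then $\tau(R(X_1)) = \sum_k \tau(a_k) X_1^k \in E[X_1]$.

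Next, I would analyze how $F$ factors over $E(X_1)$. Write $F = F_1 F_2 \cdots F_r$ as a product of monic (up to the leading coefficient $c(X_1)\in\Z[X_1]$ of $F$ in $Y$) irreducible factors in $E(X_1)[Y]$. The hypothesis that $F$ has a linear factor in $Y$ over $\overline{\Q}$ means $(Y - Q(X_1)) \mid F$ in $\overline{\Q}[Y,X_1]$, and since $Q \in E[X_1]$, we may assume $(Y-Q(X_1))$ is one of the $F_i$, say $F_1$. Because $F$ is irreducible over $\Q(X_1)$ and $E(X_1)/\Q(X_1)$ is Galois, a standard fact of Galois theory of polynomials implies that $\Gal(E(X_1)/\Q(X_1))$ acts \emph{transitively} on $\{F_1,\ldots,F_r\}$ (the orbits of roots under $\Gal(\overline{\Q(X_1)}/E(X_1))$ correspond to the $F_i$, and because $\Gal(\overline{\Q(X_1)}/E(X_1))$ is a normal subgroup of $\Gal(\overline{\Q(X_1)}/\Q(X_1))$, the quotient group acts transitively on these orbits since the full Galois group does on the roots of the irreducible $F$). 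Since $\deg_Y F_1 = 1$, every $F_i$ must have $\deg_Y F_i = 1$.

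It remains to check that every $F_i$ has the form $(Y - Q_i(X_1))$ with $Q_i(X_1) \in \overline{\Q}[X_1]$. For each $i$, we can write $F_i = \tau_i(F_1)$ for some $\tau_i \in \Gal(E(X_1)/\Q(X_1))$; since $F_1 = Y - Q(X_1)$ and $\tau_i$ acts on coefficients in $Y$ while fixing $Y$, we obtain $F_i = Y - \tau_i(Q(X_1))$, and by the observation at the end of the first paragraph, $\tau_i(Q(X_1)) \in E[X_1] \subset \overline{\Q}[X_1]$. Setting $Q_i := \tau_i(Q(X_1))$ and comparing leading coefficients in $Y$ on both sides of the equation $F = c(X_1)\prod_i (Y - Q_i(X_1))$, we deduce $c(X_1) = 1$ in the monic case, yielding the asserted factorization $F(Y,X_1) = \prod_i (Y - Q_i(X_1))$.

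I do not foresee any real obstacle in this argument; the main subtlety is tracking the distinction between $\overline{\Q}[X_1]$, $\overline{\Q}(X_1)$, and $\overline{\Q(X_1)}$, and correctly identifying the Galois group of $E(X_1)/\Q(X_1)$ with $\Gal(E/\Q)$ acting coefficient-wise — a step which crucially guarantees that Galois conjugates of a root lying in $\overline{\Q}[X_1]$ remain in $\overline{\Q}[X_1]$ (not merely in $\overline{\Q}(X_1)$).
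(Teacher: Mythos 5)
Your proof is correct and follows essentially the same route as the paper's: both rest on transitivity of the Galois action (the paper phrases it as the embeddings of $\Q(X_1)[Y]/(F)$ acting transitively on the roots, you as $\Gal(E(X_1)/\Q(X_1))$ acting transitively on the irreducible factors over $E(X_1)$) together with the observation that conjugation acts coefficient-wise and hence sends $Q(X_1)\in\overline{\Q}[X_1]$ to another polynomial in $\overline{\Q}[X_1]$. Your write-up is in fact more explicit than the paper's one-line "we observe that $\sigma_j(Q(X_1))\in\overline{\Q}[X_1]$," which is the step you carefully justify via the identification $\Gal(E(X_1)/\Q(X_1))\cong\Gal(E/\Q)$.
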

\begin{proof}
    Since $F(Y,X_1)$ is irreducible over $\Q(X_1)$,  the automorphisms of $\Q(X_1)[Y]/(F(Y,X_1))$  act transitively on the roots of $F(Y,X_1)$. Since $F(Y,X_1)$ has a linear factor over $\overline{\Q}$,   $$F(Y,X_1) = \prod_{\sigma_j} (Y-\sigma(Q(X_1)))$$
    for some polynomial $Q(X_1)\in \overline{\Q}[X_1],$ and $\sigma_j$ varying over 
    the group of embeddings of the extension $\mathbb{Q}(X_{1})[Y]/F(Y,X_{1})$ in its Galois closure.  We observe that $\sigma_j(Q(X_1))\in \overline{\Q}[X_1]$ to complete the proof.
\end{proof}

Being $n$-genuine is preserved by simple scaling transformations, as we will later require.
 \begin{lem}[Preservation under scaling]\label{lemma_preservation_scaling}
 Let $S \in \GL_n(\Q)$ denote a (diagonal) matrix that scales $(x_1,\ldots,x_n)$ to $(s_1x_1,\ldots,s_nx_n)$, with  $s_i\neq 0$ for each $i$. Then $F(Y,\bX)$ is   $n$-genuine (resp.  strongly $n$-genuine) if and only if $F(Y,S\bX)$ is   $n$-genuine (resp.  strongly $n$-genuine). 
\end{lem}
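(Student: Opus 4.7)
The plan is to exploit the fact that the diagonal scaling $S$, having nonzero rational entries, induces a $\Q$-algebra automorphism of $\Q(\bX)[Y]$. Define $\tilde\sigma$ by $Y \mapsto Y$ and $X_i \mapsto s_i X_i$, and observe that $\tilde\sigma(F(Y,\bX)) = F(Y,S\bX)$, so $\tilde\sigma$ descends to a $\Q$-algebra isomorphism
\[
\tilde\sigma: M := \Q(\bX)[Y]/(F) \xrightarrow{\sim} N := \Q(\bX)[Y]/(F(Y,S\bX)).
\]
This is \emph{not} a $\Q(\bX)$-algebra isomorphism: rather, it twists the base by the self-map $\sigma: X_i \mapsto s_i X_i$ of $\Q(\bX)$. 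The key observation is that $\sigma$ maps $\Q(\bX)$ bijectively onto $\Q(\bX)$ and, for any subset $I \subseteq \{1,\ldots,n\}$, maps $\Q[\bX_I]$ bijectively onto $\Q[\bX_I]$; hence, after clearing denominators, the property of lying in $\Z[Y,\bX_I]$ is preserved under $\tilde\sigma$ and $\tilde\sigma^{-1}$.

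For the $n$-genuine direction, I would first assume for contradiction that $N$ is not $n$-genuine, so that there exists $G \in \Z[Y,\bX]$ not depending on some $X_j$ and a $\Q(\bX)$-algebra presentation $N \cong \Q(\bX)[Y]/(G)$. Let $\alpha \in N$ be the image of $Y$, so $\alpha$ generates $N$ over $\Q(\bX)$ and $G(\alpha,\bX)=0$. Set $\beta := \tilde\sigma^{-1}(\alpha) \in M$. Because $\tilde\sigma^{-1}$ is a $\Q$-algebra isomorphism sending $\Q(\bX)$ to $\Q(\bX)$ (setwise, via $\sigma^{-1}$), $\beta$ generates $M$ over $\Q(\bX)$. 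Applying the ring homomorphism $\tilde\sigma^{-1}$ to $G(\alpha,\bX)=0$ yields $G(\beta, X_1/s_1,\ldots,X_n/s_n)=0$ in $M$. Clearing denominators produces a polynomial $\tilde G \in \Z[Y,\bX]$ that vanishes at $(\beta,\bX)$, still does not depend on $X_j$, and has the same $Y$-degree as $G$, which equals $[N:\Q(\bX)] = [M:\Q(\bX)]$. Hence $\tilde G$ is (up to a $\Q(\bX)$-unit) the minimal polynomial of $\beta$ over $\Q(\bX)$, giving a presentation $M = \Q(\bX)[Y]/(\tilde G)$ with $\tilde G$ independent of $X_j$, contradicting that $M$ is $n$-genuine. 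The reverse implication follows by applying the same argument with $S$ replaced by $S^{-1}$, which is also diagonal with nonzero rational entries.

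For the strongly $n$-genuine case, the same isomorphism $\tilde\sigma$ induces a lattice bijection between intermediate extensions $\Q(\bX) \subsetneq M' \subsetneq M$ and $\Q(\bX) \subsetneq N' \subsetneq N$, sending $M'$ to $\tilde\sigma(M')$, because $\tilde\sigma$ sends $\Q(\bX)$ to $\Q(\bX)$. Applying the $n$-genuine argument above to the pair $(M', N')$ shows that $M'$ is $n$-genuine if and only if $\tilde\sigma(M')$ is, and the strongly $n$-genuine property transfers between $M$ and $N$. The main subtlety throughout is carefully distinguishing between the $\Q$-algebra structure (preserved by $\tilde\sigma$) and the $\Q(\bX)$-algebra structure (twisted by $\sigma$); since $\sigma$ is a bijection of $\Q(\bX)$ respecting the decomposition by variable subsets, both ``generates over $\Q(\bX)$'' and ``minimal polynomial lies in $\Z[Y,\bX_I]$'' are preserved, which is precisely what the definition of ($n$-genuine/strongly $n$-genuine) requires.
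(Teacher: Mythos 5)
Your proposal is correct and takes essentially the same route as the paper: both exploit that the diagonal scaling induces an isomorphism of the extensions under which subfields correspond and under which a defining polynomial independent of some $X_j$ is sent to one still independent of $X_j$. Your write-up is in fact more careful than the paper's, since you make explicit that the isomorphism is only a $\Q$-algebra (not $\Q(\bX)$-algebra) map twisting the base by $X_i\mapsto s_iX_i$, and you verify via the minimal-polynomial/degree count that the transported polynomial genuinely presents $M$.
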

\begin{proof}
    Let $S$ denote a matrix that scales $x_i\mapsto s_i x_i$ for $s_i\neq 0$ for each $i$. Consider the extensions defined by $F(Y,\bX)$ and $F(Y,S\bX)$, namely
    $$\frac{\Q(\bX)[Y]}{F(Y,\bX)} \cong \frac{\Q(\bX)[Y]}{F(Y,S\bX)},$$
  and  these fields are isomorphic under map $S$. Now assume that there exists a $G(Y,\bX)$ such that $G(Y,X_2,...,X_n)$ is a polynomial in the variables $Y,X_2,\ldots,X_n$ and that $G(Y,\bX)$ determines a subfield:
    $$\frac{\Q(\bX)[Y]}{G(Y,\bX)}\subset \frac{\Q(\bX)[Y]}{F(Y,\bX)}.$$
    Then since subfields are preserved under isomorphism, we know that 
    $$\frac{\Q(\bX)[Y]}{G(Y,S\bX)} \subset 
    \frac{\Q(\bX)[Y]}{F(Y,S\bX)}.$$
    However, since $S$ is a scaling matrix, $G(Y,S\bX)$ is also a polynomial in $Y, X_2,...,X_n$. Thus, we know that $F(Y,\bX)$ is not $n$-genuine (resp. not strongly $n$-genuine) if and only if $F(Y,S\bX)$ is not $n$-genuine (resp. not strongly $n$-genuine). 
\end{proof}

\section{$n$-genuine but not strongly $n$-genuine polynomials}\label{sec_not_strongly_allowable}

The main goal of this section is to prove Theorems \ref{thm_allowable_not_strongly} and \ref{thm_genuine_not_strongly}. The key input is the following result:
\begin{thm}\label{thm_genuine_not_strongly_NFB}
Let $R(Y,\bX) \in \Z[Y,X_1,\ldots,X_n]$   be a polynomial that is $n$-genuine   but not strongly $n$-genuine. Then there exists an integer $e=e(n)$ such that for every integer $d\geq 1$ and every $B\geq 1$,
\[ \#\{ \bx \in [-B,B] \cap d^{-1} \Z^n : \exists y \in \Z : R(y,\bx)=0\} \ll_{n,\deg R} \log(\|R\|+2)^{e} (dB)^{n-1}(\log d(B+2))^{2e}.\]
\end{thm}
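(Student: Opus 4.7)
The plan is to leverage the intermediate subfield guaranteed by the failure of strong $n$-genuinity, together with a double application of the effective Hilbert irreducibility result in Theorem \ref{thm_polylog}, each saving $(dB)^{1/2}$ over the trivial bound.

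\textbf{Setup and key identity.} By Lemma \ref{lem : spectoglobal} and Remark \ref{remark_G_sequence}, the failure of strong $n$-genuinity yields a non-empty subset $I\subsetneq\{1,\ldots,n\}$ of size $\ell$ and an absolutely irreducible polynomial $G(Y,\bX_I)\in\Z[Y,\bX_I]$, which I arrange to be monic in $Y$, with $\Q(\bX)\subsetneq \Q(\bX)[Y]/G\subsetneq \Q(\bX)[Y]/R$. The strict left inclusion forces $m:=\deg_Y G\geq 2$; the strict right inclusion together with the $n$-genuinity of $R$ (which forbids a polynomial defining $\Q(\bX)[Y]/R$ that omits the variables $\bX_{I^c}$) forces $D:=\deg_Y R>m$, so $D/m\geq 2$. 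A chosen root $\alpha$ of $G$ inside $\Q(\bX)(\beta)$ admits a representation $\alpha = A(\beta,\bX)/\Delta(\bX)$ with $A\in\Z[Y,\bX]$, $0\neq \Delta\in\Z[\bX]$, and $\|A\|,\|\Delta\|\ll \|R\|^{O(1)}$. Clearing denominators shows
\[
R(Y,\bX)\,\Big|\,\Delta(\bX)^m\,G\!\left(\frac{A(Y,\bX)}{\Delta(\bX)},\,\bX_I\right)\quad\text{in }\Z[Y,\bX].
\]
Consequently, whenever $R(y,\bx)=0$ and $\Delta(\bx)\neq 0$, the rational number $a:=A(y,\bx)/\Delta(\bx)$ is a rational root of $G(Y,\bx_I)\in\Q[Y]$.

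\textbf{Splitting the count.} The degenerate contribution from $\{\bx\in d^{-1}\Z^n\cap[-B,B]^n:\Delta(\bx)=0\}$ lies on a proper $\Q$-subvariety of $\mathbb{A}^n$ of dimension $\leq n-1$ and degree $O_{n,\deg R}(1)$; Lemma \ref{lemma_Schwartz_Zippel} bounds it by $O_{n,\deg R}((dB)^{n-1})$. For the remaining $\bx$, I use the factorization $R(Y,\bX_{I^c},\bx_I)=\prod_{j=1}^{m}Q_j$ over the appropriate field extension: any $\Q$-solution $(y,\bx_{I^c})$ must lie on the $\Q$-irreducible factor $Q$ of $R(Y,\bX_{I^c},\bx_I)$ corresponding to the rational root $a$ of $G(Y,\bx_I)$. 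Generically this $Q$ is absolutely irreducible, of degree $D/m\geq 2$ in $Y$, monic in $Y$, and bounded by $\|Q\|\ll\|R\|^{O(1)}(dB)^{O(1)}$ (via Mahler-measure bounds on factors combined with $\|R(Y,\bX_{I^c},\bx_I)\|\ll \|R\|(dB)^{O(1)}$).

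\textbf{Double effective HIT.} Applying Theorem \ref{thm_polylog} to $G$ in $\ell$ variables, after a rescaling $\bx_I=\bt_I/d$ together with a substitution $Y\mapsto Y/d^{O(1)}$ that converts rational roots of $G(Y,\bx_I)$ into integer roots of a $\Q$-irreducible polynomial monic in $Y$ (justified by the rational root theorem applied to $d^{\deg_{\bX_I}G}G(Y,\bx_I)\in\Z[Y]$, whose leading coefficient is the constant $d^{\deg_{\bX_I}G}$), bounds
\[
|\{\bx_I\in d^{-1}\Z^\ell\cap[-B,B]^\ell:G(Y,\bx_I)\text{ has a rational root}\}|\ll \log(\|R\|+2)^{e_1}(\log d(B+2))^{2e_1}(dB)^{\ell-1/2}.
\]
Applying Theorem \ref{thm_polylog} a second time to $Q$ in $k=n-\ell$ variables bounds
\[
|\{\bx_{I^c}\in d^{-1}\Z^k\cap[-B,B]^k:\exists y\in\Z,\,Q(y,\bx_{I^c})=0\}|\ll \log(\|R\|+2)^{e_2}(\log d(B+2))^{2e_2}(dB)^{k-1/2}.
\]
Summing over at most $m=O(1)$ rational roots per $\bx_I$ and multiplying yields the claimed bound $\ll \log(\|R\|+2)^{e}(dB)^{n-1}(\log d(B+2))^{2e}$ with $e=e_1+e_2$.

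\textbf{Main obstacle.} The principal technical difficulty is ensuring that the factor $Q$ remains well-behaved across the relevant $\bx_I$: a priori $Q$ could become further $\Q$-reducible, lose its constant leading coefficient in $Y$, or develop multiplicities for special $\bx_I$ at which $R(Y,\bX_{I^c},\bx_I)$ fails to be squarefree. These degenerate $\bx_I$ form a proper Zariski-closed subset of $\mathbb{A}^\ell$, controllable via Lemma \ref{lemma_Bertini_Noether}, and can be absorbed into the $(dB)^{\ell-1/2}$ savings from the first application of Theorem \ref{thm_polylog}. Equally delicate is the uniform tracking of $\log\|Q\|\ll \log(\|R\|+2)+\log(d(B+2))$ through the specialization $\bX_I=\bx_I$, and the propagation of polylog factors through both rescaling stages, both of which are essential for the polylog dependence on $\|R\|$ and $d(B+2)$ claimed in the theorem.
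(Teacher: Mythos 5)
Your overall architecture is the same as the paper's: extract from the failure of strong $n$-genuinity an intermediate polynomial $G(Y,\bX_I)$ in fewer variables whose solvability is forced by that of $R$, then apply Theorem \ref{thm_polylog} twice — once to $G$ in the $\bX_I$ variables and once to the residual factor of $R(Y,\bX_{I^c},\bx_I)$ in the remaining variables — each saving $(dB)^{1/2}$. (The paper takes $|I|=n-1$ so that the second application is to a two-variable polynomial $R(Y,X_1,\bx')$; your more general $\ell$ is cosmetic. Your route to $G$ via a rational root $A(y,\bx)/\Delta(\bx)$, rather than the paper's integral construction in Lemma \ref{lem_shared_zeros_not_strongly_genuine}, is workable since $G$ is monic in $Y$.)

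The gap is in your ``main obstacle'' paragraph, and it is not a technicality. For the bound to close, you must show that the set of $\bx_I$ for which the relevant factor of $R(Y,\bX_{I^c},\bx_I)$ acquires a factor that is linear in $Y$ has $\ll (dB)^{|I|-1}$ points in the box: for such $\bx_I$ the count over $(\bx_{I^c},y)$ is the full trivial $(dB)^{n-|I|}$, and if these $\bx_I$ were as numerous as the set where $G(Y,\bx_I)$ has a rational root — i.e.\ $(dB)^{|I|-1/2}$ — the total would be $(dB)^{n-1/2}$, destroying the theorem. You assert this degenerate locus is a proper Zariski-closed subset ``controllable via Lemma \ref{lemma_Bertini_Noether}'', but that lemma concerns reduction modulo primes, not specialization of variables; more importantly, the assertion that the linear-factor locus is contained in a proper closed subset is precisely the content of Lemma \ref{lemma_Cohen_genuine_linear_factor}, which is the technical heart of this section of the paper. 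Its proof requires expressing the linear-factor condition as the vanishing of a Noether form and then showing that form is not identically zero, which in turn uses Lemma \ref{lemma_F_linear_factor_splits_completely}, the integral-closure/constructibility argument of Lemma \ref{lemma_F_open_set}, and crucially the hypothesis that $R$ is $n$-genuine (your proposal uses $n$-genuinity only to conclude $\deg_Y R > \deg_Y G$). Indeed, by Lemma \ref{lem : spectoglobal}, a polynomial that is not $n$-genuine has a linear factor in $Y$ over $\overline{\Q}$ after specializing $\bX_I=\bx_I$ for \emph{every} $\bx_I$, so the claim cannot follow from general position alone. Until you supply this lemma (or an equivalent), the proof does not close.
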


\begin{proof}[Deduction of Theorem \ref{thm_genuine_not_strongly}]
This immediately implies Theorem \ref{thm_genuine_not_strongly}, by taking $d=1$. 
\end{proof}
 As explained in Remark \ref{remark_gen_deg4} below,   a polynomial $R$ that is $n$-genuine but not strongly $n$-genuine must have $\deg_Y R \geq 4$; similarly for a polynomial that is $(1,n)$-allowable but not strongly $(1,n)$-allowable.
To deduce Theorem \ref{thm_allowable_not_strongly} (and later Theorem \ref{thm_main}), it is useful to record a corollary.
\begin{cor}\label{cor_thm_genuine_not_strongly_NFB}
Let $F \in \Z[Y,X_1,\ldots,X_n]$   be given, along with a transformation $L\in\GL_{n}(\mathbb{Q})$ with all integral entries such that $F_{L}(Y,\bX):=F(Y,L(\bX))\in \mathbb{Z}[Y,\bX]$  is $n$-genuine but not strongly $n$-genuine.  Denote by $\|L\| = \max_{i,j}\|L_{i,j}\|$, where for a rational number in lowest terms $\|a/b\| := \max(|a|,|b|)$. Then there exist integers $e=e(n)$ and $e'=e'(n)$ such that 
\[
N(F,B) \ll_{n,\deg F} \|L\|^{e'} (\log (\|F\|+2))^e B^{n-1} (\log (B+2))^{2e}.
\]
\end{cor}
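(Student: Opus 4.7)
The plan is to reduce the corollary to Theorem \ref{thm_genuine_not_strongly_NFB} via a linear change of variables, then carefully track how the norm $\|L\|$ enters through both the size of the new box and the denominator of the rescaled lattice.

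First I would set up the bijection induced by $L$. Since $L$ has integer entries and lies in $\GL_n(\Q)$, Cramer's rule gives $L^{-1} = (\det L)^{-1}\,\mathrm{adj}(L)$, so every entry of $L^{-1}$ has denominator dividing $d := |\det L|$ and numerator of size $\ll_n \|L\|^{n-1}$. Thus for any $\bx \in [-B,B]^n \cap \Z^n$ with $F(y,\bx) = 0$ solvable in $y$, setting $\bx' := L^{-1}\bx$ gives a point of $d^{-1}\Z^n$ with $|\bx'|_\infty \ll_n \|L\|^{n-1} B$, and the identity $F(Y,L(\bx')) = F_L(Y,\bx')$ shows that $F_L(y,\bx') = 0$ is solvable. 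Since $L$ is a bijection on $\Q^n$, this gives the injection
\[
N(F,B) \leq \#\Big\{\bx' \in [-B',B']^n \cap d^{-1}\Z^n : \exists y \in \Z,\, F_L(y,\bx') = 0\Big\},
\]
with $B' \ll_n \|L\|^{n-1} B$.

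Next I would apply Theorem \ref{thm_genuine_not_strongly_NFB} to the polynomial $R := F_L$, which by hypothesis is $n$-genuine but not strongly $n$-genuine. This yields
\[
N(F,B) \ll_{n,\deg F} \log(\|F_L\|+2)^{e} (d B')^{n-1} (\log d(B'+2))^{2e}.
\]
The key simplification is that $d B' \ll_n d \cdot \|L\|^{n-1} B / d \cdot d = \ll_n \|L\|^{n-1} B$; more precisely, since the box shrinks by the denominator $d$ while the lattice spacing is $d^{-1}$, the product $dB'$ is governed by the numerator size alone, giving $(dB')^{n-1} \ll_n \|L\|^{(n-1)^2} B^{n-1}$.

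I then need to bound $\|F_L\|$ and $\log d(B'+2)$ in terms of $\|L\|$, $\|F\|$, $B$. A straightforward expansion shows $\|F_L\| \ll_{n,\deg F} \|F\|\,\|L\|^{\deg F}$, hence
\[
\log(\|F_L\|+2)^{e} \ll_{n,\deg F} \big(\log(\|F\|+2) + \log(\|L\|+2)\big)^{e} \ll (\log(\|F\|+2))^{e} (\log(\|L\|+2))^e,
\]
and similarly the logarithmic factor $(\log d(B'+2))^{2e}$ splits as $\ll (\log(B+2))^{2e} (\log(\|L\|+2))^{2e}$. Since $(\log(\|L\|+2))^{O(1)} \ll \|L\|^\eta$ for any $\eta > 0$, collecting all of the $\|L\|$-dependence and choosing $e' = e'(n)$ large enough to absorb the polynomial factor $\|L\|^{(n-1)^2}$ and the polylogarithmic powers of $\|L\|$, we obtain the claimed inequality
\[
N(F,B) \ll_{n,\deg F} \|L\|^{e'} (\log(\|F\|+2))^{e} B^{n-1} (\log(B+2))^{2e}.
\]
No step here is particularly delicate once Theorem \ref{thm_genuine_not_strongly_NFB} is available; the only bookkeeping subtlety is that the scaling factor $d$ in the lattice $d^{-1}\Z^n$ is exactly what is needed to cancel the denominator introduced by $L^{-1}$, so the effective count behaves as if we were working over $\Z^n$ with an inflated box of size $\|L\|^{n-1} B$ rather than suffering both penalties simultaneously.
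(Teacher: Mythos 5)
Your proposal is correct and follows essentially the same route as the paper: both reduce to Theorem \ref{thm_genuine_not_strongly_NFB} by transporting the integer box through $L^{-1}=(\det L)^{-1}\mathrm{adj}(L)$, taking $d=|\det L|$ as the lattice denominator and the adjugate entries to control the enlarged box, then absorbing all polynomial and polylogarithmic powers of $\|L\|$ into $\|L\|^{e'}$. The one slip is the claim that $dB'$ is ``governed by the numerator size alone'' giving $(dB')^{n-1}\ll_n \|L\|^{(n-1)^2}B^{n-1}$ — in fact $dB'\ll_n \|L\|^{2n-1}B$ since $d$ genuinely multiplies the box size — but this is harmless because any fixed power of $\|L\|$ is absorbed into $e'(n)$.
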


\begin{proof}[Proof of Corollary \ref{cor_thm_genuine_not_strongly_NFB}]
For simplicity we may assume $\|F\| \geq 3$, $\|F_L\| \geq 3$, $B \geq 3$.   By Theorem \ref{thm_genuine_not_strongly_NFB}, for all integers $d \geq 1$, and for all $B \geq 3$,
\[ \#\{ \bx \in [-B,B] \cap d^{-1} \Z^n : \exists y \in \Z : F(y,L(\bx))=0\} \ll_{n,\deg F} (\log\|F_L\|)^e (dB)^{n-1} (\log dB)^{2e}.\]
The set on the left-hand side identifies with the set on the left-hand side of the equivalent bound
\beq\label{z_box_denoms}
\#\{ \bz \in L([-B,B] \cap d^{-1} \Z^n) : \exists y \in \Z : F(y,\bz)=0\} \ll_{n,\deg F}(\log \|F_L\|)^e (dB)^{n-1}(\log dB)^{2e}.
\eeq
 We will next choose $d$ (and modify $B$) so that the values of $\bz$ considered in this set include all integral points in a box.
\begin{lem}\label{lemma_image_box_denominators}
    Given $L \in \GL_n(\Q)$,
  there exist constants $1 \leq c  \ll_n \|L\|^{n^2}$ and $1\leq d\leq \|L\|^{n^2}$ such that
\[
[-B,B]^n \cap \Z^n \subset L([-c B, c B]^n \cap d^{-1}\Z^n).
\]
\end{lem}
\begin{proof}
It is equivalent to show that there exist such $c,d$ with the property that $L^{-1}([-B,B]^n \cap \Z^n) \subset [-c B, c B]^n \cap d^{-1}\Z^n.$ Write $L^{-1}=(\det L)^{-1}M$ in which $M$ is the adjugate matrix to $L$. By taking $d=\det L$ (so that $1\leq d\leq \|L\|^{n^2}$), it then suffices to find $c$ such that $M([-B,B]^n) \subset [-cB,cB]^n$. Such a $c$ with $1 \leq c  \ll_n \|L\|^{n^2}$ can be obtained by noting that  each entry in the adjugate matrix is $\ll_n \|L\|^{(n-1)^2}.$  
\end{proof}
By the lemma, there exist integers $1 \leq c_L  \ll_n \|L\|^{n^2}$ and $1\leq d_L\leq \|L\|^{n^2}$ such that
\[
[-B,B]^n \cap \Z^n \subset L([-c_L B, c_L B]^n \cap d_L^{-1}\Z^n).
\]
We then learn from applying (\ref{z_box_denoms}) (with $B$ replaced by $c_LB$) that 
\begin{align*}
N(F,B) &\leq \#\{ \bz \in L([-c_LB,c_LB] \cap d_L^{-1} \Z^n) : \exists y \in \Z : F(y,\bz)=0\}\\
&\ll_{n,\deg F} (\log \|F_L\|)^e(c_Ld_LB)^{n-1} (\log (c_Ld_LB))^{2e} \\
&\ll_{n,\deg F} \|L\|^{e'} (\log \|F\|)^e B^{n-1} (\log B)^{2e},
\end{align*}
for some $e'=e'(n)$. Here we have applied that $\log \|F_L\| \ll_{n,\deg F} \log \|L\| + \log \|F\|.$
\end{proof}

\subsection{Deduction of Theorem \ref{thm_allowable_not_strongly}}\label{sec_deduce_thm_allowable_not_strongly}
Under the hypothesis of Theorem \ref{thm_allowable_not_strongly}, a given polynomial $F(Y,\bX)$ is $(1,n)$-allowable but not strongly $(1,n)$-allowable. 
\begin{lem}\label{lemma_make_L_integral}
Let $F\in \mathbb{Z}[Y,X_1,\ldots,X_n]$ be a $(1,n)$-allowable polynomial which is not strongly $(1,n)$-allowable. Then there exists a transformation $L\in\GL_{n}(\mathbb{Q})$ with all integral entries such that $F_{L}(Y,\bX):=F(Y,L(\bX))\in \mathbb{Z}[Y,\bX]$ and $\mathbb{Q}(\bfX)[Y]/(F_{L}(Y,\bfX))$ is $n$-genuine but not strongly $n$-genuine.
\end{lem}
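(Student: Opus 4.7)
The plan is to unpack the definitions and then clear denominators with a diagonal scaling, appealing to Lemma~\ref{lemma_preservation_scaling} to ensure no properties are lost.

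Since $F$ is not strongly $(1,n)$-allowable, there exists some $\sigma\in\GL_n(\Q)$ such that $F_\sigma(Y,\bX)=F(Y,\sigma(\bX))$ is not strongly $n$-genuine. Since $F$ is $(1,n)$-allowable, the same $F_\sigma(Y,\bX)$ is $n$-genuine. Fix this particular $\sigma$; then $F_\sigma$ has exactly the properties we want, but $\sigma$ may have rational, non-integral entries.

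To remedy this, let $d\in\Z_{\geq 1}$ be a common denominator for all entries of $\sigma$, and set $L:=d\sigma\in\GL_n(\Q)$. Then every entry of $L$ is an integer, so $F_L(Y,\bX)=F(Y,L(\bX))\in\Z[Y,\bX]$. Let $S\in\GL_n(\Q)$ denote the scaling matrix $S=dI$, so that $S(\bX)=d\bX$. Since $L(\bX)=d\,\sigma(\bX)=\sigma(d\bX)=\sigma(S(\bX))$, we obtain the identity
\[
F_L(Y,\bX)=F_\sigma(Y,S(\bX)).
\]
By Lemma~\ref{lemma_preservation_scaling}, $F_\sigma(Y,S(\bX))$ is $n$-genuine (respectively, strongly $n$-genuine) if and only if $F_\sigma(Y,\bX)$ is. Combining this with the properties of $F_\sigma$ extracted above, $F_L(Y,\bX)$ is $n$-genuine but not strongly $n$-genuine, as required.

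There is no real obstacle here: the only point to check is that the diagonal scaling transport of properties carries through, which is precisely the content of Lemma~\ref{lemma_preservation_scaling}, and that $L$ remains invertible, which holds because $d\neq 0$ and $\sigma\in\GL_n(\Q)$.
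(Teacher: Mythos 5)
Your proposal is correct and follows essentially the same route as the paper: extract a $\sigma$ witnessing the failure of strong $(1,n)$-allowability (with $n$-genuineness guaranteed by $(1,n)$-allowability), clear denominators by multiplying by a scalar matrix, and invoke Lemma~\ref{lemma_preservation_scaling} to transport both properties to the integral transformation. No issues.
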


\begin{proof}
  By hypothesis $F$ is not strongly $(1,n)$-allowable, so there exists a transformation $L'\in\GL_{n}(\mathbb{Q})$ such that $F_{L'}(Y,\bX):=F(Y,L'(\bX))$ is $n$-genuine but not strongly $n$-genuine. Let $N$ be a positive integer such that $NL'$ has all integral entries, and let $S=NI_n$ denote the corresponding scaling matrix; define the matrix $L=SL'$. Then by  Lemma \ref{lemma_preservation_scaling}, $F_{L}(Y,\bX)$ is $n$-genuine but not strongly $n$-genuine.
 
\end{proof}
Now, given $F(Y,\bX)$ under the hypothesis of  Theorem \ref{thm_allowable_not_strongly},   we set $L$ to be a transformation as provided by Lemma \ref{lemma_make_L_integral}, so that $F_{L}(Y,\bX):=F(Y,L(\bX))$ is $n$-genuine but not strongly $n$-genuine. An application of Corollary \ref{cor_thm_genuine_not_strongly_NFB} shows that $N(F,B)\ll_{n,\deg F} \|L\|^{e'}(\log \|F\|)^e B^{n-1}(\log B)^{2e}$ for certain $e=e(n),e'=e'(n)$. The factor $\|L\|$ depends on $F$ but in an unspecified way. Thus we conclude that $N(F,B)\ll_{n,F} B^{n-1} (\log B)^{e''}$ for some $e''=e''(n),$
with unspecified dependence on $F$ in the implicit constant. This concludes the proof of Theorem \ref{thm_allowable_not_strongly}.

\subsection{Preliminaries: $n$-genuine polynomials and linear factors}
We turn to proving Theorem \ref{thm_genuine_not_strongly_NFB}.
Our principal results in this section are two-fold. First, we bound a count for the number of specializations of an $n$-genuine polynomial that have a linear factor in $Y$ over $\overline{\Q}$. Precisely, we say that $F(Y,X_1) \in \Q[Y,X_1]$ has a linear factor in $Y$ over $\overline{\Q}$ if 
    \beq\label{dfn_Y_linear_factor} 
    F(Y,X_1) = (Y - Q(X_1)) \Tilde{H}(Y,X_1),
    \eeq
    where $Q(X_1)\in \overline{\Q}[X_1]$ and $\Tilde{H}(Y,X_1)\in \overline{\Q}[Y,X_1].$

\begin{lem}\label{lemma_Cohen_genuine_linear_factor}
    Let  $F(Y,\bX) \in \Z[Y,X_1,\ldots,X_n]$ be an  $n$-genuine polynomial. Then for all $B \geq 1$,   
    $$\#\{\bx' \in [-B,B]^{n-1}: F(Y,X_1,\bx')\textrm{ has a linear factor in $Y$ over }\overline{\Q}\} \ll_{n,\deg F} B^{n-2}.$$
\end{lem}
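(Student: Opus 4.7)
The approach is to use Noether's Lemma to realize the set of ``bad'' $\bx'$ as the simultaneous vanishing locus of finitely many polynomials in $\bX'$, then exploit the $n$-genuine hypothesis to exhibit at least one nonzero such polynomial, and conclude by the Schwartz--Zippel bound (Lemma \ref{lemma_Schwartz_Zippel}).

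\textbf{Step 1 (Reduction via Noether).} Set $D = \deg F$. If $F(Y, X_1, \bx') = (Y - Q(X_1))\widetilde{H}(Y, X_1)$ with $Q \in \overline{\Q}[X_1]$, then $F(Y, X_1, \bx')$ satisfies the divisibility condition $\mathcal{D}((1, e_1))$ over $\overline{\Q}$ for $e_1 = \deg Q \leq D - 1$. For each $e = (1, e_1)$ with $|e| < D$, Lemma \ref{lemma_Noether}(i) applied to polynomials in the two variables $(Y, X_1)$ yields forms $\{G_{e, j}\}_j$ of degree $O_{n, D}(1)$ in the coefficient slots of a polynomial of degree $\leq D$ that characterize $\mathcal{D}(e)$ over $\overline{\Q}$. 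Substituting the coefficients of $F(Y, X_1, \bX')$, which are polynomials in $\bX'$, yields polynomials $\widetilde{G}_{e, j}(\bX') \in \Q[\bX']$ of degree $O_{n, D}(1)$. The residual edge case $e_1 = D - 1$ forces $\deg_Y F \leq 2$ and is handled analogously using Lemma \ref{lemma_Noether}(ii) for full reducibility.

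\textbf{Step 2 (The Noether forms are nontrivial).} For each relevant $e$, the claim is that at least one $\widetilde{G}_{e, j}$ is a nonzero polynomial in $\bX'$. Otherwise, viewing $F$ as a polynomial in $(Y, X_1)$ over $K := \Q(\bX')$, it satisfies $\mathcal{D}(e)$ over $\overline{K}$, giving a factorization $F = GH$ in $\overline{K}[Y, X_1]$ with $\deg_Y G \leq 1$. Since $F$ is absolutely irreducible over $\Z[Y, \bX]$, Gauss's lemma implies $F$ is irreducible in $K[Y, X_1]$; averaging over $\mathrm{Gal}(\overline{K}/K)$ then rules out the sub-case $\deg_Y G = 0$ (where $G \in \overline{K}[X_1]$ would, via the product of its Galois conjugates, produce a non-constant factor of $F$ in $K[X_1]$). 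Hence $\deg_Y G = 1$, yielding a root $\alpha \in \overline{K}(X_1)$ of $F$. Since $F$ is irreducible over $\Q(\bX)$, the assignment $Y \mapsto \alpha$ defines an injection $\phi : M := \Q(\bX)[Y]/F \hookrightarrow \overline{K}(X_1)$.

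\textbf{Step 3 (Contradict $n$-genuineness).} Choose a finite Galois extension $L/K$ containing all coefficients of $\alpha$ in $\overline{K}$, so $\phi(M) \subset L(X_1)$. Then $L(X_1)/K(X_1)$ is Galois with group $\mathrm{Gal}(L/K)$ acting trivially on $X_1$, and the Galois correspondence identifies its intermediate fields with intermediate fields $K \subset L' \subset L$ via $L' \mapsto L'(X_1)$. Consequently $\phi(M) = L'(X_1)$ for some such $L'$, with $[L' : K] = [M : \Q(\bX)] = \deg_Y F$. By the primitive element theorem, $L' = K(\gamma)$; clearing denominators from the minimal polynomial of $\gamma$ over $K$ produces $G \in \Z[\bX'][Y]$ with $M \cong \Q(\bX)[Y]/G$ and $\deg_{X_1} G = 0$, contradicting the hypothesis that $F$ is $n$-genuine.

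\textbf{Step 4 (Schwartz--Zippel finish).} For each of the $O_D(1)$ multi-degrees $e$, fix a nonzero $\widetilde{G}_{e, j^*}(\bX')$ produced by Step 2; its degree is $O_{n, D}(1)$, so by Lemma \ref{lemma_Schwartz_Zippel} it has at most $O_{n, D}(B^{n-2})$ integer zeros in $[-B, B]^{n-1}$. Summing over $e$ yields the claimed bound. \emph{The main technical obstacle is Step 3:} translating the generic factorization of $F$ (a statement in $\overline{K}(X_1)$) into a structural defect of the field extension $M/\Q(\bX)$. The crucial input is the Galois-theoretic description of intermediate fields in the constant-field extension $L(X_1)/K(X_1)$, which lets us ``detach'' $X_1$ from a primitive element of $\phi(M)$ and thereby produce a defining polynomial of $M$ that is independent of $X_1$.
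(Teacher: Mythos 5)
Your proposal is correct, and while it shares the paper's overall skeleton — encode ``has a linear factor in $Y$ over $\overline{\Q}$'' via Noether resultant forms evaluated at the coefficient polynomials $a_{i_0,i_1}(\bX')$, show at least one such form is not identically zero in $\bX'$, and finish with Lemma \ref{lemma_Schwartz_Zippel} — the crucial nonvanishing step is argued by a genuinely different route. The paper specializes: it invokes Hilbert irreducibility to get a dense set $U_{F,G}$ of good $\bx'$, uses Lemma \ref{lemma_F_linear_factor_splits_completely} to show that a linear factor forces complete splitting at each such $\bx'$, and then derives a contradiction with Lemma \ref{lemma_F_open_set} (an EGA-based constructibility statement guaranteeing an open set where $\Q[Y]/G(Y,\bx')$ is integrally closed in the fiber). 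You instead argue entirely generically: vanishing of all the forms means $F$ admits a factor with $\deg_Y\le 1$ over $\overline{K}$ with $K=\Q(\bX')$; monicity in $Y$ kills the $\deg_Y=0$ case; the resulting root $\alpha\in\overline K[X_1]$ embeds $M=\Q(\bX)[Y]/F$ into a constant-field extension $L(X_1)$ of $K(X_1)$, and the Galois correspondence for $L(X_1)/K(X_1)$ (intermediate fields are exactly $L'(X_1)$) plus the primitive element theorem produces a defining polynomial for $M$ independent of $X_1$, contradicting $n$-genuineness directly. This buys you a shorter, more self-contained argument that avoids Hilbert irreducibility, the complete-splitting lemma, and the EGA input; the paper's specialization machinery, on the other hand, is reused elsewhere (e.g.\ in Lemma \ref{lemma_Cohen_strongly_genuine_reducible}) so little is saved globally. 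Two minor points, neither fatal: (a) your bookkeeping over multi-degrees $e=(1,e_1)$ with $|e|<D$ and the residual case is slightly more delicate than the paper's single choice $e=(1,\deg_{X_1}F)$ (any factor of $F$ automatically has $X_1$-degree at most $\deg_{X_1}F$, so one condition suffices), though both write-ups are informal about the constraint $|e|<\deg F$ in the definition of $\mathcal D(e)$; (b) your Galois-norm argument to exclude $\deg_Y G=0$ can be replaced by the one-line observation that a monic-in-$Y$ polynomial cannot be divisible by a nonconstant element of $\overline K[X_1]$.
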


Second, we make a variable-reducing observation that applies to polynomials that are $n$-genuine but not strongly $n$-genuine. 
\begin{lem}\label{lem_shared_zeros_not_strongly_genuine}
    Let $F(Y,\bX) \in \Z[Y,X_1,\ldots,X_n]$ with $\deg_Y F \geq 2$ be absolutely irreducible, but not a strongly $n$-genuine polynomial.  Then  there exists a polynomial $G(Y,\bX) \in  \Z[Y,\bX]$ depending only $Y, \{X_i\}_{i \in I}$ for an index set $1 \leq |I|<n$, with the following properties:
    \beq\label{QGF} \Q(\bX) \subsetneq \Q(\bX)[Y]/(G(Y,\bX)) \subseteq \Q(\bX)[Y]/(F(Y,\bX));
    \eeq
   $G$ is irreducible over $\Q$,  monic in $Y$,  $\deg_Y G \geq 2$, and   $ 1\leq  \deg_{X_i} G \ll_{n,\deg F} 1$ for each $i \in I$. Moreover,  
    if $\bx\in \Z^n$ satisfies that $F(Y,\bx)=0$ is solvable over $\Z$, then $G(Z,(x_i)_{i \in I})=0$ is also solvable over $\Z$. Additionally, we can choose $G(Y,\bX)=G(Y,\bX_I)$ such that $\log \|G(Y,\bX_I)\|\ll_{n,\deg F} \log \|F(Y,\bX)\|.$

\end{lem}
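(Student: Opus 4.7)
The plan is to unpack the hypothesis algebraically, produce $G$ as a resultant extracted from the subfield structure, and transport integer solutions of $F=0$ to integer solutions of $G=0$ via the same resultant identity. Since $F$ is absolutely irreducible but not strongly $n$-genuine, the tower $\Q(\bX)\subsetneq M'\subseteq \Q(\bX)[Y]/(F)$ contains an intermediate extension defined by some $G_{0}(Z,\bX_{I})\in\Q[Z,\bX_{I}]$ for an index set $I\subsetneq\{1,\ldots,n\}$, irreducible over $\Q(\bX_I)$, with $\deg_{Z}G_{0}\geq 2$. The degenerate case $|I|=0$ would produce a nontrivial algebraic subextension of $\Q$ inside $\Q(\bX)[Y]/(F)$, but the absolute irreducibility of $F$ forces the algebraic closure of $\Q$ in $\Q(\bX)[Y]/(F)$ to be $\Q$ itself; so $1\leq|I|<n$. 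A Tschirnhaus transformation (scaling a root of $G_0$ by its leading coefficient in $Z$), followed by clearing denominators and extracting content, then produces $G\in\Z[Z,\bX_{I}]$ that is monic in $Z$, defines the same subfield $M'$, and is irreducible over $\Q$ by Gauss's lemma. Variables $X_i$ on which $G$ does not genuinely depend are dropped from $I$, so $\deg_{X_i} G\geq 1$ for each $i\in I$, and $\deg_Z G$ divides $\deg_Y F$.

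Assuming $F$ is monic in $Y$ (to which the general case reduces after clearing the leading coefficient in $Y$, at a controlled cost), the core step is to realize $G$ via a resultant. Choose a polynomial $p(Y,\bX)\in\Z[Y,\bX]$ of controlled degree such that $\beta:=p(\alpha,\bX)$, with $\alpha=Y$ the canonical generator of $M=\Q(\bX)[Y]/(F)$, lies in the subfield $M'_{0}=\Q(\bX_I)[Z]/(G)\subset M$ and generates it over $\Q(\bX_I)$. Expanding over the roots $\alpha_{1},\ldots,\alpha_{d}$ of $F$ in $\overline{\Q(\bX)}$ yields
\[
\mathrm{Res}_{Y}\bigl(F(Y,\bX),\,p(Y,\bX)-Z\bigr)\;=\;\prod_{i=1}^{d}\bigl(p(\alpha_{i},\bX)-Z\bigr)\;=\;\pm\,G(Z,\bX_{I})^{d/d_{0}},
\]
where $d=\deg_Y F$, $d_{0}=\deg_{Z}G$, and the exponent $d/d_{0}$ reflects that each of the $d_{0}$ Galois conjugates of $\beta$ over $\Q(\bX)$ is realized by exactly $d/d_{0}$ of the roots $\alpha_i$. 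Classical height-of-resultant (Mahler-type) estimates then deliver $\deg_{X_{i}}G\ll_{n,\deg F}1$ and $\log\|G\|\ll_{n,\deg F}\log\|F\|$.

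The specialization property falls out of this identity for free: given $(y,\bx)\in\Z\times\Z^{n}$ with $F(y,\bx)=0$, set $z:=p(y,\bx)\in\Z$. Since $F(Y,\bx)$ and $p(Y,\bx)-z$ share the common root $Y=y$, the specialized resultant vanishes, so $G(z,\bx_{I})^{d/d_{0}}=0$ and hence $G(z,\bx_{I})=0$, giving the desired integer solution $z$ to $G(Z,\bx_I)=0$.

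The main obstacle is the construction of $p\in\Z[Y,\bX]$ with $\beta=p(\alpha,\bX)\in M'_{0}$ generating $M'_{0}$ over $\Q(\bX_I)$. An arbitrary generator of $M'_{0}$ over $\Q(\bX_I)$ can be written as $\sum_{k}c_{k}(\bX)\alpha^{k}$ with $c_{k}\in\Q(\bX)$, but the $c_k$ are typically genuine rational functions in $\bX$, not polynomials; clearing denominators by $r(\bX)\in\Q[\bX]$ moves $\beta$ out of $M'_{0}$ (its minimal polynomial becomes $r(\bX)^{d_{0}}G(Z/r(\bX),\bX_{I})$, now depending on all of $\bX$ via $r$), which destroys the resultant identity. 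One must instead clear denominators only by elements of $\Q[\bX_I]$, equivalently produce a $\beta$ inside the intersection $M'_{0}\cap\Z[\bX][\alpha]$ still generating $M'_{0}$ over $\Q(\bX_I)$---i.e.\ realize the integral closure of $\Z[\bX_I]$ in $M'_{0}$ inside the order $\Z[\bX][\alpha]\subset M$. Verifying this is the crux, and is naturally tied to the explicit decomposition of $F$ over $M'$ via a polynomial $\tilde F(Y,\bX,Z)\in\Z[Y,\bX,Z]$ satisfying $\mathrm{Res}_Z\bigl(G(Z,\bX_I),\tilde F(Y,\bX,Z)\bigr)=c\,F(Y,\bX)$ for some $c\in\Z$, from which the desired $p$ can be extracted.
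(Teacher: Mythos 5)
There is a genuine gap here, and you identify it yourself: the entire argument hinges on producing $p(Y,\bX)\in\Z[Y,\bX]$ such that $\beta=p(\alpha,\bX)$ lies in and generates a proper non-$n$-genuine subfield, with degrees and heights controlled in terms of $n$ and $\deg F$ — and this step is never carried out. Without it there is no resultant identity, no height bound on $G$, and no specialization statement. The obstruction you point to is real: an abstract generator of the given subfield $M'_0$ has coordinates $c_k(\bX)\in\Q(\bX)$ in the basis $1,\alpha,\ldots,\alpha^{D-1}$ whose denominators need not lie in $\Q[\bX_I]$, and clearing them by an element of $\Q[\bX]\setminus\Q[\bX_I]$ destroys membership in $M'_0$. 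Asserting that the crux "is naturally tied to" a decomposition of $F$ over $M'$ is not a proof; as written, the proposal establishes none of the quantitative conclusions of the lemma.

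The paper resolves exactly this point by \emph{not} fixing the target subfield in advance. It parametrizes all elements $b(T)=b_1(\bX)T+\cdots+b_{D-1}(\bX)T^{D-1}$ with $b_i\in\Z[\bX]$ of total degree at most $E=\deg F$, and imposes that the minimal polynomial $m_b(Y,\bX)$ of $b(T)$ over $\Q(\bX)$ be independent of $X_1$. This becomes a system of equations in the coefficients $(b_{i,\bfe})$ with entries of size $\leq\|F\|^{D}$; the hypothesis that $F$ is not strongly $n$-genuine guarantees a nontrivial rational solution, because some non-$n$-genuine subextension $M_I(X_1)$ contains an element representable in this form with bounded coefficient degrees. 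A quantitative nullspace lemma (Lemma \ref{lemma_matrix_nullspace}) then extracts an integral solution $a$ with $\log\|a\|\ll_{n,\deg F}\log\|F\|$, and $G$ is taken to be the minimal polynomial of $a(T)$ — note that the resulting intermediate field may differ from the $M'_0$ you started with, and this flexibility is what makes the construction possible. Your resultant identity and the specialization argument built on it are fine in principle (the paper achieves the same end by tracking the Galois conjugate $\sigma_i(a(T))=a_1(\bx)t_i+\cdots+a_{D-1}(\bx)t_i^{D-1}\in\Z$ attached to the integral root $t_i$), but they are downstream of the construction you leave open.
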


\begin{rem}\label{remark_gen_deg4}
    In particular, if $F$ is $n$-genuine but not strongly $n$-genuine, then the right-most inclusion in (\ref{QGF}) is strict, and  $2 \leq \deg_Y G$ is a proper divisor of $\deg_Y F$  (and thus such $F$ can only exist if $\deg_Y F\geq 4$).
\end{rem}

\subsection{Preparation to prove Lemma \ref{lemma_Cohen_genuine_linear_factor}}

The following lemma considers the integral closure of $\Q(X_2,\ldots,X_n)$ (omitting the variable $X_1$) in $\Q(\bX)[Y]/F$, and constructs a nonempty open set that will be useful for an application of Noether's lemma. 

\begin{lem}\label{lemma_F_open_set}
 Let  $F(Y,\bX) \in \Z[Y,X_1,\ldots,X_n]$   be absolutely irreducible and monic in $Y$.  Consider the integral closure of $\Q(X_2,...,X_n)$ in $\Q(\bX)[Y]/F$, which we denote by $K_{\hat{1}}.$ Suppose  $G(Y,X_2,...,X_n) \in \Z[Y,X_2,\ldots,X_n]$ is an irreducible polynomial such that $$K_{\hat{1}} = \Q(X_2,...,X_n)[Y]/G(Y,X_2,...,X_n).$$ 
Then there exists a nonempty open set $V \subset \Q^{n-1}$ such that for all $\bx' \in V$, 
 $\Q[Y]/G(Y,\bx')$ is integrally closed in $\Q(X_1)[Y]/(F(Y,X_1,\bx'))$.
\end{lem}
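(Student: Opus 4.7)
The plan is to combine Hilbert's Irreducibility Theorem (Theorem \ref{thm_HIT}) with a Bertini--Noether-type principle that geometric integrality of fibers spreads to a nonempty open subset of the base. I will set $A = \Q[\bX']$ with $\bX' = (X_2,\ldots,X_n)$ and $K = \Q(\bX') = \mathrm{Frac}(A)$, so $\Q(\bX) = K(X_1)$, and let $\bar A \subset L$ denote the integral closure of $A$ in $L = \Q(\bX)[Y]/(F)$. Since $A$ is a Noetherian normal domain and $L/K$ is a finite separable extension, $\bar A$ is a finitely generated $A$-module with $\mathrm{Frac}(\bar A) = K_{\hat 1}$. I will pick $z \in \bar A$ with $K_{\hat 1} = K(z)$, whose scaled minimal polynomial over $K$ coincides with the given $G$ (temporarily writing $T$ in place of the $Y$ of the lemma's statement to avoid collision with the $Y$ in $F$).

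First, I apply Theorem \ref{thm_HIT} to the irreducible polynomials $G(T,\bX')$ and $F(Y,X_1,\bX')$ to obtain a dense set $U \subset \A^{n-1}(\Q)$ on which $G(T,\bx')$ is irreducible in $\Q[T]$, so $K_{\bx'} := \Q[T]/(G(T,\bx'))$ is a number field, and $F(Y,X_1,\bx')$ is irreducible in $\Q[Y,X_1]$, so $L_{\bx'} := \Q(X_1)[Y]/(F(Y,X_1,\bx'))$ is a field. I then shrink $U$ to a dense open $V_1 \subseteq U$ on which a fixed finite set of denominators --- coming from the expression of $z$ as an element of $L$ and from a presentation of $\bar A$ as an $A$-module --- does not vanish. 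For $\bx' \in V_1$, the element $z$ specializes to some $z_{\bx'} \in L_{\bx'}$ with $G(z_{\bx'},\bx')=0$. Since $G(T,\bx')$ retains its full $T$-degree, this produces an embedding $K_{\bx'} \hookrightarrow L_{\bx'}$ whose image equals $\bar A/\mathfrak m_{\bx'}\bar A$, and I will identify $K_{\bx'}$ with this image.

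It will then remain to find a dense open $V \subseteq V_1$ on which $K_{\bx'}$ is integrally closed in $L_{\bx'}$. Because $K_{\bx'}$ is a field, ``integrally closed'' in $L_{\bx'}$ is the same as ``algebraically closed,'' and in turn the same as geometric integrality of the fiber curve $X_{\bx'} := \{F(Y,X_1,\bx')=0\}$ over $\mathrm{Spec}\,K_{\bx'}$. At the generic point of $\mathrm{Spec}\,\bar A$ this holds automatically, since by definition $K_{\hat 1}$ is the algebraic closure of $K$ in $L$, so the morphism $\{F=0\} \to \mathrm{Spec}\,\bar A$ has geometrically integral generic fiber. The hard part, which I view as the only real obstacle, is to invoke the classical Bertini--Noether principle for spreading geometric integrality of fibers of a finite-type morphism (see, e.g., \cite[Ch.~10]{FriJar23}) to conclude that there is a nonempty open $\tilde V \subset \mathrm{Spec}\,\bar A$ over which fibers stay geometrically integral, and then take $V$ to be a nonempty open subset of $V_1$ whose preimage in $\mathrm{Spec}\,\bar A$ lies in $\tilde V$. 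For such $\bx'$, the irreducibility of $G(T,\bx')$ implies that $\mathfrak m_{\bx'}\bar A$ is a single maximal ideal with residue field $K_{\bx'}$, so the geometrically integral fiber over it coincides with $X_{\bx'}$ viewed as a curve over $K_{\bx'}$, yielding the desired integral closure statement.
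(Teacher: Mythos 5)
Your argument is essentially the paper's: both proofs rest on the observation that the generic fiber of the morphism from $\{F=0\}$ to a model of $K_{\hat 1}$ is geometrically integral (precisely because $K_{\hat 1}$ is by definition the algebraic closure of $\Q(X_2,\ldots,X_n)$ in $\Q(\bX)[Y]/F$), and then on the constructibility/spreading-out of the locus of geometrically integral fibers (the paper cites EGA IV 9.7.7, you cite the equivalent Bertini--Noether principle) to extract a nonempty open set of specializations. The extra scaffolding you add via Hilbert irreducibility and denominator-clearing is harmless but not needed for the paper's more direct formulation.
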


\begin{proof}
If $\deg_{X_1} F = 0$, the lemma is trivially true, since we may simply take $G$ to be the polynomial $F$, and  $\Q(\bX)[Y]/(F(Y,\bX)) = K_{\hat{1}}(X_1)$.
In general, let $$W_F = \bSpec(\Q(\bX)[Y]/F(Y,\bX)), \qquad W_G = \bSpec(\Q(X_2,...,X_n)[Y]/G(Y,X_2,...,X_n)).$$
The inclusion of fields gives us a morphism $$f: W_F\rightarrow W_G.$$
Let $V'$ be the set of points $u\in W_G$ where the fiber $$f_u:W_{F,u}\rightarrow W_{G,u}$$
is geometrically integral. We claim that this contains a nonempty open set. 

By  \cite[EGA IV part 3, Theorem 9.7.7]{EGAIV_part3},  $V'$ is locally constructible. Since $G$ is defined so that $\Q(X_2,...,X_n)[Y]/G(Y,X_2,...,X_n)$ is algebraically closed in $\Q(\bX)[Y]/F(Y,\bX)$, we get that the generic fiber is geometrically integral and therefore contained in $V'$. Hence, $V'$ contains a nonempty open subset $V$ such that for all $\bx' \in V$, $k(W_{G,\bx'})$   is integrally closed in $k(W_{F,\bx'})$. (Here we use the standard notation in algebraic geometry that $k(u)$ is the residue field at $u$.) That is to say, for all $\bx' \in V$, $\Q[Y]/G(Y,\bx')$ is integrally closed in $\Q(X_1)[Y]/(F(Y,X_1,\bx')).$  
\end{proof}

\subsection{Proof of Lemma \ref{lemma_Cohen_genuine_linear_factor}}

We use Noether's Lemma \ref{lemma_Noether} (i)  in combination with Lemma \ref{lemma_F_open_set} and Lemma \ref{lemma_F_linear_factor_splits_completely} to count the number of specializations $\bx'$ for which $F(Y,X_1,\bx')$  has a linear factor in $Y$ over $\overline{\Q}$. 
Let $F$ be given as in Lemma \ref{lemma_Cohen_genuine_linear_factor}, with $\deg_Y F \geq 2$ and $\deg_{X_j}F=k_j$ for $j=1,\ldots,n$. In particular, each $k_j \geq 1$ since $F$ is $n$-genuine.
Define the multi-index $e=(1,k_1)$. 
 Let $G_L$ denote a form provided by Lemma \ref{lemma_Noether} for $K=\Q$, which tests for any $H(Y,X_1) \in \Q[Y,X_1]$ the divisibility condition $\mathcal{D}(e)$ to hold for $H(Y,X_1)$ over $\overline{\Q}$.
 (Lemma \ref{lemma_Noether} provides a collection of forms $\{G_1,...,G_s\}$ which determine when $H(Y,X_1)$ has a linear factor, rather than a single form. We can take $G_L = G_1^{2\ell_1} + ... + G_s^{2\ell_s}$ for appropriate powers $\ell_i$ to obtain a single form of degree $2\ell$ where $\ell$ is the $\mathrm{lcm}$ of the degrees of the $G_i$.)
 Note in particular, if $H(Y,X_1)$ has a linear factor in $Y$ over $\overline{\Q}$ then divisibility condition $\mathcal{D}(e)$ holds for $H(Y,X_1)$ over $\overline{\Q}$.
 
  Suppose now that  $F(Y,\bX) \in \Z[Y,X_1,\ldots,X_n]$ is $n$-genuine.   Denote the coefficients of $F(Y,\bX)$ as a polynomial in $Y,X_1$ by $$F(Y,\bX) = \sum_{i_0, i_1} a_{i_0,i_1}(X_2,...,X_n) Y^{i_0}X_1^{i_1}.$$ 
 Here $a_{i_0,i_1}(X_2,...,X_n) \in \Z[X_2,\ldots,X_n]$; in particular $a_{\deg_Y F,k_j}(\bX')\con 1$ since $F$ is monic in $Y$. 
  Let $G_L((a_{i_0,i_1}(\bX'))) \in \Z[\bX']$ denote the form specified above, so that $G_L((a_{i_0,i_1}(\bx')))=0$ if and only if $F(Y,X_1,\bx')$ satisfies divisibility condition $\mathcal{D}(e)$ over $\overline{\Q}$ (or the degree of $F$, as a polynomial in $Y,X_1$, drops).  In particular, if $F(Y,X_1,\bx')$ has a linear factor in $Y$ over $\overline{\Q}$, then $G_L((a_{i_0,i_1}(\bx')))=0$.
  
     We aim to show that the form $G_L$ is not identically zero as a function of $\bx'=(x_2,\ldots,x_n)$. Once this has been verified, it follows from the trivial bound (Lemma \ref{lemma_Schwartz_Zippel}) that at most $\ll_{\deg G_L} B^{n-2}$ choices of $\bx' \in [-B,B]^n$ lead to $G_L((a_{i_0,i_1}(\bx')))=0$. Since by Lemma \ref{lemma_Noether} (i), $\deg G_L \ll_{n, \deg F} 1$, this suffices for the claim in Lemma \ref{lemma_Cohen_genuine_linear_factor}.
     
   Now we show that $G_L$ is not identically zero. We first consider the integral closure of $\Q(X_2,...,X_n)$ in $\Q(\bX)[Y]/F$, which we denote by $K_{\hat{1}}.$ Then we can find an irreducible polynomial $G(Y,X_2,...,X_n) \in \Z[Y,X_2,\ldots,X_n]$ such that $$K_{\hat{1}} = \Q(X_2,...,X_n)[Y]/G(Y,X_2,...,X_n).$$ 
Since $F(Y,\bX)$ is an $n$-genuine polynomial, we must have $\deg_Y(F) > \deg_Y(G)$, otherwise $G$ would provide a polynomial in $Y$ and $(X_i)_{i \in \{2,\ldots,n\}}$ that violates the definition of $F$ being $n$-genuine.
 By Hilbert's irreducibility theorem (Theorem \ref{thm_HIT}), we can find a dense set $U_{F,G}'\subset \Q^{n-1}$ such that $F(Y,X_1,\bfx')$ and $G(Y,\bx')$ are both irreducible over $\Q$ for all $\bx'\in U_{F,G}'$. 
 Moreover, take the open subset 
\[
V=\{(x_{2},...,x_{n})\in\mathbb{Q}^{n-1}: a_{i_{0},i_{1}}(x_{2},...,x_{n})\neq 0\text{ for all $i_{0},i_{1}$ such that $a_{i_0,i_1}(\bX') \not\con 0$} \}.
\]
Then the set $U_{F,G}=U_{F,G}'\cap V$ is dense in $\Q^{n-1}$. Now assume for contradiction that $G_L=0$ on $U_{F,G}$.
 Under this hypothesis, observe that for every $\bx' \in U_{F,G}$,  $F(Y,X_1,\bx')$ has a linear factor in $Y$ over $\overline{\Q}$ and is irreducible over $\Q(X_1)$: indeed for every $\bfx'\in U_{F,G}$ we have $\deg_{X_{1}}(F(Y,X_{1},\bfx'))=k_{1}$ and $\deg_{Y}(F(Y,X_{1},\bfx'))=\deg_{Y}F$, since $U_{F,G}\subset V$ (so the degree does not drop). We also have by the assumption that $G_{L}=0$ on $U_{F,G}$ that $F=HG$, where $\deg_{Y}(G)\leq 1$. On the other hand, since $F(Y,X_{1},\bfx)$ is monic in $Y$, the only possibility is $\deg_{Y}(G)=1$. 
  
Thus fix any  $\bx' \in U_{F,G}$, and then   $F(Y,X_1,\bx')$ has a linear factor in $Y$ over $\overline{\Q}$ and is irreducible over $\Q(X_1)$.  Then  by Lemma \ref{lemma_F_linear_factor_splits_completely}, $F(Y,X_1,\bx')$ splits completely. Consequently, the splitting field of $F(Y,X_1,\bx')$ over $\Q(X_1)$ is of the form $R_{\bx'}(X_1)$ for $R_{\bx'}$ a finite extension of $\Q$. Hence we can find $K_{\bfx'}\subset R_{\bfx'}$ such that $K_{\bfx'}(X_{1})=\Q(X_1)[Y]/(F(Y,X_1,\bx'))$. Observe that this implies that $[K_{\bx'}:\Q]= \deg_Y F$. Next, we consider $$S_{\bx'}:=\Q[Y]/G(Y,\bx')\subset K_{\bx'}.$$
We know that $[S_{\bx'}:\Q]=\deg_Y(G)<\deg_Y(F) =[K_{\bx'}:\Q]$. So, for the fixed $\bx'\in U_{F,G}$ we considered, $S_{\bx'}$ cannot be integrally closed in $K_{\bx'}$ since $S_{\bfx'}\subset K_{\bfx'}$ are both finite extensions of $\mathbb{Q}$.   
In conclusion, if  $G_L=0$ on $U_{F,G}$, then for all $\bx'\in U_{F,G}$, $S_{\bx'}$ is not integrally closed in $K_{\bx'}$.

On the other hand, Lemma \ref{lemma_F_open_set} provides a nonempty open set $V \subset \Q^{n-1}$ such that for every $\bx' \in V$, $S_{\bx'}=\Q[Y]/G(Y,\bx')$ is integrally closed in $K_{\bfx'}(X_{1}) =\Q(X_1)[Y]/(F(Y,X_1,\bx'))$.  Since $X_1$ is transcendental over $K_{\bx'}$, this implies that $S_{\bx'}$ is integrally closed in $K_{\bx'}$.
  This directly leads to a contradiction, since $U_{F,G}\cap V$ is a nonempty   set, and for any $\bx'\in U_{F,G}\cap V$, it would occur both that $S_{\bx'}$ is not integrally closed in $K_{\bx'}$ and $S_{\bx'}$ is   integrally closed in $K_{\bfx'}$. In conclusion, $G_L$ is not identically zero on $U_{F,G}$, and the lemma is proved.

\subsection{Proof of Lemma \ref{lem_shared_zeros_not_strongly_genuine}}
 We turn to   Lemma \ref{lem_shared_zeros_not_strongly_genuine}, on the  variable-reducing property implied by a polynomial $F(Y,X_1,\ldots,X_n)$ being  not strongly $n$-genuine. Let $E$ denote the total degree of $F(Y,\bX)$, and let $D=\deg_Y F$. Then by definition, there exists some subextension $M_1$ such that 
$$\Q(\bX) \subsetneq M_1\subset \Q(\bX)[Y]/(F(Y,\bX))$$
and $M_1$ is not $n$-genuine. Let us first establish some properties of any such $M_1$. 

Since $M_1$ is not $n$-genuine, we can find a subset $I\subsetneq \{1,\ldots, n\}$ and $G_1(Y,\bX_I) \in \Q[Y,\bX_I]$ such that $M_1 = \Q(\bX)[Y]/(G(Y,\bX_I))$. Note that $I$ is nonempty since the left-most inclusion $\Q(\bX)\subsetneq M_1$ is strict (however, the right-most inclusion need not be strict, since we do not assume that $F$ is $n$-genuine); additionally we can take $|I| = n-1$. For notational simplicity, assume that $I = \{2,\ldots, n\}$ and that $1\not\in I$. Since $G_1(Y,\bX_I)$ is independent of $X_1$, we can write $M_1 = M_{I}(X_1)$, where $M_{I} = \Q(\bX_I)[Y]/(G(Y,\bX_I))$. Furthermore, for any element $S$ of $M_{I}$, we note that the minimal polynomial of $S$ over $\Q(\bX)$ will not be $n$-genuine (as it will be independent of $X_1)$. 

Now, we return to our original extension $\Q(\bX)[Y]/(F(Y,\bX))$ that is not strongly $n$-genuine. Our aim is to construct a polynomial $G(Y,\bX_I)$ such that $\log\|G(Y,\bX_I)\|\ll_{n,E,D} \log\|F(Y,\bX)\|$ and such that $\Q(\bX)\subsetneq M' = \Q(\bX)[Y]/(G(Y,\bX_I)) \subset \Q(\bX)[Y]/(F(Y,\bX))$ (observe that we may not construct the $M_1$ chosen above -- $M'$ may be a different subextension that is also not $n$-genuine.) 

Let $T$ denote a root of $F(Y,\bX)$ such that $\Q(\bX)[Y]/(F(Y,\bX)) = \Q(\bX)(T).$ Let $b(Z) \in \Z[\bX,Z]$ denote a polynomial in $Z$, with coefficients $b_i(\bX) \in \Z[\bX]$ that have total degree at most $B$, of the following form: 
$$b(Z) = b_1(\bX) Z + \hdots + b_{D-1}(\bX) Z^{D-1}.$$
For each $b_i(\bX)$, we can expand the expression as 
$$b_i(\bX) = \sum_{e_1+...+e_n\leq B} b_{i,\bfe} X_1^{e_1}...X_n^{e_n}.$$
We note that for $T$ as chosen above, $\Q(\bX)(b(T)) \subset \Q(\bX)(T)$ for any such polynomial $b(T)$. Moreover, we know that if $b(Z)$ is a nonzero polynomial, then $b(T) \not\in \Q(\bX)$ since $F(Y,\bX)$ is irreducible and no polynomial of degree $<\deg_Y F = D$ in $\Q(\bX)[Y]$ can have $T$ as a root; consequently, if $b(Z)$ is a nontrivial polynomial, then $\Q(\bX)\subsetneq \Q(\bX)(b(T)) \subset \Q(\bX)(T)$. We will show that there exists a polynomial, called $a(Z)\in \Z[\bX,Z]$, with sufficiently small coefficient polynomials $a_1(\bX),...,a_{D-1}(\bX)$ satisfying that $\Q(\bX)(a(T))$ is not $n$-genuine -- in particular, the minimal polynomial of $a(T)$ will be independent of $X_1$. 

Consider for any polynomial $b(Z)\in \Z[\bX,Z]$ the minimal polynomial $m_b(Y,\bX) \in \Z[Y,\bX]$ of the element $b(T)$ over $\Q(\bX)$. If we write $$m_b(Y,\bX) = c_0(\bX) + c_1(\bX) Y + \hdots + c_{\deg_Y m_b}(\bX) Y^{\deg_Y m_b},$$
then each coefficient polynomial $c_{j}(\bX)$ is a symmetric polynomial of degree $j$ in the conjugates of $b(T)$. Note that $\deg_Y m_b \leq D.$ Hence, the coefficients of $c_j(\bX)$ for each $j$ (viewed as a polynomial in $\Z[\bX]$) will be linear combinations of $(b_{i,\bfe})_{i=1,...,D-1, |\bfe|\leq B}$; specifically, if $$c_j(\bX) = \sum c_{j,\mathbf{e}'} X_1^{e_1'}... X_n^{e_n'},$$
then for each $j$ and $\mathbf{e'}$, we can write 
$$c_{j,\bfe'} = \sum_{i=0}^{D-1} \sum_{|\bfe|\leq B} \alpha_{i,j,\bfe',\bfe} b_{i,\bfe},$$
where $\alpha_{i,j,\bfe',\bfe}$ are polynomials, in the coefficients of $F(Y,\bX)$, of degree $\leq j$.

Let us ask for $c_{j,\bfe'}$ to be zero for any $\bfe'=(e_1',\ldots,e_n')$ with $e_1'\geq 1$; in other words, $m_b(Y,\bX)$ is independent of $X_1$. We can see that this defines a linear system of equations in the variables $(b_{i,\bfe})_{i=1,...,D-1, |\bfe|\leq B}$ and the corresponding matrix has entries bounded by $\|F\|^D$. We can now input our assumption that $\Q(\bX)[Y]/(F(Y,\bX))$ is not $n$-genuine and thus a subextension $M_1= M_I(X_1)$ exists, as described at the start of the proof; recall that the minimal polynomial of any element of $M_I$ over $\Q(\bX)$ will be independent of $X_1$. Further, since \begin{align*}   
M_1\subset \Q(\bX)[Y]/(F(Y,\bX)) &= \Q(X_2,...,X_n,Y)[X_1]/(F(Y,\bX))\\
&= \ldots = \Q(X_1,...,X_{n-1},Y)[X_n]/(F(Y,\bX)),
\end{align*}
we can take any such minimal polynomial to have total degree $\leq E$. This implies that for $B:=E$ (as we fix from now on), our linear system of equations (defined above) has a nullspace with rank $\geq 1$, as there is a nontrivial solution given by the minimal polynomial of any element of $M_I \setminus \Q(\bX).$ 

 By Lemma \ref{lemma_matrix_nullspace} (below), we can now construct a nontrivial solution to our linear system of equations, denoted as $(a_{i,\bfe})_{i=1,...,D-1,|\bfe|\leq E}$ satisfying that $\|\bfa\|=\max_{i,\bfe}|a_{i,\bfe}| \ll_{n,E} \|F\|^{C_{n,E}}$, where $C_{n,E}$ is a constant only depending on $n$ and $E$, the total degree of $F$. This set of coefficients then gives us a polynomial $a(Z)\in \Z[\bX,Z]$ satisfying that the minimal polynomial of $a(T)$ over $\Q(\bX)$, denoted as $G(Y,\bX_I) := m_a(Y,\bX_I)$, is independent of $X_1$. Further, since $\|\ba\|\ll_{n,E} \|F\|^{C_{n,E}}$ and the coefficients of $G(Y,\bX_I)$ are given by polynomials of degree $\leq D$ in $(a_{i,\bfe})$ and the coefficients of $F(Y,\bX)$, we know that $\log\|G(Y,\bX_I)\|\ll_{n,\deg F} \log\|F\|.$ Further, since $G(Y,\bX_I)$ is a minimal polynomial, we know that $G$ is irreducible over $\Q$ and $1\leq \deg_{X_i} G \leq E =\deg F$ for each $i$. Additionally, $a(Z)$ is a nonzero polynomial and hence $\Q(\bX) \subsetneq \Q(\bX)(a(T))$, so we know that $2\leq \deg_Y G.$ 

Finally, we must check that if $\bx\in \Z^n$ satisfies that $F(Y,\bx)$ is solvable over $\Z$, then $G(Y,\bX_I)=0$ is also solvable over $\Z$. Let $\sig_i$ be an element of the Galois group of $F(Y,\bX)$ and set $T_i = \sig_i(T)$; it follows that 
$$\sig_i(a(T)) = a_1(\bX)T_i+ \hdots + a_{D-1}(\bX) T_i^{D-1}.$$
Moreover, since $\sig_i$ preserves the elements of $\Q(\bX)$, $\sig_i(a(T))$ is also a root of $G(Y,\bX_I)$. 

Now, let $\bx\in \Z^n$ be such that $F(Y,\bx)=0$ is solvable over $\Z$. Consequently, for some $i\in \{1,\hdots ,D\}$, it must be that $t_i:= T_i \bmod (X_1-x_1,...,X_n-x_n)$ (that is, specializing each $X_i$ to $x_i$) is in $\Z$. For this $i$, $\sig_i(a(T)) \bmod (X_1-x_1,..., X_n-x_n)$ is a root of $G(Y,\bx_I)$ and 
$$\sig_i(a(T)) \bmod (X_1-x_1,...,X_n-x_n) = a_1(\bx) t_i+...+a_{D-1}(\bx)t_i^{D-1} \in \Z.$$
Thus, $G(Y,\bx_I)=0$ is solvable over $\Z$. This completes the proof of Lemma \ref{lem_shared_zeros_not_strongly_genuine}.

\begin{lem}\label{lemma_matrix_nullspace}
Suppose $A$ is an $m_1 \times m_2$ singular   matrix with rank $s < \min\{m_1,m_2\}$ and with each entry $a_{ij}$ an integer satisfying $|a_{ij}| \leq M$. Then there exists a nonzero integral vector $\bfb$ in the nullspace of $A$ with $\|\bfb\| = \max_i |b_i| \ll_s M^{s}$.
\end{lem}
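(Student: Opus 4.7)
The plan is to construct $\mathbf{b}$ explicitly using Cramer's rule applied to a maximal nonsingular submatrix of $A$, and then bound the entries via Hadamard's inequality.

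Since $A$ has rank exactly $s$, one can find an $s \times s$ submatrix of $A$ with nonzero determinant. After permuting rows and columns (which does not affect the entry bound $M$ nor the $\ell^\infty$ norm of the resulting null vector), I may assume this submatrix $A_s$ sits in the upper-left corner of $A$. Write $A$ in block form as
\[
A = \begin{pmatrix} A_s & B \\ C & D \end{pmatrix},
\]
where $A_s$ is $s \times s$, $B$ is $s \times (m_2 - s)$, etc. Since $A_s$ is nonsingular and $\operatorname{rank} A = s$, the last $m_1 - s$ rows of $A$ are $\Q$-linear combinations of the first $s$ rows, so any vector annihilated by the first $s$ rows of $A$ automatically lies in the kernel of $A$.

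Next, I partition a candidate null vector as $\mathbf{b} = (\mathbf{b}', \mathbf{b}'') \in \Z^s \times \Z^{m_2 - s}$. The equation $(A_s \mid B)\mathbf{b} = 0$ becomes $A_s \mathbf{b}' = -B \mathbf{b}''$. Taking $\mathbf{b}'' = \det(A_s)\, \mathbf{e}_1$ (a single nonzero entry equal to $\det A_s$), Cramer's rule gives the integral solution
\[
\mathbf{b}' = -\operatorname{adj}(A_s)\, B\, \mathbf{e}_1.
\]
This $\mathbf{b}$ is nonzero because the first coordinate of $\mathbf{b}''$ equals $\det(A_s) \neq 0$, and it lies in the nullspace of $A$ by the observation above.

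It remains to bound $\|\mathbf{b}\|$. By Hadamard's inequality (or by the crude permutation-expansion bound $|\det A_s| \leq s!\, M^s$), every entry of $\mathbf{b}''$ is $\ll_s M^s$. Each cofactor of $A_s$ is the determinant of an $(s-1) \times (s-1)$ submatrix with entries bounded by $M$, hence has absolute value $\ll_s M^{s-1}$; therefore the entries of $\operatorname{adj}(A_s)$ are $\ll_s M^{s-1}$. Multiplying $\operatorname{adj}(A_s)$ by the column $B\mathbf{e}_1 \in \Z^s$, whose entries are bounded by $M$, yields $\|\mathbf{b}'\| \ll_s s \cdot M^{s-1} \cdot M \ll_s M^s$. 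Combining the two bounds produces $\|\mathbf{b}\| \ll_s M^s$, completing the proof. No step is genuinely hard here; the only thing to keep track of is that the construction produces a visibly nonzero integral vector, which the choice $\mathbf{b}'' = \det(A_s) \mathbf{e}_1$ ensures.
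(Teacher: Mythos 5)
Your proof is correct and is essentially the argument in the paper: both constructions produce a null vector supported on $s+1$ coordinates whose entries are (up to sign) $s\times s$ minors of $A$, which is exactly what yields the bound $\ll_s M^s$. The paper packages this as the vector of cofactors of an $(s+1)\times(s+1)$ block of the first $s+1$ columns and checks $A\mathbf{b}=\mathbf{0}$ by noting each row pairing is either a determinant with a repeated row or a vanishing $(s+1)\times(s+1)$ minor, while you route the same computation through $\operatorname{adj}(A_s)$ and the observation that the remaining rows lie in the span of the first $s$; both verifications are sound, and your choice $\mathbf{b}''=\det(A_s)\,\mathbf{e}_1$ is legitimate since $s<m_2$ guarantees $B$ has at least one column.
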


\begin{proof}

 Since $A$ has rank $s$, all minors of size $(s+1)\times (s+1)$ vanish, while there is at least one $s \times s$ minor that is nonzero; we may assume it arises within the top left $(s+1)\times (s+1)$ submatrix of $A$ by omitting the first row and first column. Let $A'$ denote the $m_1 \times (s+1)$ submatrix of $A$ that takes the first $(s+1)$ columns of $A$; by hypothesis every $(s+1)\times(s+1)$ minor of $A'$ vanishes. It suffices to find a nonzero vector $\bfb'$ of length $(s+1)$ in the nullspace of $A'$, and then padding $\bfb'$ with zeroes for the last $m_2-(s+1)$ entries will suffice to construct a nonzero vector $\bfb$ of length $m_2$ such that $A\bfb= \mathbf{0}$.

Define $\bfb'=(b_1,\ldots,b_{s+1})$ so that for each $1 \leq j \leq s+1$, $b_j$ is the cofactor obtained within the top $(s+1) \times (s+1)$ block  of $A'$ by deleting the first row and $j$-th column. By construction, $\|\bfb'\| \ll_s M^s$. Note also that $b_1 \neq 0$, by hypothesis. Now observe that for each row $\mathbf{r}_i$ of the matrix $A'$, 
\[ \mathbf{r}_i \cdot \bfb' = \det \left( \begin{array}{c} \mathbf{r}_i \\ \mathbf{r}_2\\ \vdots \\ \mathbf{r}_{s+1} \end{array} \right)=0
\]
so that $A' \bfb'= \mathbf{0}$. 
Indeed, if $2 \leq i \leq s+1$ this determinant vanishes since the matrix depicted has two identical rows. If $i=1$ or if $i > s+1$, this determinant vanishes since (up to sign) it is an $(s+1)\times (s+1)$ minor of $A'$, which by hypothesis must vanish.

\end{proof}

\subsection{Proof of Theorem  \ref{thm_genuine_not_strongly_NFB}}
 
The main inputs to prove   Theorem \ref{thm_genuine_not_strongly_NFB} are Lemma \ref{lemma_Cohen_genuine_linear_factor},   Lemma \ref{lem_shared_zeros_not_strongly_genuine}, and  Theorem \ref{thm_polylog}, which saves essentially $B^{1/2}$ off the trivial bound for $N(F,B)$.
For simplicity we may assume in what follows that $B \geq 3$.

As in Theorem \ref{thm_genuine_not_strongly_NFB}, suppose that $R(Y,\bX) \in \Z[Y,\bX]$    is an $n$-genuine polynomial but not a strongly $n$-genuine polynomial. By Lemma \ref{lem_shared_zeros_not_strongly_genuine}, there exists a subset $I \subsetneq \{1,\ldots,n\}$ and a  polynomial $G(Y,\bX_I) \in \Z[Y,\bX_I]$  satisfying 
$$\Q(X_1,\ldots,X_n) \subsetneq \Q(\bX)[Y]/(G(Y,\bX_I)) \subsetneq \Q(\bX)[Y]/(R(Y,\bX)),
$$ and such that if $\bx\in \Z^n$ satisfies that there exists $y\in \Z$ such that $R(y,\bx)=0$ then there also exists $z\in \Z$ such that $G(z,\bx_I)=0$. As stated in the lemma, $G$ is irreducible, monic in $Y$, $\deg_Y G \geq 2$, and for each $i \in I$, $1 \leq \deg_{X_i}(G)\leq [\Q(\bX)[Y]/(R(Y,\bX)): \Q(\bX)] \ll_{n,\deg R} 1.$ Additionally, $\log \|G\| \ll_{n,\deg R} \log \|R\|.$ Without loss of generality, we can assume that $I \subset \{2,\ldots,n\}$ so that $G(Y,\bX)$ can be written as $G(Y,X_2,...,X_n) \in \Z[Y,X_2,\ldots,X_n]$ (independent of $X_1$). Let $\bx'$ denote $(x_2,\ldots,x_n)$.

In the context of Theorem \ref{thm_genuine_not_strongly_NFB}, it suffices to bound the count:
\begin{align}
   \sum_{\substack{\bx'\in d^{-1}\Z^{n-1} \\ \bx' \in [-B,B]^{n-1} }}
   & \#\{x_1 \in [-B,B]\cap d^{-1}\Z, \exists y \in \Z, R(y,x_1,\bx')=0\}\nonumber \\
    &=\sum_{\substack{\bx'\in d^{-1}\Z^{n-1} \\ \bx'\in [-B,B]^{n-1} \\ \exists z \in \Z: G(z,\bx')=0}} \#\{x_1 \in [-B,B]\cap d^{-1}\Z, \exists y\in \Z, R(y,x_1,\bx')=0\} \nonumber \\
      &= \sum_{\substack{\bx'\in \Z^{n-1} \\ \bx'\in [-dB,dB]^{n-1}\\ \exists z\in \Z: G_d(z,\bx')=0}} \#\{x_1 \in [-dB,dB]\cap  \Z: \exists y\in \Z, R_d(y,x_1,\bx')=0\},\label{NGR_sum}
      \end{align}
 in which the last identity holds with the definitions
\[R_d(Y,X_1,\bx'):= d^{\deg(R)}R(Y,X_1/d, \bx'/d) \in \Z[Y,X_1], \quad G_d(Z,\bX') := d^{\deg G}G(Y,\bX'/d) \in \Z[Y,\bX'].\]
  In particular, $R_d(Y,X_1,\bx')$ is a polynomial with integral coefficients, and with constant-leading-coefficient in $Y$. (Precisely, since $R$ is monic in $Y$, $R_d$ has leading coefficient $d^{\deg R}$ in $Y$.) Moreover, the polynomial $R_d(Y,X_1,\bX')$ is $n$-genuine but not strongly $n$-genuine, by Lemma \ref{lemma_preservation_scaling}.  For each    $\bx' \in \Z^{n-1}$,  $R_d(Y,X_1,\bx')=0$ defines a curve in $\mathbb{P}^1(\Q)$.
  
 Let $\bx' \in \Z^{n-1}$ be fixed. Observe that since the original $R(Y,X_1,\bx')$ is monic in $Y$, if we decompose it into irreducible factors over $\Q$, say 
\[ R(Y,X_1,\bx') = \prod_i R^{(i)}(Y,X_1,\bx'),\]
then by Gauss's lemma each factor $R^{(i)}(Y,X_1,\bx')$ has integral coefficients. 
For each $i$, define  $R_d^{(i)}(Y,X_1,\bX') := d^{\deg(R^{(i)})}R^{(i)}(Y,X_1/d,\bX'/d)$. Then the above decomposition  into irreducible factors over $\Q$ yields a decomposition 
$$R_d(Y,X_1,\bx') = \prod_{i} R_d^{(i)}(Y,X_1,\bx'),$$
with each $R_d^{(i)} \in \Z[Y,X_1]$ irreducible over $\Q$.  In particular each $R_d^{(i)}$ has constant-leading-coefficient in $Y$,  and there are at most $\deg R$ terms in the indexing set.
If  for at least one index $i$, $R_d^{(i)}(Y,X_1,\bx')$ is linear in $Y$, we will say below that ``$R_d(Y,X_1,\bx')$ has a linear factor.''
In any case, by the factorization, for each fixed $\bx' \in \Z^{n-1}$, 
\begin{multline*}
\#\{x_1 \in [-dB,dB]\cap  \Z: \exists y \in \Z, R_d(y,x_1,\bx')=0\}\\
\leq \sum_{i} \#\{x_1 \in [-dB,dB]\cap  \Z: \exists y \in \Z, R_d^{(i)}(y,x_1,\bx')=0\}.
\end{multline*}
So we proceed to estimate the size of this set for each irreducible factor $R_d^{(i)}(Y,X_1,\bx')$  over $\Q$. 

First, suppose that $\deg_Y(R_d^{(i)}(Y,X_1,\bx'))=0$ so it can be written as $R_d^{(i)}(X_1,\bx')$. In this case 
\[\#\{x_1 \in [-dB,dB]\cap  \Z: R_d^{(i)}(x_1,\bx')=0\} \ll \deg(R).\]
Second, suppose that  $R_d^{(i)}(Y,X_1,\bx')$ is linear in $Y$.  Then the best bound available is the trivial
\beq\label{bad_linear_bound}
\#\{x_1 \in [-dB,dB]\cap  \Z: \exists y \in \Z, R_d^{(i)}(y,x_1,\bx')=0\} \ll  dB.
\eeq
Finally, suppose that
$\deg_{Y}(R_d^{(i)}(Y,X_1,\bx'))\geq 2$.    We now apply 
Theorem \ref{thm_polylog} to $R_d^{(i)}(Y,X_1,\bx')$ in the variables $Y,X_1$, which has a constant-leading-coefficient in $Y$. This yields:
\begin{multline*}\#\{x_1 \in [-dB,dB]\cap  \Z: \exists y \in \Z, R_d^{(i)}(y,x_1,\bx')=0\} \\ \ll_{\deg R} (\log(\|R_d^{(i)}(Y,X_1,\bx')\|+2))^{e(1)}(dB)^{1/2} ( \log (dB))^{e(1)}\\
\ll_{\deg R} (\log(\|R\|+2))^{e(1)} (dB)^{1/2}(\log(dB))^{2e(1)}.
\end{multline*}
for a constant $e(1)$.  Here, we have applied the fact that for  $\bx'\in [-dB,dB]^{n-1} \cap \Z^{n-1}$, 
\[\|R_{d}^{(i)}(Y,X_1,\bx')\|\ll_{\deg R} d^{\deg(R^{(i)})}\cdot \|R\|\cdot (dB/d)^{\deg(R^{(i)})}
\ll \|R\|(dB)^{\deg(R^{(i)})}.\]
 
 Combining the above observations shows that (\ref{NGR_sum}) is bounded by:
\begin{multline*}
\ll_{\deg R}  \log(\|R\|+2)^{e(1)} (dB)^{1/2}(\log (dB))^{2e(1)} \#\{\bx'\in [-dB,dB]^{n-1} \cap   \Z^{n-1} : \exists z\in \Z: G_d(z,\bx')=0\}\\
 + dB\cdot  \#\{\bx'\in [-dB,dB]^{n-1} \cap \Z^{n-1}: R_d(Y,X_1,\bx') \text{ has a linear factor}\}.
\end{multline*}
The second term is bounded by 
\[dB\cdot  \#\{\bx'\in [-dB,dB]^{n-1} \cap \Z^{n-1}: R_d(Y,X_1,\bx') \text{ has a linear factor}\}\ll_{n,\deg R} dB \cdot (dB)^{n-2},\]
by Lemma \ref{lemma_Cohen_genuine_linear_factor}, and this suffices.
To bound the first term, recall the definition $G_d(Z,\bX') := d^{\deg G} G(Z,\bX'/d).$  Note that $G_d(Z,\bX')$ has integral coefficients and a constant-leading-coefficient in $Z$ (that is a power of $d$).
We apply Theorem \ref{thm_polylog} again, now   to $G_d(Z,\bX')$ in variables $Z,X_2,\ldots,X_n$. Thus an application of  Theorem \ref{thm_polylog} shows
\begin{multline*}
    \#\{\bx'\in [-dB,dB]^{n-1} \cap   \Z^{n-1} : \exists z\in \Z: G_d(z,\bx')=0\}\\
\ll_{n,\deg G} \log(\|G_d(Z,\bX')\|+2)^{e(n-1)}(dB)^{(n-1)-1/2}(\log (dB))^{e(n-1)}
\end{multline*}
for all $dB \geq 3$, for  a certain power $e(n-1)$.  
Finally, since $\log \|G\| \ll_{n,\deg R}\|R\|$ by Lemma \ref{lem_shared_zeros_not_strongly_genuine}, 
\[
\log \|G_d(Z,\bX')\| \ll \log (  d^{\deg G} \|G\|)\ll_{n,\deg R} \log (d\|R\|).
\]
We summarize by concluding 
$$\#\{\bx \in [-B,B]^n \cap d^{-1}\Z^n, \exists y\in \Z, R(y,\bx')=0\}\ll_{n,\deg R}\log(\|R\|+2)^{e(n)} (dB)^{n-1} (\log dB)^{2e(n)},$$
 with the constant $e(n)$ depending only on $n$.
This completes the proof of Theorem \ref{thm_genuine_not_strongly_NFB}.

\section{Strongly $n$-genuine polynomials: second moment bound}\label{sec_strongly}
The main goal of this section is to prove   a second moment bound we will apply when treating certain terms in the polynomial sieve, namely terms that arise from irreducible components in the stratification that are degenerate (that is, lie in a proper linear subspace, see \S \ref{sec_degenerate_primes}). In full generality, this relies on the complete theory of strongly $n$-genuine polynomials. In order to make the ideas of the proof clear, we first briefly illustrate the core strategy in the setting of Theorem \ref{thm_cyclic_uniform}, which considers a special class of strongly $(1,n)$-allowable polynomials that exhibit cyclic structure. With Proposition \ref{prop_cyclic_second_moment} (below) in hand, a reader could directly prove Theorem \ref{thm_cyclic_uniform} via the ``power sieve'' for multiplicative characters as formulated in \cite{Mun09,HBPie12}, without the full theory of strongly $n$-genuine polynomials, which is much more technical. We handle the general case of the second moment bound in \S \ref{sec_second_moment_general}, which is applied to prove  Theorem \ref{thm_explicit} in full, of which Theorem \ref{thm_cyclic_uniform} is a special case.

\subsection{Illustrative special case: cyclic structure}

\begin{prop}[Second moment bound: cyclic case]\label{prop_cyclic_second_moment}
Let $H(\bX)\in \Z[X_1,\hdots,X_n]$ be an absolutely irreducible polynomial such that $\deg_{X_1}(H)\geq 1$. Let $d\geq 2$ and $\chi$ denote a nonprincipal character of $\F_p^\times$ of order $d$. Define 
$$S(\bu,p) = \sum_{\bx\in \F_p^n}\chi(H(\bx)) e_p(\bu\cdot \bx).$$
Then there exists a finite set of exceptional primes $\calE$ satisfying $|\calE|\ll_{n,\deg H} \log\|H\|/\log\log\|H\|$ such that for all $p\not\in \calE,$
$$\sum_{\bu\in \{u_1=0\}\subset \F_p^n} |S(\bu,p)|^2 \ll_{n,d,\deg H} p^{2n-1}.$$

\end{prop}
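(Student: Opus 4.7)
The plan is to use Plancherel-style orthogonality to reduce the second moment to a one-variable character sum, then apply Weil's bound for generic specializations of $(x_2,\ldots,x_n)$ and handle the exceptional specializations via a discriminant argument.

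Expanding $|S(\bu,p)|^2$ and interchanging the order of summation, the inner sum $\sum_{\bu \in \F_p^n : u_1 = 0} e_p(\bu \cdot (\bx - \by))$ equals $p^{n-1}$ when $x_i = y_i$ for all $i \geq 2$ and vanishes otherwise. Writing the common tuple as $\bz = (x_2,\ldots,x_n)$ gives
\[
\sum_{\bu : u_1 = 0} |S(\bu,p)|^2 = p^{n-1} \sum_{\bz \in \F_p^{n-1}} |T(\bz)|^2, \qquad T(\bz) := \sum_{x_1 \in \F_p} \chi(H(x_1,\bz)),
\]
and the proposition reduces to $\sum_{\bz} |T(\bz)|^2 \ll_{n,d,\deg H} p^n$. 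Partition $\F_p^{n-1} = \mathcal{G}(p) \sqcup \mathcal{B}(p)$, where $\mathcal{B}(p)$ is the set of $\bz$ for which $H(X_1,\bz) \in \F_p[X_1]$ is either of $X_1$-degree zero or of the shape $c \cdot g(X_1)^d$ for some $g \in \overline{\F_p}[X_1]$, equivalently those $\bz$ for which $\chi(H(x_1,\bz))$ is independent of $x_1$. Weil's one-variable character sum estimate yields $|T(\bz)| \leq (\deg H - 1)\, p^{1/2}$ on $\mathcal{G}(p)$, contributing $O_{\deg H}(p^n)$; the trivial bound $|T(\bz)| \leq p$ on $\mathcal{B}(p)$ contributes $O(p^2 |\mathcal{B}(p)|)$. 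Thus it suffices to show $|\mathcal{B}(p)| \ll_{n,\deg H} p^{n-2}$ for all $p$ outside an exceptional set $\mathcal{E}$ with $|\mathcal{E}| \ll_{n,\deg H} \log\|H\|/\log\log\|H\|$.

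To bound $\mathcal{B}(p)$, write $H(X_1,\bX') = \sum_{j=0}^{D} h_j(\bX') X_1^j$ with $D = \deg_{X_1}(H) \geq 1$, set $\ell := h_D \in \Z[\bX']$, and (when $D \geq 2$) let $\Delta(\bX') := \mathrm{Res}_{X_1}(H,\partial_{X_1} H) \in \Z[\bX']$, with $\Delta := 1$ when $D=1$. If $\ell(\bz) \neq 0$ and $\Delta(\bz) \neq 0$, then $H(X_1,\bz)$ has degree exactly $D \geq 1$ and distinct roots in $\overline{\F_p}$ (trivially when $D=1$), so in particular cannot be a $d$-th power for $d \geq 2$; hence $\mathcal{B}(p) \subseteq V(\ell) \cup V(\Delta)$. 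Both $\ell$ and $\Delta$ are nonzero in $\Z[\bX']$: for $\ell$ this is immediate from the definition of $D$, and for $\Delta$ it follows because $H$, being absolutely irreducible in $\Q[X_1,\bX']$ with $\deg_{X_1}(H) \geq 1$, is by Gauss's lemma irreducible in $\Q(\bX')[X_1]$ and hence separable (we are in characteristic zero). Their coefficients satisfy $\|\ell\|, \|\Delta\| \leq \|H\|^{O_{n,\deg H}(1)}$, so taking $\mathcal{E}$ to be the set of primes dividing every coefficient of either polynomial yields $|\mathcal{E}| \ll_{n,\deg H} \log\|H\|/\log\log\|H\|$. For $p \notin \mathcal{E}$, the reductions of $\ell$ and $\Delta$ modulo $p$ remain nonzero, so by the Schwartz-Zippel bound (Lemma \ref{lemma_Schwartz_Zippel}) applied in $n-1$ variables, $|V(\ell)(\F_p)| + |V(\Delta)(\F_p)| \ll_{\deg H} p^{n-2}$, as required.

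The main obstacle is the last step: the Weil-failure condition ``$H(X_1,\bz)$ is a scalar multiple of a $d$-th power'' is strictly stronger than mere reducibility, so a direct application of a Noether-style criterion is not sharp. The workaround is to enlarge the failure locus to ``$H(X_1,\bz)$ has a repeated root,'' a codimension-one condition captured by the discriminant (equivalently, the resultant with the derivative), which is nonzero by absolute irreducibility of $H$ and remains nonzero modulo all but $O_{n,\deg H}(\log\|H\|/\log\log\|H\|)$ primes thanks to the explicit coefficient bound $\|\Delta\| \leq \|H\|^{O(1)}$.
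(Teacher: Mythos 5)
Your argument is correct and follows essentially the same route as the paper's: open the square via orthogonality over $\{u_1=0\}$, reduce to the mean square of the one-variable sum $\sum_{x}\chi(H(x,\bz))$, apply Weil's bound off a ``bad'' locus, and control that locus by showing that a discriminant/resultant polynomial in the remaining variables is not identically zero, survives reduction modulo all but $O_{n,\deg H}(\log\|H\|/\log\log\|H\|)$ primes, and then invoking the Schwartz--Zippel bound. The only minor divergences are that you establish $\Delta\not\equiv 0$ directly from absolute irreducibility via Gauss's lemma and separability in characteristic zero (the paper invokes Hilbert irreducibility for this), and you define $\mathcal{E}$ via the content of $\ell$ and $\Delta$ rather than via the prime divisors of a single well-chosen integer specialization $\Delta(\bft)$; both choices are equally valid.
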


 \begin{cor}
Under the hypotheses of the proposition, suppose furthermore that there is no $L\in \GL_n(\Q)$ such that $H(L(\bX))\in \Z[(X_i)_{i\in I}]$ for a subset $I\subsetneq \{1,...,n\}.$ Then for any hyperplane $W = \{\bX: W \cdot \bX=0\}$ (defined by a fixed $W \in \Z^{n}$) there exists a finite set of exceptional primes $\calE$ satisfying $|\calE|\ll_{n,\deg H} \log (\|H\|\|W\|)/\log\log(\|H\|\|W\|)$ such that for all $p\not\in \calE,$
$$\sum_{\bu\in W\subset \F_p^n} |S(\bu,p)|^2 \ll_{n,d,\deg H} p^{2n-1}.$$
\end{cor}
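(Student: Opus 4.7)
The plan is to reduce the corollary to Proposition \ref{prop_cyclic_second_moment} by a unimodular linear change of variables that carries the coordinate hyperplane $\{v_1=0\}$ onto the given hyperplane $W$. First, I write $W = g\cdot W'$ with $W'\in\Z^n$ primitive; then the hyperplane $\{\bu\in\F_p^n : W\cdot \bu \equiv 0\}$ coincides with $\{\bu\in\F_p^n : W'\cdot \bu \equiv 0\}$ whenever $p\nmid g$, and the primes dividing $g$ number at most $O(\log\|W\|/\log\log\|W\|)$ and can be absorbed into the exceptional set $\calE$.

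Because $W'$ is primitive it extends to a $\Z$-basis of $\Z^n$, so there is $L\in\GL_n(\Z)$ with $\det L=\pm1$ whose first column equals $W'$, chosen with $\|L\|\ll_n \|W\|^{O(n)}$ by a standard extension-to-a-basis estimate. The substitution $\mathbf{v}=L^T\bu$, $\bx=L\by$ in the definition of $S(\bu,p)$ gives
\[
S(\bu,p)=\sum_{\by\in\F_p^n}\chi(H(L\by))\,e_p(\mathbf{v}\cdot\by)=S_{H_L}(\mathbf{v},p),
\]
where $H_L(\bY):=H(L\bY)\in\Z[\bY]$. Since $W'\cdot\bu=(L e_1)\cdot\bu=e_1\cdot(L^T\bu)=v_1$, the condition $\bu\in W(\F_p)$ translates (for $p\nmid g$) into $v_1=0$, and hence
\[
\sum_{\bu\in W(\F_p)}|S(\bu,p)|^2\;=\;\sum_{\mathbf{v}\in\F_p^n,\; v_1=0}|S_{H_L}(\mathbf{v},p)|^2.
\]

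The next step is to verify the hypotheses of Proposition \ref{prop_cyclic_second_moment} for $H_L$. Absolute irreducibility is preserved because an invertible linear change of variables induces a ring automorphism of $\overline{\Q}[\bY]$. The condition $\deg_{X_1}(H_L)\geq 1$ is precisely where the new hypothesis of the corollary enters: the assumption that no $L'\in\GL_n(\Q)$ makes $H(L'(\bX))\in\Z[(X_i)_{i\in I}]$ for a proper subset $I$ forces $H_L$ to depend on every variable, and in particular on $Y_1$. I then invoke Proposition \ref{prop_cyclic_second_moment} for $H_L$, obtaining a set $\calE_L$ with $|\calE_L|\ll_{n,\deg H}\log\|H_L\|/\log\log\|H_L\|$ outside of which the sum is $\ll_{n,d,\deg H}p^{2n-1}$.

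Finally, because $\|H_L\|\ll_{n,\deg H}\|H\|\cdot\|L\|^{\deg H}\ll_{n,\deg H}\|H\|\cdot\|W\|^{O(n\deg H)}$, one has $\log\|H_L\|\ll_{n,\deg H}\log(\|H\|\|W\|)$, which yields the required bound on $|\calE|$ after combining $\calE_L$ with the primes dividing $g$. The main points that demand care, and which I expect to be the main (though modest) obstacle, are the explicit construction of $L\in\GL_n(\Z)$ with polynomially controlled entries and the careful bookkeeping of the exceptional primes (those dividing $g$ or $\det L$); the analytic heart of the estimate is inherited directly from Proposition \ref{prop_cyclic_second_moment}.
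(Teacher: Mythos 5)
Your proof is correct and follows essentially the same route as the paper: apply Proposition \ref{prop_cyclic_second_moment} to $H_L(\bX)=H(L\bX)$ for an integral $L$ whose first column realizes $W$ as $\{(L^T\bu)_1=0\}$, using the no-change-of-variables hypothesis to guarantee $\deg_{X_1}H_L\geq 1$, and absorbing the finitely many bad primes (those dividing $\det L$, or in your variant the content $g$ of $W$) into $\calE$. The only difference is cosmetic — the paper takes any integral $L\in\GL_n(\Q)$ and excludes $p\mid\det L$, whereas you normalize $W$ to a primitive vector and extend it to a unimodular matrix; both yield the stated bound on $|\calE|$.
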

The corollary follows by applying the proposition to $H_{L}(\bX):=H(L\bX)$, where $L \in \GL_n(\Q)$ is a linear transformation with all integral entries such that $\{(L^T\bu)_1 =0\} = W$. By the hypothesis of the corollary, $\deg_{X_1} H_L \geq 1$, as required.  (We may assure that $L$ is invertible over $\F_p$ for each $p \not\in \mathcal{E}$ by ensuring that $\mathcal{E}$ includes all $p | \det L$, of which there are $\ll_{n} \log \|W\| / \log \log \|W\|$.)

\begin{proof}[Proof of Proposition \ref{prop_cyclic_second_moment}]
Suppose a prime $p$ is fixed.    We start by opening up the square, getting 
    \begin{align*}
        \sum_{\bu\in \{u_1=0\}\subset \F_p^n} |S(\bu,p)|^2 &= \sum_{\bx,\by\in \F_p^n}\chi(H(\bx))\overline{\chi(H(\by))} \sum_{\bu\in \{u_1=0\}} e_p(\bu\cdot (\bx-\by)) \\
        &= p^{n-1} \sum_{\substack{\bx,\by\in \F_p^{n} \\ \bx-\by \in \{u_1=0\}^\perp}} \chi(H(\bx))\overline{\chi(H(\by))}.
    \end{align*}
    Since $\{u_1=0\}^\perp = \{u_2=\hdots = u_n=0\}$, we can rewrite 
    \begin{align*}
        \sum_{\bu\in \{u_1=0\}\subset \F_p^n} |S(\bu,p)|^2 &= p^{n-1} \sum_{\bt\in \F_p^{n-1}} \sum_{x,y\in \F_p} \chi(H(x,\bt))\overline{\chi(H(y,\bt))} \\
        &= p^{n-1} \sum_{\bt\in \F_p^{n-1}} \left|\sum_{x\in \F_p} \chi(H(x,\bt))\right|^2.
    \end{align*}
  By the Weil bound as in \cite[Thm. 11.23]{IK04},
   \[
    \left|\sum_{x\in\mathbb{F}_{p}}\chi(H(x,\bft))\right|=\begin{cases}
        O_{\deg H}(\sqrt{p})     &\text{if $H(X,\bft)$ is not a $d$th-power over $\F_p$,}\\
        p+O_{\deg H}(1)   &\text{otherwise.}
    \end{cases}
    \]
    (The latter case follows since if $H(X,\bft)=(R_{\bft}(X))^d$ over $\F_p$, say, then the sum evaluates to $p - \#\{x \in \F_p: R_{\bft}(x)=0\}$.)
Thus in order to complete the proof of the proposition it suffices to construct an appropriately small set $\mathcal{E}$ such that  for all $p\not\in \calE$, the polynomial $H(X, \bft)$ is not a $d$th-power over $\F_p$ for all but $O_{n,d,\deg H}(p^{n-2})$ choices of $\bft\in \F_p^{n-1}.$

    Write the polynomial $$H(\bX) = f_0(X_2,\hdots,X_n) X_1^k + f_1(X_2,\hdots, X_n)X_1^{k-1} +\hdots + f_k(X_2,\hdots, X_n).$$
    Since $k\geq 1$ by the hypothesis of the proposition, there is a nonzero polynomial $\Delta(X_2,\hdots,X_n)$ with $\deg \Delta \ll_{n,\deg H} 1$ (the discriminant polynomial applied at the coefficient polynomials $f_0,...,f_k$) satisfying  
    $$H(X,\bft) \textrm{ is square-free}\iff \Delta(\bft)\neq 0.$$
    Moreover, the above relation also holds over $\F_p$. Observe that if $H(X,\bft)$ is a $d$th-power, then it is not square-free. So it suffices to show that $p \ndiv \Delta(\bft)$ for all but $O_{n,d,\deg H}(p^{n-2})$ choices of $\bft \in \F_p^{n-1}$, as long as $p$ is not in an appropriately small exceptional set $\mathcal{E}$.

    First, we establish that $\Delta(X_2,\hdots,X_n)\not\equiv 0$ as a polynomial over $\Z$. Since we are assuming that $H(\bX)$ is irreducible over $\Q$, the Hilbert Irreducibility Theorem \ref{thm_HIT} guarantees that we can find $t_2,\hdots,t_{n}\in \Z$ where $H(X_1,t_2,\hdots,t_n)$ is irreducible over $\Q$. For this choice of $(t_2,\hdots,t_n)\in \Z^{n-1}$, $\Delta(t_2,\hdots, t_n)\neq 0$ and hence $\Delta(X_2,\hdots,X_n)\not\equiv 0.$ With this fact in hand, as a result of the trivial Schwartz-Zippel bound (Lemma \ref{lemma_Schwartz_Zippel}), we can moreover choose a particular tuple $\bft$ with $\Delta(t_2,\hdots, t_n)\neq 0$ and $\|\bft\|\ll_{\deg \Delta} 1 \ll_{n,\deg H} 1.$ Given this tuple, define the exceptional set $\calE:=\{p: p\mid \Delta(t_2,\hdots,t_n)\}$. By the previous bound on $\bft$, we know that $\log|\Delta(\bft)|\ll_{n,\deg H} \log\|H\|.$ Thus, $|\calE|\ll_{n,\deg H} \log\|H\|/\log\log\|H\|$. Moreover, for each $p \not\in \mathcal{E}$, the property $p \ndiv \Delta(\bft)$ shows that $\Delta(X_2,\ldots,X_n) \not\con 0 \modd{p}$, so that a second application of the trivial   bound (Lemma \ref{lemma_Schwartz_Zippel}) shows $\#\{\bx' \in \F_p^{n-1}: \Delta(\bx')=0\} \ll_{\deg H} p^{n-2}$, as desired.
\end{proof}

\subsection{General case: strongly $n$-genuine polynomials}\label{sec_second_moment_general}

The notion of a strongly $n$-genuine polynomial plays a key role in our strategy to prove a second moment bound in full generality. 
We first state and prove a second moment bound over the hyperplane $\{u_1=0\}$, and then deduce from it a second moment bound over any hyperplane. Each of these bounds requires some additional hypothesis on $F(Y,\bX)$. 

In preparation, we recall from Lemma \ref{lemma_strongly_n_gen_equiv_alg_closed} that  $F$ is a strongly $n$-genuine polynomial if and only if for every $j\in\{1,\ldots,n\}$,
       \beq\label{alg_closed_assumption}
       \overline{\mathbb{Q}(X_{i})_{i\neq j}}\cap \mathcal{L}_F=\mathbb{Q}(X_{i})_{i\neq j},
       \eeq
       in which $\mathcal{L}_F=\Q(X_1,\ldots,X_n)[Y]/F$.
 For example, for $j=1$,  (\ref{alg_closed_assumption}) is the statement that $\Q(X_{2},...,X_{n})$ is integrally closed in $\Q(X_{1},...,X_{n})[Y]/F$; equivalently, if $\theta\in \mathcal{L}_F$ and $\theta$ is algebraic over $\Q(X_{2},...,X_{n})$, then $\theta\in \Q(X_{2},...,X_{n})$; equivalently, for any algebraic extension $K'$ of $\Q(X_2,...,X_n)$, $K'\cap \mathcal{L_F} = \Q(X_2,...,X_n)$. The next second moment bound, tailored to the hyperplane $\{u_1=0\}$, is proved under the hypothesis that (\ref{alg_closed_assumption}) holds for $j=1$. 
   
\begin{prop}[Second moment bound]\label{prop_strongly_genuine_second_moment}
Let $F(Y,\bX) \in \Z[Y,X_1,\ldots,X_n]$ with $\deg_Y F \geq 2$ and $F$ monic in $Y$ be absolutely irreducible. Suppose that  \[
       \overline{\mathbb{Q}(X_2,\ldots,X_n)}\cap \mathcal{L}_F=\mathbb{Q}(X_2,\ldots,X_n),
       \]
       in which $\mathcal{L}_F=\Q(X_1,\ldots,X_n)[Y]/F$.
Define for a given prime $p$,
\[ S(\bu,p) = \sum_{\bx \in \F_p^n} (v_p(\bx)-1)e_p(\bu \cdot \bx)\]
where $v_p(\bx) = \# \{ y \in \F_p : F(y,\bx)=0\}$. Then there exists a finite set of exceptional primes $\calE$ satisfying $|\calE| \ll_{n,\deg F}\log\|F\|/\log\log\|F\|$ such that for all $p\not\in \calE$, 
    $$\sum_{\bu \in \{u_1=0\}\subset \F_p^n} |S(\bu,p)|^2 \ll_{n,\deg F} p^{2n-1}.$$
    (In particular, the conclusion holds if $F$ is strongly $n$-genuine.)
\end{prop}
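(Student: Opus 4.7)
The plan is to mimic the strategy of Proposition \ref{prop_cyclic_second_moment}, but to replace the multiplicative character sum argument with one combining Lang--Weil and Noether's Lemma applied in a fibered setting. Write $\bX' = (X_2, \ldots, X_n)$. First, I open the square by Fourier orthogonality: since $\sum_{\bu:u_1=0} e_p(\bu\cdot(\bx-\by)) = p^{n-1}$ when $(x_2,\ldots,x_n) = (y_2,\ldots,y_n)$ and vanishes otherwise,
\[
\sum_{\bu \in \{u_1=0\}} |S(\bu,p)|^2 = p^{n-1} \sum_{\bt \in \F_p^{n-1}} T(\bt)^2,
\]
where $T(\bt) := \sum_{x \in \F_p}(v_p(x, \bt) - 1) = \#V_\bt(\F_p) - p$ and $V_\bt \subset \A^2_{\F_p}$ is the affine curve $F(Y, X_1, \bt) = 0$. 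It therefore suffices to produce an exceptional set $\calE$ of primes with $|\calE| \ll_{n, \deg F} \log\|F\|/\log\log\|F\|$ such that $\sum_{\bt \in \F_p^{n-1}} T(\bt)^2 \ll_{n, \deg F} p^n$ for all $p \notin \calE$.

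The key algebraic input is the equivalence between the hypothesis $\overline{\Q(\bX')}\cap\mathcal{L}_F = \Q(\bX')$ and the statement that $F(Y, X_1, \bX')$, viewed as a polynomial in the two variables $Y, X_1$ with coefficients in $\Q(\bX')$, is absolutely irreducible over $\overline{\Q(\bX')}$. Indeed, the hypothesis asserts exactly that $\mathcal{L}_F$ is a regular extension of $\Q(\bX')$ (separability being automatic in characteristic zero), which is in turn equivalent to $F$ remaining irreducible in $\overline{\Q(\bX')}[Y,X_1]$; geometrically, this says that the generic fiber of the projection $V(F) \to \A^{n-1}$ onto the $\bX'$-coordinates is geometrically integral.

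Given this translation, I apply Noether's Lemma \ref{lemma_Noether}(ii) to $F$ viewed as a polynomial in $Y, X_1$ of total degree $D = \deg F$, treating the coefficients $\{a_{i_0, i_1}(\bX')\} \subset \Z[\bX']$ as formal entries. This produces finitely many forms $G_1, \ldots, G_s$ (with $s$ and each $\deg G_j$ being $\ll_{n, D} 1$) such that $F(Y, X_1, \bt)$ is absolutely irreducible over $\overline{\F_p}$ of total degree $D$ whenever $G_j(\{a_{i_0, i_1}(\bt)\}) \neq 0$ for some $j$. The absolute irreducibility of $F$ over $\overline{\Q(\bX')}$ guarantees the existence of some index $j^\star$ for which $P(\bX') := G_{j^\star}(\{a_{i_0, i_1}(\bX')\}) \in \Z[\bX']$ is not identically zero; tracking the construction in Noether's Lemma gives $\deg P \ll_{n, D} 1$ and $\log\|P\| \ll_{n, D} \log\|F\|$. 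Define $\calE$ to be the set of primes dividing the gcd of all coefficients of $P$; then $|\calE|\ll_{n, D} \log\|F\|/\log\log\|F\|$. For $p\notin\calE$, the reduction $P\bmod p$ is a nonzero polynomial of bounded degree, so by Schwartz--Zippel (Lemma \ref{lemma_Schwartz_Zippel}) the ``bad'' set $\calB_p := \{\bt \in \F_p^{n-1} : P(\bt) \equiv 0 \bmod p\}$ satisfies $|\calB_p| \ll_{n, D} p^{n-2}$.

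Finally, I combine the estimates. For $\bt \notin \calB_p$, $F(Y, X_1, \bt)$ defines an absolutely irreducible affine curve in $\A^2_{\F_p}$, and Lang--Weil (Lemma \ref{lemma_Lang_Weil_cor}) yields $\#V_\bt(\F_p) = p + O_{D}(p^{1/2})$, whence $T(\bt)^2 \ll_{D} p$. For $\bt \in \calB_p$, the trivial bound (Lemma \ref{lemma_Schwartz_Zippel}) gives $\#V_\bt(\F_p) \leq D \cdot p$, hence $T(\bt)^2 \ll_D p^2$. Summing,
\[
\sum_{\bt \in \F_p^{n-1}} T(\bt)^2 \ll_{n, D} p^{n-1}\cdot p + p^{n-2}\cdot p^2 \ll p^n,
\]
which multiplied by the outer $p^{n-1}$ yields $p^{2n-1}$ as claimed. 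The main obstacle I expect is the algebraic equivalence in the second paragraph: one must carefully translate the field-theoretic hypothesis on $\mathcal{L}_F$ into polynomial-theoretic absolute irreducibility of $F$ in the pair of variables $(Y,X_1)$ over $\overline{\Q(\bX')}$, which is the precise form needed to invoke Noether's Lemma with the correct variable partition and to descend from $\overline{\Q}$ to $\overline{\F_p}$ with only polylogarithmic loss in $\|F\|$.
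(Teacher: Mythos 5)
Your proposal is correct, and its top-level skeleton coincides with the paper's: open the square to reduce to $p^{n-1}\sum_{\bt}\bigl(\#V_{\bt}(\F_p)-p\bigr)^2$, apply Lang--Weil on fibers where $F(Y,X_1,\bt)$ stays absolutely irreducible, use the trivial bound on the $O(p^{n-2})$ reducible fibers, and detect reducibility via a Noether form $P(\bX')$ whose content determines the exceptional primes. The genuine divergence is in how you certify that $P(\bX')\not\equiv 0$. The paper isolates this as Lemma \ref{lemma_Cohen_strongly_genuine_reducible} and proves it by contradiction: Hilbert irreducibility supplies a dense set of rational specializations where $F(Y,X_1,\bx')$ is irreducible over $\Q$, and the EGA constructibility statement (Lemma \ref{lemma_F_strongly_n_geniune_open_V}) supplies an open set where $\Q$ is integrally closed in the specialized function field; if $P$ vanished identically these sets would intersect incompatibly. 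You instead work directly at the generic point: the hypothesis $\overline{\Q(\bX')}\cap\mathcal{L}_F=\Q(\bX')$ says $\mathcal{L}_F/\Q(\bX')$ is regular (separability is free in characteristic zero), hence the generic fiber is geometrically integral, hence $F$ is irreducible in $\overline{\Q(\bX')}[Y,X_1]$, and Noether's Lemma applied over the field $K=\Q(\bX')$ immediately gives a nonvanishing form. This is the one step that requires care --- note that irreducibility over $\overline{\Q}(\bX')$ (which follows trivially from absolute irreducibility and Gauss's lemma) is strictly weaker than irreducibility over $\overline{\Q(\bX')}$, and it is the latter you need --- but the regularity/geometric-integrality equivalence you invoke is indeed standard, and the paper's own remark following Lemma \ref{lemma_F_strongly_n_geniune_open_V} proves a fragment of it. Your route buys a shorter argument that avoids both Hilbert irreducibility and the constructibility input; the paper's route is more self-contained relative to the toolkit it has already assembled (and reuses Lemma \ref{lemma_F_strongly_n_geniune_open_V} elsewhere). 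Minor cosmetic point: the relevant degree for Noether's Lemma is $\deg_{(Y,X_1)}F$, which may be smaller than the total degree of $F$, but this changes nothing.
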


This proposition is the main result of the section, and to prepare for its proof we gather several key lemmas.

 \begin{lem}\label{lemma_F_strongly_n_geniune_open_V}
    Let $F(Y,\bX) \in \Z[Y,X_1,\ldots,X_n]$ with $\deg_Y F \geq 2$ be absolutely irreducible. Suppose that  
    \beq\label{int_closed_open_V}
       \overline{\mathbb{Q}(X_2,\ldots,X_n)}\cap \mathcal{L}_F=\mathbb{Q}(X_2,\ldots,X_n),
       \eeq
       in which $\mathcal{L}_F=\Q(X_1,\ldots,X_n)[Y]/F$. Then, there exists a nonempty open set $V\subset \Q^{n-1}$ such that for all $\bx'\in V$, $\Q$ is integrally closed in $\Q(X_{1})[Y]/F(Y,X_{1},\bfx ')$. (In particular, the conclusion holds if $F$ is strongly $n$-genuine.)
    \end{lem}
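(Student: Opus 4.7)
The plan is to deduce this lemma as a direct specialization of Lemma \ref{lemma_F_open_set}, the key observation being that the hypothesis forces the integral closure $K_{\hat{1}}$ appearing in that earlier lemma to be as small as possible.

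First I would identify $K_{\hat{1}}$ under the hypothesis. Since $\Q(X_2,\ldots,X_n)$ is a field and $\mathcal{L}_F$ is a field extension of it, an element of $\mathcal{L}_F$ is integral over $\Q(X_2,\ldots,X_n)$ if and only if it is algebraic over $\Q(X_2,\ldots,X_n)$. Therefore one has the equality $K_{\hat{1}} = \overline{\Q(X_2,\ldots,X_n)} \cap \mathcal{L}_F$, and the hypothesis of the lemma then yields exactly $K_{\hat{1}} = \Q(X_2,\ldots,X_n)$.

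Next I would apply Lemma \ref{lemma_F_open_set} with the specific choice $G(Y,X_2,\ldots,X_n) = Y$. This $G$ is an irreducible element of $\Z[Y,X_2,\ldots,X_n]$, and $\Q(X_2,\ldots,X_n)[Y]/(G) = \Q(X_2,\ldots,X_n) = K_{\hat{1}}$, so the hypotheses of Lemma \ref{lemma_F_open_set} are satisfied. That lemma then provides a nonempty open set $V \subset \Q^{n-1}$ such that for every $\bx' \in V$, the ring $\Q[Y]/(G(Y,\bx')) = \Q[Y]/(Y) = \Q$ is integrally closed in $\Q(X_1)[Y]/(F(Y,X_1,\bx'))$, which is precisely the desired conclusion.

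The parenthetical assertion that the conclusion holds whenever $F$ is strongly $n$-genuine would then follow by invoking Lemma \ref{lemma_strongly_n_gen_equiv_alg_closed} at the index $j=1$, which supplies the algebraic-closure hypothesis in that case. There is essentially no obstacle here, since the geometric content (EGA IV Theorem 9.7.7 on the constructibility of the geometric-integrality locus of a morphism) is already packaged into Lemma \ref{lemma_F_open_set}; the only minor points to check are that taking $G=Y$ meets the irreducibility requirement of that lemma and that, if $F$ is not monic in $Y$ as assumed in Lemma \ref{lemma_F_open_set}, one can shrink $V$ further by removing the vanishing locus of the leading coefficient of $F$ in $Y$, which is a proper closed subset of $\Q^{n-1}$.
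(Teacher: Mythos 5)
Your reduction is correct, and at bottom it is the same argument the paper gives: both rest on \cite[EGA IV part 3, Theorem 9.7.7]{EGAIV_part3} applied to the locus where the fibers of the projection onto $(X_2,\ldots,X_n)$ are geometrically integral, together with the observation that hypothesis (\ref{int_closed_open_V}) makes the generic fiber geometrically integral. The difference is purely organizational: you obtain the statement as the special case $G=Y$ of Lemma \ref{lemma_F_open_set} (correctly noting that for field extensions integral closure coincides with relative algebraic closure, so (\ref{int_closed_open_V}) forces $K_{\hat{1}}=\Q(X_2,\ldots,X_n)$, and that $\Q(X_2,\ldots,X_n)[Y]/(Y)$ realizes this field), whereas the paper reruns the geometric argument directly with the morphism $\mathrm{Spec}(\Q[Y,X_1,\ldots,X_n]/F)\to\mathbb{A}^{n-1}_{\Q}$. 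Your route is shorter; the paper's self-contained version has the advantage of making explicit the spreading-out from the generic point to an honest nonempty open subset of $\Q^{n-1}$, a step that is somewhat compressed in the proof of Lemma \ref{lemma_F_open_set} (there $W_G$ is the spectrum of a field, so ``open subset of $W_G$'' must be reinterpreted before it yields an open set of specializations $\bx'$). One small caveat: Lemma \ref{lemma_F_open_set} is stated for $F$ monic in $Y$ while the present lemma is not, so you cannot invoke it verbatim and then patch the conclusion by ``shrinking $V$'' afterwards; you should instead observe that the proof of Lemma \ref{lemma_F_open_set} never uses monicity, or impose the nonvanishing of the leading coefficient before applying the fiber argument. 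Your appeal to Lemma \ref{lemma_strongly_n_gen_equiv_alg_closed} with $j=1$ for the parenthetical ``strongly $n$-genuine'' case is exactly right. Neither point is a genuine gap.
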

    We remark that the condition of $F$ being absolutely irreducible is actually implied by the fact that we are assuming
    \[
       \overline{\mathbb{Q}(X_2,\ldots,X_n)}\cap \mathcal{L}_F=\mathbb{Q}(X_2,\ldots,X_n),
       \]
       Indeed, if $F$ is irreducible over $\Q$ but not absolutely irreducible, then we can find a polynomial $H$ with coefficients in $\overline{\mathbb{Q}}$ which is a proper divisor of $F$ and is a minimal polynomial for some root of $F$. Denote by $K$ the finite extension of $\mathbb{Q}$ which contains all coefficients of $H$. Now let $\eta$ be a root of $H$ (and hence a root of $F$), $\Omega$ be the splitting field of $F$, and consider $\tau\in\text{Gal}(\Omega/\mathbb{Q}(\bfX))$ such that $\tau(\eta)=\eta$; then $\tau$ needs to fix also the coefficients of $H$ since $H$ is the minimal polynomial of $\eta$ over $\overline{\mathbb{Q}}(\bfX)$. Hence all the coefficients of $H$ are contained in $\mathcal{L}_{F}$, i.e. $K\subset\mathcal{L}_{F}$ which implies
       \[
       \overline{\mathbb{Q}(X_2,\ldots,X_n)}\cap \mathcal{L}_F\supset K(X_2,\ldots,X_n),
       \]
       a contradiction unless $H$ has coefficients in $\Q$, which cannot occur since $F$ is irreducible over $\Q$.
    \begin{proof}
    Consider $W=\text{Spec}\left(\Q
    [Y,X_{1},...,X_{n}]/F\right)$ and the morphism induced by $\Q[X_{2},....,X_{n}]\subset \Q[Y,X_{1},...,X_{n}]/F$
    \[
    f:W\longrightarrow\mathbb{A}_{\Q}^{n-1}.
    \]
    Let $V'\subset\mathbb{A}_{\Q}^{n-1}$ be the set of  $u\in\mathbb{A}_{\Q}^{n-1}$ such that the fiber
    \[
    f_{u}:W_{u}\longrightarrow\mathbb{A}_{k(u)}^{n-1}
    \]
    is geometrically integral (i.e. $k(u)$ is integrally closed in $K(W_{u})$). (Here we again use the standard notation in algebraic geometry that $k(u)$ is the residue field at $u$ and $K(W_{u})$ is the field of rational functions on $W_{u}$.) By  \cite[EGA IV part 3, Theorem $9.7.7$]{EGAIV_part3},   the set $V'$ is locally constructible. On the other hand, the fiber on the generic point $\eta$, say $f_{\eta}$, is geometrically integral; this follows from the hypothesis (\ref{int_closed_open_V}) that $\Q(X_{2},...,X_{n})$ is integrally closed in $\Q(X_{1},...,X_{n})[Y]/F=K(W)$.   Consequently, the generic point $\eta$ lies in  $V'$. Hence, since it contains the generic point, $V'$ contains an open subset $U'$: indeed, since $V'$ is locally constructible we can find an open covering $\mathbb{A}_{\Q}^{n-1}=\bigcup_{i}V_{i}$ such that for each $i$, $V_{i}\cap V'$ is a constructible set, i.e. it is a finite union of sets of the type $S\cap T^{c}$ for $T,S$ open sets. Since $\eta\in V'$, we can find $i$ such that $\eta\in V_{i}\cap V'$, but then we can find $S,T$ open sets such that $\eta\in S\cap T^{c}$. On the other hand, since $\{\eta\}$ is dense in $\mathbb{A}_{\Q}^{n-1}$, it follows that $T=\emptyset$. Then it suffices to set $U' = S \subset (V_i \cap V') \subset V'$. Finally, defining the open subset $V=U'\cap \Q^{n-1}$ in $\Q^{n-1}$, for every $(x_{2},...,x_{n})\in V$,  we have that $k(u)=\Q$ is algebraically closed in $K(W_{u})=\text{Frac}\left(\Q[Y,X_{1},x_{2},...,x_{n}]/F(Y,X_{1},x_{2},...,x_{n})\right)$. 
\end{proof}
 
Next, we apply Noether's Lemma \ref{lemma_Noether} (ii) in combination with Lemma \ref{lemma_F_strongly_n_geniune_open_V},  to control how often $F(Y,X_1,x_2,...,x_n)$ is reducible, still under the hypothesis (\ref{alg_closed_assumption}) for $j=1$. The following is an alternative to \cite[Lemma 4.2(ii)]{Coh81} whose proof contains a gap (see \cite{BPW25x_per} for details, as well as a full correction of the gap).
\begin{lem}\label{lemma_Cohen_strongly_genuine_reducible}
    Let $F(Y,\bX) \in \Z[Y,X_1,\ldots,X_n]$ with $\deg_Y F \geq 2$ be absolutely irreducible.  Suppose that  \[
       \overline{\mathbb{Q}(X_2,\ldots,X_n)}\cap \mathcal{L}_F=\mathbb{Q}(X_2,\ldots,X_n),
       \]
       in which $\mathcal{L}_F=\Q(X_1,\ldots,X_n)[Y]/F$. Then there exists a finite set of exceptional primes $\calE$ satisfying $|\calE|\ll_{n,\deg F} \log\|F\|/\log\log\|F\|$ such that for all $p\not\in \calE$, 
$$\#\{(x_2,...,x_n)\in \F_p^{n-1}: F(Y,X_1,x_2,...,x_n) \textrm{ is reducible over }\overline{\F_p}\} \ll_{n,\deg F} p^{n-2}.$$
(In particular, the conclusion holds if $F$ is strongly $n$-genuine.)
\end{lem}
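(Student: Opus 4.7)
The plan is to adapt the strategy of Lemma \ref{lemma_Cohen_genuine_linear_factor}, with the stronger form of Noether's Lemma \ref{lemma_Noether}(ii) (detecting reducibility rather than merely a linear factor) replacing part (i), and Lemma \ref{lemma_F_strongly_n_geniune_open_V} replacing Lemma \ref{lemma_F_open_set}. First I would apply Lemma \ref{lemma_Noether}(ii) to bivariate polynomials in $(Y, X_1)$ of total degree at most $D = \deg F$, obtaining forms $G_1,\ldots,G_s$ in the abstract coefficients, and aggregate them into a single form $G_R := \sum_j G_j^{2\ell_j}$ with $\deg G_R, \|G_R\| \ll_{D} 1$. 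Writing $F = \sum_{i_0,i_1} a_{i_0,i_1}(\bX') Y^{i_0} X_1^{i_1}$ with $\bX' = (X_2,\ldots,X_n)$ and $a_{i_0,i_1} \in \Z[\bX']$, I would define $\Psi(\bX') := G_R((a_{i_0,i_1}(\bX'))) \in \Z[\bX']$. For any prime $p$ and $\bx' \in \F_p^{n-1}$, if $F(Y, X_1, \bx')$ is reducible over $\overline{\F_p}$, then all $G_j((a_{i_0,i_1}(\bx')))$ vanish mod $p$, so $\Psi(\bx') \equiv 0 \pmod p$.

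The central task is to establish $\Psi \not\equiv 0$ in $\Z[\bX']$. Assuming for contradiction $\Psi \equiv 0$, by Hilbert's Irreducibility Theorem (Theorem \ref{thm_HIT}) combined with the open condition that the top $(Y, X_1)$-coefficients of $F$ do not vanish at $\bx'$, the set $U \subset \Q^{n-1}$ where $F(Y, X_1, \bx')$ is irreducible over $\Q(X_1)$ and retains its full bidegree is dense. By Lemma \ref{lemma_F_strongly_n_geniune_open_V}, the hypothesis $\overline{\Q(X_2,\ldots,X_n)} \cap \mathcal{L}_F = \Q(X_2,\ldots,X_n)$ supplies a nonempty open $V \subset \Q^{n-1}$ such that for $\bx' \in V$, $\Q$ is algebraically closed in $L_{\bx'} := \Q(X_1)[Y]/F(Y, X_1, \bx')$. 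Choosing any $\bx' \in U \cap V$, the vanishing $\Psi(\bx') = 0$ forces $F(Y, X_1, \bx')$ to be reducible in $\overline{\Q}[Y, X_1]$; since $F$ is monic in $Y$, matching leading coefficients in $Y$ forces both factors of any nontrivial such factorization to have positive $Y$-degree, hence $F(Y, X_1, \bx')$ is reducible in $\overline{\Q}(X_1)[Y]$. This contradicts algebraic closedness of $\Q$ in $L_{\bx'}$: in characteristic zero this is equivalent to $L_{\bx'}/\Q$ being a regular extension, which forces $L_{\bx'} \otimes_\Q \overline{\Q} \cong \overline{\Q}(X_1)[Y]/F(Y, X_1, \bx')$ to be a field; but the separable coprime factorization of $F(Y, X_1, \bx')$ over $\overline{\Q}(X_1)$ into $\geq 2$ factors exhibits this tensor product as a nontrivial product of fields.

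With $\Psi \not\equiv 0$ established, some integer coefficient of $\Psi$ is nonzero and bounded by $\|F\|^{O_{n,D}(1)}$ (since $G_R$ has coefficients and degree $O_D(1)$ and the $a_{i_0,i_1}$ have norm at most $\|F\|$). Let $\mathcal{E}$ be the set of primes dividing this coefficient; then $|\mathcal{E}| \ll_{n,D} \log\|F\|/\log\log\|F\|$, and for $p \notin \mathcal{E}$, $\Psi \not\equiv 0 \pmod p$. Schwartz-Zippel (Lemma \ref{lemma_Schwartz_Zippel}) then yields
\[
\#\{\bx' \in \F_p^{n-1} : \Psi(\bx') \equiv 0 \pmod p\} \leq (\deg \Psi)\, p^{n-2} \ll_{n,D} p^{n-2},
\]
which contains the set of reducible specializations, giving the claim. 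The main obstacle I anticipate is the field-theoretic step connecting absolute reducibility of $F(Y, X_1, \bx')$ to the existence of elements of $\overline{\Q} \setminus \Q$ in $L_{\bx'}$ via the tensor-product/regular-extension argument; this is precisely where the hypothesis on the algebraic closure of $\Q(X_2,\ldots,X_n)$ in $\mathcal{L}_F$ is essential, and constitutes the substantive repair of Cohen's original approach.
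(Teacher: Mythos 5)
Your proposal is correct and follows essentially the same route as the paper's proof: Noether's Lemma (ii) aggregated into a single form $G_R$, Hilbert irreducibility plus the non-vanishing of the $(Y,X_1)$-coefficients to produce a dense set $U$, Lemma \ref{lemma_F_strongly_n_geniune_open_V} to produce the open set $V$, a contradiction at a point of $U\cap V$, and Schwartz--Zippel with the exceptional primes dividing a nonzero integer of size $\|F\|^{O_{n,\deg F}(1)}$. The only (harmless) deviation is your phrasing of the contradiction via regular extensions and the tensor product $L_{\bx'}\otimes_{\Q}\overline{\Q}$ (which is not literally $\overline{\Q}(X_1)[Y]/F$ but localizes to it, so the zero-divisor argument still applies), where the paper instead directly notes that $\Q$-irreducibility together with $\overline{\Q}$-reducibility forces $\Q$ not to be integrally closed in $\Q(X_1)[Y]/F(Y,X_1,\bx')$.
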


\begin{proof}

     By Hilbert's irreducibility theorem (Theorem \ref{thm_HIT}), since $F(Y,\bX)$ is irreducible over $\Q$, there exists a dense set $U' \subset \Q^{n-1}$    such that 
    \beq\label{conclusion_F_irred_on_U}
    F(Y,X_1,x_2,...,x_n)\text{ is irreducible over }\Q, \forall (x_2,...,x_n)\in U'.
    \eeq
 Denote the coefficients of $F(Y,\bX)$ as a polynomial in $Y,X_1$ by $$F(Y,\bX) = \sum_{i_0, i_1} a_{i_0,i_1}(X_2,...,X_n) Y^{i_0}X_1^{i_1}.$$
 Take the open subset 
\[
V=\{(x_{2},...,x_{n})\in\mathbb{Q}^{n-1}: a_{i_{0},i_{1}}(x_{2},...,x_{n})\neq 0\text{ for all $i_{0},i_{1}$ such that $a_{i_0,i_1}(\bX') \not\con 0$} \}.
\]
(In particular, for all $\bx' \in V,$ $\deg_{Y}F(Y,X_1,\bx')=\deg_{Y}F(Y,X_1,\bX')$ and $\deg_{X_1}F(Y,X_1,\bx')=\deg_{X_1}F(Y,X_1,\bX')$.)
Then the set $U=U'\cap V$ is dense in $\Q^{n-1}$. 

Let $G_R((a_{i_0,i_1}(\bX')))$ denote a form provided by Noether's Lemma \ref{lemma_Noether} (ii) for polynomials in $Y,X_1$ of degree $\leq \max\{\deg_Y F, \deg_{X_1}F\}$ over $K=\Q$   and the divisibility condition ``reducible'', so that $G_R((a_{i_0,i_1}(\bx')))=0$ if and only if $F(Y,X_1,\bx')$ is reducible over $\overline{\Q}$ (or the degree of $F$, as a polynomial in $Y,X_1$, drops). 
  (Lemma \ref{lemma_Noether} (ii) provides a collection of forms $\{G_1,...,G_s\}$ which determine when $F(Y,\bX)$ is reducible, rather than a single form. We can, however,  take $G_R = G_1^{2 \ell_1} + \cdots + G_s^{2\ell_s}$ for appropriate $\ell_i \geq 1$ to obtain a single form whose degree is $2r$, where $r$ is the $\mathrm{lcm}$ of the degrees of $G_1,\ldots,G_s$.)

    We will use Lemma \ref{lemma_F_strongly_n_geniune_open_V} to show that there exists $\bx'\in U$ such that $G_R((a_{i_0,i_1}(\bx'))) \neq 0$.  
  Assume for contradiction that $G_R = 0$ on $U$. Then $F(Y,X_1,\bx')$ is reducible over $\overline{\Q}$  for all $\bx'\in U$ (since by construction the degree of $F$ does not drop, as a polynomial of $Y,X_1$, on $U$).  However, by the construction in (\ref{conclusion_F_irred_on_U}), for each $\bx' \in U$, $F(Y,X_1,\bx')$ is irreducible over $\Q$. So, for all $\bx'  \in U$, $\Q$ cannot be integrally closed in $\Q(X_{1})[Y]/F(Y,X_{1},\bfx ')$. 
  On the other hand, by Lemma \ref{lemma_F_strongly_n_geniune_open_V}, there exists a nonempty open set $V \subset \Q^{n-1}$ such that for all $\bx' \in V$, $\Q$ is integrally closed in $\Q(X_{1})[Y]/F(Y,X_{1},\bfx ')$. Since $U$ is dense and $V$ is open, $U\cap V \neq \emptyset.$ Thus, we get a contradiction: if an element $\bx'$  exists in $ U\cap V$, $\Q$ would be both integrally closed and not integrally closed in $\Q(X_{1})[Y]/F(Y,X_{1},\bfx ')$. Thus the supposition that $G_R = 0$ on $U$ must be false.
  
 Now we can complete the proof of the lemma. Since there exists a choice of $\bx'\in U$ such that $G_R((a_{i_0,i_1}(\bx'))) \neq 0$, then $G_R((a_{i_0,i_1}(\bX')))$ is a polynomial in $\Z[\bX']$ that is not identically zero. Consequently, for all rational primes $p$ that do not divide the gcd of the coefficients of $G_R((a_{i_0,i_1}(\bX')))$, the reduction of $G_R((a_{i_0,i_1}(\bX')))$ modulo $p$ is a   polynomial in $\F_p[\bX']$, not identically zero. Denote this gcd temporarily by $g$,  and denote by $\|G_R\|$ the maximum absolute value of the coefficient of any monomial in $\bX'$ in the polynomial $G_R((a_{i_0,i_1}(\bX')))$ (which will depend polynomially on the fixed coefficients defining $F$). Note that $g \ll \|G_R\|$. By Lemma \ref{lemma_Noether}, $\log \|G_R\| \ll_{n,\deg F} \log \|F\|$. Upon defining the exceptional set $\mathcal{E}$ to be all $p|g$, it follows that $|\mathcal{E}| \ll_{n,\deg F} \log \|F\| / \log \log \|F\|$, as claimed.  Now suppose $p \not\in \mathcal{E}$ is fixed. Again by Lemma \ref{lemma_Noether} (ii), for a given tuple $(x_2,\ldots,x_n) \in \F_p^{n-1}$, $F(Y,X_1,x_2,\ldots,x_n)$ is reducible over $\overline{\F_p}$ if and only if $G_R((a_{i_0,i_1}(\bx')))=0$ in $\F_p$.  Since this is not the zero polynomial in $\F_p$, the trivial bound (Lemma \ref{lemma_Schwartz_Zippel}) shows that the number of $\bx' \in \F_p^{n-1}$ that satisfy $G_R((a_{i_0,i_1}(\bx'))) \neq 0$ is $\ll \deg_{G_R} p^{n-2}$. By Lemma \ref{lemma_Noether} (ii), $\deg G_R \ll_{n,\deg F} 1$, so that this is sufficient for the claim of Lemma \ref{lemma_Cohen_strongly_genuine_reducible}.

\end{proof}

\subsection{Proof of Proposition \ref{prop_strongly_genuine_second_moment}}
The main input   is Lemma \ref{lemma_Cohen_strongly_genuine_reducible}.
Expanding the square,
    \begin{align*}
        \sum_{\bu\in \{u_1=0\} \subset \F_p^n} |S(\bu,p)|^2 &= \sum_{\bx,\bx'\in \F_p^n} (v_p(\bx)-1)(v_p(\bx')-1) \sum_{\bu\in \{u_1=0\}\subset \F_p^n} e(\bu\cdot (\bx-\bx'))\\
        &= p^{n-1} \sum_{\ba \in \F_p^{n-1}} \left|\sum_{x_1\in \F_p} (v_p((x_1,\ba))-1)\right|^2\\
        &= p^{n-1} \sum_{\ba\in \F_p^{n-1}}\left|\#\{(y,x_1)\in \F_p^2: F(y,x_1,\ba)=0\} - p\right|^2.
    \end{align*}
Under the assumption that $F(Y,\bX)$ is monic in $Y$, note that   for all $p$ and all $\ba \in \F_p^{n-1}$, $F(Y,X_1,\ba) \not\con 0$ over $\F_p$.   By the Lang-Weil bound (Lemma \ref{lemma_Lang_Weil_cor}), if $F(Y,X_1,\ba) \in \F_p[Y,X_1]$ is  irreducible over $\overline{\F_p}$, then 
    $$|\#\{(y,x_1)\in \F_p^2: F(y,x_1,\ba)=0\} - p|\ll_{n,\deg F} p^{1/2}.$$
    On the other hand, if $F(Y,X_1,\ba)$ is not  irreducible over $\overline{\F_p}$, the best bound available is 
    $$|\#\{(y,x_1)\in \F_p^2: F(y,x_1,\ba)=0\} - p|\ll_{\deg F} p,$$
by Lemma \ref{lemma_Schwartz_Zippel}.
    By Lemma \ref{lemma_Cohen_strongly_genuine_reducible},   there exists a set of exceptional primes $\calE$ satisfying $|\calE|\ll \log\|F\|/\log\log\|F\|$ such that for all $p\not\in \calE$,
    $$\#\{\ba\in \F_p^{n-1}: F(Y,X_1,\ba) \textrm{ is reducible over }\overline{\F_p}\} \ll_{n,\deg F} p^{n-2}.$$
    Thus, for each such $p \not\in \mathcal{E}$,
    $$\sum_{\bu\in \{u_1=0\}} |S(\bu,p)|^2 \ll_{n,\deg F} p^{n-1} (\sum_{\ba \in \F_p^{n-1}} O(p^{1/2})^2 + \sum_{\substack{\ba \in \F_p^{n-1}\\ F(Y,X_1,\ba) \textrm{ reducible/}\overline{\F_p}}} O(p)^2 )\ll_{n,\deg F} p^{2n-1},$$
    and the proposition is proved.

\subsection{Linear changes of variable and second moment bounds}
Proposition \ref{prop_strongly_genuine_second_moment} proves a second moment bound when $|S(\bu,p)|^2$ is summed over $\bu$ lying in the hyperplane $\{u_1=0\}$. We will ultimately require a more general result that sums over $\bu$ lying in a fixed arbitrary hyperplane $V \subset \mathbb{A}^n(\Z)$. Of course any hyperplane $V$ can be mapped to the special case $\{u_1=0\}$ by an appropriate linear change of variables, but because in bounding $N(F,B)$ we consider integral points corresponding to a certain box $[-B,B]^n \cap \Z^n$, we must consider carefully how this set of integral points is affected by  a   change of variables.  
Given $L \in \GL_n(\Q)$, denote the rows of the matrix by $L_1,\ldots,L_n$ and correspondingly define $n$ linear forms by $L_i(\bx) = L_i \cdot \bx$.

\begin{cor}\label{cor_second_moment}
Let $F(Y,\bX) \in \Z[Y,X_1,\ldots,X_n]$ with $\deg_YF \geq 2$ and $F$ monic in $Y$ be absolutely irreducible. Define for a given prime $p$,
\[ S(\bu,p) = \sum_{\bx \in \F_p^n} (v_p(\bx)-1)e_p(\bu \cdot \bx)\]
where $v_p(\bx) = \# \{ y \in \F_p : F(y,\bx)=0\}$.
Let $L \in \GL_n(\Q)$ be given, with all integer entries, and let $\|L\|$ denote the maximum of the absolute value of the entries in $L$. If the polynomial $F_L(Y,\bX) :=F(Y,L_1(\bx),\ldots,L_n(\bx))$ satisfies the condition (\ref{alg_closed_assumption}) for $j=1$, then
there exists a set $\mathcal{E}_L$ with 
\[|\mathcal{E}_L| \ll \log (\|F\| \|L \|)/ \log \log (\|F\| \|L\|)\]
such that 
for all $p \not\in \mathcal{E}_L,$
\[ \sum_{\bu \in \{ (L^T\bu)_1=0\} \subset \F_p^n} | S(\bu,p)|^2 \ll_{n,\deg F} p^{2n-1}.\]
(In particular, the conclusion holds if   $F(Y,L_1(\bx),\ldots,L_n(\bx))$ is a strongly  $n$-genuine polynomial.) \end{cor}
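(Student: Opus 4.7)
The plan is to reduce this corollary directly to Proposition \ref{prop_strongly_genuine_second_moment} via a linear change of variables. First I would perform the substitution $\bx = L\bz$ in the definition of $S(\bu,p)$. Since $L$ has integer entries, this substitution is invertible over $\F_p$ precisely when $p \nmid \det L$; I would therefore include all such primes in the exceptional set $\mathcal{E}_L$, contributing at most $\ll_n \log\|L\|/\log\log\|L\|$ primes. Writing $v_p^L(\bz) := \#\{y\in\F_p : F_L(y,\bz) = 0\}$ for the local counting function of $F_L$, one has $v_p(L\bz) = v_p^L(\bz)$ and $\bu \cdot (L\bz) = (L^T\bu)\cdot \bz$, so that the key identity
\[ S(\bu,p) = S_L(L^T\bu, p) \]
emerges, where $S_L$ is the character sum constructed from $F_L$ in place of $F$.

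Next I would use the fact that $\bu \mapsto L^T\bu$ is a bijection of $\F_p^n$ for $p \nmid \det L$; it carries the hyperplane $\{\bu : (L^T\bu)_1 = 0\}$ onto $\{\bv : v_1 = 0\}$, whence
\[ \sum_{\bu : (L^T\bu)_1 = 0} |S(\bu,p)|^2 \;=\; \sum_{\bv : v_1 = 0} |S_L(\bv,p)|^2. \]
At this stage I would verify that $F_L$ satisfies the hypotheses of Proposition \ref{prop_strongly_genuine_second_moment}: it is absolutely irreducible (invertibility of $L$ over $\Q$ preserves absolute irreducibility), monic in $Y$ with $\deg_Y F_L = \deg_Y F \geq 2$, and by the corollary's assumption it satisfies $\overline{\Q(X_2,\ldots,X_n)} \cap \mathcal{L}_{F_L} = \Q(X_2,\ldots,X_n)$. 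Applying Proposition \ref{prop_strongly_genuine_second_moment} to $F_L$ then yields the bound $p^{2n-1}$ outside an exceptional set of size $\ll_{n,\deg F} \log \|F_L\|/\log\log \|F_L\|$.

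Finally I would control $\|F_L\|$ in terms of $\|F\|$ and $\|L\|$. Since each coefficient of $F_L(Y,\bX)$ is a polynomial (of shape depending only on $n$ and $\deg F$) in the coefficients of $F$ and the entries of $L$, one gets $\|F_L\| \ll_{n,\deg F} \|F\| \|L\|^{\deg F}$, hence $\log\|F_L\| \ll_{n,\deg F} \log(\|F\|\|L\|)$. Merging the exceptional primes produced by Proposition \ref{prop_strongly_genuine_second_moment} applied to $F_L$ with those dividing $\det L$ yields the total exceptional set $\mathcal{E}_L$ with the claimed size bound. I do not anticipate any serious obstacle here: the substantive content is Proposition \ref{prop_strongly_genuine_second_moment}, and this corollary reduces to careful bookkeeping around the linear change of variables, with the only subtlety being the need to exclude the primes dividing $\det L$ so that $L$ acts invertibly on $\F_p^n$.
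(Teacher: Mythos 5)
Your proposal is correct and follows essentially the same route as the paper: both arguments reduce to Proposition \ref{prop_strongly_genuine_second_moment} applied to $F_L$ via the identity $S(\bu,p)=S_L(L^T\bu,p)$ (the paper writes the equivalent $S_L(\bu,p)=S(M^T\bu,p)$ with $M=L^{-1}$ over $\F_p$), excluding the primes dividing $\det L$ and merging exceptional sets using $\log\|F_L\|\ll_{n,\deg F}\log(\|F\|\|L\|)$.
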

\begin{proof}
    Let $L$ be given. By assumption,   $F_L(Y,\bX)$ satisfies the hypotheses of Proposition \ref{prop_strongly_genuine_second_moment} and $\|F_L\| \leq \|F\| \|L\|$, so
    there exists a set $\mathcal{E}'_L$ with $|\mathcal{E}'_L| \ll \log (\|F\| \|L \|)/ \log \log (\|F\| \|L\|)$ such that 
for all $p \not\in \mathcal{E}'_L,$
\[ \sum_{\bu \in \{ u_1=0\} \subset \F_p^n} | S_L(\bu,p)|^2 \ll_{n,\deg F} p^{2n-1},\]
in which 
\[ S_L(\bu,p) :=  \sum_{\bx \in \F_p^n} (v^L_p(\bx)-1)e_p(\bu \cdot \bx)\]
where $v^L_p(\bx) = \# \{ y \in \F_p : F(y,L_1(\bx), \ldots, L_n(\bx))=0\}$.   We will require that $p \ndiv \det L$, so that furthermore $L \in \GL_n(\F_p)$; this excludes at most $\ll_n \log \|L\|/\log \log \|L\|$ further primes, and we enlarge the set $\mathcal{E}'_L$ to a set $\mathcal{E}_L$ that includes these further primes as well. Now suppose $p \not\in \mathcal{E}_L$ and write the matrix inverse as $M$ so that $ML =LM= \mathrm{Id}$ over $\F_p$. Note that for each $\bx \in \F_p^n$,
\begin{align*}
    v^L_p(M\bx) &= \# \{ y \in \F_p : F(y,L_1(M\bx), \ldots, L_n(M\bx))=0\} \\
    &= \# \{ y \in \F_p : F(y,x_1, \ldots, x_n)=0\} = v_p(\bx).
    \end{align*}
Consequently   
\[ S_L(\bu,p) = \sum_{M\bx \in \F_p^n} (v^L_p(M\bx)-1)e_p(\bu \cdot M\bx) = \sum_{M\bx \in \F_p^n} (v_p(\bx)-1)e_p((M^T\bu )\cdot \bx), \]
where we have applied $(\bu \cdot M \bx) = \bu^T (M\bx) = (M^T \bu)^T \bx$. Since $\bx$ ranges over $\F_p^n$ as $M\bx$ ranges over $\F_p^n$, that is to say 
\[ S_L(\bu,p) = \sum_{\bx \in \F_p^n} (v_p(\bx)-1)e_p(M^T\bu \cdot \bx) = S( M^T \bu,p).\]
We can conclude that for all $p \not\in \mathcal{E}_L,$
\[ \sum_{\bu \in \{ u_1=0\} \subset \F_p^n} | S(M^T\bu,p)|^2 \ll_{n,\deg F} p^{2n-1}.\]
Finally, we set $\mathbf{v} = M^T \bu$ (and so $\bu = (M^T)^{-1} \mathbf{v} = L^T \mathbf{v}$), so that this statement is equivalent to 
\[  \sum_{\mathbf{v} \in\{ \mathbf{v} \in \F_p^n :  (L^T \mathbf{v})_1=0\} } | S(\mathbf{v},p)|^2 \ll_{n, \deg F} p^{2n-1}.\]
\end{proof}

\section{Application of the stratification to the polynomial sieve}\label{sec_apply_strat_to_sieve}
We turn to the proof of Theorem \ref{thm_main} and Theorem \ref{thm_explicit} (which we recall immediately implies Theorem \ref{thm_cyclic_uniform} by Example \ref{ex: generic cyclic}). The main claim in this section is Theorem \ref{thm_main_options}, which is the main outcome of the polynomial sieve in this paper. For simplicity we may assume throughout that $\|F\|\geq 3$.
\subsection{Initiation of the polynomial sieve}
 The  strategy is to use a polynomial sieve to control a smoothed version of the counting function $N(F,B)$. Let $w:\R^n\rightarrow \R_{\geq 0}$ be an infinitely differentiable function that is $\geq 1$ on $[-1,1]^n$ and compactly supported in $[-2,2]^n$. For each fixed $B\geq 1$, let $W(\bfx) = w(\bfx/B)$. By construction, 
\[N(F,B) \leq S(F,B):= \sum_{\substack{\bx\in \Z^n\\ F(Y,\bx)=0 \text{ is solvable in $\Z$}}}W(\bfx).\]
The polynomial sieve relies on understanding the local counting function
\[v_p(\bx) = \#\{y\modd{p}: F(y,\bx) = 0 \modd{p}\}\]
for sufficiently many primes $p$ in a sieving set $\mathcal{P}$. The version we employ is as follows:
\begin{lem}[{\cite[Lemma 1.2]{BonPie24}}]
   \label{lemma_poly_sieve}
   Let $m\geq 2$ and $d \geq 1$ be integers. Consider a polynomial of the form $$F(Y,\bX) = Y^{md} + f_1(\bX) Y^{m(d-1)} + ... + f_d(\bX),$$ in which   $f_d(\bX)\not\equiv 0$ and $f_j \in \Z[X_1,\ldots,X_n]$ have degrees $k_j$ (possibly zero), respectively.  Let $\calP \subset \{p\text{ prime}: p\equiv 1 \modd{m}\}\cap [P,2P]$ be a set of primes. Suppose that $|\calP|\gg P/\log P$ and that 
    \beq\label{P_size_lower_bound}
    |\calP| \gg_{\deg f_d} \max\{\log(\|f_d\|,\log B)\}.
    \eeq
    Then  
    $$S(F,B) \ll_{\deg F} \sum_{f_d(\bx)=0}W(\bx) + \frac{1}{|\calP|} \sum_{\bx\in \Z^n}W(\bx) + \frac{1}{|\calP|^2} \sum_{\substack{p,q\in \calP \\ p\neq q}}\left|\sum_{\bx\in \Z^n} (v_p(\bx)-1)(v_q(\bx)-1)W(\bx)\right|.$$
\end{lem}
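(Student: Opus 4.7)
The plan is to derive the stated three-term bound by a standard polynomial sieve adapted to the cyclic structure $F(Y,\bX)=g(Y^m,\bX)$, where $g(Z,\bX)=Z^d+f_1(\bX)Z^{d-1}+\cdots+f_d(\bX)$. First, I would split $S(F,B)$ according to whether $f_d(\bx)=0$; that range contributes exactly the first term on the right. In the remaining range, any integer solution $y_0$ of $F(y_0,\bx)=0$ is nonzero (since $F(0,\bx)=f_d(\bx)\ne 0$), and comparing the leading term $y_0^{md}$ of $F(y_0,\bx)=0$ against the remaining terms yields the archimedean bound $\log|y_0|\ll_{\deg F}\log(\|F\|+B)$ uniformly for $\bx\in\mathrm{supp}(W)$.

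Next, the hypothesis $p\equiv 1\pmod m$ enters crucially. For any $p\in\mathcal{P}$ with $p\nmid y_0\,f_d(\bx)$, the element $y_0^m\in\F_p^\times$ is a nonzero $m$-th power, so the equation $Y^m=y_0^m$ has exactly $m$ solutions mod $p$, each of which reduces a root of $g(Y^m,\bx)\equiv 0\pmod p$. Hence $v_p(\bx)\geq m\geq 2$ for all such $p$, and the set of $p\in\mathcal{P}$ failing this condition has cardinality $\ll\log|y_0 f_d(\bx)|\ll_{\deg f_d}\log(\|f_d\|+B)$. The lower bound \eqref{P_size_lower_bound} is chosen precisely so that this bad set has size at most a fixed fraction of $|\mathcal{P}|$, giving the pointwise inequality
\[
\mathbb{1}\bigl[\,\bx\text{ contributes},\ f_d(\bx)\neq 0\bigr]\;\ll_{\deg F}\;\frac{1}{|\mathcal{P}|}\sum_{p\in\mathcal{P}}\bigl(v_p(\bx)-1\bigr).
\]

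Finally, I would square this inequality, multiply by $W(\bx)$, and sum over $\bx\in\Z^n$ to obtain
\[
S(F,B)\;\ll_{\deg F}\;\sum_{f_d(\bx)=0}W(\bx)\;+\;\frac{1}{|\mathcal{P}|^2}\sum_{\bx\in\Z^n}W(\bx)\Bigl(\sum_{p\in\mathcal{P}}(v_p(\bx)-1)\Bigr)^{2}.
\]
Expanding the square splits the inner sum into a diagonal ($p=q$) and an off-diagonal ($p\ne q$) part. On the diagonal, the trivial bound $v_p(\bx)\leq\deg_Y F$ gives $\sum_{p\in\mathcal{P}}(v_p(\bx)-1)^2\ll_{\deg F}|\mathcal{P}|$, producing the second term $|\mathcal{P}|^{-1}\sum_\bx W(\bx)$. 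The off-diagonal part, after exchanging the order of summation and pulling absolute values through, is exactly the third term. The main technical obstacle is the careful accounting in the middle step: one must verify that the archimedean estimate on $|y_0|$ together with the divisor bound really keeps the excluded-prime count below a small fixed fraction of $|\mathcal{P}|$ under the quantitative hypothesis \eqref{P_size_lower_bound}, so that the squaring argument indeed recovers the indicator on the contributing set.
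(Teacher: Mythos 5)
Your argument is correct and is essentially the proof of the cited result \cite[Lemma 1.2]{BonPie24}, which this paper deliberately does not reproduce: split off the $\bx$ with $f_d(\bx)=0$, use $p\equiv 1 \pmod{m}$ with $m\geq 2$ to force $v_p(\bx)\geq m\geq 2$ for every $p\in\calP$ not dividing the (nonzero) integer root $y_0$, note $v_p(\bx)\geq 1$ even at the exceptional primes so the squared inequality holds for all relevant $\bx$, and then expand the square into the diagonal (middle term) and off-diagonal (third term) contributions. One small tightening: to keep the exceptional-prime count within hypothesis (\ref{P_size_lower_bound}), which involves only $\|f_d\|$ and $B$, bound $|y_0|$ via the divisibility $y_0\mid f_d(\bx)$ (so $\log|y_0|\ll_{n,\deg f_d}\max\{\log\|f_d\|,\log B\}$) rather than via the leading-coefficient comparison, which would bring $\|F\|$ into the bound and is not covered by the stated hypothesis.
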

This formulation of the polynomial sieve introduces the requirement that $m \geq 2$; this guarantees that for a positive proportion of primes (namely those $p \con 1 \modd{m}$), $v_p(\bfx) >0$ implies that $v_p(\bfx) >1$ so that $(v_p(\bfx)-1) \neq 0$. The version stated here is a small modification of \cite[Lemma 1.2]{BonPie24}, which was stated in the setting that the polynomials $f_j$ are homogeneous, with degree constraints of the form $\deg f_j = m \cdot e \cdot j$; then $F(Y,\bX)=0$ defines a hypersurface in weighted projective space $\mathbb{P}(e,1,\ldots,1)$ for a given integer $e \geq 1$. The proof given for \cite[Lemma 1.2]{BonPie24} did not depend on homogeneity of the polynomials $f_j$, and repeating it line-by-line immediately leads to the result stated above, so we do not repeat the proof.

In our present work, we also allow $F(Y,\bX)$ to have the form (\ref{F_dfn_intro})  with $\deg_Y F \geq 2$ and $m=1$, and for this case we apply the conditional version of the sieve lemma:
\begin{lem}[{\cite[Remark 1.3, \S3.2]{BonPie24}}]\label{lemma_poly_sieve_cor}
Lemma \ref{lemma_poly_sieve} holds for $m=1$ and  $d \geq 2$, conditional on GRH, with $\mathcal{P}$ a set of primes in $[P,2P]$ with $|\mathcal{P}| \gg P(\log P)^{-1}$, and (\ref{P_size_lower_bound}) replaced by 
 $
P \gg_{m,d, \deg f_d} \max \{ (\log \|F\|)^{\al_0}, (\log B)^{\al_0}\}
$ for any fixed $\al_0>2$.
\end{lem}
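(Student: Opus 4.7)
The plan is to run the proof of Lemma \ref{lemma_poly_sieve} verbatim, replacing the single step that used the structure $F\in\Z[Y^m,\bX]$ with $m\geq 2$ by an effective Chebotarev argument under GRH. The role of $m\geq 2$ in Lemma \ref{lemma_poly_sieve} is to guarantee, for every solvable $\bx$ with $f_d(\bx)\neq 0$ and every $p\in\mathcal{P}$ outside a negligible set, the pointwise inequality $v_p(\bx)-1\geq m-1\geq 1$: if $p\equiv 1\,(\mathrm{mod}\,m)$ then $\F_p$ contains $m$ distinct $m$-th roots of unity, so a single integer solution $y$ of $F(y,\bx)=0$ yields $m$ distinct solutions $\zeta y\in\F_p$ because $F$ is a polynomial in $Y^m$. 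This pointwise lower bound is the only place the $m\geq 2$ hypothesis intervenes; the subsequent Cauchy--Schwarz step producing the three terms of the conclusion proceeds purely formally.

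For $m=1$, $d\geq 2$, one must recover the inequality $v_p(\bx)\geq 2$ for a positive proportion of $p\in\mathcal{P}$ by a Chebotarev argument. Fix $\bx\in\mathrm{supp}(W)$ with $F(Y,\bx)=0$ solvable over $\Z$ and $f_d(\bx)\neq 0$. By Gauss's lemma, factor $F(Y,\bx)=(Y-y)G_{\bx}(Y)$ in $\Z[Y]$ with $G_{\bx}$ monic of degree $d-1\geq 1$. For every prime $p\nmid\mathrm{disc}\,F(Y,\bx)$, the root $y$ is distinct from the roots of $G_{\bx}$ modulo $p$, so $v_p(\bx)\geq 2$ if and only if $G_{\bx}$ has a root modulo $p$. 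Let $K_{\bx}$ be the splitting field of $F(Y,\bx)$ over $\Q$, with Galois group $H_{\bx}\leq S_d$ acting on the $d$ roots. Since one root is rational, $H_{\bx}$ fixes it, and the conjugation-invariant set $C_{\bx}\subset H_{\bx}$ of elements with at least two fixed points is nonempty (it contains the identity). By the Lagarias--Odlyzko effective Chebotarev density theorem under GRH,
\[
\#\{p\in[P,2P]:\mathrm{Frob}_p\in C_{\bx}\}=\frac{|C_{\bx}|}{|H_{\bx}|}\bigl(\mathrm{Li}(2P)-\mathrm{Li}(P)\bigr)+O\!\left(P^{1/2}\bigl(\log|\mathrm{disc}\,K_{\bx}|+d\log P\bigr)\right).
\]
Standard bounds give $\log|\mathrm{disc}\,K_{\bx}|\ll_{n,d}\log(\|F\|+2)+\log(B+2)$ uniformly in $\bx\in[-2B,2B]^n$. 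Hence whenever $P\gg_{n,d,\deg f_d}\max\{(\log\|F\|)^{\al_0},(\log B)^{\al_0}\}$ with any fixed $\al_0>2$, the error term is dominated by the main term (the margin $\al_0>2$ comes exactly from requiring $P^{1/2}\log P\cdot(\log\|F\|+\log B)\ll P/\log P$), and one obtains a set of density $\geq c_d>0$ of primes $p\in[P,2P]$ with $v_p(\bx)\geq 2$, uniformly in $\bx$.

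Taking $\mathcal{P}$ to be all primes in $[P,2P]$ (so $|\mathcal{P}|\asymp P/\log P$, compatible with the stated hypothesis), one concludes that $\sum_{p\in\mathcal{P}}(v_p(\bx)-1)\geq c_d|\mathcal{P}|$ for every solvable $\bx$ with $f_d(\bx)\neq 0$. With this uniform pointwise lower bound in hand, the Cauchy--Schwarz step is identical to that of Lemma \ref{lemma_poly_sieve}: squaring, expanding, and peeling off the $f_d(\bx)=0$ contribution as the first term, the diagonal $p=q$ contribution as the second term $|\mathcal{P}|^{-1}\sum_{\bx}W(\bx)$ (using $v_p\leq d$), and the off-diagonal $p\neq q$ contribution as the third term of the conclusion.

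The main obstacle, and the sole reason both for assuming GRH and for strengthening the hypothesis from $\log P\gg\log\|F\|$ to $P\gg(\log\|F\|)^{\al_0}$ with $\al_0>2$, is ensuring that the Chebotarev error term is uniformly beaten by the main term for every $\bx$ in the support of $W$. This is exactly what the polylogarithmic lower bound on $P$ provides, via the uniform discriminant bound $\log|\mathrm{disc}\,K_{\bx}|\ll_{n,d}\log(\|F\|+B)$; no other step in the proof of Lemma \ref{lemma_poly_sieve} requires modification.
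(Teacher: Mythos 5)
Your proposal is correct and matches the paper's approach: the paper proves this lemma by citing \cite[Remark 1.3, \S3.2]{BonPie24} (noting only that homogeneity of the $f_j$ is never used there), and the argument in that reference — as the paper itself summarizes in Remark \ref{remark_GRH_number_of_apps} — is exactly the one you give, namely factoring out the rational root, applying the GRH-effective Chebotarev theorem to a nonempty conjugation-invariant class (the cited proof uses complete splitting in the splitting field of an irreducible factor of the cofactor, you use Frobenius elements with at least two fixed points; the difference is immaterial), with the discriminant bound $\log|\mathrm{disc}| \ll \log(\|F\|) + \log B$ forcing the threshold $P \gg (\log(\|F\|B))^{\al_0}$, $\al_0>2$, before running the Cauchy--Schwarz step unchanged. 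The only detail you gloss over is the locus where $\mathrm{disc}_Y F(Y,\bx)=0$ (so that the rational root could coincide with roots of $G_{\bx}$), which is a proper subvariety contributing an acceptable $O(B^{n-1})$.
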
 
For clarity, we note that \cite[Remark 1.3, \S3.2]{BonPie24} was also stated in the case where the polynomials $f_j$ are homogeneous, but the proof given there did not depend on homogeneity, and repeating it line-by-line immediately leads to the result stated above. (The dependence of Lemma \ref{lemma_poly_sieve_cor} on GRH can be weakened somewhat; see  Remark \ref{remark_GRH_number_of_apps}.)

Assume that $F(Y,\bX)$ with $\deg_Y F \geq 2$ is any absolutely irreducible polynomial of the form (\ref{F_dfn_intro}) for a fixed $m \geq 1$. (In particular, since $F(Y,\bX)$ is absolutely irreducible, it must be that $f_d(\bX) \not\equiv 0$.) 
By an application of Lemma \ref{lemma_poly_sieve} (when $m\geq 2$) or Lemma \ref{lemma_poly_sieve_cor} (when $m=1$, assuming GRH),
   $$N(F,B) \ll_{\deg F} \sum_{f_d(\bx)=0}W(\bx) + \frac{1}{|\calP|} \sum_{\bx\in \Z^n}W(\bx) + \frac{1}{|\calP|^2} \sum_{\substack{p,q\in \calP \\ p\neq q}}\left|\sum_{\bx\in \Z^n} (v_p(\bx)-1)(v_q(\bx)-1)W(\bx)\right|.$$
  Here the sieving set $\mathcal{P}$ is assumed to satisfy  $\mathcal{P} \subset \{p\text{ prime}: p\equiv 1 \modd{m}\}\cap [P,2P]$ and we assume $P \gg_{\deg F} \max \{(\log \|F\|)^{\al_0}, (\log B)^{\al_0}\}$ for a fixed $\al_0>2$.  (The condition $p \con 1 \modd{m}$ denotes primes in an arithmetic progression modulo $m$ if $m \geq 2$; if $m=1$ it  imposes no restriction.) We will state the precise choice of $\mathcal{P}$ in \S \ref{sec_sieving_set},  and  confirm that  $|\calP|\gg P/\log P$.
   For the remainder of the paper, whether $m \geq 2$ or $m =1$ will no longer play any role in the argument. 
   
   An application of the trivial bound (Lemma \ref{lemma_Schwartz_Zippel}) shows that
   $$\sum_{f_d(\bx) =0} W(\bx) \ll_{\deg f_d} B^{n-1}.$$
Trivially,  $$\frac{1}{|\calP|}\sum_{\bx\in \Z^n}W(\bx) \ll_n \frac{B^n}{|\calP|}.$$
Hence to bound $N(F,B)$ from above, it remains to understand the third, main sieve, term.

To bound the main sieve term, we first fix a pair $(p,q)\in \calP^2$ such that $p\neq q$. After an application of Poisson summation,  
\begin{equation*}
    \sum_{\bx\in \Z^n} W(\bx) (v_p(\bx)-1)(v_q(\bx)-1) = \frac{1}{(pq)^n} \sum_{\bfu\in \Z^n} \hat{W}(\bfu/pq)S(\bfu,pq), 
\end{equation*}
where 
$$S(\bfu,pq) = \sum_{\bfa \modd{pq}} (v_p(\bfa)-1)(v_q(\bfa)-1)e_{pq}(\bfu\cdot \bfa).$$
Accordingly, for a prime $p$ define 
\beq\label{S_prime_dfn}
S(\bfu,p) = \sum_{\bfa \modd{p}} (v_p(\bfa)-1)e_{p}(\bfu\cdot \bfa).
\eeq
The sum is multiplicative:  for primes $p \neq q$,
\begin{equation*}
    S(\bfu,pq) = S(\overline{q}\bfu,p) S(\overline{p}\bfu, q), 
\end{equation*}
in which $\overline{p}p \con 1 \modd{q}$ and $\overline{q}q \con 1 \modd{q}$. 
The standard result of Lemma \ref{lemma_weight_count}(i)  applies to bound the function $\hat{W}(\bu)$. We define the main focus of study: for each pair of primes $p \neq q$ (for any fixed large $M \geq 1$ of our choice, to be determined later), set
\begin{equation}\label{eq: def of T(p,q)}
    T(p,q;B) := \frac{B^n}{(pq)^n} \sum_{\bfu\in \Z^n} \prod_{i=1}^n \left(1+ \frac{|u_i|B}{pq}\right)^{-M} |S(\overline{q}\bfu,p)||S(\overline{p}\bfu,q)|.
\end{equation}
In particular, 
\beq\label{N_bounded_by_T}
N(F,B) \ll_{n,\deg F} B^{n-1} + B^n |\mathcal{P}|^{-1}  + \frac{1}{|\mathcal{P}|^2}\sum_{p \neq q \in \mathcal{P}} |T(p,q;B)|,
\eeq
 for $P \gg_{\deg F} \max \{(\log \|F\|)^{\al_0}, (\log B)^{\al_0}\}$ for a fixed $\al_0>2$.
To control the terms $T(p,q;B)$, the key tool we will use is a stratification result for additive character sums over a variety, with strong uniformity with respect to the variety.

\subsection{Introducing the uniform stratification}
  Let $X$ be an affine scheme over $\Z$ of dimension $n$ and let $ \varphi:X\rightarrow \A^n_{\Z}$ be a finite morphism. Let $p$ be a prime and $\psi:\F_p\rightarrow \C$ be a nontrivial additive character. Define the following exponential sum for $\bfu\in \Z^n$:
  \beq
  S_{p,\psi}(\bfu; \varphi) = \sum_{x\in X(\F_p)} \psi(\bfu\cdot \varphi(x)) = \sum_{\by\in \F_p^n}\psi(\bu\cdot \by) r_\varphi(\by),
  \eeq
  where we define $r_{\varphi}(\by) = \#\{x\in X(\F_p): \varphi(x) = \by\}.$ The function $r_\varphi(\by)$ is a trace function and fits into the stratification framework.

  Loosely speaking, a stratification associated to this sum is a nested collection of varieties $\A^n =: Z_0\supset Z_1 \supset ...$ such that the tuples  $\bfu$ that lie progressively deeper in this nesting sequence, have progressively worse upper bounds on $|S_{p,\psi}(\bfu;\varphi)|$, in terms of the power of $p$. The key is to produce lower bounds on the codimension of $Z_j$ for each $j$, so that ``progressively worse'' $\bfu$ become ``progressively sparser.''
  The sums $S(\bfu,p)$ we consider in (\ref{S_prime_dfn}) fit this paradigm: first write
$$S(\bu,p) = \sum_{\ba \modd{p}} (v_p(\ba)-1) e_{p}(\bu\cdot \ba) = \sum_{\substack{(y,\bx) \in \F_p^{n+1}\\F(y,\bx) = 0\modd{p}}} e_{p}(\bu\cdot \bx) - p^n \mathbf{1}_{\bu=\mathbf{0}}.$$
Now let 
\[ X = \{F(Y,\bX)=0\} \subset \mathbb{A}_\Z^n\]
and $\varphi: \{F(Y,\bX)=0\} \rightarrow \A^n$, where $\varphi((y,\bx))= \bx$, and define the additive character $\psi(a) =e_p(a)$. Then  for each $\bu \neq \mathbf{0}$, $S(\bu,p)$ is  in the form of the sum $S_{p,\psi}(\bu;\varphi)$ defined above; for this particular map and choice of additive character, we define 
\begin{equation}\label{S_recap}
\mathbf{S}_{p}(\bu;F)  := S_{p,\psi}(\bu,\varphi).
\end{equation}

The existence of such a stratification has been proved by Katz and Laumon \cite{KatLau85} (further explained in Fouvry \cite{Fou00}). 
We require an enhanced version of the stratification, which tracks the dependence of all parameters on the scheme $X$. We have developed such an enhanced stratification in a companion paper \cite{BKW25x} with E. Kowalski, and here we record the relevant result from that work, in the form of \cite[Thm. 2.3]{BKW25x}.

\begin{thm}[Uniform stratification]\label{thm_strat_details}
Fix integers $n \geq 1$ and $D \geq 1$, and let $F\in \Z[Y,X_1,\ldots,X_n]$ be a polynomial of total degree at most $D$.  Then there exist integers $C(n,D)$ and $N(n,D)$ and a stratification 
\[ \A^n_{\Z} = V_0\supset V_1 \supset ... \supset V_n\]
such that $V_i$ is a homogeneous subvariety of codimension $\geq i$ and  for all primes $p \ndiv N(n,D)$,   for $\bu\in V_i(\F_p)\backslash V_{i+1}(\F_p)$,
$$|\mathbf{S}_{p}(\bu;F)|\leq C(n,D) p^{\frac{n+i}{2}}.$$
Moreover, 
\begin{enumerate}[(i)]
     \item \label{thm_strat_C} $C(n,D)\ll_{n,D} 1$;
    \item \label{thm_strat_deg} $\deg(V_i) \ll_{n,D} 1$;
    \item \label{thm_strat_num_comp} the number of irreducible components of $V_i$ is $\ll_{n,D} 1;$
      \item \label{thm_strat_N} $N(n,D) \ll_{n,D} 1$;
    \item \label{thm_strat_coeff} each  $V_i = V(G_{i,1},...,G_{i,k})$ is the common vanishing set of homogeneous polynomials $G_{i,s}$ with the property that $\log\|G_{i,s}\|\ll_{n,D} \log\|F\|$ for every pair $i,s.$
\end{enumerate}
\end{thm}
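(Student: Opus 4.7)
The plan is to adapt the classical Katz--Laumon stratification \cite{KatLau85} for character sums to a relative setting over the parameter space of all polynomials of total degree at most $D$, tracking every algebraic invariant of the construction to verify (i)--(v).

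First, I would recast the sum $\mathbf{S}_p(\bu; F)$ sheaf-theoretically. With $Z = \{F=0\} \subset \mathbb{A}^{n+1}_{\F_p}$ and $\varphi: Z \to \mathbb{A}^n$ the projection onto the last $n$ coordinates, the counting function $r_\varphi(\by)$ is realized as the trace of Frobenius on $\mathcal{K} = R\varphi_! \overline{\mathbb{Q}_\ell}$. Deligne's $\ell$-adic Fourier transform $\mathrm{FT}_\psi$ then yields
\[
\mathbf{S}_p(\bu; F) = \mathrm{Tr}\bigl(\mathrm{Frob}_p \mid (\mathrm{FT}_\psi \mathcal{K})_{\bu}\bigr) + (\text{elementary terms}),
\]
and by Deligne's purity theorem the modulus of this trace is controlled by $p^{n/2}$ times the total stalk dimension of $(\mathrm{FT}_\psi \mathcal{K})_{\bu}$. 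The strata $V_i$ are then characterized as the loci where this stalk dimension is abnormally large, or equivalently where the singular support of $\mathcal{K}$ accumulates in codimension at least $i$; that these $V_i$ are homogeneous subvarieties of the stated codimension is the content of the original Katz--Laumon argument, applied fiberwise.

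Second, to achieve the uniformity asserted in (i)--(v), I would globalize this picture over the moduli space $\mathcal{M}$ of polynomials in $\Z[Y,X_1,\ldots,X_n]$ of total degree $\leq D$, realized as an affine space whose coordinates are the coefficients of $F$. The relevant operations (pushforward, Fourier transform, stalk-rank jumping loci) depend constructibly on the coefficients of $F$, so by generic constructibility together with elimination theory (resultants and subresultants applied to the universal polynomial in $\Z[\mathcal{M}][Y,\bX]$), each stratum is cut out by homogeneous polynomials $G_{i,s}$ whose coefficients are polynomials of degree $\ll_{n,D} 1$ in the coefficients of $F$. This immediately yields (ii), (iii), and (v), since the logarithmic height of resultant-type polynomials is linear in the logarithmic heights of their inputs, and it also yields a uniform bound on the constant $C(n,D)$ in (i) by a quantitative version of Deligne's equidistribution bound applied to the universal sheaf.

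The main obstacle is item (iv): the prime $N(n,D)$ must be chosen without reference to $\|F\|$. A naive application of Bertini--Noether (Lemma \ref{lemma_Bertini_Noether}) would introduce an exceptional set of size $\log\|F\|/\log\log\|F\|$, which is unacceptable here. The strategy is to separate the bad primes associated to the \emph{universal} construction over $\mathcal{M}$, which form a fixed finite set depending only on $n$ and $D$, from the apparent $F$-dependent bad primes arising in specialization; the latter should be absorbed into the stratification itself, since specializations that drop rank modulo a prime $p \nmid N(n,D)$ must then land in a deeper stratum, rather than violating the bound. Establishing this compatibility between the $\ell$-adic Fourier transform and integral base change uniformly in $\mathcal{M}$ is the most delicate step, and I expect it to be the genuine technical heart of the companion paper \cite{BKW25x}.
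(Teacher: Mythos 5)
The paper does not actually prove Theorem \ref{thm_strat_details}: it is imported verbatim from the companion paper \cite{BKW25x} (stated there as Theorem 2.3), so there is no internal argument to compare your proposal against. What you have written is a plausible outline of the strategy one expects that companion paper to follow — the Katz--Laumon machinery via the $\ell$-adic Fourier transform, globalized over the affine space of coefficients of $F$ — and your identification of item (iv) as the crux is correct. But as a proof the proposal has a genuine gap precisely there, and you essentially concede this yourself.

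Concretely: the assertion that ``specializations that drop rank modulo a prime $p \nmid N(n,D)$ must then land in a deeper stratum'' is the entire content of the uniformity claim, and it does not follow from generic constructibility over $\mathcal{M}$. Generic constructibility gives you control over a dense open subset of the coefficient space and over all but finitely many primes \emph{for each fixed $F$}; converting this into a single integer $N(n,D)$ that works simultaneously for every $F$ of degree $\leq D$ requires showing that the formation of $R\varphi_!\overline{\mathbb{Q}_\ell}$, its Fourier transform, and the weight filtration all commute with arbitrary base change from the universal family over $\Z[\mathcal{M}]$ to a closed point in characteristic $p$, outside a fixed set of primes depending only on $(n,D)$. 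That is a statement about the integral model of the universal sheaf, not about any individual specialization, and nothing in your sketch supplies it. A secondary looseness: $R\varphi_!\overline{\mathbb{Q}_\ell}$ for the possibly singular hypersurface $\{F=0\}$ is mixed rather than pure, so ``Deligne's purity theorem'' alone does not give the bound $C(n,D)p^{(n+i)/2}$ off $V_{i+1}$; one needs the full Katz--Laumon weight/stratification argument, and the uniformity of $C(n,D)$ in (i) likewise rests on bounding sums of Betti numbers of the universal fibers, which you assert but do not derive. In short, your proposal is a correct map of where the difficulty lies, but the difficult steps — items (i) and (iv) in particular — are named rather than proved, which is consistent with the paper's decision to outsource the theorem to \cite{BKW25x}.
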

We will say that such a stratification is uniform; in particular, this result   follows from a statement of algebraic uniformity of the Katz--Laumon stratification, as proved in \cite{BKW25x}.
 Note that the first four statements are   uniform in $F$; the impact of the last   statement is that the homogeneous polynomials $G_{i,s}$  are defined over $\Z$ and the dependence of $\|G_{i,s}\|$ on $\|F\|$ is at most polynomial.

 Let us then denote by 
\beq\label{nesting_V} 
\mathbb{A}_{\mathbb{Z}}^n =:V_0 \supset V_1\supset V_2\supset \cdots \supset V_n
\eeq
the stratification provided for the exponential sum $\mathbf{S}_{p}(\bu;F)$ in  (\ref{S_recap}).
Since $V_n$ is a homogeneous variety of codimension at least $n$ in $\mathbb{A}_{\Z}^n$, it must be that $V_n$ is only $V_n = \{\mathbf{0}\}.$ 
Now fix some prime $p$ in the sieving set $\mathcal{P}$. 
By hypothesis $\{F(Y,\bX) = 0\}$ is absolutely irreducible over $\Q$, and by the choice of sieving set made momentarily in \S \ref{sec_sieving_set} we may assume that for all $p \in \mathcal{P}$   the reduction remains absolutely irreducible over $\F_p$.
Consequently, for $\bu\equiv \mathbf{0} \modd{p}$,  by an application of the Lang-Weil bound (Lemma \ref{lemma_Lang_Weil_cor}),  
\beq\label{eq: zero freq}
S(\mathbf{0},p) = \#X(\F_p) - p^n \ll_{n,\deg F}p^{n-1/2}.
\eeq
Thus for each $p \in \mathcal{P}$, as a consequence of Theorem \ref{thm_strat_details} for $j=n-1$ applied to $\bu \in V_{n-1}(\F_p)\setminus \{\boldsymbol{0}\}$,   and (\ref{eq: zero freq})  applied to $\bu \con \boldsymbol{0} \modd{p}$,  
\beq\label{S_bound_all_V_n_1}
\text{$|S(\bu,p)|\ll p^{n-1/2}$ holds for \emph{all} $\bu \in V_{n-1}(\F_p)$}.
\eeq
 In what follows, we thus formally re-define 
$V_n = \emptyset$, so that any restriction of the form $\bu \in V_{n-1} \setminus V_n$ can instead be read as $\bu \in V_{n-1}$, and this will lead to valid upper bounds due to the uniformity in (\ref{S_bound_all_V_n_1}).

\subsection{Degenerate irreducible components of the stratification}\label{sec_degenerate_primes}
   For each $1 \leq j \leq n-1$ decompose the stratum $V_j$ into irreducible components over $\Q$ as 
\[ V_j = \bigcup_{s \in I_j} X_{j,s}.\] 
By Theorem \ref{thm_strat_details}(\ref{thm_strat_num_comp}), $|I_j| \ll_{n,\deg F} 1$.   We must   consider whether any stratum $V_j$ contains an irreducible component  $X_{j,s} \subset \mathbb{A}^n(\Z)$ which is contained in a proper linear subspace of $\mathbb{A}^n(\Z)$; in this case we will say the component $X_{j,s}$ is degenerate. (Recall that a variety $X \subset \mathbb{P}^{n-1}$ is said to be nondegenerate if it spans $\mathbb{P}^{n-1}$, or equivalently, does not lie in any hyperplane \cite[p. 145]{Har92}.) Fix a degenerate irreducible component $X_{j,s}$; we may in particular assume it lies in a hyperplane, say $W_{j,s}$, defined by $\{ \bu: W^{(j,s)} \cdot \bu =0\} \subset \mathbb{A}^n(\Z)$ for a nonzero vector $W^{(j,s)} \in \Z^n$.  By Theorem \ref{thm_strat_details}(\ref{thm_strat_coeff}), $\log \|W^{(j,s)}\| \ll_{n,\deg F} \log \|F\|.$
We define $L_{j,s} \in \GL_n(\Q)$ to be a matrix with first column $W^{(j,s)}$ and all integer entries; once the first column is fixed we may choose the other entries appropriately to make the matrix in $\GL_n(\Q)$, with $ \log \|L_{j,s} \|\ll_{n,\deg F} \log \|F\|$.   Then in particular,
\beq\label{parameterize_W_plane}
\{ \bu : (L_{j,s}^T \bu)_1 =0\}=\{\bu  : W^{(j,s)} \cdot \bu =0\}  .
\eeq

  Cumulatively, let $\{W_{j,s}\}_{(j,s)}$ be the finite set of (distinct) hyperplanes that contain a degenerate irreducible component $X_{j,s}$ of any stratum $V_j$ in the stratification $V_0\supset V_1 \supset \ldots \supset V_n$. By Theorem \ref{thm_strat_details}(\ref{thm_strat_num_comp}), there are $\ll_{n,\deg F}1$ hyperplanes in this set (and it may be empty).   Let $\mathcal{L}(F) = \{L_{j,s}\}_{(j,s)}$ denote the finite (possibly empty) set of corresponding (distinct)   transformations, with $|\mathcal{L}(F)| \ll_{n,\deg F} 1$. Both Theorem \ref{thm_main} and Theorem \ref{thm_explicit} will be deduced from the following theorem, whose proof occupies the remainder of the paper.
 \begin{thm}\label{thm_main_options}
 Fix $n \geq 2$. Let $F \in \Z[Y,X_1,\ldots,X_n]$ be an absolutely irreducible polynomial of the form  (\ref{F_dfn_intro}) for an integer $m \geq 2$. 
 Let  $\mathcal{L}(F) \subset \GL_n(\Q)$ be the finite (possibly empty) set of linear transformations with integer entries, as defined above, with $|\mathcal{L}(F)| \ll_{n,\deg F} 1$ and with $\log \|L\| \ll_{n,\deg F}\log \|F\|$ for each $L \in \mathcal{L}(F)$.  Assume that $F(Y,L(\bX))$  is strongly $n$-genuine for each  $L \in \mathcal{L}(F)$.
 Then for $B \gg_{n,\deg F} (\log \|F\|)^6$,
\[ N(F,B) \ll_{n,\deg F, \ep} (\log \|F\|) B^{n-1+\frac{1}{n+1} + \ep} \qquad \text{for every $\ep>0$}.\] 
 If $\deg_Y F \geq 2$ and the same hypotheses hold with  $m =1$, the same conclusions hold, conditional on GRH.
\end{thm}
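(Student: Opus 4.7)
The plan is to combine the polynomial sieve with a careful application of the uniform Katz--Laumon stratification, handling ``generic'' strata pointwise and the degenerate irreducible components via the second-moment bound that uses the strongly $n$-genuine hypothesis on $F(Y,L(\bX))$ for each $L \in \mathcal{L}(F)$. First I would apply Lemma \ref{lemma_poly_sieve} (when $m \geq 2$) or Lemma \ref{lemma_poly_sieve_cor} (when $m=1$, under GRH) to reduce the problem to controlling the averaged sums $T(p,q;B)$ defined in \eqref{eq: def of T(p,q)}, via \eqref{N_bounded_by_T}. The sieving set $\mathcal{P} \subset [P,2P]$ is chosen to consist of primes $p \equiv 1 \modd{m}$ avoiding: (a) the divisors of $N(n,\deg F)$ from Theorem \ref{thm_strat_details}; (b) the bad primes from Lemma \ref{lemma_Bertini_Noether} for which $F$ or $f_d$ become reducible or drop in degree; and (c) the exceptional sets $\mathcal{E}_L$ from Corollary \ref{cor_second_moment}, for each of the $O_{n,\deg F}(1)$ transformations $L \in \mathcal{L}(F)$. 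Each exclusion removes at most $O_{n,\deg F}(\log \|F\|)$ primes, so choosing $P$ polynomially larger than $\log \|F\|$ (the hypothesis $B \gg (\log \|F\|)^6$ provides ample slack) ensures $|\mathcal{P}| \gg P/\log P$.

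Second, fix a pair $p \neq q$ in $\mathcal{P}$. Applying Theorem \ref{thm_strat_details} to $\mathbf{S}_p(\bu;F)$ gives a nested stratification $\mathbb{A}^n \supset V_1 \supset \cdots \supset V_{n-1}$ (with the convention $V_n = \emptyset$ justified by \eqref{S_bound_all_V_n_1}) on which $|S(\overline{q}\bu,p)| \leq C p^{(n+j)/2}$ when $\bu \in V_j(\F_p) \setminus V_{j+1}(\F_p)$; the same holds for $q$. For $\bu$ lying in a \emph{non-degenerate} irreducible component $X_{j,s}$ of $V_j$, the component is not contained in any hyperplane, so together with the codimension-$j$ bound and standard projection arguments, the hypothesis \eqref{hypothesis_in_box_p} of Lemma \ref{lemma_weight_count}(iii) is satisfied with $\kappa = n-j$, giving the box-count savings $p^{n-j}$ necessary to offset the factor $p^{n+j}$ from the pointwise bounds in $|S(\overline{q}\bu,p)||S(\overline{p}\bu,q)|$.

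The main obstacle is the third step: handling the degenerate irreducible components. If $X_{j,s} \subset V_j$ is contained in the hyperplane $W_{j,s}$ with associated $L_{j,s} \in \mathcal{L}(F)$, then the box counting in Lemma \ref{lemma_weight_count}(iii) can only exploit codimension one (rather than codimension $j$) for this component, so the pointwise bound $|S(\bu,p)| \ll p^{(n+j)/2}$ is no longer sufficient. For such components I would discard the pointwise stratification bound in favor of Cauchy--Schwarz, splitting the contribution from $\bu \in W_{j,s}$ and decoupling $p$ from $q$ to produce $\bigl(\sum_{\bu \in W_{j,s}(\F_p)} |S(\bu,p)|^2\bigr)^{1/2}$ (and symmetrically for $q$), weighted by the box-averaged Fourier weights. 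Because by hypothesis $F(Y,L_{j,s}(\bX))$ is strongly $n$-genuine and $p,q \notin \mathcal{E}_{L_{j,s}}$, Corollary \ref{cor_second_moment} yields $\sum_{\bu \in W_{j,s}(\F_p)} |S(\bu,p)|^2 \ll_{n,\deg F} p^{2n-1}$, an $L^2$ saving of $p^{1/2}$ over each of $p$ and $q$. Together with Lemma \ref{lemma_weight_count} this restores the required bound for the degenerate contribution, and the fact that $|\mathcal{L}(F)|$ and the number of degenerate components are $O_{n,\deg F}(1)$ (Theorem \ref{thm_strat_details}(\ref{thm_strat_num_comp})) means we only pay an absolute constant for summing over them.

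Finally, I would collect the resulting estimates over all strata $V_j$ into an overall bound of the shape $T(p,q;B) \ll_{n,\deg F,\ep} (pqB^{-1})^\ep \bigl( B^n (pq)^{-1} + B^{n-1/2}(pq)^{(n-1)/2} \cdots \bigr)$, and average over $p \neq q \in \mathcal{P}$ in \eqref{N_bounded_by_T}. The optimisation is a standard balance: the sieve error $B^n/|\mathcal{P}|$ and the worst-stratum contribution (coming from $V_{n-1}$, where the codimension savings are exhausted) are equated by choosing $P$ a suitable power of $B$; this produces the exponent $n-1 + 1/(n+1) + \ep$. The single factor $\log \|F\|$ arises from the density lost in $|\mathcal{P}|$ relative to $P/\log P$ and from the accumulated logarithmic factors in the $\ep$-absorbing steps. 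Under GRH when $m=1$, exactly the same execution goes through with Lemma \ref{lemma_poly_sieve_cor} in place of Lemma \ref{lemma_poly_sieve}.
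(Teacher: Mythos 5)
There is a genuine gap in your treatment of the non-degenerate strata, and it traces back to a structural feature of the argument that your proposal omits: the distinction between $\bu$ that lie on $V_j$ \emph{over $\Z$} (``globally bad'') and $\bu$ that lie on $V_j(\F_p)$ only after reduction mod $p$ (``locally bad''). You propose to handle all $\bu \in V_j(\F_p)$ uniformly via Lemma \ref{lemma_weight_count}(iii) with $\kappa = n-j$, i.e.\ the mod-$p$ box count of Proposition \ref{prop_Xu_count}. That count is correct (and has nothing to do with nondegeneracy), but it is not strong enough. For a fixed pair $(p,q)$ the resulting contribution at level $j$ is
\[
\frac{B^n}{P^{2n}}\, P^{\,n+j}\Bigl(\frac{P^2}{B}\Bigr)^{n-j} \;=\; B^{\,j}P^{\,n-j},
\]
which with $P=B^{n/(n+1)}$ equals $B^{(n^2+j)/(n+1)}$ and exceeds the target $B^{n-1+1/(n+1)}=B^{n^2/(n+1)}$ for every $j\geq 1$. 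So the ``offset'' you describe does not close for any stratum beyond $V_0$.

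The paper recovers the loss by two different mechanisms depending on the global/local dichotomy. For \emph{locally} bad $\bu$ (with $\bu\notin V_j(\Z)$), one gains a factor $\approx |\mathcal{P}|^{-1}$ from the sum over primes: since $G_{j,s}(\bu)\neq 0$ but $p\mid G_{j,s}(\bu)$, only $O(\log(|\bu|\,\|F\|)/\log P)$ primes in $[P,2P]$ can see $\bu$ as bad, and this extra $P^{-1}$ turns $B^jP^{n-j}$ into $B^jP^{n-j-1}\leq B^{n-1}$. For \emph{globally} bad $\bu$ on a nondegenerate component $X_{j,s}$, no averaging over primes is available; instead the mod-$p$ count must be replaced by the genuinely stronger integral-point count from Pila's determinant method (Theorem \ref{thm_Pila_multidim}), giving $\kappa = n-j-1+(\deg X_{j,s})^{-1}+\ep$ — a saving of nearly a full power of $P^2/B$ over your $\kappa=n-j$. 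The role of nondegeneracy is then not the box count at all but the inequality $\deg X_{j,s} > \codim X_{j,s}\geq j$ (Proposition \ref{prop_anonymous}), which is exactly what makes the residual $(\deg X_{j,s})^{-1}$ loss affordable when $P\geq B^{n/(n+1)}$ (Lemma \ref{lemma_Sigma_bound}); nondegeneracy also guarantees $\dim X_{j,s}>1$, so $j\leq n-2$ there. Your handling of the degenerate components by Cauchy--Schwarz plus the second moment of Corollary \ref{cor_second_moment} is fine and essentially equivalent to the paper's dyadic level-set argument, and your sieve setup and choice of $\mathcal{P}$ match the paper; but without the global/local splitting, the divisor-counting saving in the local case, and the Pila input in the global case, the proof does not close for any $j\geq 1$.
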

\begin{rem}
Suppose that an absolutely irreducible polynomial $F$ of the form (\ref{F_dfn_intro}) has the property $(*)$ that no stratum $V_j$ with $j \leq n-1$ in the associated stratification $\A^n = V_0 \supset V_1 \supset \cdots \supset V_{n-1}\supset V_n  $ contains an irreducible component that lies in a linear subspace. Then  $\mathcal{L}(F)$ is empty, and the bound for $N(F,B)$ stated above holds (under GRH if $m=1$). By  homogeneity, $V_n= \{ \mathbf{0}\}$ (as remarked above), and the irreducible components of $V_{n-1}$ can be seen to be lines (as argued below in \S \ref{sec_glo_glo_nondegenerate}).  So in particular  property $(*)$ would also imply that $V_{n-1} = \{ \mathbf{0}\}$. It would be interesting to illuminate what is characterized about a polynomial $F$ if the strata satisfy  property $(*)$.
\end{rem}

\subsection{Deduction of Theorem \ref{thm_explicit}}\label{sec_deduce_thm_explicit}
Under the hypothesis of Theorem \ref{thm_explicit} that $F(Y,\bX)$ is strongly $(1,n)$-allowable, $F_{L}(Y,\bX):= F(Y,L(\bX))$ is strongly $n$-genuine for all $L \in \GL_n(\Q)$ so certainly for each $L \in \mathcal{L}(F)$. Thus  Theorem \ref{thm_main_options} proves the claim in Theorem \ref{thm_explicit} for all $B \gg_{n,\deg F}(\log \|F\|)^6$.  For 
all $B \ll_{n,\deg F} (\log \|F\|)^6$, we can apply the trivial bound 
\[ N(F,B) \ll_n B^n \ll_{n,\deg F} (\log \|F\|)^{6n} \ll_{n,\deg F,\ep} (\log \|F\|)^{6n} B^{n-1+\frac{1}{n+1}+\ep},\]
completing the proof of the theorem for all $B \geq 1$.

\subsection{Deduction of Theorem \ref{thm_main}}\label{sec_deduce_thm_main}
For Theorem \ref{thm_main}, suppose that $F(Y,\bX)$ is a $(1,n)$-allowable polynomial. If $F_{L}(Y,\bX)=F(Y,L(\bX))$ is strongly $n$-genuine for each $L \in \mathcal{L}(F)$, then Theorem \ref{thm_main_options} applies directly, yielding the claim of Theorem \ref{thm_main} (by the reasoning immediately above). On the other hand, suppose that for some $L \in\mathcal{L}(F)$, say $L_*$, the polynomial $F_{L_*}(Y,\bX):=F(Y,L_*(\bX))$ is $n$-genuine but not strongly $n$-genuine. Then we do not apply the polynomial sieve to $F$. Instead, as a consequence of   Theorem \ref{thm_genuine_not_strongly_NFB}, in the form of Corollary \ref{cor_thm_genuine_not_strongly_NFB}, we know 
that there exist integers $e=e(n)$ and $e'=e'(n)$ such that for all $B \geq 1$,
\[
N(F,B) \ll_{n,\deg F} \|L_*\|^{e'} (\log (\|F\|+2))^e B^{n-1} (\log (B+2))^{2e}.
\]
By Theorem \ref{thm_strat_details}(v),  $\log\|L_*\|\ll_{n,\deg F} \log\|F\|$, so that there exists $h=h(n,\deg F)$ for which $\|L_*\|^{e'} \ll_{n,\deg F} \|F\|^{h}$.  This verifies the remaining case of Theorem \ref{thm_main}.

\subsection{Adapting the second moment bound to the setting of Theorem \ref{thm_main_options}}\label{sec_adapting_second_moment}
In the setting of Theorem \ref{thm_main_options}, we will apply the second moment bound proved in \S \ref{sec_strongly} in the following formulation. 
As in \S \ref{sec_degenerate_primes}, let  $X_{j,s}$ be a degenerate irreducible component of a stratum $V_j$, with associated $L_{j,s} \in \mathcal{L}(F)$   with integer entries, which defines a hyperplane $W^{(j,s)}$ containing $X_{j,s}$ as in (\ref{parameterize_W_plane}). By Theorem \ref{thm_strat_details}(v),  $\log\|L_{j,s}\|\ll_{n,\deg F} \log\|F\|$.
By Corollary \ref{cor_second_moment}, there exists an exceptional set $\mathcal{E}_{j,s}$ of  primes with 
\[|\mathcal{E}_{j,s}| \ll_{n,\deg F}\log ( \|F\|\|L_{j,s}\|)/ \log \log (\|F\| \|L_{j,s}\|)\ll_{n,\deg F} \log \|F\|/ \log \log \|F\|,\]
such that
for all $p \not\in \mathcal{E}_{j,s},$
\[ \sum_{\bu \in X_{j,s}(\F_p)  \subset \F_p^n} | S(\bu,p)|^2 \leq \sum_{\bu \in  W_{j,s}(\F_p) \subset \F_p^n} | S(\bu,p)|^2  \ll_{n,\deg F} p^{2n-1}.\]
The first inequality is by positivity, and the second is by Corollary \ref{cor_second_moment}, via the parameterization (\ref{parameterize_W_plane}). It immediately follows that
for any integer $M \geq 1$, $$\#\{\bu\in X_{j,s}(\F_p): |S(\bu,p)|\geq M\} \ll_{n, \deg F} \frac{p^{2n-1}}{M^2}.$$
 Moreover, by the homogeneity of $X_{j,s}$, we can deduce from this that for any $\al \in \F_p^\times$,
 \beq\label{second_moment_apply}
 \#\{\bu\in X_{j,s}(\F_p): |S(\al \bu,p)|\geq M\} \ll_{n, \deg F} \frac{p^{2n-1}}{M^2}.
 \eeq
 This is the main consequence of Corollary \ref{cor_second_moment} we will apply in \S \ref{sec_glo_glo_degenerate} (and again in \S \ref{sec_glo_loc}).

\subsection{Choice of the sieving set}\label{sec_sieving_set}
We now turn to the proof of Theorem \ref{thm_main_options}, which is the main outcome of the polynomial sieve in this paper.
We  choose the sieving set $\mathcal{P}$, adapted to the stratification of Theorem \ref{thm_strat_details}.
Fix $N(n,\deg F)$ as in Theorem \ref{thm_strat_details} (\ref{thm_strat_N}) and let $\mathcal{E}_0$ denote the primes $p|N(n,\deg F)$  so that $|\mathcal{E}_0|\ll_{n,\deg F} 1$.  
Let $\mathcal{E}_1$ denote the finitely many exceptional primes for which the reduction of $F(Y,\bX)$ is not   irreducible over $\overline{\F}_p$; by Lemma \ref{lemma_Bertini_Noether}, $|\mathcal{E}_1| \ll_{n,\deg F} \log \|F\| / \log \log \|F\|$. 

In \S \ref{sec_adapting_second_moment}, for each $1 \leq j \leq n-1$, for each $s \in I_j$ such that $X_{j,s}$ is a degenerate irreducible component of the stratum $V_j$, we have isolated a set $\mathcal{E}_{j,s}$ of exceptional  primes with $|\mathcal{E}_{j,s}| \ll_{n,\deg F} \log \|F\| / \log \log \|F\|$.   If (for a given $j,s$) $X_{j,s}$ is nondegenerate, set $\mathcal{E}_{j,s} = \emptyset.$ Define 
\[ \mathcal{E}_{\mathrm{deg}}  = \bigcup_{1 \leq j \leq n-1} \bigcup_{s \in I_j} \mathcal{E}_{j,s}.\] 
By an application of Theorem \ref{thm_strat_details}(\ref{thm_strat_num_comp}),  $|\mathcal{E}_{\mathrm{deg}}|\ll_{n,\deg F} \log \|F\|/ \log \log \|F\|$.

  By the Siegel-Walfisz theorem for primes in arithmetic progressions ($m \geq 2$) or the prime number theorem ($m=1$),  $\#\{p \in [P,2P]: p\con 1 \modd{m}\} \gg_m P/\log P$ for all $P \gg_m 1$.
  Thus as long as 
  \beq\label{P_big_enough}
  P \gg_{n, \deg F} \log \|F\|/\log \log \|F\|,
  \eeq
by taking (either for $m \geq 2$ or for $m=1$),
\[ \mathcal{P} = \{p \in [P,2P]: p\con 1 \modd{m}\} \setminus (\mathcal{E}_0 \cup \mathcal{E}_1 \cup \mathcal{E}_{\mathrm{deg}}),\]
we still obtain $|\mathcal{P}| \gg P/\log P$.
We also assume from now on that 
\beq\label{P_size_half}
P=B^{\rho}, \qquad 1/2 < \rho<1.\eeq

\subsection{Separation into global and local contributions}

To prove Theorem \ref{thm_main_options}, we now bound $N(F,B)$ via the polynomial sieve; recall the main consequence of the sieve lemma stated in (\ref{N_bounded_by_T}) with $T(p,q;B)$ defined in (\ref{eq: def of T(p,q)}).
We  apply the stratification to the evaluation of $T(p,q;B)$ for a fixed pair $p \neq q \in \mathcal{P}$. First, write
\begin{align*}
    T(p,q;B) &\leq   \frac{B^n}{(pq)^n} \sum_{j=0}^{n-1} \sum_{k=0}^{n-1} \sum_{\substack{\bu \in \Z^n \\ \overline{q}\bu \in V_{j}(\F_p)\backslash V_{j+1}(\F_p) \\ \overline{p}\bu \in V_{k}(\F_q)\backslash V_{k+1}(\F_q)}} |S(\overline{q}\bu,p)| |S(\overline{p}\bu,q)|\prod_{i=1}^n \left(1+\frac{|u_i|B}{pq}\right)^{-M}\\
    &= \frac{B^n}{(pq)^n} \sum_{j=0}^{n-1} \sum_{k=0}^{n-1} \sum_{\substack{\bu \in \Z^n \\ \bu \in V_{j}(\F_p)\backslash V_{j+1}(\F_p) \\ \bu \in V_{k}(\F_q)\backslash V_{k+1}(\F_q)}} |S(\overline{q}\bu,p)| |S(\overline{p}\bu,q)|\prod_{i=1}^n \left(1+\frac{|u_i|B}{pq}\right)^{-M} .
    \end{align*}
    The sums need only index over $j \leq n-1$ and $k \leq n-1$ by the remark (\ref{S_bound_all_V_n_1}) and the corresponding convention $V_n :=\emptyset$.
  The second line is equivalent to the first since the strata $V_j$ are homogeneous varieties.  Throughout what follows, we will repeatedly use   the homogeneity of the varieties $V_i$ in order to impose a constraint on $\bu$ rather than on $\overline{q}\bu$ or $\overline{p}\bu$. When we next average $T(p,q;B)$ over $p\neq q\in \calP$, this will enable us to bring the sum over $p,q$ inside the sum over $\bu$. Indeed, 
  
\begin{multline}\label{T_average_first_step}
\frac{1}{|\calP|^2}\sum_{\substack{p,q\in \calP \\ p\neq q}} T(p,q;B) \\ 
\ll \frac{B^n}{P^{2n}} \sum_{j=0}^{n-1}\sum_{k=0}^{n-1}    \frac{1}{|\calP|^2} \sum_{\bu\in \Z^n} \sum_{\substack{p \neq q \in \mathcal{P} \\ \bu \in V_{j}(\F_p)\backslash V_{j+1}(\F_p) \\ \bu \in V_{k}(\F_q)\backslash V_{k+1}(\F_q)}} |S(\overline{q}\bu,p)| |S(\overline{p}\bu,q)| \prod_{i=1}^n \left(1+\frac{|u_i|B}{P^2}\right)^{-M} .
\end{multline}
To bring the sum over $p,q$ inside the sum over $j$, we have also used the fact that the strata $V_{n-1} \subset \cdots \subset V_1 \subset V_0 = \mathbb{A}^n_{\Z}$ provided by Theorem \ref{thm_strat_details} are defined over $\Z$, independent of $p$ and $q$. 

We say that a vector $\bu\in \Z^n$ is \textit{globally $j$-bad} if $\bu\in V_j(\Z)$. For such $\bu$, it must be that $\bu \in V_j(\F_p)$ for all primes $p$. Otherwise we say $\bu\in \Z^n$ is \textit{locally $j$-bad} if $\bu \in V_j(\F_p)$ for some $p\in \calP$, but $\bu\not\in V_j(\Z)$. For a given pair of indices $j,k$,  the sum over $\bu$ splits into four parts: those $\bu$ that are globally $j$-bad and globally $k$-bad; those  $\bu$  that are globally $j$-bad and locally $k$-bad; those  $\bu$ that are locally $j$-bad and globally $k$-bad; and those  $\bu$ that are locally $j$-bad and locally $k$-bad. We first manipulate the contribution of each of these cases into a format we require in later steps.

\subsubsection{Preprocessing the global/global contribution}
Let $j$ and $k$ be fixed. The global/global contribution to (\ref{T_average_first_step}) for this $j,k$ pair takes the form
\[S^{j,k}_{\glo,\glo}:=
\frac{1}{|\calP|^2}  \sum_{\substack{\bu\in \Z^n\\ \bu \in V_{j}(\Z)\backslash V_{j+1}(\Z) \\ \bu \in V_{k}(\Z)\backslash V_{k+1}(\Z)} } \sum_{\substack{p \neq q \in \mathcal{P} \\ \bu \in V_{j}(\F_p)\backslash V_{j+1}(\F_p) \\ \bu \in V_{k}(\F_q)\backslash V_{k+1}(\F_q)}} |S(\overline{q}\bu,p)| |S(\overline{p}\bu,q)| \prod_{i=1}^n \left(1+\frac{|u_i|B}{P^2}\right)^{-M} .\]
By the nesting of the stratification (\ref{nesting_V}), 
  if $\bu \in V_{j}(\Z)\backslash V_{j+1}(\Z)$ and  $\bu \in V_{k}(\Z)\backslash V_{k+1}(\Z)$, then $j<k+1$ and $k< j+1$ (namely $j=k$). Thus we need only consider terms of the form $S^{j,k}_{\glo,\glo}$ for $j=k$, $j=0,\ldots, n-1$.
    In the internal sum over $p,q$,  it is equivalent (by homogeneity) to impose that $\overline{q}\bu \in V_{j}(\F_p)\backslash V_{j+1}(\F_p) $ as to impose $\bu \in V_{j}(\F_p)\backslash V_{j+1}(\F_p) $, and similarly we may impose $\overline{p}\bu \in V_{k}(\F_q)\backslash V_{k+1}(\F_q)$. Hence we may apply the bounds of Theorem \ref{thm_strat_details} to the relevant factors $|S(\overline{q}\bu,p)|,$ $|S(\overline{p}\bu,q)|$.
  
   It is   convenient to treat the $j=k=0$ contribution to the global/global case immediately: this is the case with square root cancellation. Since $V_0 = \A^n(\Z)$,  by an application of Lemma \ref{lemma_weight_count} (with $\kappa=n$, $Y = P^2/B \gg 1$  under the assumption of (\ref{P_size_half})), 
 \beq\label{S_global_zero}
S_{\glo,\glo}^{0,0}\ll C(n,\deg F)^2 P^{n/2}P^{n/2}\sum_{\bu \in \Z^n} \prod_{i=1}^n \left(1+\frac{|u_i|B}{P^2}\right)^{-M}\ll P^n \left(\frac{P^2}{B}\right)^{n+\ep}.
 \eeq
 (In what follows, we will allow implicit constants to depend on $n,\deg F$ and $\ep$, for an arbitrarily small $\ep>0$ that may vary from line to line, without always noting this.)
  For each $1 \leq j \leq n-1$ we  now decompose the stratum $V_j \subset \mathbb{A}^n (\Z)$ into its irreducible components, say 
\[ V_j = \bigcup_{s\in I_j} X_{j,s},\]
in which $|I_j|\ll_{n,\deg F} 1$ by Theorem \ref{thm_strat_details}(\ref{thm_strat_num_comp}).
Then the global/global contribution of the term $S^{j}_{\glo,\glo} := S^{j,j}_{\glo,\glo} $
is:
\beq\label{S_j_glo_glo_expression}
  \frac{1}{|\calP|^2} \sum_{\substack{s,t \in I_{j}\\ s',t' \in I_{j+1}}} \sum_{\substack{\bu\in \Z^n\\ \bu \in X_{j,s}(\Z)\backslash X_{j+1,s'}(\Z) \\ \bu \in X_{j,t}(\Z)\backslash X_{j+1,t'}(\Z) }} \sum_{\substack{p \neq q \in \mathcal{P} \\ \bu \in X_{j,s}(\F_p)\backslash X_{j+1,s'}(\F_p) \\ \bu \in X_{j,t}(\F_q)\backslash X_{j+1,t'}(\F_q)}} |S(\overline{q}\bu,p)| |S(\overline{p}\bu,q)| 
\prod_{i=1}^n \left(1+\frac{|u_i|B}{P^2}\right)^{-M}.
 \eeq
Here $X_{j,s}(\F_p)\setminus X_{j+1,s'}(\F_p)$ indicates $X_{j,s}(\F_p) \setminus (X_{j,s} \cap X_{j+1,s'})$, and similarly for the other conditions on   $\bu$. We will use this convention repeatedly in what follows.  For later reference, we also note that by Theorem \ref{thm_strat_details}, we can bound the above expression by
\[
S^{j}_{\glo,\glo} \ll_{n,\deg F} \max_{p \neq q \in \calP}  \sum_{\substack{s,t \in I_{j}\\ s',t' \in I_{j+1}}} \sum_{\substack{\bu\in \Z^n\\ \bu \in X_{j,s}(\Z)\backslash X_{j+1,s'}(\Z) \\ \bu \in X_{j,t}(\Z)\backslash X_{j+1,t'}(\Z) }}   p^{\frac{n+j}{2}} q^{\frac{n+j}{2}} \prod_{i=1}^n \left(1+\frac{|u_i|B}{P^2}\right)^{-M}.
 \]
 The conditions place $\bu$ in an intersection, so by positivity we only possibly enlarge the upper bound by writing this for each $1 \leq j \leq n-1$ as 
\beq\label{S_j_glo_glo_expression_alternate}
S^j_{\glo,\glo}  \ll |I_j||I_{j+1}|\max_{p \neq q \in \calP}  \sum_{s \in I_{j}} \sum_{s' \in I_{j+1}} \sum_{\substack{\bu\in \Z^n\\ \bu \in X_{j,s}(\Z)\backslash X_{j+1,s'}(\Z)  }}   p^{\frac{n+j}{2}}q^{\frac{n+j}{2}} \prod_{i=1}^n \left(1+\frac{|u_i|B}{P^2}\right)^{-M}.
 \eeq
 The contribution (\ref{S_j_glo_glo_expression}) from the global/global case can   be inserted in (\ref{T_average_first_step}), recalling the diagonal condition $j=k$, and $1 \leq j \leq n-1$. 

 \subsubsection{Preprocessing the local/local contribution}
Let $j, k$ be fixed. The local/local contribution to (\ref{T_average_first_step}) for this $j,k$ pair takes the form 
\[
S^{j,k}_{\loc,\loc} :=
\frac{1}{|\calP|^2}  \sum_{\substack{\bu\in \Z^n\\ \bu \not\in V_{j}(\Z)\backslash V_{j+1}(\Z) \\ \bu \not\in V_{k}(\Z)\backslash V_{k+1}(\Z)} } \sum_{\substack{p \neq q \in \mathcal{P} \\ \bu \in V_{j}(\F_p)\backslash V_{j+1}(\F_p) \\ \bu \in V_{k}(\F_q)\backslash V_{k+1}(\F_q)}} |S(\overline{q}\bu,p)| |S(\overline{p}\bu,q)| \prod_{i=1}^n \left(1+\frac{|u_i|B}{P^2}\right)^{-M} .\]
We can write the conditions on the sum over $\bu$ as $\bu \not\in V_{j}(\Z)$ and  $\bu \not\in V_{k}(\Z)$. (Otherwise, suppose $\bu \not\in V_{j}(\Z)\backslash V_{j+1}(\Z)$ but $\bu \in V_{j+1}(\Z)$; then it would be the case that $\bu \in V_{j+1}(\F_p)$ for all $p$, so the restricted sum over $p$ in the expression above would be empty, leading to zero contribution. An analogous argument holds for the second condition on $\bu$.) By symmetry it suffices to suppose that $j \leq k$, so that the stronger condition is $\bu \not \in V_{j}(\Z)$. 
We furthermore apply Theorem \ref{thm_strat_details} to bound the sums modulo $p$ and $q$. The local/local contribution for a given pair $j \leq k$ may thus be bounded by
\[
C(n,\deg F)^2 \frac{1}{|\calP|^2}  \sum_{\substack{\bu\in \Z^n\\ \bu \not\in V_{j}(\Z) } } \sum_{\substack{p \neq q \in \mathcal{P} \\ \bu \in V_{j}(\F_p)  \\ \bu \in V_{k}(\F_q) }} p^{\frac{n+j}{2}}q^{\frac{n+k}{2}} \prod_{i=1}^n \left(1+\frac{|u_i|B}{P^2}\right)^{-M} .\]
Furthermore, note that if   $j=0$  this contribution is zero, since the condition $\bu \not\in V_0(\Z) = \mathbb{A}^n(\Z)$ makes the sum over $\bu$ empty. 
In total,   the local/local contribution need only consider $1 \leq j \leq k \leq n-1$, and for such indices can  be represented as  
\beq\label{S_j_loc_loc_expression}
S^{j,k}_{\loc,\loc}
 \ll \frac{1}{|\calP|^2} P^{n+j/2+k/2}  \sum_{q \in \mathcal{P}} \sum_{\substack{\bu\in \Z^n\\ \bu \not\in V_{j}(\Z) \\ \bu \in V_{k}(\F_q)} }   \prod_{i=1}^n \left(1+\frac{|u_i|B}{P^2}\right)^{-M}
 \sum_{\substack{ p \in \mathcal{P} \subset [P,2P]   \\ \bu \in V_{j}(\F_p)}}1.
 \eeq
 We have chosen to count the number of $p$ as the innermost sum, and preserve the condition $\bu \in V_k(\F_q)$ in the sum over $\bu$ (rather than reversing the roles of $p$ and $q$) because $V_k$ has higher codimension ($j \leq k$).

 \subsubsection{Preprocessing the global/local contribution}
We consider one final case: the global/local contribution (the local/global contribution is equivalent, by symmetry considerations).
Let $j,k$ be fixed. The global/local contribution to (\ref{T_average_first_step}) for this $j,k$ pair takes the form 
 \[
S^{j,k}_{\glo,\loc} :=
\frac{1}{|\calP|^2}  \sum_{\substack{\bu\in \Z^n\\ \bu \in V_{j}(\Z)\backslash V_{j+1}(\Z) \\ \bu \not\in V_{k}(\Z)\backslash V_{k+1}(\Z)} } \sum_{\substack{p \neq q \in \mathcal{P} \\ \bu \in V_{j}(\F_p)\backslash V_{j+1}(\F_p) \\ \bu \in V_{k}(\F_q)\backslash V_{k+1}(\F_q)}} |S(\overline{q}\bu,p)| |S(\overline{p}\bu,q)| \prod_{i=1}^n \left(1+\frac{|u_i|B}{P^2}\right)^{-M} .\]
 We can regard the condition $ \bu \not\in V_{k}(\Z)\backslash V_{k+1}(\Z)$ as stating that $ \bu \not\in V_{k}(\Z)$, since if $\bu \in V_{k+1}(\Z)$ it would follow that $\bu \in V_{k+1}(\F_q)$ for all $q$ so that the sum over $q$ is empty, leading to zero contribution. Thus we consider all $\bu$ such that $\bu \in V_{j}(\Z) \setminus V_{j+1}(\Z)$ and $\bu \not\in V_{k}(\Z)$. By the nesting property (\ref{nesting_V}), for the set of such $\bu$ to be nonempty it must be the case that $j< k$, and then it suffices to simply write $\bu \in V_{j}(\Z) \setminus V_{j+1}(\Z)$. 
Finally, we decompose $V_j=\cup_{s \in I_j}X_{j,s}$ and $V_k=\cup_{t \in I_k}X_{k,t}$ into irreducible components, writing
\beq\label{S_j_glo_loc_expression}
S^{j,k}_{\glo,\loc} =
\frac{1}{|\calP|^2} \sum_{\substack{s \in I_j \\ t \in I_k }} \sum_{\substack{\bu\in \Z^n\\ \bu \in X_{j,s}(\Z)\backslash V_{j+1}(\Z)  } } \sum_{\substack{p \neq q \in \mathcal{P} \\ \bu \in X_{j,s}(\F_p)\backslash V_{j+1}(\F_p) \\ \bu \in X_{k,t}(\F_q)\backslash V_{k+1}(\F_q)}} |S(\overline{q}\bu,p)| |S(\overline{p}\bu,q)| \prod_{i=1}^n \left(1+\frac{|u_i|B}{P^2}\right)^{-M} .
\eeq
 We will bound this contribution for all pairs of indices $j,k$ with $0 \leq j<k \leq n-1$.\\ 

 Finally, we assemble the three types of cases from (\ref{S_j_glo_glo_expression}), (\ref{S_j_loc_loc_expression}), and (\ref{S_j_glo_loc_expression})   in (\ref{T_average_first_step}), and write the result as:
\begin{multline}\label{T_average_3_types_of_terms}
\frac{1}{|\calP|^2}\sum_{\substack{p,q\in \calP \\ p\neq q}} T(p,q;B) \ll \frac{B^n}{P^{2n}}\left( \sum_{j=0}^{n-1} S^{j}_{\glo,\glo} + \sum_{1 \leq j \leq k \leq n-1} S^{j,k}_{\loc,\loc} + \sum_{0 \leq j < k \leq n-1} S^{j,k}_{\glo,\loc} \right).
\end{multline}
Now we will bound each of these three types of contributions, starting with the easiest case, which is the local/local case.

\subsection{Bounding the local/local contribution}
Fix a pair of indices $1 \leq j \leq k \leq n-1$ and recall the expression (\ref{S_j_loc_loc_expression}) for the corresponding local/local contribution. 
Recall that $V_j = V((G_{j,s})_s)$ is defined as the common vanishing set of  homogeneous polynomials $\{G_{j,s}\}_{s \in I_j}$ as $s$ varies over a finite index set $I_j$. Then if $\bu\not\in V_j(\Z)$,  it must be that $G_{j,s}(\bu)\neq 0$ for some $s\in I_j.$ On the other hand, since $\bu\in V_j(\F_p)$, it must be that $p| G_{j,s}(\bu)$ (indeed for all $s \in I_j$). Hence,   for any $\bu\not\in V_j(\Z)$, 
\beq\label{small_number_of_p}
\sum_{\substack{p\in [P,2P]\\ \bu\in V_j(\F_p)}}1 \leq   \max_{s \in I_j, G_{j,s}(\bu) \neq 0} \nu_0(G_{j,s}(\bu)),
\eeq
in which the maximum is over a set with at least one element.
Here $\nu_0(m)$ denotes the number of distinct prime divisors of an integer $m$ in the range $[P,2P]$.  Multiplying over the distinct prime divisors of the integer $G_{j,s}(\bu)\neq 0$ that lie in the range $[P,2P]$, one sees that $(\prod p) |G_{j,s}(\bu)$ so that 
\beq\label{nu_0_bound}
P^{\nu_0(G_{j,s}(\bu))} \leq |G_{j,s}(\bu)| \ll_n \|G_{j,s}\||\bu|^{\deg_{G_{j,s}}},
\eeq
and this provides an upper bound for $\nu_0(G_{j,s}(\bu))$. 
Consequently, applying this in (\ref{S_j_loc_loc_expression}) leads to
\[
S_{\loc,\loc}^{j,k} \ll_{n,\deg F} \frac{1}{|\mathcal{P}|^2} P^{n+j/2+k/2} \deg(V_j) \sum_{q\in \calP}\sum_{\substack{\bu\in \Z^n \\ \bu\in V_k(\F_q)}} \frac{\log(1+|\bu|)+\log \|V_j\|}{\log P} \prod_{i=1}^n \left(1+\frac{|u_i|B}{P^2}\right)^{-M} ,
\]
in which we used positivity to omit the condition $\bu \not\in V_j(\Z)$. We have also adopted the convention that 
\[ \|V_j\| :=\max_{s \in I_j} \|G_{j,s}\|\]
so that by Theorem \ref{thm_strat_details}(\ref{thm_strat_coeff}), $\log \|V_j\| \ll_{n,\deg F}\log \|F\|$. Also by (\ref{thm_strat_deg}) of that theorem, $\deg V_j \ll_{n,\deg F} 1$.  
Next, we apply Lemma \ref{lemma_weight_count} to bound this smoothed sum over points of bounded height $\ll P^2/B$ whose reduction modulo $q$ lie in a variety of dimension at most $n-k$ in $\mathbb{A}^n(\F_q)$. Note that for $j \leq k$ we pre-processed the local/local sum in precisely the form (\ref{S_j_loc_loc_expression})   so that of the two options, the highest codimension $k$ now appears in this sum. Under the condition (\ref{P_size_half}) on the size of $P$, $1 \ll   P^2/B=o(P)$.
By Proposition \ref{prop_Xu_count}, the hypothesis (\ref{hypothesis_in_box_p}) of Lemma \ref{lemma_weight_count}  holds with $\kappa=n-k$ for $V_k(\F_q)$. We may apply the case $Y:=P^2/B$  in Lemma \ref{lemma_weight_count}; then $Y < q^\sig$ for some $\sig<1$ since $q \in [P,2P]$ and (\ref{P_size_half}) controls the size of $P$. This shows
\beq \label{local_local_sum_u}
\sum_{\substack{\bu\in \Z^n \\ \bu\in V_k(\F_q)}} \prod_{i=1}^n \left(1+\frac{|u_i|B}{P^2}\right)^{-M}(\log (1+|\bu|)) \ll \left( \frac{P^2}{B}\right)^{n-k+\ep}.
\eeq
Finally, we sum trivially over $q$. 
The conclusion is that for $1 \leq j \leq k \leq n-1$, for any $\ep>0$,
 \begin{align}
S_{\mathrm{loc,\loc}}^{j,k}
& \ll_{\ep,n,\deg F}(\log \|F\|) \frac{1}{|\mathcal{P}|^2} P^{n+j/2+k/2} \frac{|\calP|}{\log P} \left(\frac{P^2}{B}\right)^{n-k+\ep}\nonumber\\
& \ll_{\ep,n,\deg F}(\log \|F\|)   P^{n+k-1}   \left(\frac{P^2}{B}\right)^{n-k+\ep}. \label{S_local_bound}
\end{align}
By recording the second inequality, we may reduce the local/local terms to only considering the diagonal contribution from $k=j$, which we have shown via the first inequality are the largest terms.

\subsection{Bounding the global/global contribution, degenerate case}\label{sec_glo_glo_degenerate}
We next consider the global/global contribution.
Let $1 \leq j \leq n-1$ be fixed. Within the expression (\ref{S_j_glo_glo_expression}) for $S^{j}_{\glo,\glo}$, we now further divide the irreducible components $X_{j,s}$ of $V_j$ into two cases.  Recalling the discussion in \S \ref{sec_degenerate_primes}, if $X_{j,s}$ lies in a (proper) linear subspace of $\mathbb{A}^n(\Z)$,   we say $X_{j,s}$ is a degenerate case, and otherwise we say that $X_{j,s}$ is a nondegenerate case.
Given $j$, we decompose \[S_{\glo,\glo}^j = S_{\glo,\glo}^{j,\mathrm{deg}} +  S_{\glo,\glo}^{j,\mathrm{non}}.\]
Here, $S^{j,\mathrm{deg}}_{\glo,\glo}$ (respectively $S^{j,\mathrm{non}}_{\glo,\glo}$) is given by the terms in the sum over $s$ on the right-hand side of (\ref{S_j_glo_glo_expression}) restricted to those components such that $X_{j,s}$ is a degenerate case (respectively, nondegenerate case). (This decomposition does not distinguish those $s'$ for which $X_{j+1,s'}$ is degenerate or nondegenerate. Also, we need only consider whether $X_{j,s}$ is degenerate over $\Z$ (and consequently over $\F_p$ for all primes $p$); we do not need to handle separately any ``locally degenerate'' cases.) We will return to the non-degenerate contributions in \S \ref{sec_glo_glo_nondegenerate}; here we focus on $S_{\glo,\glo}^{j,\deg}$.
We will apply the  second moment bound from  Proposition \ref{prop_strongly_genuine_second_moment},   in the form of  (\ref{second_moment_apply}), which is adapted  to a linear transformation corresponding to the degenerate component; this specifically relies on the hypothesis of Theorem \ref{thm_main_options} that $F_L$ is strongly $n$-genuine for each $L \in \mathcal{L}(F)$.

As seen in the expression (\ref{S_j_glo_glo_expression}), the contribution of a degenerate irreducible component  $X_{j,s}$ to $S_{\mathrm{glo},\mathrm{glo}}^{j,\mathrm{deg}}$ is at most:
\begin{align}
  &  \max_{p \neq q \in \calP} |I_j| |I_{j+1}|^2 
    \sum_{\bu\in X_{j,s}(\Z)} \prod_{i=1}^n \left(1+\frac{|u_i|B}{P^2}\right)^{-M} |S(\overline{q}\bu,p)||S(\overline{p}\bu,q)| \label{glo_glo_degenerate}\\
     &\ll_{n,\deg F}  \max_{p \neq q \in \calP} \sum_{\substack{0\leq b \leq \lceil\log_2 p^n\rceil\\M_p = 2^b}} \sum_{\substack{0 \leq c \leq \lceil\log_2 q^n \rceil\\M_q = 2^c}}   \sum_{\substack{\bu\in X_{j,s}(\Z) \\ M_p < |S(\overline{q}\bu,p)| \leq 2M_p \\ M_q \leq |S(\overline{p}\bu,q)| < 2M_q}} \prod_{i=1}^n \left(1+\frac{|u_i|B}{P^2}\right)^{-M}|S(\overline{q}\bu,p)||S(\overline{p}\bu,q)| \nonumber \\
     &\ll_{n,\deg F} \max_{p \neq q \in \calP}  \sum_{M_p\nearrow p^n} \sum_{M_q\nearrow q^n} M_pM_q \sum_{\substack{\bu\in X_{j,s}(\Z) \\ |S(\overline{q}\bu,p)|>M_p  \\ |S(\overline{p}\bu,q)|>M_q}}   \prod_{i=1}^n \left(1+\frac{|u_i|B}{P^2}\right)^{-M}, \nonumber
     \end{align}
  in which the sums over $M_p, M_q$ denote dyadic sums, each with $\ll_n \log P$ summands. (We have expressed these as dyadic sums over powers of $2$, but it would be equally possible to write them as as a $p$-adic and $q$-adic sum, respectively. Since $X_{j,s} \subset V_j$, we do not need to take $M_p$ beyond $p^{(n+j)/2}$, but the additional terms here are negligible in the dyadic sum, and similarly for $M_q$.) We have also used the fact (\ref{thm_strat_num_comp}) from Theorem \ref{thm_strat_details} that the number of components $|I_j| \ll_{n,\deg F} 1$ for all $j$. 
Fix a small $\al>0$ (to be chosen later) and define the box
\[ C_\al = [-(P^2/B)^{1+\al}, (P^2/B)^{1+\al}]^n \subset \R^n.\]
 Decompose the above expression for the contribution to $S^{j,\mathrm{deg}}_{\mathrm{glo},\mathrm{glo}}$ from $X_{j,s}$ into two terms $\mathrm{I} + \mathrm{II}$,   denoting the contribution from summing over $\bu \in C_\al$ and over $\bu \not\in C_\al$, respectively.  First,
\begin{align}
   \mathrm{I} 
    & \ll_{n,\deg F}  \max_{p \neq q \in \mathcal{P}} \sum_{M_p\nearrow p^n} \sum_{M_q\nearrow q^n}  M_pM_q  \nonumber \\
     & \qquad   \cdot \#\left(\{ |\bu| \in X_{j,s}(\Z)\cap C_\al: |S(\overline{q}\bu,p)|>M_p\} \cap \{ |\bu| \in X_{j,s}(\Z)\cap C_\al: |S(\overline{p}\bu,q)|>M_q\}\right) \nonumber \\
    &\ll_{n,\deg F}    \log P \max_{p \neq q \in \mathcal{P}}\sum_{M_p\nearrow p^n} M_p^2 \cdot \#\{|\bu|\in X_{j,s}(\Z)\cap C_\al: |S(\overline{q}\bu,p)|>M_p\}\nonumber \\
      &\ll_{n,\deg F}    \log P \max_{p \neq q \in \mathcal{P}}\sum_{M_p\nearrow p^n} M_p^2 \cdot \#\{|\bu|\in X_{j,s}(\F_p): |S(\overline{q}\bu,p)|>M_p\} \nonumber \\
    &\ll_{n, \deg F}   P^{2n-1} (\log P)^2.\label{S_global_lin_I}
\end{align}
In the first inequality, we assume without loss of generality that we consider terms with $M_p \geq M_q$ (an analogous argument holds in the symmetric case). Then by positivity we can bound the size of the intersection by the size of the superlevel set defined with respect to $M_p$ (and then sum trivially over the $\ll \log P$ summands for $M_q$).  In the second inequality, we use that $P= B^\rho$ for some $\rho<1$ by assumption (\ref{P_size_half}). In particular, given $\rho$, by choosing $\al \ll_\rho 1$ sufficiently small, we can ensure that $(P^2/B)^{1+\al} \ll p$ (at least for $B \gg_\rho 1$) so that $X_{j,s}(\Z)\cap C_\al$ may be identified with a subset of $X_{j,s}(\F_p)$. To achieve  the last inequality, we plug in (\ref{second_moment_apply}). (By the choice of the sieving set in \S \ref{sec_sieving_set}, all primes in $\mathcal{P}$ have been chosen to avoid the possible exceptional primes for which (\ref{second_moment_apply}) does not apply.)

Now we turn to the term $\mathrm{II}$, which is bounded by 
\[ 
\mathrm{II} \ll_{n,\deg F}   P^{2n} (\log P)^2 \sum_{\substack{\bu\in \Z^n \\ \bu \not\in C_\al}}   \prod_{i=1}^n \left(1+\frac{|u_i|B}{P^2}\right)^{-M} \ll    P^{2n} (\log P)^2 (P^2/B)^{n - \al (M-2)}.
\]
Here we have applied that $P=B^\rho$ for $\rho > 1/2$ so that $P^2/B \gg 1$, and argued 
as in (\ref{C_ep_estimate}) of  Lemma \ref{lemma_weight_count}, (with $Y=P^2/B\gg 1$, $A=0$, and $\ep =\al$). Recall that we may take $M$ as large as we like. Our ultimate choice of $P=B^\rho$ in \S \ref{sec_combining_estimates} will choose $\rho = n/(n+1)$, so that the appropriate choice of $\al$ above depends only on $n$. (Any $\al < 1/(n-1)$ will do.) Thus we may assume from now on that $M=M(n)$ is sufficiently large that $n-\al(M-2)<0$. Then the contribution of $\mathrm{II}$ can be made arbitrarily small, and in particular is dominated by the contribution from $\mathrm{I}$, for all $B \gg_n 1$.
In total, the contribution of $X_{j,s}$ to $S_{\glo,\glo}^{j,\mathrm{deg}}$ is dominated by (\ref{S_global_lin_I}). Summing over all   $X_{j,s}$ in the degenerate case (at most $|I_j| \ll_{n,\deg F} 1$ terms) leads to 
\beq\label{S_global_degenerate_total}
S_{\glo,\glo}^{j,\mathrm{deg}}  \ll_{n, \deg F} P^{2n-1} (\log P)^2.
\eeq

\subsection{Bounding the global/global contribution, nondegenerate case}\label{sec_glo_glo_nondegenerate}
We now focus on the contribution $S_{\glo,\glo}^{j,\mathrm{non}}$ to (\ref{S_j_glo_glo_expression}) for each $1 \leq j \leq n-1$.
First, we claim that when $X_{j,s} \subset \mathbb{A}^n(\Z)$ is a nondegenerate irreducible component of $V_j$, we may assume $\dim X_{j,s}>1$ (or equivalently $\codim X_{j,s} \leq n-2$).
In other words, we claim that if $\dim(X_{j,s})= 1$, then $X_{j,s}$ must lie in a (proper) linear subspace of $\mathbb{A}^n(\Z)$, and hence fall into the degenerate case. Indeed, since $X_{j,s}$ is homogeneous, the projective dimension of such $X_{j,s}$ is zero. So, when viewed projectively, $X_{j,s}$ is a finite set of points; as such, when viewed in affine space, the irreducible component $X_{j,s}$ will be a line and thus linear (and degenerate).  
The requirement $\codim X_{j,s} \leq n-2$, combined with the fact from Theorem \ref{thm_strat_details} that $\codim X_{j,s} \geq j$, thus imposes $j \leq n-2$ from now on. Note that if $n=2$, only $j=0$ satisfies $j \leq n-2$, and the global/global contribution for $V_0$ has already been bounded in (\ref{S_global_zero}). Thus for the rest of this section, we may assume $n \geq 3$ (and if $n=2$, the sums over $j\leq n-2$ depicted below are considered to be  empty).

Second, if $X_{j,s}$ is nondegenerate, it must be that $\deg X_{j,s} \geq 2$.
Third, fix an index $j \in \{1,\ldots, n-2\}$.
Let $X_{j,s}$ be an irreducible component of the $j$-th stratum $V_j$ with codimension at least $j$ in $\mathbb{A}^n(\Z)$, and degree denoted $\deg X_{j,s}$. 
We recall the following relation between the dimension and degree of a nondegenerate variety.
\begin{prop}\label{prop_anonymous}
Let $W\subset\mathbb{P}^{N}$ be an irreducible nondegenerate variety of degree $e$ and dimension $m$. Then $e> N-m$.
\end{prop}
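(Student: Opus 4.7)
The proposition is a classical bound in projective algebraic geometry: any nondegenerate irreducible variety $W \subset \mathbb{P}^{N}$ of dimension $m$ satisfies $\deg W \geq N - m + 1$. Since both $e$ and $N-m$ are integers, this is equivalent to the strict inequality $e > N - m$ asserted in the statement. My plan is to prove this by induction on $m$, following the standard argument (see e.g.\ Harris, \emph{Algebraic Geometry: A First Course}, \S 18).

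For the base case $m = 0$, the variety $W$ is a finite reduced set of points, and nondegeneracy means these points span $\mathbb{P}^{N}$. Such a configuration has cardinality at least $N+1$, so $e = |W| \geq N+1 = N - 0 + 1$.

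For the inductive step with $m \geq 1$, I would cut $W$ by a general hyperplane $H \subset \mathbb{P}^{N}$. The scheme-theoretic intersection $W \cap H$ has dimension $m-1$ and degree $e$ (by the definition of degree via intersection with a general linear space). When $m \geq 2$, Bertini's theorem implies $W \cap H$ is again irreducible for generic $H$; when $m = 1$, $W \cap H$ is a zero-dimensional scheme of degree $e$ whose span one analyzes directly. Granting that $W \cap H$ is again \emph{nondegenerate} in $H \cong \mathbb{P}^{N-1}$ for generic $H$, the inductive hypothesis applied to $W \cap H \subset \mathbb{P}^{N-1}$ yields
\[
e = \deg(W \cap H) \geq (N-1) - (m-1) + 1 = N - m + 1,
\]
closing the induction.

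The main obstacle is verifying that nondegeneracy persists under a generic hyperplane section. The argument I have in mind is the following: if $W \cap H$ were contained in a proper linear subspace $L_{H} \subsetneq H$ for every $H$ in a Zariski-open subset of the dual projective space $(\mathbb{P}^{N})^{*}$, then an incidence / monodromy argument on the family $\{L_{H}\}_{H}$ would force $W$ itself to be contained in a proper linear subspace of $\mathbb{P}^{N}$, contradicting the nondegeneracy of $W$. Equivalently, one can invoke the general position theorem for irreducible nondegenerate varieties, which asserts that a generic linear section of codimension $m$ meets $W$ in $e$ points that are in linearly general position in that linear space and hence span it; this directly forces $e \geq (N-m) + 1$.
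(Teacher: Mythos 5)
The paper does not actually prove this proposition: it simply quotes it from \cite[Cor.\ 18.12]{Har92}, remarking that the statement goes back to an anonymous note and to Swinnerton-Dyer. Your proposal reconstructs the standard textbook argument behind that citation (induction on dimension via generic hyperplane sections plus the General Position Lemma), so in substance you are supplying the proof that the paper outsources. The outline is correct, but two points deserve care. First, the base case as you state it is slightly off: an \emph{irreducible} variety of dimension $0$ over an algebraically closed field is a single point, so it is nondegenerate only in $\mathbb{P}^0$; the finite point sets you need to analyze arise only as the (generally reducible) sections $W\cap H_1\cap\cdots\cap H_m$, which is why the induction really bottoms out at $m=1$ via the General Position Lemma rather than at $m=0$. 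Second, the entire content of the proposition is concentrated in the step you flag as ``the main obstacle,'' namely that a generic hyperplane section of an irreducible nondegenerate variety of dimension at least $1$ is again nondegenerate (Harris, Prop.\ 18.10); your incidence/monodromy sketch for this is plausible but is not a proof as written — one needs the irreducibility of the incidence correspondence $\{(p,H): p\in W\cap H\}$ and a monodromy or dimension-count argument to rule out the family of proper subspaces $L_H$. If you either fill in that lemma or, like the paper, cite it, the argument is complete; as it stands the proposal is a faithful sketch of the classical proof rather than a self-contained one.
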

We quote this from \cite[Cor. 18.12]{Har92}, but we remark that it seems to have appeared originally in the note \cite{Ano57}, and soon after in Swinnerton-Dyer  \cite[Lemma 3 and Corollary]{Swi73}. 
As a result of the proposition, if  $X_{j,s}$ is nondegenerate, then $\deg X_{j,s} > \codim X_{j,s} \geq j$.   Hence  all that remains is to bound the contribution of  $X_{j,s}$ to $S_{\glo,\glo}^{j,\mathrm{non}}$ under the assumption that 
$
1 \leq j \leq \min\{\deg X_{j,s}-1,n-2\}.
$

With these restrictions, the total nondegenerate contribution to (\ref{S_j_glo_glo_expression}) for a fixed $1 \leq j \leq n-2$ is bounded by:
\beq\label{S_glo_glo_bound_before_Pila}
S_{\glo,\glo}^{j,\mathrm{non}}
\ll_{n,\deg F} |I_j||I_{j+1}|^2\sum_{\substack{s \in I_j\\j \leq \min\{\deg X_{j,s}-1,n-2\}}}P^{n+j}    \sum_{\substack{\bu \in \Z^n \\ \bu \in X_{j,s}(\Z) }} \prod_{i=1}^n \left(1+\frac{|u_i|B}{P^2}\right)^{-M}.
    \eeq
    We have obtained this expression by considering the nondegenerate contribution to the simplified form (\ref{S_j_glo_glo_expression_alternate}).
We can achieve a bound that is uniform in the coefficients of the defining polynomials of $X_{j,s}$, by applying Pila's bound (Theorem \ref{thm_Pila_multidim}). (See Remark \ref{remark_DGC_app} for why a more straightforward approach is not yet available.) Fix a component $X_{j,s}$ considered in the sum, and apply Lemma \ref{lemma_weight_count} to bound the smoothed sum over $\bu \in X_{j,s}(\Z)$, with $Y=P^2/B \gg 1$. By Pila's bound, the hypothesis (\ref{hypothesis_in_box}) holds with $\kappa = n-j-1+(\deg X_{j,s})^{-1}+\ep$ for any $\ep>0$, and an implicit constant depending only on $n,j,\deg X_{j,s},\ep$. By Theorem \ref{thm_strat_details} (ii), $\deg X_{j,s}\ll_{n,\deg F}1$.
Consequently,  
\beq\label{S_global_nondegenerate_bound}
S_{\glo,\glo}^{j,\mathrm{non}} \ll_{n,\deg F,\ep} \delta_{n\geq 3}\sum_{\substack{s \in I_j} }
   \delta_{j,\deg(X_{j,s})}  P^{n+j}  (P^2/B)^{n-j-1+(\deg X_{j,s})^{-1}+\ep} 
   \eeq
 under the definitions $\delta_{n \geq 3} =1$ if $n \geq 3$ and vanishes otherwise, and 
\beq\label{dfn_delta}
\delta_{j,e} = \begin{cases} 1 & \text{if $j \leq \min \{e-1,n-2\}$ and $e \geq 2$ } 
\\
  0 & \text{otherwise.}
\end{cases}
\eeq

    \begin{rem}\label{remark_DGC_app}
    Salberger has proved   the Dimension Growth Conjecture, and the uniform version of the conjecture for degree at least 4 \cite[Thm. 0.1, 0.3]{Sal23}.
Let $X\subset \mathbb{P}^{n-1}_\Q$ be an integral projective variety defined over $\Q$. Then for $\deg(X)\geq 2$,  for any $\varepsilon>0$, $$\#\{\bfx\in X(\Z): \|\bfx\|\leq B\} = O_{X,\varepsilon}(B^{\dim_{\mathrm{proj}}(X)+\varepsilon}).$$
 For $\deg(X) \geq 4$,  
 \beq\label{DGC_uniform}
 \#\{\bfx\in X(\Z): \|\bfx\|\leq B\} = O_{\deg(X),n,\varepsilon} (B^{\dim_{\mathrm{proj}}(X)+\varepsilon}).
 \eeq
This uniform bound was previously obtained  for hypersurfaces of degree 2 in \cite{HB02} and for geometrically integral hypersurfaces of  degree 2 or degree at least 6 in  \cite{BHBS06}. 
 
 In (\ref{S_global_nondegenerate_bound}), it would be  simpler to apply the truth of the Dimension Growth Conjecture, which would show that for any $\varepsilon>0$, $$\sum_{\substack{\bu\in \Z^n \\ \bu\in X_{j,s}(\Z)}} \prod_{i=1}^n \left(1+\frac{|u_i|B}{P^2}\right)^{-M} \ll_{X_{j,s},\varepsilon} \left(\frac{P^2}{B}\right)^{n-j-1+\varepsilon}$$
for any $\ep>0$. This is a better exponent than   in (\ref{S_global_nondegenerate_bound}), and leads to a bound $S_{\mathrm{global}}^{j,\mathrm{non}}
\ll_{V_j,\ep} P^{n+j}(P^2/B)^{n-j-1+\ep}$.  However, the implicit constant is only known to be uniform in the coefficients of the defining functions of $X_{j,s}$ if $\deg X_{j,s}\neq 3$, while we cannot rule out that $\deg X_{j,s}=3$ may occur.
  (For $\deg(X)=3$, the current record of \cite[Thm. 0.3]{Sal23} replaces the right-hand side of (\ref{DGC_uniform}) by $ O_{n,\varepsilon} (B^{\dim_{\mathrm{proj}}(X)-1+2/\sqrt{3}+\varepsilon})$,   
  with an exponent not strong enough for our application.) Thus we proceed with the bound (\ref{S_global_nondegenerate_bound}), and we will use Proposition \ref{prop_anonymous} to argue case by case that the additional $(\deg X_{j,s})^{-1}$ in the exponent is not detrimental in the proof of 
  Theorem \ref{thm_main_options}. 

\end{rem}

\subsection{Bounding the global/local contribution}\label{sec_glo_loc}
It only remains to bound the global/local contribution to (\ref{T_average_3_types_of_terms}), and this uses a combination of the above strategies. 
Recall the expression (\ref{S_j_glo_loc_expression}) for $S^{j,k}_{\glo,\loc}$ for each pair $j,k$ with $0\leq j< k \leq n-1$. We will prove two bounds for $S^{j,k}_{\glo,\loc}$, and choose the most advantageous depending on the pair $j,k$ of indices. 

For the first bound, fix indices $s \in I_j$ and $t \in I_k$. (By Theorem \ref{thm_strat_details}(\ref{thm_strat_num_comp}) there are $\ll_{n,\deg F} 1$ such tuples of indices.) The contribution of the corresponding term to $S^{j,k}_{\glo,\loc}$ is 
\beq\label{S_j_glo_loc_expression_stst}
\frac{1}{|\calP|^2}
\sum_{\substack{\bu\in \Z^n\\ \bu \in X_{j,s}(\Z)\backslash V_{j+1}(\Z)  } } \sum_{\substack{p \neq q \in \mathcal{P} \\ \bu \in X_{j,s}(\F_p)\backslash V_{j+1}(\F_p) \\ \bu \in X_{k,t}(\F_q)\backslash V_{k+1}(\F_q)}} |S(\overline{q}\bu,p)| |S(\overline{p}\bu,q)| \prod_{i=1}^n \left(1+\frac{|u_i|B}{P^2}\right)^{-M} .
\eeq
We apply the bounds $|S(\overline{q}\bu,p)|\ll P^{\frac{n+j}{2}}$ and  $|S(\overline{p}\bu,q)| \ll P^{\frac{n+k}{2}}$ from Theorem \ref{thm_strat_details}.
Since $\bu \in X_{j,s} (\Z)$, it follows that $\bu \in X_{j,s}(\F_p)$ for all $p \in \mathcal{P}$ so we sum trivially over such $p$ and gain no advantage there.  (It is tempting to fix $\bu$ and use the fact that there can be few $q$ for which $\bu$ is locally in $X_{k,t}(\F_q)$ but not in $X_{k,t}(\Z)$, but then the remaining sum over $\bu \in X_{j,s}(\Z)$ is only over a set of codimension $j$. So we currently argue a different way.)
We first sum over $\bu$ and then sum over $q$ in the remaining expression, which then is bounded above by 
\[
\ll_{n,\deg F} \max_{p \in \mathcal{P}}  \frac{1}{|\calP|} P^{\frac{n+j}{2}} P^{\frac{n+k}{2}}
\sum_{q \in \mathcal{P}} \sum_{\substack{\bu\in \Z^n\\ \bu \in X_{k,t}(\F_q)  } }   \prod_{i=1}^n \left(1+\frac{|u_i|B}{P^2}\right)^{-M} .\]
Here we have used positivity in order to omit all conditions on $\bu$ except for the condition $\bu \in X_{k,t}(\F_q)$, which we retained because $X_{k,t}$ has the higher codimension $k$. 
Now for each $q$ this sum over $\bu$ can be treated exactly as we treated the (slightly larger) sum (\ref{local_local_sum_u}) in the local/local case. Finally, we sum trivially over $q$. This leads to the first upper bound 
\beq\label{S_j_glo_loc_bound_small_j}
S^{j,k}_{\glo,\loc} \ll_{n,\deg F,\ep}  P^{n+j/2+k/2} \left( \frac{P^2}{B} \right)^{n-k+\ep}.
\eeq
This will ultimately be strong enough in two scenarios: if $j=0$ and $1 \leq k \leq n-1$, and if $1 \leq j \leq n-1$ and $k \geq j+3$.

Thus we proceed now to consider a different bound (which we will apply for all index pairs for which $1 \leq j \leq n-1$ and $k=j+\kappa$ with $\kappa=1$ or $2$). Fix $j,k$ with $j<k$.
Now we consider whether each irreducible component $X_{j,s}\subset \mathbb{A}^n(\Z)$ of $V_j$ is degenerate     or not, and we accordingly decompose 
\[ 
S^{j,k}_{\glo,\loc} = S^{j,k,\mathrm{deg}}_{\glo,\loc}+S^{j,k,\mathrm{non}}_{\glo,\loc}.
\]
Suppose first that $X_{j,s}$ is degenerate: then by summing over all $p,q$ trivially, the contribution of $X_{j,s}$ to $S^{j,k}_{\glo,\loc} $ is bounded above by:
\[
 \leq |I_j| |I_{j+1}|^2\max_{p \neq q \in \mathcal{P}}
 \sum_{\substack{\bu\in \Z^n\\ \bu \in X_{j,s}(\Z)  } }   |S(\overline{q}\bu,p)| |S(\overline{p}\bu,q)| \prod_{i=1}^n \left(1+\frac{|u_i|B}{P^2}\right)^{-M} .
\]
This is recognized as (\ref{glo_glo_degenerate}) and can be dominated by $\ll_{n,\deg F} P^{2n-1}(\log P)^2$, just as that expression was. (This also relies on the hypothesis of Theorem \ref{thm_main_options} that $F_L$ is strongly $n$-genuine for all $L \in \mathcal{L}(F)$.)  Thus we can record the contribution to $S^{j,k}_{\glo,\loc}$ from all irreducible components $X_{j,s}$ that fall in the degenerate case as 
\beq\label{S_j_glo_loc_degenerate_bound}
S^{j,k,\mathrm{deg}}_{\glo,\loc} \ll_{n,\deg F} P^{2n-1}(\log P)^2. 
\eeq
In particular, for all pairs $1 \leq j<k \leq n-1$, this contribution is dominated by the contribution (\ref{S_global_degenerate_total}) recorded for $S^{j,\mathrm{deg}}_{\glo,\glo}$.  

Now fix an $s$ for which $X_{j,s}$ is  nondegenerate. By the same arguments presented at the beginning of \S \ref{sec_glo_glo_nondegenerate}, we need only consider the case that $1 \leq j \leq \min\{ \deg(X_{j,s})-1, n-2\}$. (If $n=2$ this imposes $j=0$, which was already handled before, so from now on we may assume $n \geq 3$.) Fix any index $t \in I_k$  (of which there are $\ll_{n,\deg F} 1$ choices). The contribution we must bound is 
\[\frac{1}{|\calP|^2}
\sum_{\substack{\bu\in \Z^n\\ \bu \in X_{j,s}(\Z)\backslash V_{j+1}(\Z)  } } \sum_{\substack{p \neq q \in \mathcal{P} \\ \bu \in X_{j,s}(\F_p)\backslash V_{j+1}(\F_p) \\ \bu \in X_{k,t}(\F_q)\backslash V_{k+1}(\F_q)}} |S(\overline{q}\bu,p)| |S(\overline{p}\bu,q)| \prod_{i=1}^n \left(1+\frac{|u_i|B}{P^2}\right)^{-M} .\]
We apply the bounds $|S(\overline{q}\bu,p)|\ll P^{\frac{n+j}{2}}$ and  $|S(\overline{p}\bu,q)| \ll P^{\frac{n+k}{2}}$ from Theorem \ref{thm_strat_details}.
We again sum trivially over all $p \in \mathcal{P}$, since $\bu$ lies in $X_{j,s}(\Z)$.  On the other hand, observe that using the local property for $q$ will preserve a savings of $|\mathcal{P}|^{-1}$ for the sum over $q$. Recall that $X_{k,t}$ is a homogeneous irreducible component, so it is defined by some homogeneous polynomial $G_{k,t}$. Since $\bu \not\in X_{k,t}(\Z)$ (since $\bu \not\in V_{j+1}(\Z)$ and $k > j$)   but $\bu \in X_{k,t}(\F_q)$, we may conclude that $G_{k,t}(\bu) \neq 0$ but $q | G_{k,t}(\bu)$. Thus arguing as in (\ref{small_number_of_p}) and (\ref{nu_0_bound}) to bound the number of $q$ arising in the sum, the contribution expressed above can be bounded by
\[\ll_{n,\deg F} \deg(V_k) \frac{1}{|\calP|} P^{\frac{n+j}{2}} P^{\frac{n+k}{2}} 
\sum_{\substack{\bu\in \Z^n\\ \bu \in X_{j,s}(\Z) } } \frac{(\log(1+|\bu|) + \log \|V_k \|)}{\log P}  \prod_{i=1}^n \left(1+\frac{|u_i|B}{P^2}\right)^{-M} .\]
By Theorem \ref{thm_strat_details} (\ref{thm_strat_coeff}), $\log \|V_k\| := \max_{s \in I_k}\log \|G_{k,s}\| \ll_{n,\deg F}\log  \|F\|.$
Furthermore,  we recognize that the sum over $\bu$ in the expression above takes the same form as the sum over $\bfu$ in (\ref{S_glo_glo_bound_before_Pila}) previously encountered in the global/global nondegenerate case, aside from an extra factor of $\log (1+|\bu|)$ here. We proceed as we did to bound (\ref{S_glo_glo_bound_before_Pila}) by applying Pila's bound (Theorem \ref{thm_Pila_multidim}) as an input to the smoothed counting lemma (Lemma \ref{lemma_weight_count}) with $\kappa = n-j-1+(\deg X_{j,s})^{-1}+\ep$.
This shows that the contribution to $S_{\glo,\loc}^{j,k}$ from all nondegenerate components $X_{j,s}$ is
 \beq\label{S_glo_loc_nondegenerate_bound}
S_{\glo,\loc}^{j,k,\mathrm{non}}\ll_{n,\deg F,\ep} (\log \|F\|)   \delta_{n \geq 3} \sum_{s \in I_j} \delta_{j,\deg(X_{j,s})} P^{n+j/2+k/2-1}  (P^2/B)^{n-j-1+\deg(X_{j,s})^{-1}+\ep} .
   \eeq
 In particular, for  $k \leq j+2$, note that $P^{n+j/2+k/2-1} \leq P^{n+j}$, in which case the summand  above is dominated by the corresponding summand recorded in (\ref{S_global_nondegenerate_bound}) of $X_{j,s}$ for $S^{j,\mathrm{non}}_{\glo,\glo}$. But note   the factor of $\log \|F\|$ here.

\section{Combining the estimates}\label{sec_combining_estimates}
We now complete the proof of Theorem \ref{thm_main_options} via the polynomial sieve. We input the bounds for the global/global, local/local, and global/local cases into  (\ref{T_average_3_types_of_terms}), which we summarize in the form 
\[
\frac{1}{|\calP|^2}\sum_{\substack{p,q\in \calP \\ p\neq q}} T(p,q;B) \ll T_{\loc,\loc}+ T_{\glo,\glo} + T_{\glo,\loc}.\]
Recall that we consider $F(Y,\bX)$ of the form (\ref{F_dfn_intro}) for given degree parameters $m,d, k_1,\ldots,k_d$. 
Inputting the bounds proved in (\ref{S_global_zero}),
(\ref{S_local_bound}),(\ref{S_global_degenerate_total}), and (\ref{S_global_nondegenerate_bound}), as well as (\ref{S_j_glo_loc_bound_small_j}), (\ref{S_j_glo_loc_degenerate_bound}) and (\ref{S_glo_loc_nondegenerate_bound}), we can summarize our conclusions as follows. For the local/local terms,

\begin{align*}
T_{\loc,\loc} 
& \ll  \frac{B^n}{P^{2n}} \sum_{1 \leq j \leq k \leq n-1} S^{j,k}_{\loc,\loc}
\ll_{n,\deg F,\ep} (\log \|F\|) \frac{B^n}{P^{2n}}  \sum_{j=1}^{n-1}  P^{n+j-1}\left( \frac{P^2}{B}\right)^{n-j+\ep}\\
& \ll  (\log \|F\|)  P^{n-1}  B^\ep  \sum_{j=1}^{n-1}  (B/P)^j \\
    & \ll_{n,\deg F,\ep} (\log \|F\|) B^{n-1+\ep}.
\end{align*}
In the last inequality we have used that $B/P \gg 1$ to dominate the  sum by the contribution from $j=n-1$.

For the global/global term, recall the definition of $\delta_{j,e}$ in (\ref{dfn_delta}). 
Under the hypotheses of Theorem \ref{thm_main_options},  
\begin{align*}
T_{\glo,\glo} & \ll \frac{B^n}{P^{2n}}\left( P^n(P^2/B)^{n+\ep}+\sum_{j=1}^{n-1}S^{j,\mathrm{deg}}_{\glo,\glo} + \sum_{j=1}^{n-2}S^{j,\mathrm{non}}_{\glo,\glo} \right) \\ 
&\ll  \frac{B^n}{P^{2n}}\Bigg( P^n(P^2/B)^{n+\ep} +P^{2n-1}(\log P)^2 \\
&\qquad \qquad +\; \delta_{n\geq 3} \sum_{j=1}^{n-2}\sum_{s \in I_j } \delta_{j,\deg X_{j,s}} P^{n+j}(P^2/B)^{n-j-1+(\deg X_{j,s})^{-1}+\ep} \Bigg)\\
 &\ll    B^\ep ( P^n+ B^nP^{-1} + \delta_{n\geq 3} P^{n}\sum_{j=1}^{n-2}\sum_{\substack{s \in I_j\\(e :=\deg X_{j,s}\geq 2)}} \delta_{j,e}  \frac{B^{j+1-1/e}}{P^{j+2-2/e}} )\\
&\ll B^\ep ( P^n + B^nP^{-1} +  \delta_{n\geq 3}P^{n} \Sigma ) ,
\end{align*}
say, where $\Sig$ denotes the double sum over $j$ and $s$. The implicit constants depend on $n, \deg F,\ep$.
 We now analyze the sum $\Sig$ when $n \geq 3$ (for $n=2$ it is empty). 
\begin{lem} \label{lemma_Sigma_bound}
Suppose 
\beq\label{P_hypothesis_conclusion}
\text{$P=B^\rho$ with $n/(n+1)\leq \rho < 1$.}
\eeq
Then for $n \geq 3$,
 \beq\label{Sigma_bdd_1}
 \Sigma :=\sum_{j=1}^{n-2}\sum_{\substack{s \in I_j\\(e :=\deg X_{j,s} \geq 2)}}  \delta_{j,e}\frac{B^{j+1-1/e}}{P^{j+2-2/e}} \ll_{n,\deg F} 1.
 \eeq
 \end{lem}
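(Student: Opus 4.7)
The plan is to show that each individual term in $\Sigma$ is $O_{n,\deg F}(1)$ under the hypothesis $\rho\geq n/(n+1)$, and that the total number of terms is $O_{n,\deg F}(1)$. The latter follows immediately from Theorem \ref{thm_strat_details}: by (\ref{thm_strat_num_comp}), $|I_j|\ll_{n,\deg F}1$ for each $j$, and by (\ref{thm_strat_deg}), $\deg X_{j,s}\ll_{n,\deg F} 1$, so only finitely many values of $e=\deg X_{j,s}$ occur.

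Thus the heart of the argument is purely numerical. Substituting $P=B^\rho$, a single summand becomes $B^{E(j,e,\rho)}$ where
\[
E(j,e,\rho)=\bigl(j+1-\tfrac{1}{e}\bigr)-\rho\bigl(j+2-\tfrac{2}{e}\bigr).
\]
Since $j+2-2/e>0$ for $e\geq 2$, the term is $\ll 1$ precisely when
\[
\rho\;\geq\;\frac{j+1-1/e}{\,j+2-2/e\,}.
\]
So I must verify that the right-hand side is $\leq n/(n+1)$ for every admissible pair $(j,e)$, namely those with $e\geq 2$ and $1\leq j\leq\min\{e-1,n-2\}$ (the conditions encoded by $\delta_{j,e}=1$).

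For the numerical verification, fix $j$ and regard the ratio as a function of $x:=1/e\in(0,1/(j+1)]$. A direct computation of $\frac{d}{dx}\frac{j+1-x}{j+2-2x}=\frac{j}{(j+2-2x)^{2}}>0$ shows the ratio is strictly increasing in $x$, so it attains its maximum at the endpoint $x=1/(j+1)$, i.e.\ at $e=j+1$. Plugging in,
\[
\frac{(j+1)-1/(j+1)}{(j+2)-2/(j+1)}=\frac{(j+1)^2-1}{(j+1)(j+2)-2}=\frac{j(j+2)}{j(j+3)}=\frac{j+2}{j+3}.
\]
The latter is increasing in $j$, and the constraint $j\leq n-2$ gives the overall maximum $n/(n+1)$, attained at $(j,e)=(n-2,n-1)$. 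Hence the hypothesis $\rho\geq n/(n+1)$ guarantees $E(j,e,\rho)\leq 0$ for every admissible $(j,e)$, each summand is $O(1)$, and combining with the uniform bound on the number of summands completes the proof.

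There is no real obstacle here; the only mild subtlety is making sure that the maximizing pair $(n-2,n-1)$ genuinely satisfies both constraints $j\leq e-1$ and $j\leq n-2$, which it does with equality on both sides. This is exactly why the exponent $n/(n+1)$ in the hypothesis (\ref{P_hypothesis_conclusion}) is the sharp threshold that will ultimately determine the exponent $n-1+1/(n+1)+\varepsilon$ appearing in Theorem \ref{thm_main_options}.
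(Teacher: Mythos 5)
Your proof is correct and rests on the same elementary computation as the paper's: both reduce the claim to verifying that the worst admissible pair is $(j,e)=(n-2,n-1)$, where the critical exponent $\frac{j+1-1/e}{j+2-2/e}=\frac{j+2}{j+3}$ equals $n/(n+1)$ exactly (the paper's $j=e-1$ endpoint check gives the identical ratio $(e+1)/(e+2)$). The only organizational difference is that you carry out a clean two-variable monotonicity optimization over all admissible $(j,e)$ at once, whereas the paper fixes $e$ and bounds the inner $j$-sum by its endpoint contributions; the substance and the sharp threshold are the same.
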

 \begin{proof}
Let  $E$ denote the set of possible distinct values of  $e := \deg X_{j,s}$  that arise as $j,s$ vary over all pairs in this sum, for which  $\delta_{j,e}$  is nonzero (as defined in (\ref{dfn_delta})). Then $E$ is a finite set with  $E\subset [2,\infty)$  and $|E| \leq  \sum_j |I_j|\ll_{n,\deg F} 1$ (by Theorem \ref{thm_strat_details}(\ref{thm_strat_num_comp})). Possibly overcounting, we can write
 \[  \Sigma \leq  \sum_{e \in E} \sum_{j=1}^{\min\{e-1,n-2\}} |I_j|  \frac{B^{j+1-1/e}}{P^{j+2-2/e}}.\]
 (For clarity, we observe that  the condition $j\leq e-1$ may be imposed here because otherwise $\delta_{j,e} =0$ (and the underlying reason is due to Proposition \ref{prop_anonymous}).)
  For each $e \in E$, the inner sum over $j$ is bounded above by the contributions of its upper and lower endpoints, so we check the contribution at $j=1$, at $j=e-1$ (only if $e-1 \leq n-2$) and at $j=n-2$ (only if $e-1 \geq n-2$).  The contribution at $j=1$ is 
\[
\frac{B^{2-\frac{1}{e}}}{P^{3-\frac{2}{e}}}\ll 1 \Longleftrightarrow P\gg B^{\frac{2e-1}{3e-2}}.
\]
Note that $\frac{2e-1}{3e-2}$ is a decreasing function of $e \geq 2$ and evaluates to $3/4$ when $e=2$, so the relation above will hold as long as $\rho \geq 3/4$ (which we note holds in particular for $\rho = n/(n+1)$, for all $n \geq 3$). 
The contribution at $j=e-1$ (which we only consider if $e-1 \leq n-2$), is
    \[
\frac{B^{e-\frac{1}{e}}}{P^{e+1-\frac{2}{e}}}\ll 1 \Longleftrightarrow P\gg B^{\frac{e-\frac{1}{e}}{e+1-\frac{2}{e}}} = B^{\frac{e+1}{e+2}}.
    \]
  Since the function $\frac{e+1}{e+2}$ is increasing as a function of $e\geq 2$ and  $e-1\leq n-2$, the relation above will hold as long as $\rho \geq n/(n+1).$
    Finally, the contribution at $j=n-2$ (which we only consider if $e-1 \geq n-2$) is
    \[
\frac{B^{n-1-\frac{1}{e}}}{P^{n-\frac{2}{e}}}\ll 1 \Longleftrightarrow P\gg B^{\frac{n-1-\frac{1}{e}}{n-\frac{2}{e}}} = B^{\frac{e(n-1)-1}{en-2}}.
    \]
   Since the exponent on the right-hand side is decreasing as a function of $e\geq 2$ (for $n \geq 3$) and  since we may assume $e-1\geq n-2$ (as follows from Proposition \ref{prop_anonymous}), it follows that the relation above will hold if  $\rho \geq \frac{(n-1)^2-1}{(n-1)n-2}= \frac{n}{n+1}$.   
\end{proof}

And finally, we have proved 
\begin{align*}
T_{\glo,\loc} & \ll \frac{B^n}{P^{2n}}\sum_{0 \leq j < k \leq n-1} S^{j,k}_{\glo,\loc}  \\ 
&\ll  \frac{B^n}{P^{2n}}\left(\sum_{1 \leq k \leq n-1} S^{0,k}_{\glo,\loc}+ \sum_{\substack{1 \leq j <k\leq n-1 \\ k \geq j+3}}  S^{j,k}_{\glo,\loc} + \sum_{\substack{1 \leq j <k\leq n-1\\ k \leq j+2}}   S^{j,k}_{\glo,\loc} \right)
\end{align*}
As previously remarked below (\ref{S_j_glo_loc_degenerate_bound}) and (\ref{S_glo_loc_nondegenerate_bound}), the total contribution to $T_{\glo,\loc}$ from the last sum over $j,k$ (with $k \leq j+2$) is dominated by quantities we have already bounded when treating $T_{\glo,\glo}$, up to an additional factor $(\log \|F\|)$. Precisely, this sum contributes $\ll_{n,\deg F,\ep} (\log \|F\|)B^\ep (B^n P^{-1} + P^n)$ to $T_{\glo,\loc}$. The remaining consideration is to apply (\ref{S_j_glo_loc_bound_small_j}) to each term in the first two sums, which shows these two sums yield a total contribution to $T_{\glo,\loc}$ of at most
\begin{align*}
    &  \ll_{n,\deg F,\ep} \frac{B^n}{P^{2n}} \left( \sum_{1 \leq k \leq n-1}  P^{n+k/2} \left( \frac{P^2}{B} \right)^{n-k+\ep} + \sum_{\substack{1 \leq j <k\leq n-1 \\ k \geq j+3}} 
    P^{n+j/2 +k/2} \left( \frac{P^2}{B} \right)^{n-k+\ep} \right) 
    \\
     &  \ll   B^\ep \left( P^{n}\sum_{1 \leq k \leq n-1}  \left( \frac{B}{P^{3/2}} \right)^{k} + P^{n}\sum_{\substack{1 \leq j <k\leq n-1 \\ k \geq j+3}} 
    P^{(j-k)/2} \left( \frac{B}{P} \right)^{k} \right) .
\end{align*}
 We claim this is $\ll( P^n +B^{n-1})B^\ep$. In the first sum, it suffices to observe that $B/P^{3/2} \ll 1$, which holds  for all $n \geq 2$ under our current hypothesis (\ref{P_hypothesis_conclusion}) on the size of $P$.
 In the second sum, 
 \[ P^{n}\sum_{\substack{1 \leq j <k\leq n-1 \\ k \geq j+3}} 
    P^{(j-k)/2} \left( \frac{B}{P} \right)^{k}
    \ll_n P^{n}\sum_{2\leq k\leq n-1} 
    P^{-3/2} \left( \frac{B}{P} \right)^{k} \ll P^{n-3/2}(B/P)^{n-1} \ll B^{n-1}.
    \]
 The penultimate inequality uses $B \gg P$, and the last inequality is an over-estimate, but suffices.  In total, under the hypothesis (\ref{P_hypothesis_conclusion}), 
 \[ T_{\glo,\loc} \ll_{n,\deg F,\ep} (\log \|F\|) (B^n P^{-1}+ P^n)B^\ep.\]

\subsection{Conclusion of the proof of   Theorem \ref{thm_main_options}  }
We have shown that if $P=B^\rho$ with $n/(n+1) \leq \rho < 1$ and $n \geq 2$,  then under the hypotheses of Theorem \ref{thm_main_options},  
\[
\frac{1}{|\calP|^2}\sum_{\substack{p,q\in \calP \\ p\neq q}} T(p,q;B) \ll_{n,\deg F,\ep} ( \log \|F\|) (B^n P^{-1} + P^n)B^\ep.
\]
Inserting this in (\ref{N_bounded_by_T}) and taking the optimal choice $P = B^{\frac{n}{n+1}},$
the conclusion is that   for any $\varepsilon>0$: 
\beq\label{NFBstar}
N(F,B) \ll_{n,\deg F,\ep} ( \log \|F\|) B^{n-1+\frac{1}{n+1}+\varepsilon} .
\eeq
This consequence of the polynomial sieve has been shown under the conditions from Lemma \ref{lemma_poly_sieve} if $m \geq 2$ (or Lemma \ref{lemma_poly_sieve_cor} if $m=1$), and the lower bound (\ref{P_big_enough}) for $P$ when constructing the sieving set. These are all satisfied if
\[ P \gg_{n,\deg F} \max \{ (\log \|F\|)^3, (\log B)^3\}.\]
With $P=B^{n/(n+1)}$, the latter condition is satisfied for all $B \gg_{n,\deg F}  1$. The former condition is satisfied if $B \gg_{n,\deg F} (\log \|F\|)^6$, so (\ref{NFBstar}) holds for all such $B$. This completes the proof of Theorem \ref{thm_main_options}.

\begin{rem}[Dependence on GRH]\label{remark_GRH_number_of_apps}
The  conditional version of the polynomial sieve stated in Lemma \ref{lemma_poly_sieve_cor} for $m=1$  assumes that GRH holds for a large collection of Dedekind zeta functions (each associated to some $\bfk \in [-2B,2B]^n$, in the proof provided in \cite[\S 3.2]{BonPie24}). As $B \rightarrow \infty$, this formulation of  Lemma \ref{lemma_poly_sieve_cor} assumes GRH for the Dedekind zeta functions of potentially infinitely many fields. 

 However, an inspection of the proof in \cite[\S3.2]{BonPie24} shows that the following average statement (\ref{weaker_GRH_average}) (weaker than assuming GRH for all Dedekind zeta functions) suffices to prove  Lemma \ref{lemma_poly_sieve_cor} in the following form  (which would also imply its consequences in this paper, namely the $m=1$ case of all the main theorems in this paper). Let $G(Y,\bX)$ be monic in $Y$ and write $G(Y,\bX) = Y^D + f_1(\bX)Y^{D-1} + \cdots + f_d(\bX)$ for some $D \geq 2$. Following the notation of \cite[\S3.2]{BonPie24}, fix any $\bfk \in \Z^n$ with $G(Y,\bfk)=0$ solvable and $f_d(\bfk) \neq 0$. Let $g_{\bfk}(y)$ be an irreducible factor of the polynomial $\tilde{g}_{\bfk}$ defined by $G(Y,\bfk) = (Y-y_0)\tilde{g}_{\bfk}(Y)$. Let $F_{\bfk}$ denote the splitting field of $g_{\bfk}(Y)$ and $G_{\bfk}$   the Galois group of $F_{\bfk}$ over $\Q$, and $\pi_{\bfk}(Q)$  the number of primes in $[Q/2,Q]$ that split completely in $F_{\bfk}$. 
Suppose it is known that for a sufficiently small constant $c_D$, 
\beq\label{weaker_GRH_average}
 \sum_{\substack{\bfk\in [-2B,2B]^n\\f_{d}(\bfk)\neq 0\\ G(y,\bfk)=0 \text{ solvable in $\Z$}}}  \left|\pi_{\bfk}(Q) - \frac{1}{|G_{\bfk}|}\frac{Q}{\log Q}\right|    \\
\\ \leq c_D \frac{Q}{\log Q} \sum_{\substack{\bfk\in [-2B,2B]^n\\f_{d}(\bfk)\neq 0\\ G(y,\bfk) =0\text{ solvable in $\Z$}}} 1 ,
\eeq
for all $Q\gg B^{\kappa}$ (for a given $1/2 \leq \kappa \leq 1$) and $B \geq 1$.
Then for    $\mathcal{P}$ a set of primes $p \in [Q,2Q]$ with $|\mathcal{P}| \gg Q(\log Q)^{-1}$, a minor adaptation of the argument in \cite[\S 3.2]{BonPie24} proves that the smoothed count $\mathcal{S}(G,B)$ for $N(G,B)$ satisfies
\[ \mathcal{S}(G,B) \ll_{n,\deg G} B^nQ^{-1}\log Q +  \frac{1}{|\mathcal{P}|^2} \sum_{p \neq q \in \mathcal{P}} \left| \sum_{\bx \in \Z^n} (v_p(\bx)-1)(v_q(\bx)-1) W(\bx) \right|,\]
for all $Q \gg_{\deg G} \max\{ (\log \|G\|)^{\al_0}, B^\kappa\}$, for a choice of $\al_0>2$. Here as usual, for each $p$, $v_p(\bx) = \# \{ y \in \F_p: G(y,\bx)=0\}$ is the local count. If, as in this paper, the upper bound for the second term on the right-hand side of the sieve inequality is on the order of $\ll Q^{n+\ep}$, the relevant power is $\kappa \approx n/(n+1)$. Note that (\ref{weaker_GRH_average}) is   weaker than requiring GRH on average; it only requires that on average over $\bfk$ in the relevant set, the possible Siegel zeroes of the Dedekind zeta functions $\zeta_{F_{\bfk}}(s)$ do not contribute to the error term in the Chebotarev density theorem  in an uncontrolled way (see \cite[Eqn. (3.6)]{BonPie24}). 
\end{rem}

    \section{Construction of examples}\label{sec_examples}
    In this section, we construct examples necessary to verify genericity in \S \ref{sec_genericity}, examples of classes of polynomials to which each main theorem applies, and examples that verify   this paper recovers nearly all the previous literature on these problems.  
  
\subsection{Examples: $n$-genuine and strongly $n$-genuine polynomials}
To gain some intuition, we provide  the following examples: $\Q(X_1,X_2,\sqrt{X_1+X_2})$ is a 2-genuine extension of $\Q(X_1,X_2)$; $Y^2 - X_1-X_2$ is a $2$-genuine polynomial.
Next, $\Q(X_1,X_2,X_3, \sqrt{X_3+\sqrt{X_1+X_2}})$ is a 3-genuine extension of $\Q(X_1,X_2,X_3)$ but not a strongly 3-genuine extension; $(Y^2-X_3)^2 - X_1-X_2$ is  $3$-genuine polynomial but not a strongly $3$-genuine polynomial. 

Being $n$-genuine imposes that  $F$ has at least degree 2 in $Y$: for if $\deg_Y F(Y,X_1,\ldots,X_n) =1$ then $F(Y,X_1,\ldots,X_n)$ is not an $n$-genuine polynomial, since $\Q(X_1,\ldots,X_n)[Y]/F(Y,\bX))=\Q(X_1,\ldots,X_n)$ is not an $n$-genuine extension of $\Q(X_1,\ldots,X_n)$ (since it is trivial). Concordantly, if $\deg_Y F(Y,\bX)=1$ then trivially $B^n \ll N(F,B) \ll B^n$, since $F(Y,\bx)=0$ is solvable for all $\bx \in [-B,B]^n$. 

\begin{example}[Composita]\label{example_composita}
  Let $1\leq j <n$. Let $K$ be a $j$-genuine extension of $\Q(X_1,...,X_j)$ and $K'$ be a $(n-j)$-genuine extension of $\Q(X_{j+1},...,X_n)$. Define $L=KK'$ to be the compositum of fields. Then  $L$ is an $n$-genuine extension but not strongly $n$-genuine extension of $\Q(X_1,...,X_n)$. 
 \end{example}
    \begin{proof}
        Since $K(X_{j+1},...,X_n)\subset L$ is a subfield of $L$ generated by a polynomial in only $Y,X_1,...,X_j$,  we know that $L$ cannot be strongly $n$-genuine. It remains to show that $L$ is $n$-genuine. 

        Assume for contradiction that $L$ is not $n$-genuine, so there exists a polynomial $G(Y,\bX_I) \in \Z[Y,\bX_I]$ for some index set $I \subsetneq \{1,\ldots,n\}$ such that $$L = \Q(\bX)[Y]/G(Y,\bX_I).$$
        Assume without loss of generality that   $G$ does not depend on $X_1$ (that is, $I \subset \{2,\ldots,n\}$). (If instead, $G$ does not depend on some $X_i$ with $i \in \{j+1,\ldots,n\}$, the following argument would be applied with $K'$ in place of $K$.)  Then we can write $L = L'(X_1)$, where $$L' = \Q(X_2,...,X_n)[Y]/G(Y,X_2,...,X_n).$$
        Now, we note that $L\cap \overline{\Q(X_1,...,X_j)} = K = L'(X_1) \cap \overline{\Q(X_1,...,X_j)}.$ Thus, this implies that $K = (L'\cap \overline{\Q(X_2,...,X_j)})(X_1)$; in other words, $K$ is of the form $M(X_1)$ for $M$ an extension of $\Q(X_2,...,X_j)$. This implies that $K$ is not $j$-genuine and gives a contradiction. 
    \end{proof}

  The next example produces classes of polynomials to which Theorem \ref{thm_genuine_not_strongly} applies.
\begin{example}[$n$-genuine but not strongly $n$-genuine]\label{example_gen_not_strongly}
 Let $G(\bX_I)\in \Z[\bX_I]$ and $H(\bX_J)\in \Z[\bX_J]$, where $I$ and $J$ partition $\{1,\ldots,n\}$ nontrivially. Assume that $\deg_{X_i}G(\bX_I)\geq 1$ for all $i \in I$ and $\deg_{X_j}H(\bX_J)\geq 1$ for all $j \in J$. First, we claim that for $e \geq 2$, $Y^e - G(\bX_I)$ is an $|I|$-genuine polynomial. Indeed, $Y^e - G(\bX_I)$  generates the extension $\Q(\bX)(\sqrt[e]{G(\bX_I)})$ and there is no polynomial in fewer than $|I|$ variables for which $\sqrt[e]{G(\bX_I)}$ is a root. Similarly, we see that for $f\geq 2$, $Y^f - H(\bX_J)$ is a $|J|$-genuine polynomial. 

Assume that $e\geq 2$, $f\geq 2$, and that $\gcd(e,f)=1$. Consider the compositum field:
$$K=\Q(\bX)(\sqrt[e]{G(\bX_I)}, \sqrt[f]{H(\bX_J)}).$$
We saw in Example \ref{example_composita} that $K$ is an $n$-genuine but not strongly $n$-genuine extension. Thus, if $F(Y,\bX)$ is a polynomial satisfying that $$K = \Q(\bX)[Y]/F(Y,\bX),$$
then $F(Y,\bX)$ is an $n$-genuine but not strongly $n$-genuine polynomial.
Finally, we construct such a polynomial $F(Y,\bX)$. Consider the field
$$K' = \Q(\bX)(\alpha), \qquad \text{for $\alpha = \sqrt[e]{G(\bX_I)} (1+\sqrt[f]{H(\bX_J)}.$}$$
Since $\gcd(e,f)=1$, we can see that $\alpha^e \in K'$ implies that $\sqrt[f]{H(\bX_J)}\in K'$. So, we immediately deduce that $\sqrt[e]{G(\bX_I)}\in K'$. Thus, $K'\subset K$. However, $[K:\Q(\bX)] = ef = [K':\Q(\bX)].$ So, $K'=K$. So, we can take $F(Y,\bX)$ to be the minimal polynomial of $\alpha$. 
\end{example}

Recall from Lemma \ref{lemma_preservation_scaling} that being $n$-genuine is preserved by scaling.
But being $n$-genuine is not, in general, preserved by isomorphism of fields. For example, the field $\Q(X_1,X_2,\sqrt{X_1+X_2})$ is a 2-genuine extension, but $\Q(X_1,X_2,\sqrt{2X_1})$ is not. However the first field is  isomorphic to the second, under the map $X_1\mapsto X_1-X_2$, $X_2\mapsto X_1+X_2$.  This motivates defining the stronger notion of a $(1,n)$-allowable polynomial, which we turn to next.
 
\subsection{ Examples: $(1,n)$-allowable and strongly $(1,n)$-allowable polynomials}
First we provide some non-examples, to distinguish these two properties.

\begin{example}[Linear is not allowed]
$F(Y,\bX)=Y^D - L(X_1,\ldots,X_n)$ for $D \geq 2$ and $L$ a linear form is not $(1,n)$-allowable for $n \geq 2$. 
\begin{proof} A linear change of variables $Z_1=L(X_1,\ldots,X_n)$ shows that $F$ can be expressed as a function of only two variables $Y,Z_1$ and is hence not $n$-genuine.   Thus $F$ is not $(1,n$)-allowable.  Concordantly, as in \cite[Ch. 13]{Ser97},
$ M = \{ \bx = (x_1,\ldots,x_n) \in \Z^n : \text{$x_1$ is a square}\}$ is an affine thin set, but has $\gg B^{n-1/2}$ integral points of height at most $B$. Restricting attention to $(1,n)$-allowable polynomials rules out such cases from the settings of our main theorems. Similarly, note that the case in which $H(\bX)$ is linear is ruled out by the hypothesis of Theorem \ref{thm_cyclic_uniform}.
\end{proof}
\end{example}

  \begin{example}[$(1,n)$-allowable is stronger than having no change of variables]
  If $F(Y,\bX)$ is $(1,n)$-allowable then there does not exist a change of variables $\bX\rightarrow \bZ$ in $\mathrm{GL}_n(\Q)$ such that $F(Y,\bZ) \in \Z[Y,Z_2,...,Z_n]$. (In particular, if $F(Y,\bX)$ is weighted-homogeneous  so that the hypersurface $F(Y,X_1,\ldots,X_n)=0$ lies in weighted projective space $\mathbb{P}(e,1,\ldots,1)$, then this condition imposes that it is not contained in a hyperplane.)  Yet being $(1,n)$-allowable is stronger than having no such change of variables: for example, $F(Y,X_1,X_2,X_3) = (Y-X_1^2)^2 + X_2^2+X_3^2$ is not $(1,3)$-allowable. (To see this,  note that $F(Y,X_1,X_2,X_3)$ generates the extension $\Q(X_1,X_2,X_3,X_1^2 + \sqrt{X_2^2+X_3^2}) = \Q(X_1,X_2,X_3,\sqrt{X_2^2+X_3^2})$, which is not 3-genuine.) But it also has the property that there is no change of variables $\bX\rightarrow \bZ$ in $\mathrm{GL}_3(\Q)$ such that $F(Y,\bZ) \in \Z[Y,Z_2,Z_3]$.  
\end{example}

 We next construct a class of polynomials for which Theorem \ref{thm_allowable_not_strongly} applies.

\begin{example}[$(1,n)$-allowable but not strongly $(1,n)$-allowable]\label{ex: notstrong}
   Let $I,J$ be a nontrivial partition of $\{1,...,n\}$, and let $H(\bX_I)\in\mathbb{Z}[\bX_I]$, $G(\bX_J)\in\mathbb{Z}[\bX_J]$ be two irreducible homogeneous polynomials such that there is no $L_I\in\GL_{|I|}(\Q)$ such that $H(L_I(\bX_I))\in\mathbb{Z}[\bX_{I'}]$ with $I' \subsetneq I$, and  there is no $L_J\in\GL_{|J|}(\Q)$ such that $G(L_J(\bX_J))\in\mathbb{Z}[\bX_J']$ with $J' \subsetneq J$. Then   for each $d\geq 2$ and $e \geq 2$,
   \[
   F(Y,\bX)=(Y^{d}+H(\bX))^{e}+G(\bX)=(Y^{d}+H(\bX_I))^{e}+G(\bX_J)
   \]
   is $(1,n)$-allowable but not $(1,n)$-strongly allowable.

   \begin{proof}
    Throughout, we may write $H(\bX)=H(\bX_I)$ and $G(\bX)=G(\bX_J)$, even if each variable only depends nontrivially on certain coordinates. 
    We start by recording a simple observation: suppose a homogeneous polynomial $M(\bfX)\in\mathbb{Z}[\bfX]$ is given. Let a subset $B\subsetneq\{1,...,n\}$ be fixed. 
    Let $M^B(\bX_B)$ be the component of $M(\bX)$ containing monomials only in terms of $X_b$ for $b\in B$; that is, $M^B(\bX_B) = \left. M(\bX) \right|_{X_a=0,a \not\in B}$. 
   Suppose $\tau\in\GL_{n}(\Q)$ is a linear transformation such that its associated matrix does not have any zeros on the diagonal.
    (That is, if $\tau_i$ represents the $i$-th row of $\tau$ then $\tau_i\cdot \bX$ contains $X_i$; that is to say  $X_i$ is mapped to a linear combination that includes $X_i$.) Then   there exists 
    $\tau^{B}\in \GL_{|B|}(\Q)$ acting on $\bX_B$ such that
      \beq\label{matrix_fact}
      M_{\tau}(\bfX)=M^B_{\tau^{B}}(\bX_B) + M_{1}(\bfX),
      \eeq
      where  $\left. M_{1}(\bX) \right|_{X_a=0,a \not\in B}\equiv  0$. (That is, every monomial in $M_{1}$ contains a factor $X_{a}$ for some $a\in \{1,...,n\}\setminus B$. On the other hand, $M^B_{\tau^{B}}$ depends only on $X_{b}$ for $b\in B$.) Here we use the notation $M_\tau(\bX) = M(\tau \bX)$ and similarly for $M_{\tau^B}$.

       To verify the example, we first  show that $F(Y,\bX)$ is $(1,n)$-allowable.  By contradiction, assume we can find $L\in \GL_{n}(\mathbb{Q})$ such that $F_{L}(Y,\bX):=F(Y,L(\bX))$ is not $n$-genuine. By reordering the columns of the matrix representing $L$, we may assume that there are no zeroes on the diagonal. (That is, if $L_i$ represents the $i$-th row of $L$ then $L_i\cdot \bX$ contains $X_i$; that is to say  $X_i$ is mapped to a linear combination that includes $X_i$.) 
      By Lemma \ref{lem : spectoglobal}, there exists a nonempty subset $K\subsetneq\{1,...,n\}$ such that    
\beq\label{F_linear_factor}
\text{for every $\bx_K \in \Q^{|K|}$,  $F_L(Y,\bX_{K^c},\bx_K)$ has a linear factor in $Y$ over $\overline{\Q}$.}
\eeq
 We aim to contradict this. Let $G_L(\bX)=G(L(\bX))$. We claim that if $\ell\not\in K$, then $\deg_{X_{\ell}}G_{L}=0$. Otherwise, by the Hilbert Irreducibility Theorem, there exists some $\bx_K \in \Q^{|K|}$  such that $G_{L}(\bX_{K^c},\bx_K)$ is irreducible over $\mathbb{Q}$. Since we are assuming $\ell\not\in K$, then $\deg_{X_{\ell}}G_{L}(\bX_{K^c},\bx_K)\geq 1$, i.e. $G_{L}(\bX_{K^c},\bx_K)$ is not constant. In particular, this implies that $G_{L}(\bX_{K^c},\bx_K)$ is separable, so that it is not an $e$-power over $\overline{\Q}$. In turn, this implies that  
 \[F_{L}(Y,\bX_{K^c},\bx_K)=(Y^{d}+H_{L}(\bX_{K^c},\bx_K))^{e}+G_{L}(\bX_{K^c},\bx_K),
\]
is irreducible  over $\overline{\Q}$, hence contradicting (\ref{F_linear_factor}).  Thus $\deg_{X_{i}}G_{L}=0$ for all $i\not\in K$. Observe that this furthermore implies that $J\subset K$. On the other hand, by the fact (\ref{matrix_fact}), we can find $L^K\in\GL_{|K|}(\Q)$ such that
      \[
      G_{L}(\bfX)=G^K_{L^{K}}(\bX_K) + G_{1}(\bfX),
      \]
      where $\left. G_{1}(\bX)\right|_{X_i=0,i \not\in K}\equiv  0$ and $\deg_{X_{i}}G_{L}=0$ for every $i\not\in K$. Taking these two facts in combination, it follows that 
       \[
      G_{L}(\bfX)=G^K_{L^{K}}(\bX_K).
      \]
    Thus,
    \[
    F_{L}(Y,\bfX)=(Y^{d}+H_{L}(\bfX))^{e}+G^K_{L^{K}}(\bX_K).
\]
Next,  by Remark \ref{remark_G_sequence} we know that we can find a polynomial $P\in\Q(\bX_K)[Y]$ such that
\[
\Q(\bfX)[Y]/F_{L}=(\Q(\bX_K)[Y]/P)(\bX_{K^c}).
\]
This implies that for every $\sigma \in \GL_{|K|}(\Q)$
\[
\Q(\bfX)[Y]/F_{\sigma\circ L}=(\Q(\bX_K)[Y]/P_{\sigma})(\bX_{K^c}),
\]
so that $F_{\sigma \circ L}$ is also not $n$-genuine.
(While $\sig$ initially acts on $\bX_K$, we have extended the definition of $\sigma$ to act on $n$ variables by setting $\sigma (X_{i})=X_{i}$ for all $i\not\in K$; thus $\sigma \circ L$ is well-defined.) We claim that we can find some $\sig \in \GL_{|K|}(\Q)$ such that $G_{\sigma\circ L}=G$ and $\sigma \circ L$ has no zeroes on the diagonal.
To verify this, first recall that $G_L(\bX) = G^K_{L^K}(\bX_K)$, and by the hypothesis of the example, the polynomial depicted on the right-hand side depends nontrivially on  $X_k$ for each $k \in K$. Moreover $L^K$ is invertible when acting on $\bX_K$, so we can simply take $\sig = (L^K)^{-1}$ and then extend it to act on $n$ variables by nominally setting $\sig=\begin{pmatrix}
    (L^{K})^{-1}&0\\0&I
\end{pmatrix}$ (assuming temporarily for notational simplicity that $K=\{1,\ldots,|K|\}$); this verifies the first part of the claim. Second, we claim that $\sig \circ L$ does not have any zeroes on the diagonal. This is true since $\sig \circ L$ acts as the identity on $\bX_K$, and on the other variables, it acts by $L$, which has no zeroes on the diagonal.

Consequently, by replacing $L$ by $\sigma\circ L$, from now on we may assume that $L$ is such that
 \[
   F_{L}(Y,\bX)=(Y^{d}+H_{L}(\bX))^{e}+G(\bX),
   \]
   and $L$ has no zeroes on the diagonal. 
  Here we note that $G$ is \emph{not} modified by $L$. 
  Again by the fact (\ref{matrix_fact}) we can find $L_I \in \GL_{|I|}(\Q)$ acting on $\bX_I$ such that 
   \[
      H_{L}(\bfX)=H^I_{L^{I}}(\bX_I) + H_{1}(\bfX),
      \]
where $\left.H_{1}(\bX)\right|_{X_j=0,j \in J}\equiv  0$. Thus after restricting to $X_{j}=0$ for every $j\in J$,  the homogeneity of $G$ implies that 
\[
   \left. F_{L}(Y,\bX)\right|_{X_j=0, j \in J}=(Y^{d}+H^I_{L^{I}}(\bX_I))^{e}.
   \]
Now let $i\not\in K$; since $J \subset K$ this implies $i \in I$. By hypothesis we know that $\deg_{X_{i}}H^I_{L^{I}}\geq 1$. First suppose that $|I|=1$. Then under the hypothesis that $H(\bX_I)$ is irreducible over $\Q$, $H^I_{L^I}(\bX_I)$ is irreducible over $\Q$, hence separable, hence not a $d$-power over $\overline{\Q}$. Next, if $|I| \geq 2$, then by the Hilbert Irreducibility Theorem, we can find $\bx_{I \setminus\{i\}}=(x_{\ell})_{\ell\neq i} \in \Q^{|I|-1}$ such that $H^I_{L_{I}}(X_{i},x_{\ell})_{\ell\neq i}$ is irreducible over $\Q$, hence separable, and hence not a $d$-power over $\overline{\Q}$.  In either case, this shows the polynomial $F_{L}(Y,X_{i},X_{\ell}=x_{\ell},X_{j}=0)_{\ell\in I\setminus\{i\},j\in J}$ does not have a linear factor over $\overline{\Q}$, contradicting the conclusion of Lemma \ref{lem : spectoglobal} recorded in (\ref{F_linear_factor}). (Indeed, recalling the comment after Lemma \ref{lem : spectoglobal}, it contradicts the conclusion for specialized variables with indices in a superset of $K$.)

   Finally, we show that $F(Y,\bX)$ is  not strongly $(1,n)$-allowable.
Since $\mathbb{Q}(\bfX)[Y]/(Y^{e}+G(\bfX))$ is a subextension of $\mathbb{Q}(\bfX)[Y]/(F(Y,\bfX))$ and $G\in\ZZ[\bX_J]$ for a proper subset $J\subsetneq \{1,...,n\}$, it follows that $\mathbb{Q}(\bfX)[Y]/(F(Y,\bfX))$ is not strongly $n$-genuine, hence $F$ is not strongly $(1,n)$-allowable.
   \end{proof}
\end{example}

  We next verify the properties of six final classes of polynomials; these examples demonstrate that our general theorems recover nearly all previous literature on the topic. The first of these examples has been applied in the derivation of Corollary \ref{cor_genericity_M_Y}.
\begin{example}[Diagonal structure]\label{ex: generic diag}
    Fix integers $d\geq 2$ and   $k_j\geq 2$ for $j=1,...,n.$ Then the polynomial $$F(Y,\bX) = Y^d - \sum_{j=1}^n X_j^{k_j}$$
    is $(1,n)$-allowable.
        
    \begin{proof}    Consider the vanishing set $V(F) \subset \A_{\overline{\Q}}^{n+1}$; this variety has an isolated singularity at the vector $\mathbf{0}$ and hence $\dim_{\overline{\Q}}(\sing(V(F)))=0.$ Assume for contradiction that there exists   $L\in \GL_n(\Q)$ such that $F(Y,L(\bX))$ is not $n$-genuine, so $F(Y,L(\bX))$ generates the same extension as a polynomial $G(Y,\bX_I)$ in which $|I| < n$. Without loss of generality, we may suppose $G(Y,\bX) \in \Z[Y,X_2,\ldots,X_n]$. Then $\{y=0\}\cap\{x_i=0, i=2,...,n\}$ is contained in the singularity of $V(F(Y,L(\bX)))$, which implies that $\dim_{\overline{\Q}}(\sing(V(F))) = \dim_{\overline{\Q}}(\sing(V(F(Y,L(\bX)))))>0$, which is a contradiction.  
    \end{proof}
\end{example}

The next example confirms that  Theorem \ref{thm_explicit} immediately implies Theorem \ref{thm_cyclic_uniform}. It has also been applied (twice) in the proof of Theorem \ref{thm_genericity} to verify that the polynomials $\tilde{h}_1,\ldots,\tilde{h}_s$ are not all identically zero on the relevant moduli space.
\begin{example}[Cyclic structure]\label{ex: generic cyclic}
 Let $H(\bX) \in \Z[X_1,\ldots,X_n]$ be an irreducible polynomial of total degree at least 2 and define for a fixed $d \geq 2$,  $$F(Y,\bX) = Y^d - H(\bX).$$
Suppose there is no $L\in \GL_n(\Q)$ such that $H(L(\bX)) \in \Z[\bX_I]$ for some set $I \subsetneq \{1,\ldots,n\}$.  Then $F(Y,\bX)$ is strongly $(1,n)$-allowable.  
 \begin{proof}
 We first remark that $F$ is absolutely irreducible, since $Y^d-H(\bX)$ is reducible over $\overline{\Q}$ if and only if $H(\bX)$ is an $\ell$-power over $\overline{\Q}$ for some $\ell|d$ with $\ell>1$. Such a factorization $H(\bX)=(G(\bX))^\ell$ for some $G \in \overline{\Q}[\bX]$ cannot occur under the hypothesis, since then  for every $\bx' \in \Q^{n-1}$, $H(X_1,\bx') = (G(X_1,\bx'))^\ell$. But by the Hilbert Irreducibility Theorem, there exists some $\bx' \in \Q^{n-1}$ such that $H(X_1,\bx')$ is irreducible over $\Q$, hence separable, hence certainly not an $\ell$-power over $\overline{\Q}$. 
 
    Assume for contradiction that $F(Y,\bX)$ is not strongly $(1,n)$-allowable, so there exists   $L\in \GL_n(\Q)$ such that $F_L(Y,\bX):=F(Y,L(\bX))$ is not strongly $n$-genuine. Then there exists a polynomial $G(Y,\bX_I)$ for some set $I \subsetneq \{1,\ldots,n\}$,  such that $$K:=\Q(\bX)[Y]/(F(Y,L(\bX))) \supset \Q(\bX)[Y]/(G(Y,\bX_I)) \supsetneq \Q(\bX).$$
Without loss of generality, we may suppose $I=\{2,\ldots,n\}$.  
Let $H_L(\bX)$ denote $H(L(\bX))$. By hypothesis,  $\deg_{X_{1}}H_L(\bX)>0$, hence by the Hilbert Irreducibility Theorem we can find $\bx'=(x_{2},...,x_{n}) \in \Z^{n-1}$ such that $H_{L,\bx'}(X_{1}) := H_L(X_1,\bx')$ is irreducible over $\Q$. On the other hand, $Y^{d}-H_{L,\bx'}(X_{1})$ is reducible over $\overline{\Q}$ if and only if $H_{L,\bx'}(X_{1})$ is an $\ell$-power for some $\ell>1$ with $\ell | d$ over $\overline{\Q}$ but this is not possible since $H_{L,\bx'}(X_{1})$ is irreducible over $\Q$ (hence $H_{L,\bx'}(X_{1})$ is separable, and certainly not an $\ell$-power).  Thus under the assumption that such a change of variables $L$ exists,  we have produced an index set $I=I(L)$ (depending on $L$) with $|I|<n$,   for which there is some $\bx_I \in \Q^{|I|}$ for which $F_L(Y,\bX_{I^c},\bx_I)$ is not reducible over $\overline{\Q}$.   
By Lemma \ref{lem : spectoglobal} this yields a contradiction. For by that lemma, since $F_L(Y,\bX)$ is not strongly $n$-genuine then  there exists a nonempty subset $J=J(L)\subsetneq \{1,\ldots,n\}$  such that $F_L(Y,\bX_{J^c},\bx_J)$ is  reducible over $\overline{\Q}$ for all choices of $\bx_J \in \Q^{|J|}$. Indeed, by Remark \ref{remark_G_sequence}, the conclusion of Lemma \ref{lem : spectoglobal} holds for $J=I$, and this yields a contradiction.
 
\end{proof}
 
\end{example}

The next example has been applied in the derivation of Corollary \ref{cor_genericity_sing}.
 \begin{example}[Cyclic structure, singular locus dimension $s$]\label{ex: genericsin}
Fix $d \geq 2, m \geq 1$. Let $H(\bX) \in \Z[X_1,\ldots,X_n]$ be a homogeneous polynomial of degree $d\cdot m$ such that the projective hypersurface $V(H)\subset \mathbb{P}^{n-1}$ defined by the vanishing of $H$ has $\dim (\sing V(H))=s$, where this denotes the projective dimension in $\mathbb{P}^{n-1}$, $-1 \leq s \leq n-1$. Moreover, suppose there is no $L\in \GL_n(\Q)$ such that $H(L(\bX)) \in \Z[\bX_I]$ with $|I|<n$. 
Then $F(Y,\bX)=Y^d -H(\bX)$ is strongly $(1,n)$-allowable.

     \begin{proof}
       This follows immediately from Example \ref{ex: generic cyclic}, upon choosing $H$ with the desired dimension of the singular locus.
     \end{proof}
 \end{example}

\begin{example}[Projective smooth hypersurfaces]\label{ex: generic nonsingular}
     Let $G(Y,X_{1},...,X_{n})\in \Z[Y,X_1,\ldots,X_n]$ be a homogeneous nonsingular polynomial that is monic in $Y$ and of degree at least 2.   Then $G$ is strongly $(1,n)$-allowable. 
    
   \begin{proof} By contradiction, assume we can find $L\in \GL_{n}(\mathbb{Q})$ such that $G_{L}(Y,\bX) :=G(Y,L(\bX))$ is not strongly $n$-genuine. By Lemma $\ref{lem : spectoglobal}$ this would imply that there exists $I\subsetneq\{1,...,n\}$ such that for every $\bfx_{I}\in \Q^{|I|}$, the polynomial $G_L(Y,\bX_{I^c},\bx_I)$ is reducible over $\overline{\Q}$. Without loss of generality we may assume $I=\{2,...,n\}$. Consider the morphism
   \[
  \pi: \begin{matrix}
       V(G_{L}) &\rightarrow &\mathbb{P}^{n-2}, \\  [y:x_{1}:...:x_{n}]&\mapsto& [x_{2}:...:x_{n}].
   \end{matrix}
   \]
The linear transformation $L$ preserves the nonsingularity of the polynomial, and  the geometric fiber $\pi^{-1}(\eta)$   is smooth (here $\eta$ is the generic point in $\mathbb{P}^{n-2}$). (Indeed, smoothness is preserved by base change \cite[EGA IV, Proposition (17.3.3) (iii), p. 61]{EGAIV_part4}.) It follows that we can find $[x_{2}:...:x_{n}]$ such that $G_{L}(Y,X_{1},x_{2}T,...,x_{n}T)$ is a smooth curve. But this implies that  $G_{L}(Y,X_{1},x_{2},...,x_{n})$   is irreducible over $\overline{\Q}$ for this choice of $[x_2: \cdots :x_n]$, which yields a contradiction. Thus it must be that $G_L(Y,\bX)$ is strongly $n$-genuine for all $L$, and hence $G$ is strongly $(1,n)$-allowable.
    \end{proof}
\end{example}

The next example confirms that Theorem \ref{thm_explicit} recovers the main result of \cite{BonPie24}.
\begin{example}[Nonsingular weighted projective hypersurface]\label{ex: generic nonsingular weighted}
Fix integers $d,e,\geq 1$ and $m \geq 2$. Let $F(Y,X_{1},...,X_{n})$ be a polynomial   of the form (\ref{F_dfn_intro})   in which for each $j=1,\ldots,d$ the polynomial $f_j$ is  homogeneous of degree $m \cdot e \cdot j$. Thus  $F(Y,\bX)=0$ defines a hypersurface in weighted projective space $\mathbb{P}(e,1,\ldots,1)$, and we assume in particular that  $V(F(Y,\bX))$ is nonsingular. Then $F$ is strongly $(1,n)$-allowable. 

\begin{proof}  To verify this, suppose for contradiction that $F(Y,X_{1},...,X_{n})$ is not strongly $(1,n)$-allowable; then $F(Z^{e},X_{1},...,X_{n})$ is not strongly $(1,n)$-allowable. But by hypothesis $V(F(Y,\bX))$ is nonsingular, and as shown in \cite[\S 3.3]{BonPie24} since $m\geq 2$ it follows that  $V(F(Z^{e},X_{1},...,X_{n}))\subset \mathbb{P}^{n}$ is also nonsingular. Thus $V(F(Z^{e},X_{1},...,X_{n}))$ is a smooth variety and by Example \ref{ex: generic nonsingular}, $F(Z^{e},X_{1},...,X_{n})$ is strongly $(1,n)$-allowable, showing the desired contradiction.
\end{proof}
 \end{example}
 \begin{rem}\label{remark_correction}
  The main result of \cite{BonPie24} states that $N(F,B) \ll_{n,\deg F,\ep} B^{n-1+\frac{1}{n+1}+\ep}$ with an implicit constant independent of $F$. In fact, the argument presented in that paper only shows polylog dependence on $\|F\|$, proving $N(F,B) \ll_{n,\deg F,\ep} (\log (\|F\|+2))^e B^{n-1+\frac{1}{n+1}+\ep}$ for some $e=e(n)$;   Example \ref{ex: generic nonsingular weighted} shows this is recovered by the present work.
\end{rem}

 The next example demonstrates that our main theorems recover many cases of   \cite[Thm. 1.1]{Bon21}.
\begin{example}[Separation of variables]\label{ex: sepvars}
Fix $d \geq 2, m \geq 2$. Let $P(Y) \in \Z[Y]$ be a monic polynomial of degree $d$ and let  $G(X_1,\ldots,X_n) \in \Z[X_1,\ldots,X_n]$ be an irreducible homogeneous polynomial of degree $m$ such that there is no $L \in \GL_{n}(\Q)$ such
that $G(L(X_{1},...,X_{n}))\in\Q[\bX_I]$ with $|I|< n$. Then $F(Y,\bX)=P(Y) -G(\bX)$ is $(1,n)$-allowable. 

\begin{rem}\label{remark_sepvars}
The result \cite[Thm. 1.1]{Bon21} proves $N(F,B)\ll_{F,\ep}B^{n-1+1/(n+1)+\ep}$ for all $F(Y,\bX)=P(Y) + G(\bX)$ in which $P$ is a polynomial of degree at least 2 and $G$ is a nonsingular homogeneous polynomial.  Theorem \ref{thm_allowable_not_strongly} recovers this for all such $F$ that are $(1,n)$-allowable but not strongly $(1,n)$-allowable; Theorem \ref{thm_explicit} recovers this unconditionally for all such $F$ that are strongly $(1,n)$-allowable and $P(Y)$ is a polynomial in $Y^m$ for some $m \geq 2$; otherwise Theorem \ref{thm_explicit} only recovers this with the additional hypothesis of GRH.
\end{rem}
    \begin{proof}
        We begin with a preliminary observation. Under the hypothesis that $G$ is irreducible, for every $a\in\mathbb{C}$ the polynomial $G(\bfX)-aT^{m}$ in $n+1$ variables is absolutely irreducible since $G$ is not an $m$-power. Thus, the affine variety $\{G-a\}$, obtained by intersecting the projective variety associated to $G(\bfX)-aT^{m}$ with the open set $\{T\neq 0\}$ is absolutely irreducible. Hence, for every $a\in\mathbb{C}^{\times}$, $G(\bfX)-a$ is an absolutely irreducible polynomial. In particular, this implies that for every $y$ with $P(y)\neq 0$, $G(\bfX)-P(y)$ is absolutely irreducible, hence $F(Y,\bX)=P(Y)-G(\bfX)$ is absolutely irreducible.
        
        Now we verify the example. To seek a contradiction, suppose that we can find $L\in \GL_{n}(\mathbb{Q})$ such that $F_{L}(Y,\bX):=F(Y,L(\bX))$ is not $n$-genuine. By Lemma $\ref{lem : spectoglobal}$ we know that there exists a nonempty subset $I\subsetneq\{1,...,n\}$ such that \beq\label{sep_vars_allowable}
        \text{for every $\bx_I \in \Q^{|I|}$, the polynomial $F_L(Y,\bX_{I^c},\bx_I)$ has a linear factor in $Y$ over $\overline{\Q}$.}
        \eeq
        Without loss of generality we may assume $I=\{2,...,n\}$. For a given  root $\alpha$ of the derivative $P'(Y)$,    let $S_{\alpha}=\left\{G(\bfX)-P(\alpha)=\frac{\partial G}{\partial X_{1}}=0\right\}$.   By the preliminary observation, $G(\bX) - P(\al)$ is irreducible so in particular it shares no common factor with $\frac{\partial}{\partial X_1}(G(\bX)-P(\al)) = \frac{\partial G}{\partial X_1}$; consequently $\dim S_{\alpha}=n-2$.  In what follows, given $(x_2,\ldots,x_n) \in \Q^{n-1}$, we denote the line 
        \[H_{x_{2},...,x_{n}}:=\{X_{2}-x_{2}=...=X_{n}-x_{n}=0\}.\]
        We claim that we can find a nonempty open set $U_{\alpha}\subset\mathbb{Q}^{n-1}$ such that $S_{\alpha}\cap H_{x_{2},...,x_{n}}=\emptyset$ over $\mathbb{C}$ for every $(x_{2},...,x_{n})\in U_{\alpha}$.
        We first observe that $H_{x_2...,x_{n}}\cap S_{\alpha}\neq\emptyset$    if and only if the resultant vanishes, namely  \[
      \text{Res}\left(G(\bfX)-P(\alpha ),\frac{\partial G}{\partial X_{1}},X_{2}-x_{2},...,X_{n}-x_{n}\right)=0,
      \] 
      which gives a polynomial relation in the coefficients $x_{2},...,x_{n}$. 
     Next, we prove that we can find some choice of $(x_{2},...,x_{n}) \in \Q^{n-1}$ such that  $S_{\alpha}\cap H_{x_{2},...,x_{n}}=\emptyset$, so that this resultant polynomial is not identically zero; this suffices to show such an open set $U_\al$ exists.  Since for all $x_2 \in \Q$ the hyperplanes $\{X_{2}-x_{2}=0\}$ are all parallel, for every irreducible component of $S_{\alpha}$, say $V$, there exists at most one value of $x_{2}$ such that $V\subset\{X_{2}-x_{2}\}$. Thus, $V\cap\{X_{2}-x_{2}\}$ has dimension $n-3$ for all but one value of $x_{2}$. Since $S_{\alpha}$ has finitely many irreducible components, it follows that $S_{\alpha}\cap \{X_{2}-x_{2}\}$ has dimension $n-3$ for all but finitely many choices of $x_{2} \in \Q$.  Pick now 
 $x_{2}$ such that $S_{\alpha}\cap\{X_{2}-x_{2}\}$ has dimension $n-3$. We can iterate the same argument and find $x_{2},...,x_{n}$ such that $\dim(S_{\alpha}\cap H_{x_{2},...,x_{n}})=n-2-(n-1)=-1$, i.e. $S_{\alpha}\cap H_{x_{2},...,x_{n}}$ is empty. 
 
 Now take $U=\cap_{\alpha}U_{\alpha}$ to be the intersection over the roots $\al$ of $P'(Y)$. By the preliminary observation, $F(Y,\bX)$ is absolutely irreducible, hence $F_L(Y,\bX)$ is absolutely irreducible, so by the Hilbert Irreducibility Theorem there exists a choice of $(x_{2},...,x_{n})\in U$ such that $F_{L}(Y,X_{1},x_{2},...,x_{n})$ is irreducible over $\Q$. Observe that for such a choice of $(x_{2},...,x_{n})$ the curve defined by $F_{L}(Y,X_{1},x_{2},...,x_{n})=0$ is smooth on the affine plane. Indeed, the Jacobian for $F(Y,X_{1},x_{2},...,x_{n})=P(Y)-G(X_{1},x_{2},...,x_{n})$ is 
      \[
      (F(Y,X_{1},x_{2},...,x_{n}),P'(Y), \frac{\partial G}{\partial X_{1}}(X_{1},x_{2},...,x_{n})).
      \]
      If there is a singular point, so that the Jacobian vanishes, then there exists a root $\alpha$ of $P'$ such that the system
      \[
      G(\bfX)-P(\alpha )=\frac{\partial G}{\partial X_{1}}=X_{2}-x_{2}=...=X_{n}-x_{n}=0
      \]
          has a  solution, which is impossible, since $(x_2,\ldots,x_n)$ lies in $U_\al$ for all $\al$. 
      
      We will now use (\ref{sep_vars_allowable}) to construct a singular point of $F_L(Y,X_1,x_2,\ldots,x_n)$ in $\mathbb{A}^2$, thus obtaining a contradiction. Since we are assuming that $F_{L}(Y,X_{1},x_{2},...,x_{n})$ has a linear factor in $Y$ over $\overline{\mathbb{Q}}$ and it is irreducible over $\mathbb{Q}$, it follows from Lemma \ref{lemma_F_linear_factor_splits_completely} that 
         \beq\label{FL_factorization}
      F_{L}(Y,X_{1},x_{2},...,x_{n})=\prod_{\tau\in H}(Y-\tau(Q(X_{1}))),
      \eeq
      where $H$ is the Galois group of the extension $\mathbb{Q}(X_{1})[Y]/F_{L}(Y,X_{1},x_{2},...,x_{n})$, meaning, the group of embeddings of the extension in its Galois closure).  We claim that $Q(X_{1})$ cannot be of the form
      \[
      Q(X_{1})=X_{1}R(X_{1})+\beta,
      \]
      for some $R\in\mathbb{Q}[X_{1}]$; that is, the polynomial $Q \in \overline{\Q}[X_1]$ cannot have only its constant term lie in $\overline{\Q}\setminus \Q$.   For if $Q$ had this form, then 
  \[
      \Tr(Q(X_{1}))=\Tr(X_{1}R(X_{1}))+\Tr(\beta)=dX_{1}R(X_{1})+\Tr(\beta)\neq 0,
     \]
      which would be a contradiction. (By construction $F(Y,\bX)$ has separation of variables, so $F_L(Y,\bX)$ does as well, which would be violated via the factorization (\ref{FL_factorization}) if $\Tr(Q(X_1)) \neq 0$.) We conclude that there is some nonconstant term in $Q$ with a coefficient in $\overline{\Q}\setminus \Q$, so that we can find $\tau\neq \text{Id}$, such that $\deg_{X_{1}}(Q(X_{1})-\tau(Q(X_{1})))> 0$. This implies that $F_{L}(Y,X_{1},x_{2},...,x_{n})$ has singular points on the affine plane since
      \[
      \{Y-Q(X_{1})=Y-\tau(Q(X_{1}))=0\}
      \]
      is not empty in $\mathbb{A}^{2}$. 
      This contradicts the earlier conclusion that the curve defined by the equation $F_{L}(Y,X_{1},x_{2},...,x_{n})=0$ is smooth on the affine plane. 
    \end{proof}
\end{example}

\subsection*{Acknowledgements}

The authors thank Tim Browning for his encouragement, Julian Demeio, and Per Salberger for helpful references, and Emmanuel Kowalski and Will Sawin for discussions on uniformity in the stratification of Katz and Laumon. 
L.P.  has been partially supported during portions of this research by NSF DMS-2200470, a Joan and Joseph Birman Fellowship, the Simons Foundation (1026705), and a Guggenheim Fellowship. L. P. thanks the Hausdorff Center for Mathematics for hosting several research periods as a Bonn Research Chair and the Mittag-Leffler Institute for hosting a research period in March 2024. 
K.W. visited Duke in 2023 with funding from NSF RTG-2231514, which supports the Number Theory group at Duke University. K.W. is partially supported by NSF GRFP under Grant No.  DGE-2039656.

\bibliographystyle{alpha}

\bibliography{NoThBib}

\end{document}